\documentclass[oneside,english]{amsart}
\usepackage[T1]{fontenc}
\usepackage[latin9]{inputenc}
\pagestyle{plain}
\usepackage{mathtools}
\usepackage{enumitem}
\usepackage{amstext}
\usepackage{amsthm}
\usepackage{amssymb}

\usepackage[normalem]{ulem}
\usepackage{hyperref}

\usepackage{verbatim}

\makeatletter
\numberwithin{equation}{section}
\numberwithin{figure}{section}
\theoremstyle{plain}
\newtheorem{thm}{\protect\theoremname}
\newtheorem*{theo}{Theorem}
\theoremstyle{definition}
\newtheorem{defn}[thm]{\protect\definitionname}
\theoremstyle{remark}
\newtheorem*{rem*}{\protect\remarkname}
\theoremstyle{plain}
\newtheorem{fact}[thm]{\protect\factname}
\newtheorem{observation}[equation]{\protect\observationname}
\theoremstyle{remark}
\newtheorem{rem}[thm]{\protect\remarkname}
\theoremstyle{plain}
\newtheorem{lem}[thm]{\protect\lemmaname}
\newlist{casenv}{enumerate}{4}
\setlist[casenv]{leftmargin=*,align=left,widest={iiii}}
\setlist[casenv,1]{label={{\itshape\ \casename} \arabic*.},ref=\arabic*}
\setlist[casenv,2]{label={{\itshape\ \casename} \roman*.},ref=\roman*}
\setlist[casenv,3]{label={{\itshape\ \casename\ \alph*.}},ref=\alph*}
\setlist[casenv,4]{label={{\itshape\ \casename} \arabic*.},ref=\arabic*}
\theoremstyle{plain}
\newtheorem{cor}[thm]{\protect\corollaryname}
\theoremstyle{plain}
\newtheorem{prop}[thm]{\protect\propositionname}
\theoremstyle{plain}
\newtheorem*{cor*}{\protect\corollaryname}
\theoremstyle{plain}
\newtheorem*{lem*}{\protect\lemmaname}

\def\N{\mathbb N}


\newcommand{\Meng}[2]{\left\{#1\mathrel{}\middle|\mathrel{}#2\right\}}
\newcommand{\abs}[1]{\left\lvert#1\right\rvert}

\DeclarePairedDelimiter\floor{\lfloor}{\rfloor}

\usepackage{babel}
\providecommand{\lemmaname}{Lemma}
\providecommand{\propositionname}{Proposition}
\providecommand{\theoremname}{Theorem}

\makeatother

\usepackage{babel}
\providecommand{\casename}{Case}
\providecommand{\corollaryname}{Corollary}
\providecommand{\definitionname}{Definition}
\providecommand{\factname}{Fact}
\providecommand{\observationname}{Observation}
\providecommand{\lemmaname}{Lemma}
\providecommand{\propositionname}{Proposition}
\providecommand{\remarkname}{Remark}
\providecommand{\theoremname}{Theorem}

\begin{document}
	\title{Non-classifiability Of Ergodic Flows Up To Time Change}
	\author{Marlies Gerber$^1$}   
	\thanks{$^1$ Indiana University, Department of Mathematics, Bloomington, IN 47405, USA}
	\author{Philipp Kunde$^2$} 
	\thanks{$^2$ Jagiellonian University in Krakow, Faculty of Mathematics and Computer Science, 30-348 Krakow, Poland. Oregon State University, Department of Mathematics, Corvallis, OR 97331, USA.
		P.K. acknowledges financial support from a DFG Forschungsstipendium under Grant No. 405305501 and a Polonez Bis Grant No. 2021/43/P/ST1/02885.}
	\begin{abstract}A time change of a flow $\{T_t\}$,  ${t\in\mathbb{R}}$, is a reparametrization of the orbits of the flow such that each orbit is mapped to itself by an orientation-preserving homeomorphism of the parameter space. If a flow $\{S_t\}$ is isomorphic to a flow obtained by a reparametrization of a flow $\{T_t\}$, then we say that $\{S_t\}$ and $\{T_t\}$ are isomorphic up to a time change. For ergodic flows $\{S_t\}$ and $\{T_t\}$, Kakutani showed that this happens if and only if the two flows have Kakutani equivalent transformations as cross-sections.
		
		We prove that the Kakutani equivalence relation on ergodic invertible
		measure-preserving transformations of a standard non-atomic probability space is not a Borel set. This shows
		in a precise way that classification of ergodic transformations up to Kakutani equivalence is
		impossible. In particular, our results imply the non-classifiability of ergodic flows up to isomorphism after a time change. Moreover, we obtain anti-classification results under isomorphism for ergodic invertible transformations of a sigma-finite measure space.
		
		We also obtain anti-classification results under Kakutani equivalence for ergodic
		area-preserving smooth diffeomorphisms of the disk, annulus, and 2-torus, as well as real-analytic diffeomorphisms
		of the $2$-torus. Our work generalizes the anti-classification results under isomorphism for ergodic transformations obtained by Foreman, Rudolph, and Weiss.

	\end{abstract}
	
	\maketitle
	
	\insert\footins{\footnotesize - \\
		\textit{2020 Mathematics Subject classification:} Primary: 37A20; Secondary: 37A35, 37A05, 37C40, 03E15\\
		\textit{Key words: } Kakutani equivalence, anti-classification, complete analytic, smooth ergodic theory}
	
	\section{\label{sec:Introduction}Introduction}
	
	A fundamental theme in ergodic theory, going back to J. von Neumann
	\cite{Ne}, is the isomorphism problem: classify measure-preserving
	transformations (MPTs) up to isomorphism. By an MPT we mean a measure-preserving
	automorphism of a standard non-atomic probability space. We let $\mathcal{X}$
	denote the set of all MPTs of such a space. Two automorphisms $S,T\in\mathcal{X}$
	are said to be \emph{isomorphic} (written $S \cong T$) if there exists $\varphi\in\mathcal{X}$
	such that $S\circ\varphi$ and $\varphi\circ T$ agree $\mu$-almost
	everywhere. Since there is a decomposition of each MPT into ergodic
	components (see, for example,
	Chapter 5 of \cite{VO16}), we often consider just the set $\mathcal{E}$
	of ergodic elements of $\mathcal{X}.$ Two well-known successes in the
	classification of MPTs are the Halmos-von Neumann classification
	of ergodic MPTs with pure point spectrum by countable subgroups of
	the unit circle \cite{HN42} and D. Ornstein's classification of Bernoulli
	shifts by their metric entropy \cite{Or70}. Another, lesser known, instance of
		success was obtained by C. Foia\c{s} and \c{S}. Str\u{a}til\u{a} \cite{FS68}, and this was recently generalized by F. Parreau \cite{P23}. Nonetheless, the isomorphism
	problem remains unsolved for general ergodic MPTs. 
	
	In contrast to the positive results in \cite{HN42} and \cite{Or70},
	a series of \emph{anti-classification} results have appeared, starting
	with the work of F. Beleznay and M. Foreman \cite{BF96}. Anti-classification
	results demonstrate the complexity of the isomorphism problem. To
	describe some of this work, we endow $\mathcal{X}$ with the weak
	topology. (Recall that $T_{n}\to T$ in the weak topology if and only
	if $\mu(T_{n}(A)\triangle T(A))\to0$ for every $A\in\mathcal{M}.)$
	This topology is compatible with a complete separable metric and hence
	makes $\mathcal{X}$ into a Polish space. Since $\mathcal{E}$ is
	a (dense) $G_{\delta}$ -set in $\mathcal{X}$, the induced topology
	on $\mathcal{E}$ is Polish as well. In contrast to the Halmos-von
	Neumann classification result, G. Hjorth \cite{Hj01} showed that
	there is no Borel way of associating algebraic invariants to ergodic
	MPTs that completely determines isomorphism. Moreover, Foreman and
	B. Weiss \cite{FW0} proved that there is no generic class (that is,
	a dense $G_{\delta}$ -set) of ergodic MPTs for which there is a
	Borel way of associating a complete algebraic invariant. Hjorth also
	proved that the isomorphism relation $\mathcal{R}$ is itself not
	a Borel set when viewed as a subset of $\mathcal{X}\times\mathcal{X}$.
	However, his proof used nonergodic transformations in an essential
	way. In the case of ergodic MPTs, Foreman, D. Rudolph, and Weiss
	\cite{FRW} proved that $\mathcal{R}\cap(\mathcal{E}\times\mathcal{E})$
	is not Borel. One way to interpret an equivalence relation not being
	a Borel set is to say that there is no inherently countable technique
	for determining whether or not two members of the set are equivalent.
	(See the survey article by Foreman \cite{Fo18} for more details.)
	Thus \cite{FRW} gives a precise formulation of the nonclassifiability
	of ergodic MPTs up to isomorphism.
	
	Recently, Foreman and A. Gorodetski \cite{FGpp} obtained anti-classification
	results in a different context. They considered topological equivalence
	of $C^{\infty}$ diffeomorphisms of a compact manifold $M.$ Two diffeomorphisms
	$f$ and $g$ of $M$ are said to be topologically equivalent if there
	is a homeomorphism $\varphi$ of $M$ such that $f\circ\varphi=\varphi\circ g.$
	In the case dim$(M)\ge2,$ Foreman and Gorodetski proved there is
	no Borel function from the $C^{\infty}$ diffeomorphisms to the reals
	that is a complete invariant for topological equivalence. Moreover,
	if dim$(M)\ge5,$ they proved the analog of the unclassifiability 
	result of \cite{FRW}, that is, the set of topologically equivalent
	pairs of diffeomorphisms is not Borel. 
	
	When the anti-classification results in \cite{FRW} became known,
	the question immediately arose whether these results are also true
	if ``isomorphism'' is replaced by ``Kakutani equivalence''. (See
	\cite{We14} for some history of this question.) In this paper we
	answer this question affirmatively. Our arguments build on the methods
	in \cite{FRW} and also imply the anti-classification result of \cite{FRW}. 
	
	Two ergodic automorphisms $T$ and $S$ of $(\Omega,\mathcal{M},\mu)$ 
	are \emph{Kakutani equivalent} if there exist positive measure subsets $A$ 
	and $B$ of $\Omega$ such that the first return maps $T_A$ and $S_B$ are
	isomorphic. (See Section \ref{subsec:Kak} for further details.) Kakutani equivalence 
	was introduced in \cite{Kt43} in order to study the equivalence of ergodic flows 
	up to isomorphism after a time change. A flow is defined to be a measure-preserving
	action of $\mathbb{R}$ on $(\Omega,\mathcal{M},\mu)$, that is, a collection 
	of measure-preserving maps $\{S_t\}$, for $t\in\mathbb{R}$, such that 
	$(\omega,t)\mapsto S_t(\omega)$ is jointly measurable from 
	$\Omega\times\mathbb{R}$ to $\Omega$ and $S_{t+s}=S_t\circ S_s$, for 
	$t,s\in \mathbb{R}$. Two flows $\{S_t\}$ and $\{T_t\}$ on 
	$(\Omega,\mathcal{M},\mu)$ are said to be \emph{isomorphic} if there exists 
	an invertible measure-preserving map $\varphi:\mathcal{D}\mapsto\mathcal{D}'$,
	where $\mathcal{D}$ and $\mathcal{D}'$ are full-measure subsets of $\Omega$ 
	that are, respectively, $\{S_t\}$ and $\{T_t\}$ invariant, and 
	$\varphi(S_t(\omega))=T_t(\varphi(\omega))$, for  $\omega\in\mathcal{D}$
	and $t\in\mathbb{R}$. It is natural to also allow a reparametrization 
	along the orbits, which is defined to be a jointly measurable map
	$\tau:\Omega\times\mathbb{R}\to\mathbb{R}$ such that $S_{\tau(\cdot,t)}(\cdot)$
	is again a flow on $\Omega$, and for every $\omega\in\mathcal{D}$ the map 
	$t\mapsto\tau(\omega,t)$ is an orientation-preserving 
	homeomorphism of $\mathbb{R}$. If there exists a reparametrization 
	$\tau$ such that $S_{\tau(\cdot,t)}(\cdot)$ is isomorphic to $\{T_t\}$, 
	then the flows $\{S_t\}$ and $\{T_t\}$ are said to be 
	\emph{isomorphic up to a time change}, or \emph{Kakutani equivalent}.
	(If ``orientation-preserving homeomorphism'' were relaxed to ``piecewise continuous
	invertible map'', then all ergodic flows would be isomorphic up to this more
	general type of reparametrization \cite{AmK42,Dye}, and therefore the corresponding 
	equivalence relation is not interesting in the present context.) 
	
	According to \cite{AmK42} and \cite{Kt43}, two ergodic flows are Kakutani equivalent if and only if they have Kakutani 
	equivalent transformations as cross-sections. Therefore our
	anti-classification result for Kakutani equivalence on $\mathcal{E}$ implies the non-classifiability of ergodic flows
	up to isomorphism after a time change.
	
	Since the Kakutani equivalence relation is
	weaker than isomorphism, one might expect classification to be simpler.
	In fact, until the work of J. Feldman \cite{Fe}, it seemed that there
	might be only three Kakutani equivalence classes in $\mathcal{E}$: zero entropy, finite
	positive entropy, and infinite entropy. However, Feldman found examples
	to show that each entropy class (zero, finite positive, and infinite)
	contains at least two non-Kakutani equivalent ergodic MPTs. A. Katok
	\cite{Ka77}, and Ornstein, Rudolph, and Weiss \cite{ORW}, independently,
	extended this to obtain uncountably many non-Kakutani equivalent examples
	in each entropy class.
	
	A precursor to the anti-classification results for isomorphism is
	the observation by Feldman \cite{Fe74} that there does not exist
	a Borel function from $\mathcal{E}$ to the reals such that two transformations
	are isomorphic if and only if the Borel function takes the same value
	at these transformations. His proof is an easy application of a 0-1
	law to the uncountable family of non-Bernoulli K-automorphisms constructed
	by Ornstein and P. Shields \cite{OS73}. It is interesting to note
	that the uncountable family of automorphisms constructed in \cite{ORW}
	can be substituted for the Ornstein-Shields examples to conclude that
	there is no Borel function from $\mathcal{E}$ to the reals whose
	values are complete invariants for Kakutani equivalence. The key common
	feature of the examples in \cite{OS73} and the examples in \cite{ORW}
	is that they are indexed by sequences of zeros and ones in such a
	way that two of the examples are isomorphic (in the case of \cite{OS73})
	or Kakutani equivalent (in the case of \cite{ORW}) if and only if
	the two corresponding sequences of zeros and ones agree except for
	possibly finitely many terms. 
	
	In the context of Kakutani equivalence for flows, M. Ratner \cite{Ra81}
	introduced the Kakutani invariant and used this to prove that for
	distinct integers $k,\ell>0,$ the product of $k$ copies of the horocycle
	flow with itself is not Kakutani equivalent to the product of $\ell$
	copies. More recently, A. Kanigowski, K. Vinhage, and D. Wei \cite{KVW}
	obtained an explicit formula for the Kakutani invariant for unipotent
	flows acting on compact quotients of semisimple Lie groups. It is
	an open question whether the Kakutani invariant is a complete invariant
	in the class of unipotent flows. 
	
	We also note that the analog of the result in \cite{FW0} fails for
	Kakutani equivalence in a trivial way, because the rank one transformations
	form a dense $G_{\delta}$-subset of $\mathcal{E},$ and all rank
	one transformations are zero entropy loosely Bernoulli, and therefore
	in the same Kakutani equivalence class. In case of isomorphism, Foreman,
	Rudolph and Weiss \cite[section 10]{FRW} use a theorem of J. King
	\cite{Ki86} to prove that the collection of pairs $(S,T)$ of rank
	one ergodic transformations such that $S$ and $T$ are isomorphic
	is a Borel subset of $\mathcal{E}\times\mathcal{E}.$ Nonetheless,
	no useful structure theorem to classify rank one transformations up
	to isomorphism is known. 
	
	We prove that the set $\mathcal{S}$ of pairs $(S,T)$ in $\mathcal{E}\times\mathcal{E}$
	such that $S$ and $T$ are Kakutani equivalent is not a Borel set. In fact, our main result is
		\begin{theo}
			The collection 
			\[
			\mathcal{S}\coloneqq \left\{ (S,T):S\text{ and }T\text{ are ergodic and Kakutani equivalent}\right\} \subseteq\mathcal{E}\times\mathcal{E}
			\]
			is a complete analytic set. In particular, it is not Borel.
	\end{theo}
	We follow the general scheme in \cite{FRW}, but with some notable
	differences. As in \cite{FRW} we consider the set, $\text{\ensuremath{\mathcal{T}}}rees,$
	which consists of countable rooted trees with arbitrarily long branches.
	Then, analogously to the approach in \cite{FRW}, we construct a continuous
	function $\Psi:\mathcal{T}\kern-.5mm rees\to\mathcal{E}$ such that $\Psi(\mathcal{T})$
	is Kakutani equivalent to $\Psi(\mathcal{T})^{-1}$ if and only if
	$\mathcal{T}$ has an infinite branch. According to a classical result
	\cite{Kechris}, the collection of $\mathcal{T}\kern-.5mm rees$ with one or
	more infinite branches is a complete analytic subset of $\mathcal{T}\kern-.5mm rees.$
	Since the set $\mathcal{S}$ of Kakutani equivalent pairs is an analytic set (see \cite[section 3.2]{GK4}), it follows that $\mathcal{S}$
	is a complete analytic set and hence not Borel. In fact, by an argument
	similar to that in \cite{FRW}, $\Psi(\mathcal{T})$ is actually \emph{isomorphic}
	to $\Psi(\mathcal{T})^{-1}$ if $\mathcal{T}$ has an infinite branch.
	Therefore any  equivalence relation on $\mathcal{E}$ that is weaker
	than (or equal to) isomorphism and stronger than (or equal to) Kakutani
	equivalence is complete analytic. This includes \emph{even} Kakutani
	equivalence (see Definition \ref{def:EvenEquiv} in Section \ref{subsec:On-even-Kakutani}),
	and $\alpha$-equivalence (defined in \cite{FJR94}). Indeed, we first
	prove our result for even Kakutani equivalence and then use the methods
	of \cite{ORW} to prove that for any $\mathcal{T}\in\mathcal{T}\kern-.5mm rees,$
	$\Psi(\mathcal{T})$ and $\Psi(\mathcal{T})^{-1}$ cannot be Kakutani
	equivalent unless they are evenly Kakutani equivalent. 
	
	Foreman, Rudolph, and Weiss use the elegant method of joinings to
	prove that for $\mathcal{T}\in\mathcal{T}\kern-.5mm rees$ such that $\mathcal{T}$
	has no infinite branch, the corresponding element of $\mathcal{E}$
	is not isomorphic to its inverse. However, as they announced in \cite{FRW06},
	they could also have used a finite coding argument and approximations
	in the $\overline{d}$-metric for this proof. We use the finite coding
	approach and apply the result in \cite{ORW} that for evenly equivalent
	processes $(S,P)$ and $(T,Q),$ there is a finite code defined on
	the $(T,Q)$ process such that the $\overline{f}$-distance between
	the $(S,P)$ process and the process coded from $(T,Q)$ is small. 
	
	As in \cite{FRW}, each level $s$ of a tree $\mathcal{T}\in\mathcal{T}\kern-.5mm rees$
	corresponds to an equivalence relation $\mathcal{Q}_{s}$ on certain
	finite words, and $\mathcal{Q}_{s+1}$ refines $\mathcal{Q}_{s}.$
	One of the main steps in the construction of $\Psi(\mathcal{T})$
	is a procedure for substituting $\mathcal{Q}_{s+1}$ equivalence classes
	into $\mathcal{Q}_{s}$ equivalence classes. Foreman, Rudolph, and
	Weiss use a random substitution method, while we use Feldman patterns
	(defined in Section \ref{sec:Feldman}) to execute a deterministic
	substitution. 
	
	The classification problem (with respect to isomorphism or Kakutani
	equivalence) can also be restricted to the class of smooth ergodic
	diffeomorphisms of a compact manifold $M$ that preserve a smooth
	measure. For dimensions greater than or equal to three, there are
	no known obstacles to realizing an arbitrary ergodic MPT as a diffeomorphism
	of a compact manifold except for the requirement, proved by A. Kushnirenko
	\cite{Ku65}, that the ergodic MPT have finite entropy. Thus it is
	not clear if the restriction to smooth ergodic diffeomorphisms changes
	the classification problem. But even with this restriction, Foreman
	and Weiss \cite{FW3} proved that the isomorphism relation is not
	Borel. Moreover, S. Banerjee and P. Kunde \cite{BK2} showed that
	the isomorphism relation is still not Borel if ``smooth'' is replaced
	by ``real analytic''. We obtain the analogous results for Kakutani
	equivalence both in the smooth and the real analytic settings. 
	
	To obtain their anti-classification results for smooth diffeomorphisms,
	Foreman and Weiss \cite{FW2} introduced a functor $\mathcal{F}$
	from certain odometer-based systems in \cite{FRW} to the so-called
	circular systems, which they showed in \cite{FW1} have a smooth realization
	on $\mathbb{D}^{2}$ and $\mathbb{T}^{2}$ via the untwisted Anosov-Katok
	method \cite{AK70}, while on $\mathbb{T}^{2}$ Banerjee and Kunde \cite{Ba17,BK2} extended
	these results to the real analytic case. We also make use of this
	machinery to obtain smooth and real analytic models of the images
	under $\mathcal{F}$ of the odometer-based systems that we construct
	using the Feldman patterns. The same argument as in \cite{FW2} shows
	that in our construction $\mathcal{F}(\Psi(\mathcal{T}))$ and $\mathcal{F}(\Psi(\mathcal{T})^{-1})$
	are isomorphic if $\mathcal{T}$ has an infinite branch. But the more
	difficult part, showing that $\mathcal{F}(\Psi(\mathcal{T}))$ and
	$\mathcal{F}(\Psi(\mathcal{T})^{-1})$ are \emph{not} Kakutani equivalent
	if $\mathcal{T}$ does \emph{not} have an infinite branch, requires
	a different argument from that in \cite{FW2}. In fact, in \cite{GeKu},
	we obtained an example of two \emph{non}-Kakutani equivalent odometer-based
	systems $S$ and $T$ such that $\mathcal{F}(S)$ and $\mathcal{F}(T)$
	are Kakutani equivalent. However, for the systems $\Psi(\mathcal{T})$
	that we construct in the present paper, if $\Psi(\mathcal{T})$ is
	not Kakutani equivalent to $\Psi(\mathcal{T})^{-1},$ then $\mathcal{F}(\Psi(\mathcal{T}))$
	is not Kakutani equivalent to $\mathcal{F}(\Psi(\mathcal{T})^{-1})$.
	Thus, we can transfer the non-Kakutani equivalence between $\Psi(\mathcal{T})$
	and $\Psi(\mathcal{T})^{-1}$ to non-Kakutani equivalence between
	$\mathcal{F}(\Psi(\mathcal{T}))$ and $\mathcal{F}(\Psi(\mathcal{T})^{-1})$
	if $\mathcal{T}$ does not have an infinite branch. This yields smooth
	and real analytic anti-classification results for isomorphism (already
	proved in \cite{FW3} and \cite{BK2}), Kakutani equivalence, and
	every equivalence relation in between these two. 
	
	Classification questions can also be considered in the context of group actions. For example, E. Gardella and M. Lupini \cite{GL} showed that for every nonamenable countable discrete group $\Gamma$, the relations of isomorphism and orbit equivalence of free ergodic (or weak mixing) measure preserving actions of $\Gamma$ on a standard probability space are not Borel. The analogous question for amenable countable discrete groups and the isomorphism relation is open. 
	
	 Since the submission of the original version of this paper, we have obtained two new applications of the main results. The first application is the non-classifiability of Kolmogorov automorphisms both up to isomorphism and up to Kakutani equivalence. This remains true when ``automorphisms'' are replaced by ``diffeomorphisms'' \cite{GK4}. The second application, which answers a question of M. Foreman, is contained in Section~\ref{sec:Sigma} of the current paper. We prove non-classifiability up to isomorphism of the ergodic automorphisms of a sigma-finite measure space. In both of these results, it is the \emph{non-classifiability up to Kakutani equivalence} in Theorems \ref{thm:CompleteAnalytic} and \ref{thm:Main} that is used in a crucial way to obtain \emph{non-classifiability up to isomorphism} in a different context (Kolmogorov automorphisms instead of ergodic automorphisms, or sigma-finite measures instead of finite measures). 
	
	\section{\label{sec:Preliminaries}Preliminaries}
	
	\subsection{\label{subsec:basicsET}Some terminology from ergodic theory}
	
	A measure space $(\Omega,\mathcal{M},\mu)$ is called
	a \emph{standard non-atomic probability space} if there is a bi-measurable measure-preserving bijection (modulo sets of measure zero) from $(\Omega,\mathcal{M},\mu)$ to $([0,1],\mathcal{B},\lambda)$, where $\mathcal{B}$ is the collection of Borel sets, and $\lambda$ is Lebesgue measure. Thus
	every invertible MPT (also called an \emph{automorphism}) of a standard
	non-atomic probability space is isomorphic to an invertible Lebesgue measure-preserving
	transformation on $[0,1]$. As in the
	introduction, we let $\mathcal{X}$ denote the group of MPTs on $[0,1]$
	endowed with the weak topology, where two MPTs are identified if
	they are equal on sets of full measure. 
	
	While in this model we fix the measure and look at transformations
	preserving that measure, there is also a dual viewpoint of fixing
	the transformation and considering the collection of its invariant
	measures. The transformation that we use is the left shift $sh:\Sigma^{\mathbb{Z}}\to\Sigma^{\mathbb{Z}},$ defined by
	\[
	sh((x_n)_{n=-\infty}^{\infty})=(x_{n+1})_{n=-\infty}^{\infty}
	\] 
	on $(\Sigma^{\mathbb{Z}},\mathcal{C})$,
	where $\Sigma$ is 
	a countable set and $\mathcal{C}$ is the $\sigma$-algebra of Borel sets in $\Sigma^{\mathbb{Z}}.$ 
	We endow the set of shift-invariant measures with the weak$^*$ topology. 
	In our paper, $\Sigma$ is finite and for each shift-invariant measure $\nu$ that we construct, there is also an explicit cutting-and-stacking procedure that yields an ergodic Lebesgue measure-preserving automorphism $T$ of $[0,1]$ such that 
	$\left([0,1],\mathcal{B},\lambda,T\right)$ is isomorphic to $\left(\Sigma^{\mathbb{Z}},\mathcal{C},\nu,sh\right).$
	This is a special case of a more general result of Foreman \cite{Fo}:
	there is a Borel bijection
	$\psi$ between the aperiodic Lebesgue measure-preserving transformations on $[0,1]$ and nonatomic
	shift-invariant measures on $\Sigma^{\mathbb{Z}}$  taking comeager sets to comeager
	sets such that $\left([0,1],\mathcal{B},\lambda,T\right)$ is isomorphic to $\left(\Sigma^{\mathbb{Z}},\mathcal{C},\psi(T),sh\right).$
	In particular,
	the choice of model for measure-preserving transformations is not
	important for the Borel or analytic distinction.
	
	Another important viewpoint is to assign a process to a MPT. For this
	purpose, we recall that a \emph{partition} $P$ of a standard measure
	space $(\Omega,\mathcal{M},\mu)$ is a collection $P=\left\{ c_{\sigma}\right\} _{\sigma\in\Sigma}$
	of subsets $c_{\sigma}\in\mathcal{M}$ with $\mu(c_{\sigma}\cap c_{\sigma'})=0$
	for all $\sigma\neq\sigma'$ and $\mu\left(\bigcup_{\sigma\in\Sigma}c_{\sigma}\right)=1$,
	where $\Sigma$ is a finite set of indices. Each $c_{\sigma}$
	is called an \emph{atom} of the partition $P$. 
	For two
	partitions $P$ and $Q$ we define the join of $P$ and $Q$ to be the partition $P\vee Q=\left\{ c\cap d:c\in P,d\in Q\right\}$. For a sequence of partitions $\{P_{n}\}_{n=1}^{\infty}$ we inductively define $\vee^N_{n=1}P_n$ and we let
		$\vee_{n=1}^{\infty}P_{n}$ be the smallest $\sigma$-algebra containing $\vee^N_{n=1}P_n$ for each $N \in \N$.
	We say that a sequence of partitions $\{P_{n}\}_{n=1}^{\infty}$ is
	a \emph{generating sequence} if 
	$\vee_{n=1}^{\infty}P_{n} = \mathcal{M}$. We also have a standard
	notion of a \emph{distance between two ordered partitions}: If $P=\{c_{i}\}_{i=1}^{N}$
	and $Q=\{d_{i}\}_{i=1}^{N}$ are two ordered partitions with the same number of atoms, then
	we define 
	\[
	d(P,Q)=\sum_{i=1}^{N}\mu(c_{i}\triangle d_{i}),
	\]
	where $\triangle$ denotes the symmetric difference. 
For two partitions $P=\{c_{i}\}_{i=1}^{N}$
		and $Q=\{d_{j}\}_{j=1}^{M}$, we say that $P\subset^{\varepsilon} Q$ if there exists a partition $Q'=\{d'_{i}\}_{i=1}^{N}$ such that each $d_i'$ is a union of atoms of $Q$ and $d(P,Q')<\varepsilon.$
	
	For an automorphism $T$ of $(\Omega,\mathcal{M},\mu)$ and a
	partition $P=\left\{ c_{\sigma}\right\} _{\sigma\in\Sigma}$
	of $\Omega$ we can define the $(T,P)$-name for almost every $x\in\Omega$
	by 
	\[
	(a_n)_{n=-\infty}^{\infty}\in\Sigma^{\mathbb{Z}}\text{ with }T^{i}(x)\in c_{a_{i}}.
	\]
	Note that the $(T,P)$-name of $T(x)$ is the left shift of the $(T,P)$-name of $x$.
	If the partition $P$ is a \emph{generator}
	(that is, $\left\{ T^{n}P\right\} _{n=-\infty}^{\infty}$ is a generating
	sequence),
	then $(X,\mu,T)$ is isomorphic to $(\Sigma^{\mathbb{Z}},\nu,sh)$
	with the measure $\nu$ satisfying $\nu\{(x_n)_{n=-\infty}^{\infty}: x_n=a_n \text{\ for\ } n=j,j+1,...,k\}=\mu(T^{-j}(c_{a_j}))\cap T^{-(j+1)}(c_{a_{j+1}})\cap\cdots\cap T^{-k}(c_{a_k})).$ The pair $(T,P)$ is called a \emph{process}. 
	
	Recall that the \emph{Hamming distance}
	between two strings of symbols $a_{1}\dots a_{n}$ and $b_{1}\dots b_{n}$
	is defined by $\overline{d}\left(a_{1}\dots a_{n},b_{1}\dots b_{n}\right)=\frac{1}{n}|\left\{ i:a_{i}\neq b_{i}\right\} |$.
	It can be extended to infinite words $w=\dots a_{-2}a_{-1}a_{0}a_{1}a_{2}...$
	and $w^{\prime}=\dots b_{-2}b_{-1}b_{0}b_{1}b_{2}\dots$ by $\overline{d}(w,w^{\prime})=\limsup_{n\to\infty}\overline{d}\left(w_{n},w_{n}^{\prime}\right)$,
	where $w_{n}=a_{-n}a_{-n+1}\dots a_{n-1}a_{n}$ and $w_{n}^{\prime}=b_{-n}b_{-n+1}\dots b_{n-1}b_{n}$
	are the truncated words.

	\subsection{Induced maps and Kakutani equivalence} \label{subsec:Kak}
	
	For an automorphism $T$ of $(\Omega,\mathcal{M},\mu)$ and $A \in \mathcal{M}$ with $\mu(A)>0$, let $n_A(x)= \inf \Meng{i\in \mathbb{Z}^+}{T^i(x) \in A}$ be the \emph{first return time} to $A$. Then we define the \emph{induced map} (also called the \emph{first return map}) $T_A : A \to A$ by $T^{n_A(x)}(x)$. Furthermore, the normalized induced measure on $A$ is defined by $\mu_A(E) = \frac{\mu(A\cap E)}{\mu(A)}$ for $E \in \mathcal{M}$. We can now state the definition of Kakutani equivalence for MPTs.
	\begin{defn}\label{defn:Kakutani}
		Two transformations $T:(X,\mu)\to(X,\mu)$ and
		$S:(Y,\nu)\to(Y,\nu)$ are said to be \emph{Kakutani equivalent} if
		there are subsets $A\subseteq X$ with $\mu(A)>0$ and $B\subseteq Y$ with
		$\nu(B)>0$ such that $(T_{A},\mu_A)$ and $(S_{B},\nu_B)$ are isomorphic
		to each other.
	\end{defn}
	Kakutani showed in \cite{Kt43} that $T$ and $S$ are Kakutani equivalent if and only if they are isomorphic to measurable cross-sections of the same ergodic flow. We refer to \cite{ORW} for terminology and further characterizations of Kakutani equivalence.
	
	\subsection{\label{subsec:Symbolic-systems}Symbolic systems}
	
	An \emph{alphabet} is a countable or finite collection of symbols.
	In the following, let $\Sigma$ be a finite alphabet endowed with
	the discrete topology. Then $\Sigma^{\mathbb{Z}}$ with the product
	topology is a separable, totally disconnected and compact space. A
	usual base of the product topology is given by the collection of cylinder
	sets of the form $\left\langle u\right\rangle _{k}=\left\{ f\in\Sigma^{\mathbb{Z}}:f\upharpoonright[k,k+n)=u\right\} $
	for some $k\in\mathbb{Z}$ and finite sequence $u=\sigma_{0}\dots\sigma_{n-1}\in\Sigma^{n}$.
	For $k=0$ we abbreviate this by $\left\langle u\right\rangle $.
	
	The shift map $sh:\Sigma^{\mathbb{Z}}\to\Sigma^{\mathbb{Z}}$
	is a homeomorphism. If $\mu$ is a shift-invariant Borel measure,
	then the measure-preserving dynamical system $\left(\Sigma^{\mathbb{Z}},\mathcal{B},\mu,sh\right)$
	is called a \emph{symbolic system}. The closed support of $\mu$ is
	a shift-invariant subset of $\Sigma^{\mathbb{Z}}$ called a \emph{symbolic
		shift} or \emph{sub-shift}. 
	
	The symbolic shifts that we use are described by specifying a collection of words. A
	word $w$ in $\Sigma$ is a finite sequence of elements of $\Sigma$,
	and we denote its length by $|w|$.
	\begin{defn}
		\label{def:ConstrSeq} A sequence of collections of words $\left(W_{n}\right)_{n\in\mathbb{N}}$,
		where $\mathbb{N}=\left\{ 0,1,2,\dots\right\} $, satisfying the following
		properties is called a \emph{construction sequence}:
		\begin{enumerate}
			\item for every $n\in\mathbb{N}$ all words in $W_{n}$ have the same length
			$h_{n}$,
			\item each $w\in W_{n}$ occurs at least once as a subword of each $w^{\prime}\in W_{n+1}$,
			\item there is a summable sequence $\left(\varepsilon_{n}\right)_{n\in\mathbb{N}}$
			of positive numbers such that for every $n\in\mathbb{N}$, every word
			$w\in W_{n+1}$ can be uniquely parsed into segments $u_{0}w_{1}u_{1}w_{1}\dots w_{l}u_{l}$
			such that each $w_{i}\in W_{n}$, each $u_{i}$ (called spacer or
			boundary) is a word in $\Sigma$ of finite length and for this parsing
			\[
			\frac{\sum_{i=0}^{l}|u_{i}|}{h_{n+1}}<\varepsilon_{n+1}.
			\]
		\end{enumerate}
	\end{defn}
	
	We will often call words in $W_{n}$\emph{ $n$-words} or \emph{$n$-blocks},
	while a general concatenation of symbols from $\Sigma$ is called
	a \emph{string}. A \emph{substring} of a string $x$ is a string of symbols that occur consecutively within $x.$ We also associate a symbolic shift with a construction
	sequence: Let $\mathbb{K}$ be the collection of $x\in\Sigma^{\mathbb{Z}}$
	such that every finite  substring of $x$ occurs 
	as a substring of
	some $w\in W_{n}$. Then $\mathbb{K}$ is a closed shift-invariant
	subset of $\Sigma^{\mathbb{Z}}$ that is compact since $\Sigma$ is finite. 
	\begin{rem*}
		The symbolic shifts built from construction sequences can be realized as
		transformations built by cutting-and-stacking constructions. Spacers are
		not used in our constructions until Section \ref{sec:Transfer}.
	\end{rem*}
	In order to be able to unambiguously parse elements of $\mathbb{K}$
	we will use construction sequences consisting of uniquely readable
	words.
	\begin{defn}
		Let $\Sigma$ be an alphabet and $W$ be a collection of finite words
		in $\Sigma$. 
		Then $W$ is \emph{uniquely readable} if and only if whenever $u,v,w\in W$
		and $uv=pws$ with $p$ and $s$ strings of symbols in $\Sigma$,
		then either $p$ or $s$ is the empty word.
	\end{defn}
	
	Moreover, our $(n+1)$-words will be strongly uniform in the $n$-words
	as defined below. 
	\begin{defn}
		We call a construction sequence $\left(W_{n}\right)_{n\in\mathbb{N}}$ \emph{uniform}
		if there is a summable sequence $(\varepsilon_{n})_{n\in\mathbb{N}}$
		of positive numbers, and for each $n\in\mathbb{N},$ a map $d_{n}:W_{n}\to(0,1)$
		such that for all words $w'\in W_{n+1}$ and $w\in W_{n}$ we have
		\[
		\Big|\frac{r(w,w')}{h_{n+1}/h_{n}}-d_{n}(w)\Big|<\frac{\varepsilon_{n+1}}{ |W_n|},
		\]
		where $r(w,w')$ is the number of occurrences of $w$ in $w'$.
		Moreover, the construction sequence is called\emph{ strongly uniform}
		if for each $n\in\mathbb{N}$ there is a constant $c=c(n)>0$ such that
		for all words $w^{\prime}\in W_{n+1}$ and $w\in W_{n}$ we have $r(w,w')=c$. 
	\end{defn}
	
	In the following we will identify $\mathbb{K}$ with the symbolic
	system $\left(\mathbb{K},sh\right)$. We introduce the following natural
	set $S$ which will be of measure one for measures that we consider.
	\begin{defn}
		Suppose that $\left(W_{n}\right)_{n\in\mathbb{N}}$ is a construction
		sequence for a symbolic system $\mathbb{K}$ with each $W_{n}$ uniquely
		readable. Let $S$ be the collection of $x\in\mathbb{K}$ such that
		there are sequences of natural numbers $\left(a_{n}\right)_{n\in\mathbb{N}}$,
		$\left(b_{n}\right)_{n\in\mathbb{N}}$ going to infinity such that
		for all $m\in\mathbb{N}$ there is $n\in\mathbb{N}$ such that $x\upharpoonright[-a_{m},b_{m})\in W_{n}$.
	\end{defn}
	
	We note that $S$ is a dense shift-invariant $\mathcal{G}_{\delta}$
	subset of $\mathbb{K}$ and we recall the following properties from
	\cite[Lemma 11]{FW1} and \cite[Lemma 2.11]{FW2}.
	\begin{fact}
		\label{fact:MeasureConstrSeq}Fix a construction sequence $\left(W_{n}\right)_{n\in\mathbb{N}}$
		for a symbolic system $\mathbb{K}$ in a finite alphabet $\Sigma$.
		Then:
		\begin{enumerate}
			\item $\mathbb{K}$ is the smallest shift-invariant closed subset of $\Sigma^{\mathbb{Z}}$
			such that for all $n\in\mathbb{N}$ and $w\in W_{n}$, $\mathbb{K}$
			has non-empty intersection with the basic open interval $\left\langle w\right\rangle \subset\Sigma^{\mathbb{Z}}$.
			\item Suppose that $\left(W_{n}\right)_{n\in\mathbb{N}}$ is a uniform construction
			sequence. Then there is a unique non-atomic shift-invariant measure
			$\nu$ on $\mathbb{K}$ concentrating on $S$ and $\nu$ is ergodic.
			\item If $\nu$ is a shift-invariant measure on $\mathbb{K}$ concentrating
			on $S$, then for $\nu$-almost every $s\in S$ there is $N=N(s)\in\mathbb{N}$
			such that for all $n>N$ there are $a_{n}\leq0<b_{n}$ such that $s\upharpoonright[a_{n},b_{n})\in W_{n}$.
		\end{enumerate}
	\end{fact}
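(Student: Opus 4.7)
The plan is to treat the three parts in order, exploiting the combinatorial hierarchy imposed by the construction sequence and the unambiguous parses afforded by unique readability.

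For Part (1), $\mathbb{K}$ is manifestly shift-invariant and closed: the defining property is shift-invariant, and if $x^{(k)} \to x$ in $\Sigma^{\mathbb{Z}}$, each finite window of $x$ eventually coincides with a finite window of some $x^{(k)}$ and hence lies inside some $n$-word. To realize a given $w \in W_n$ at coordinates $[0,h_n)$, use condition (2) of Definition \ref{def:ConstrSeq} to choose a chain $w = w_n, w_{n+1}, w_{n+2}, \ldots$ with $w_m \in W_m$ and $w_{m-1}$ a subword of $w_m$, shift each $w_m$ to place $w_n$ at $[0,h_n)$, and extract a convergent subsequence by compactness of $\Sigma^{\mathbb{Z}}$. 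For minimality, given any closed shift-invariant $\mathbb{K}'$ meeting every $\langle w \rangle$ for $w \in \bigcup_n W_n$ and any $x \in \mathbb{K}$, each finite window of $x$ sits inside some $n$-word $w$, so a suitable shift of a point of $\mathbb{K}' \cap \langle w \rangle$ agrees with $x$ on that window; closedness forces $x \in \mathbb{K}'$.

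For Part (3), which I would prove before uniqueness in (2), let $B_n$ be the set of $x \in S$ for which, after writing the unique $n$-word $w \in W_n$ whose cylinder contains $x$ around position $0$ (well-defined by unique readability) and parsing it into $(n-1)$-words and spacers, coordinate $0$ falls in a spacer rather than an embedded $(n-1)$-subword. By shift-invariance, $\nu(B_n)$ equals the $\nu$-expected fraction of spacer coordinates inside an $n$-word, which condition (3) of Definition \ref{def:ConstrSeq} bounds by $\varepsilon_n$. Since $\sum \varepsilon_n < \infty$, the Borel--Cantelli lemma yields $\nu(\limsup B_n) = 0$, so $\nu$-a.e.\ $x \in S$ eventually has $0$ lying in an $n$-subword, producing the required $a_n \leq 0 < b_n$ with $x \upharpoonright [a_n, b_n) \in W_n$.

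For Part (2), I would construct $\nu$ by prescribing cylinder values via the uniform frequencies: for $w \in W_n$, define
\[
\nu(\langle w \rangle) := \lim_{N \to \infty} \frac{r(w, w')}{h_N}\quad\text{for any}\ w' \in W_N,
\]
which exists and is consistent under refinement as $n$ increases thanks to the bound $|r(w,w')/(h_{n+1}/h_n) - d_n(w)| < \varepsilon_{n+1}/h_n$ together with the vanishing spacer density. Unique readability ensures that embedded $n$-words are counted without overlap, so these values extend to a shift-invariant premeasure on the algebra of cylinders; non-atomicity follows from $h_n \to \infty$. For uniqueness and ergodicity, any invariant probability measure concentrating on $S$ satisfies (3), so for a.e.\ $x$ the orbital frequency of a fixed $n$-word $w$ is the deterministic value $\nu(\langle w \rangle)$; by the Birkhoff ergodic theorem, the conditional expectation of $\mathbf{1}_{\langle w\rangle}$ on the invariant $\sigma$-algebra is therefore a.s.\ constant, and since the $\langle w \rangle$ over $w \in \bigcup_n W_n$ generate the Borel $\sigma$-algebra of $\mathbb{K}$, both uniqueness and ergodicity follow.

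The main obstacle is the bookkeeping in Part (2): verifying that the prescribed cylinder values genuinely define a consistent premeasure on the algebra of \emph{all} finite cylinders (not merely those of $n$-word length), and then extending it as a non-atomic Borel measure on $\mathbb{K}$, requires chasing how an arbitrary cylinder is refined by the $n$-word parse at sufficiently high level. This is the step where the spacer bound of Definition \ref{def:ConstrSeq}(3) and unique readability must be used together; the Part (3) estimate then plugs in to rule out stray mass on the boundary.
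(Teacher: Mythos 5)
First, note that the paper does not prove this Fact: it is quoted verbatim from \cite[Lemma 11]{FW1} and \cite[Lemma 12]{FW2}, so there is no internal proof to compare against; your argument has to stand on its own. Parts (1) and (3) are essentially right, with two points of imprecision worth fixing. In (1), after choosing the nested chain $w=w_n\subset w_{n+1}\subset\cdots$ and extending each $w_m$ arbitrarily to a point of $\Sigma^{\mathbb{Z}}$, the limit point is only guaranteed to lie in $\mathbb{K}$ if the intervals occupied by the $w_m$ (with $w_n$ pinned at $[0,h_n)$) exhaust $\mathbb{Z}$; if every occurrence you pick sits at, say, the left edge of the next word, windows with negative coordinates come from the arbitrary extension and need not occur in any $W_m$-word, so you must either choose the occurrences to spread in both directions or pass to a shift of a point whose covering blocks do exhaust $\mathbb{Z}$. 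In (3), the event $B_n$ should not be phrased in terms of "the $n$-word around $0$" (whose existence is precisely what is being proved); define it instead as "position $0$ lies in a level-$n$ spacer of the parse of whichever higher-level block covers $0$", and then combine Borel--Cantelli with the fact that $x\in S$ has covering blocks at infinitely many levels.

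The genuine gap is in part (2), in the uniqueness/ergodicity step. You assert that for a.e.\ $x$ the orbital (Birkhoff) frequency of $\mathbf{1}_{\langle w\rangle}$ equals the deterministic value $d_n(w)/h_n$. Uniformity does not give this: it controls only the number of occurrences of $w$ \emph{as a parsed $n$-block inside a complete higher-level word}. It says nothing about occurrences of the string $w$ that are not blocks of the parse (e.g.\ straddling two adjacent $n$-blocks or overlapping spacers), and, more seriously, nothing about how the occurrences are distributed inside a proper sub-window of a long block --- which is exactly what the average over $[0,L)$ sees when $[0,L)$ is a small initial piece of an $M$-block. So the Birkhoff limit of $\mathbf{1}_{\langle w\rangle}$ need not be constant, and your argument for uniqueness and ergodicity does not close. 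The standard repair (and the route taken in \cite{FW1}) works with the tower structure instead: for each $n$ the sets $C^{(n)}_{w,j}=\{x: \text{the $n$-block of $x$ covering $0$ equals $w$ and begins at position $-j$}\}$, $0\le j<h_n$, are shifts of one another and hence have equal measure; their common value is forced by uniformity together with part (3) (the measure not covered by $n$-blocks tends to $0$); and by part (3) these sets generate the Borel $\sigma$-algebra modulo $\nu$-null sets, which yields uniqueness, with ergodicity following from uniqueness via the ergodic decomposition (every ergodic component also concentrates on the invariant set $S$). Finally, you explicitly defer the existence half --- checking that the prescribed cylinder values extend to a consistent shift-invariant premeasure on \emph{all} cylinders --- and that is a real piece of the proof, not a formality; as written, part (2) is a plan rather than a proof.
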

	
	Since our symbolic systems $\mathbb{K}\cong\left(\Sigma^{\mathbb{Z}},\mathcal{B},\nu,sh\right)$
	will be built from  uniquely readable uniform construction sequences,
	they will automatically be ergodic. To each symbolic system we will
	also consider its inverse $\mathbb{K}^{-1}$ which stands for $\left(\mathbb{K},sh^{-1}\right)$.
	Since it will often be convenient to have the shifts going in the
	same direction, we also introduce another convention.
	\begin{defn}
		If $w$ is a finite or infinite string, we write $rev(w)$ for the
		reverse string of $w$. In particular, if $x$ is in $\mathbb{K}$
		we define $rev(x)$ by setting $rev(x)(k)=x(-k)$. Then for $A\subseteq\mathbb{K}$
		we define $rev(A)=\left\{ rev(x):x\in A\right\} .$ 
		
		If we explicitly view a finite word $w$ positioned at a location
		interval $[a,b)$, then we take $rev(w)$ to be positioned at the same interval
		$[a,b)$ and we set $rev(w)(k)=w(a+b-(k+1))$. For a collection $W$
		of words, $rev(W)$ is the collection of reverses of words in $W$.
	\end{defn}
	
	Then we introduce the symbolic system $\left(rev(\mathbb{K}),sh\right)$
	as the one built from the construction sequence $\left(rev(W_{n})\right)_{n\in\mathbb{N}}$.
	Clearly, the map sending $x$ to $rev(x)$ is a canonical isomorphism
	between $\left(\mathbb{K},sh^{-1}\right)$ and $\left(rev(\mathbb{K}),sh\right)$.
	We often abbreviate the symbolic system $\left(rev(\mathbb{K}),sh\right)$
	as $rev(\mathbb{K})$. If we have a cutting-and-stacking model of $\left(rev(\mathbb{K}),sh\right)$ where each level (except the top level) of each column is mapped
	to the level above it, then the model for $\left(\mathbb{K},sh^{-1}\right)$ would map each level (except the bottom level) to the level below it. In analogy with 
	$\left(rev(\mathbb{K}),sh\right)$, we could instead reverse the order of the levels in each column and map the levels upward. 
	
	\subsection{\label{subsec:Odometer-Based-Systems}Odometer-Based Systems}
	
	The systems constructed in the first part of the paper to prove our anti-classification
	result for ergodic MPTs will turn out to be so-called odometer-based
	systems. In this subsection we review the definition.
	
	Let $\left(k_{n}\right)_{n\in\mathbb{N}}$ be a sequence of natural
	numbers $k_{n}\geq2$ and 
	\[
	O=\prod_{n\in\mathbb{N}}\left(\mathbb{Z}/k_{n}\mathbb{Z}\right)
	\]
	be the $\left(k_{n}\right)_{n\in\mathbb{N}}$-adic integers. Then
	$O$ has a compact abelian group structure and hence carries a Haar
	measure $\mu$. We define a transformation $T:O\to O$ to be addition
	by $1$ in the $\left(k_{n}\right)_{n\in\mathbb{N}}$-adic integers
	(that is, the map that adds one in $\mathbb{Z}/k_{0}\mathbb{Z}$ and carries
	right). Then $T$ is a $\mu$-preserving invertible transformation,
	called an \emph{odometer transformation}, which is ergodic and has discrete
	spectrum. 
	
	We now define the collection of symbolic systems that have odometer
	systems as their timing mechanism to parse the sequence of symbols constituting a typical element of the system into $n$
		blocks, for $n=1,2,\dots$. See Section~\ref{subsubsec:Odometer} for a definition of the associated factor map from typical points in the symbolic system to $O$, where $k_n$ is as defined below.
	\begin{defn}
		Let $\left(W_{n}\right)_{n\in\mathbb{N}}$ be a uniquely readable
		construction sequence with $W_{0}=\Sigma$ and $W_{n+1}\subseteq\left(W_{n}\right)^{k_{n}}$
		for every $n\in\mathbb{N}$. The associated symbolic shift will be
		called an \emph{odometer-based system}.
	\end{defn}
	
	Thus, odometer-based systems are those built from construction sequences
	$\left(W_{n}\right)_{n\in\mathbb{N}}$ such that the words in $W_{n+1}$
	are concatenations of a fixed number $k_{n}$ of words in $W_{n}$.
	Hence, the words in $W_{n}$ have length 
	\[
	h_{n}=\prod_{i=0}^{n-1}k_{i},
	\]
	if $n>0$, and $h_{0}=1$. Moreover, the spacers in part 3 of Definition
	\ref{def:ConstrSeq} are all the empty words (that is, an odometer-based
	transformation can be built by a cutting-and-stacking construction
	using no spacers).
	\begin{rem*}
		By \cite{FW4}, any finite entropy system
		that has an odometer factor can be represented as an odometer-based
		system.
	\end{rem*}
	
	\subsection{\label{subsec:f-metric}The $\overline{f}$ metric}
	
	Feldman \cite{Fe} introduced a notion of distance, now called $\overline{f}$,
	between strings of symbols. The $\overline{f}$ distance allows more flexibility
		in matching than the $\overline{d}$ distance (defined in Section~\ref{subsec:basicsET}), because we can match symbols in different positions, as long as we preserve
		the ordering of the symbols being matched.
	\begin{defn}
		\label{def:fbar}A \emph{match} between two strings of symbols $a_{1}a_{2}\dots a_{n}$
		and $b_{1}b_{2}\dots b_{m}$ from a given alphabet $\Sigma,$ is a
		collection $\mathcal{M}$ of pairs of indices $(i_{s},j_{s})$, $s=1,\dots,r$
		such that $1\le i_{1}<i_{2}<\cdots<i_{r}\le n$, $1\le j_{1}<j_{2}<\cdots<j_{r}\le m$
		and $a_{i_{s}}=b_{j_{s}}$ for $s=1,2,\dots,r.$ A \emph{pairing}
		is defined the same way as a match, except we drop the requirement
		that $a_{i_{s}}=b_{j_{s}}.$Then 
		\begin{equation}
			\begin{array}{ll}
				\overline{f}(a_{1}a_{2}\dots a_{n},b_{1}b_{2}\dots b_{m})=\hfill\\
				{\displaystyle 1-\frac{2\sup\{|\mathcal{M}|:\mathcal{M}\text{\ is\ a\ match\ between\ }a_{1}a_{2}\cdots a_{n}\text{\ and\ }b_{1}b_{2}\cdots b_{m}\}}{n+m}.}
			\end{array}\label{eq:cl}
		\end{equation}
		We will refer to $\overline{f}(a_{1}a_{2}\cdots a_{n},b_{1}b_{2}\cdots b_{m})$
		as the ``$\overline{f}$-distance'' between $a_{1}a_{2}\cdots a_{n}$
		and $b_{1}b_{2}\cdots b_{m},$ even though $\overline{f}$ does not
		satisfy the triangle inequality unless the strings are all of the
		same length. A match $\mathcal{M}$ is called a\emph{ best possible
			match} if it realizes the supremum in the definition of $\overline{f}$.
		The extension of the definition of $\overline{f}$-distance from finite words to 
		infinite words is analogous to the extension for $\overline{d}$-distance.
		
		If $x=x_{1}x_{2}\cdots x_{n}$ and $y=y_{1}y_{2}\cdots y_{n}$ are
		decompositions of the strings of symbols $x$ and $y$ into substrings
		such that in a given match (or pairing) between $x$ and $y$, all
		matches (or pairings) are between symbols in $x_{i}$ and $y_{i},$
		for $i=1,2,\dots,n,$ then we say that $x_{i}$ \emph{corresponds}
		to $y_{i}$ (and vice versa), under this match (or pairing). If we
		are given a match or pairing between $x=x_{1}x_{2}\cdots x_{n}$ and
		$y,$ then we can decompose $y$ into $y_{1}y_{2}\cdots y_{n}$ so
		that $x_{i}$ corresponds to $y_{i},$ for $i=1,2,\dots,n.$ The choice
		of $y_{1},y_{2},\dots,y_{n}$ is not unique, but this does not matter
		in any of our arguments.
	\end{defn}
	
	\begin{rem*}
		In the proof of Lemma \ref{lem:symbol by block replacement}, we will
		use certain pairings to give an upper bound on $|\mathcal{M}|,$ for
		a best possible match $\mathcal{M}.$
	\end{rem*}
	\begin{rem*}
		Alternatively, one can view a match as an injective order-preserving
		function $\pi:\mathcal{D}(\pi)\subseteq\left\{ 1,\dots,n\right\} \to\mathcal{R}(\pi)\subseteq\left\{ 1,\dots,m\right\} $
		with $a_{i}=b_{\pi(i)}$ for every $i\in\mathcal{D}(\pi)$. Then $\overline{f}\left(a_{1}\dots a_{n},b_{1}\dots b_{m}\right)=1-\max\left\{ \frac{2|\mathcal{D}(\pi)|}{n+m}:\pi\text{ is a match}\right\} $.
	\end{rem*}
	%

	The following simple properties of $\overline{f}$ from  \cite[properties~2.5--2.7]{GeKu},
	that were already used in \cite{Fe} and \cite{ORW}, will appear
	frequently in our arguments. These properties can be proved easily
	by considering the \emph{fit} $1-\bar{f}(a,b)$ between two strings
	$a$ and $b$.
	\begin{fact}
		\label{fact:omit_symbols}Suppose $a$ and $b$ are strings of symbols
		of length $n$ and $m,$ respectively, from an alphabet $\Sigma$.
		If $\tilde{a}$ and $\tilde{b}$ are strings of symbols obtained by
		deleting at most $\lfloor\gamma(n+m)\rfloor$ terms from $a$ and
		$b$ in total, where $0<\gamma<1$, then 
		\begin{equation}
			\overline{f}(a,b)\ge\overline{f}(\tilde{a},\tilde{b})-2\gamma.\label{eq:omit_symbols}
		\end{equation}
		Moreover, if there exists a best possible match between $a$ and
		$b$ such that no term that is deleted from $a$ and $b$ to form
		$\tilde{a}$ and $\tilde{b}$ is matched with a non-deleted term,
		then 
		\begin{equation}
			\overline{f}(a,b)\ge\overline{f}(\tilde{a},\tilde{b})-\gamma.\label{eq:omit_symbols2}
		\end{equation}
		Likewise, if $\tilde{a}$ and $\tilde{b}$ are obtained by adding
		at most $\lfloor\gamma(n+m)\rfloor$ symbols to $a$ and $b$, then
		($\ref{eq:omit_symbols2}$) holds.
	\end{fact}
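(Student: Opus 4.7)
The plan is to reason throughout with the ``fit'' $1-\overline{f}$, which by the definition equals $2|\mathcal{M}|/(n+m)$ for a best possible match $\mathcal{M}$ between strings of lengths $n$ and $m$, as the remark following the definition of $\overline{f}$ suggests. For each of the three inequalities I will start from a match on one of the pairs $(a,b)$ or $(\tilde{a},\tilde{b})$ and exhibit from it a match on the other pair whose cardinality is controlled; the conclusion is then a short arithmetic comparison of numerator and denominator, together with one trivial case distinction.

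For (\ref{eq:omit_symbols}) let $\mathcal{M}$ be a best possible match for $(a,b)$ and set $k=\lfloor\gamma(n+m)\rfloor$. Restrict $\mathcal{M}$ to the pairs whose two coordinates both avoid the deleted positions; this yields a valid match $\tilde{\mathcal{M}}$ for $(\tilde{a},\tilde{b})$, and since each deletion destroys at most one pair of $\mathcal{M}$, we have $|\tilde{\mathcal{M}}|\ge|\mathcal{M}|-k$. Writing $\tilde{n},\tilde{m}$ for the lengths of $\tilde{a},\tilde{b}$, the inequality $\tilde{n}+\tilde{m}\le n+m$ gives
\[
1-\overline{f}(\tilde{a},\tilde{b})\;\ge\;\frac{2|\tilde{\mathcal{M}}|}{\tilde{n}+\tilde{m}}\;\ge\;\frac{2|\mathcal{M}|-2k}{n+m}\;\ge\;\bigl(1-\overline{f}(a,b)\bigr)-2\gamma,
\]
which rearranges to (\ref{eq:omit_symbols}). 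If $|\mathcal{M}|<k$ the middle expression becomes negative, but then $\overline{f}(a,b)>1-2\gamma$ and the conclusion is vacuous.

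Under the stronger hypothesis of (\ref{eq:omit_symbols2}), pick the special best match $\mathcal{M}$ for which no deleted term is paired with a non-deleted term. Then a pair of $\mathcal{M}$ is destroyed by the restriction only when both of its coordinates lie in the deleted set, and each such pair consumes two deletions, so at most $k/2$ pairs are lost; the same chain of inequalities therefore yields only the smaller penalty $\gamma$. For the addition statement, start again from a best possible match $\mathcal{M}$ of $(a,b)$ and embed it verbatim into $(\tilde{a},\tilde{b})$ via the positions occupied by the original symbols, so that $|\tilde{\mathcal{M}}|=|\mathcal{M}|$ while now $\tilde{n}+\tilde{m}\le n+m+k$. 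Writing $f:=2|\mathcal{M}|/(n+m)\le 1$ and using $1/(1+x)\ge 1-x$,
\[
1-\overline{f}(\tilde{a},\tilde{b})\;\ge\;\frac{2|\mathcal{M}|}{n+m+k}\;=\;\frac{f}{1+k/(n+m)}\;\ge\;f-\tfrac{k}{n+m}\;\ge\;f-\gamma,
\]
which is (\ref{eq:omit_symbols2}). No single step is deep; the only bookkeeping to watch is that each altered symbol costs at most one matched pair (at most half a pair in the aligned case) and that the combined length of the two strings changes by a factor in $[1-\gamma,1+\gamma]$.
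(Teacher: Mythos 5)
Your proof is correct, and it follows exactly the route the paper indicates (the paper only sketches this Fact with the remark that it ``can be proved easily by considering the fit $1-\overline{f}$''): you bound the fit of one pair by transporting a best possible match to the other pair and tracking how many pairs and how many symbols are lost or gained. The bookkeeping (at most one destroyed pair per deletion in general, at most one per two deletions in the aligned case, and the vacuous case when the fit is below $2\gamma$) is all handled correctly.
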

	
	\begin{fact}
		\label{fact:substring_matching}If $x=x_{1}x_{2}\cdots x_{n}$ and
		$y=y_{1}y_{2}\cdots y_{n}$ are decompositions of the strings of symbols
		$x$ and $y$ into corresponding substrings under a best possible
		$\overline{f}$-match between $x$ and $y,$ that is, one that achieves
		the minimum in the definition of $\overline{f},$ then 
		\[
		\overline{f}(x,y)=\sum_{i=1}^{n}\overline{f}(x_{i},y_{i})v_{i},
		\]
		where 
		\begin{equation}
			v_{i}=\frac{|x_{i}|+|y_{i}|}{|x|+|y|}.\label{eq:substring_matching}
		\end{equation}
	\end{fact}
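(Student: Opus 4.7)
The plan is to prove Fact \ref{fact:substring_matching} by exploiting the optimality of the given match on each sub-block and then reassembling the totals. Let $\mathcal{M}$ be a best possible match between $x$ and $y$ realizing $\overline{f}(x,y)$, and let $\mathcal{M}_i$ denote the sub-collection of pairs in $\mathcal{M}$ whose first coordinate lies in the interval of indices occupied by $x_i$ (equivalently, by the definition of ``corresponds,'' whose second coordinate lies in the interval occupied by $y_i$). Then $\mathcal{M}_i$ is an order-preserving injective partial map from the indices of $x_i$ to those of $y_i$ satisfying the symbol-matching condition, so it is a match between $x_i$ and $y_i$, and clearly $|\mathcal{M}| = \sum_{i=1}^{n}|\mathcal{M}_i|$.

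The first key step is to show that each $\mathcal{M}_i$ is actually a best possible match between $x_i$ and $y_i$. Suppose, for contradiction, that some $\mathcal{M}_i$ is not optimal and let $\mathcal{M}_i^\ast$ be a strictly larger match between $x_i$ and $y_i$. Replacing $\mathcal{M}_i$ by $\mathcal{M}_i^\ast$ inside $\mathcal{M}$ (after shifting the indices of $\mathcal{M}_i^\ast$ by the offsets at which $x_i$ and $y_i$ sit inside $x$ and $y$) still yields an order-preserving injective correspondence, because the index intervals for the different $x_i$'s and $y_i$'s are disjoint and arranged in the same order on both sides. The resulting match strictly exceeds $|\mathcal{M}|$, contradicting the optimality of $\mathcal{M}$.

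Once each $\mathcal{M}_i$ is known to be best possible, the identity follows by direct computation. From the definition of $\overline{f}$,
\[
|\mathcal{M}_i| = \tfrac{1}{2}(|x_i|+|y_i|)\bigl(1-\overline{f}(x_i,y_i)\bigr), \qquad |\mathcal{M}| = \tfrac{1}{2}(|x|+|y|)\bigl(1-\overline{f}(x,y)\bigr).
\]
Summing the first identity over $i$, dividing by $|x|+|y|$, and using $\sum_i v_i = 1$ yields
\[
1-\overline{f}(x,y) = \sum_{i=1}^{n} v_i\bigl(1-\overline{f}(x_i,y_i)\bigr) = 1 - \sum_{i=1}^{n} v_i\,\overline{f}(x_i,y_i),
\]
whence the claimed formula.

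The only delicate point is the swap argument in the second paragraph: one must verify that inserting $\mathcal{M}_i^\ast$ into $\mathcal{M}$ preserves strict monotonicity on both sides. I expect this to be routine because the sub-blocks $x_i$ (respectively $y_i$) are contiguous and listed in the original order of $x$ (respectively $y$), so any match within block $i$ uses indices strictly between those used by blocks $<i$ and $>i$ on both sides. Hence replacing one block's matching with another does not interact with the rest of $\mathcal{M}$, and the rest of the proof is a one-line algebraic rearrangement.
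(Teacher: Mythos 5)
Your proof is correct and uses precisely the approach the paper alludes to (``considering the fit $1-\overline{f}$''): you express the fit of the whole match as the sum of block-level fits, show that each restriction $\mathcal{M}_i$ is itself optimal via a swap argument (the blocks are contiguous, disjoint, and in matching order on both sides, so replacing one block's matching cannot break monotonicity), and then rearrange the normalized counts. No gaps.
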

	
	\begin{fact}
		\label{fact:string_length}If $x$ and $y$ are strings of symbols
		such that $\overline{f}(x,y)\le\gamma,$ for some $0\le\gamma<1,$
		then 
		\begin{equation}
			\left(\frac{1-\gamma}{1+\gamma}\right)|x|\leq|y|\le\left(\frac{1+\gamma}{1-\gamma}\right)|x|.\label{eq:string_length}
		\end{equation}
		
	\end{fact}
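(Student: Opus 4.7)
The plan is to unwind the definition of $\overline{f}$ and combine it with the trivial fact that any match consists of at most $\min(|x|,|y|)$ pairs. Let $\mathcal{M}$ be a best possible match between $x$ and $y$, so that $\overline{f}(x,y)=1-2|\mathcal{M}|/(|x|+|y|)$. The hypothesis $\overline{f}(x,y)\leq\gamma$ then rewrites as
\[
2|\mathcal{M}|\geq(1-\gamma)(|x|+|y|).
\]
Since a match corresponds to an injective order-preserving partial function from the indices of $x$ to the indices of $y$ (as noted in the alternative description of $\overline{f}$ in the second remark following Definition \ref{def:fbar}), one has the elementary bound $|\mathcal{M}|\leq\min(|x|,|y|)$.

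To obtain the upper bound on $|y|$, I would split on which of $|x|,|y|$ is larger. If $|y|\geq|x|$, then $|\mathcal{M}|\leq|x|$, and substituting into the displayed inequality gives $2|x|\geq(1-\gamma)(|x|+|y|)$, which rearranges to $|y|\leq\frac{1+\gamma}{1-\gamma}|x|$. If instead $|y|<|x|$, then since $\frac{1+\gamma}{1-\gamma}\geq 1$, the bound $|y|\leq\frac{1+\gamma}{1-\gamma}|x|$ holds trivially. The lower bound $|y|\geq\frac{1-\gamma}{1+\gamma}|x|$ is obtained by the symmetric argument: when $|y|\leq|x|$ use $|\mathcal{M}|\leq|y|$ in the displayed inequality, and when $|y|>|x|$ the bound is immediate from $\frac{1-\gamma}{1+\gamma}\leq 1$.

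I do not anticipate any real obstacle here; the statement is essentially an arithmetic rearrangement of the definition of $\overline{f}$, once one notes the case split needed to resolve the minimum in $|\mathcal{M}|\leq\min(|x|,|y|)$.
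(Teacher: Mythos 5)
Your proof is correct and takes exactly the approach the paper hints at (the paper does not spell out a proof but remarks that these facts ``can be proved easily by considering the fit $1-\overline{f}(a,b)$''); your argument is precisely that: rewrite the hypothesis as $2|\mathcal{M}|\geq(1-\gamma)(|x|+|y|)$, bound $|\mathcal{M}|$ by $\min(|x|,|y|)$, and split into cases. The case split is handled cleanly and the arithmetic is right.
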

	
	We also recall the following elementary fact from \cite[Proposition 2.3, p.85]{ORW}.
	\begin{fact}
		\label{fact:AddSymbol}Suppose $a$ and $b$ are strings of symbols
		of length $n$ and $m,$ respectively, from an alphabet $\Sigma$.
		Let $h$ be a symbol that is not an element in $\Sigma$. Suppose
		$\tilde{a}$ and $\tilde{b}$ are strings of symbols obtained by inserting
		at most $(\beta-1)n$ symbols $h$ in $a$ and at most $(\beta-1)m$
		symbols $h$ in $b$. Then 
		\[
		\overline{f}\left(\tilde{a},\tilde{b}\right)\geq\frac{\overline{f}(a,b)}{\beta}.
		\]
	\end{fact}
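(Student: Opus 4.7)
My plan is to reduce the inequality to a direct comparison between the sizes of optimal matches for $\tilde{a},\tilde{b}$ and $a,b$. Let $n_{h}$ and $m_{h}$ denote the numbers of $h$'s inserted into $a$ and $b$, so by hypothesis $n_{h}\leq(\beta-1)n$ and $m_{h}\leq(\beta-1)m$; set $\tilde{n}=n+n_{h}$ and $\tilde{m}=m+m_{h}$.

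First, I would fix a best possible match $\tilde{\mathcal{M}}$ between $\tilde{a}$ and $\tilde{b}$ and split its matched pairs according to the common symbol: let $\tilde{r}_{\Sigma}$ count the pairs whose common symbol lies in $\Sigma$, and $\tilde{r}_{h}$ count the pairs matching $h$ to $h$. These are the only possibilities, since the definition of a match forces agreement of symbols and $h\notin\Sigma$. The $\tilde{r}_{\Sigma}$ pairs involve positions that survive deletion of the inserted $h$'s, and because that deletion is order-preserving on the surviving positions, these pairs induce a valid match of $a$ and $b$. Consequently
\[
\tilde{r}_{\Sigma}\leq\tfrac{1}{2}\bigl(1-\overline{f}(a,b)\bigr)(n+m),
\]
while the obvious bound $\tilde{r}_{h}\leq\min(n_{h},m_{h})\leq\tfrac{1}{2}(n_{h}+m_{h})$ takes care of the $h$-matches.

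The second step is algebraic consolidation. Writing $N=n+m$, $H=n_{h}+m_{h}$, and $F=1-\overline{f}(a,b)$, the two bounds combine to $|\tilde{\mathcal{M}}|\leq(FN+H)/2$, and therefore
\[
\overline{f}(\tilde{a},\tilde{b})\geq 1-\frac{FN+H}{N+H}=\frac{(1-F)N}{N+H}=\frac{\overline{f}(a,b)\cdot N}{N+H}.
\]
The insertion hypothesis gives $H\leq(\beta-1)N$, hence $N+H\leq\beta N$, so the desired inequality $\overline{f}(\tilde{a},\tilde{b})\geq\overline{f}(a,b)/\beta$ follows.

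The only step with any real content is the first one: one has to verify that the $\Sigma$-part of $\tilde{\mathcal{M}}$ continues to respect the ordering of $a$ and $b$, not merely of $\tilde{a}$ and $\tilde{b}$. This is immediate because deleting inserted symbols preserves the relative order of the survivors, so I view the ``main obstacle'' as being essentially conceptual rather than technical. As a sanity check, the bound is sharp: if $a$ and $b$ share no symbols and equal numbers of $h$'s are inserted into each, the inequality becomes an equality.
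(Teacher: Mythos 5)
Your proof is correct. The paper does not actually prove this fact — it quotes it from \cite[Proposition 2.3, p.85]{ORW} — and your argument (split a best match of $\tilde{a},\tilde{b}$ into the $\Sigma$-pairs, which descend to a match of $a,b$ since deleting the inserted $h$'s preserves order, plus at most $\tfrac{1}{2}(n_h+m_h)$ $h$-to-$h$ pairs, then the elementary arithmetic) is precisely the standard proof of that proposition.
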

	
	The next result provides lower bounds on the $\overline{f}$ distance when
	symbols $a_i$ and $b_j$ are replaced by blocks
	$A_{a_i}$ and $A_{b_j}$. The proof utilizes ideas from \cite[Proposition~1.1, p.79]{ORW}.
	\begin{lem}[Symbol by block replacement]
		\label{lem:symbol by block replacement}Suppose that $A_{a_{1}},A_{a_{2}},\dots,A_{a_{n}}$
		and $A_{b_{1}},A_{b_{2}},\cdots,A_{b_{m}}$ are blocks of symbols
		with each block of length $L.$ Assume that $\alpha\in(0,1/7),R\ge2,$
		and for all substrings $C$ and $D$ consisting of consecutive symbols
		from $A_{a_{i}}$ and $A_{b_{j}},$ respectively, with $|C|,|D|\ge L/R,$
		we have 
		\[
		\overline{f}(C,D)\ge\alpha\text{, if }a_{i}\ne b_{j}.
		\]
		Then 
		\[
		\overline{f}(A_{a_{1}}A_{a_{2}}\cdots A_{a_{n}},A_{b_{1}}A_{b_{2}}\cdots A_{b_{m}})>\alpha\overline{f}(a_{1}a_{2}\dots a_{n},b_{1}b_{2}\dots b_{m})-\frac{1}{R}.
		\]
	\end{lem}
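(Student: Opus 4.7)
The plan is to start from a best possible match $\mathcal{M}$ of the two block--concatenations and upper bound $|\mathcal{M}|$ by analyzing it block--pair by block--pair, invoking the hypothesis whenever a contributing substring is long enough, and extracting from $\mathcal{M}$ an auxiliary pairing on the symbol strings that captures the $\alpha\overline{f}(a_1 a_2\dots a_n,b_1 b_2\dots b_n)$ term. Concretely, let $\mathcal{M}$ be a best possible match between $A_{a_1}\cdots A_{a_n}$ and $A_{b_1}\cdots A_{b_m}$, and write $M=|\mathcal{M}|$, so $\overline{f}(A_{a_1}\cdots A_{a_n},A_{b_1}\cdots A_{b_m})=1-\tfrac{2M}{L(n+m)}$. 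For each pair of block indices $(i,j)$ let $\lambda(i,j)$ be the number of pairs in $\mathcal{M}$ whose left coordinate lies in $A_{a_i}$ and right coordinate in $A_{b_j}$, and let $C_{ij}\subseteq A_{a_i}$ and $D_{ij}\subseteq A_{b_j}$ be the shortest contiguous substrings containing those matched coordinates. The order--preservation of $\mathcal{M}$ forces, for fixed $i$, the $C_{ij}$ to be pairwise disjoint inside $A_{a_i}$, so $\sum_{j}|C_{ij}|\le L$, and symmetrically $\sum_{i}|D_{ij}|\le L$. The same property forces the support of $\lambda$ to have a ``staircase'' shape: if $\lambda(i_1,j_1),\lambda(i_2,j_2)>0$ with $i_1<i_2$, then $j_1\le j_2$.

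Next I would bound $\lambda(i,j)$ case by case. The restriction of $\mathcal{M}$ to the positions forming $(C_{ij},D_{ij})$ is itself a match of size $\lambda(i,j)$, so in all cases $\lambda(i,j)\le\tfrac12(|C_{ij}|+|D_{ij}|)$. When $a_i\neq b_j$ and $\min(|C_{ij}|,|D_{ij}|)\ge L/R$, the hypothesis gives $\overline{f}(C_{ij},D_{ij})\ge\alpha$, which strengthens the bound to $\lambda(i,j)\le\tfrac{1-\alpha}{2}(|C_{ij}|+|D_{ij}|)$; the remaining disagreeing pairs (which I call \emph{light}) satisfy $\lambda(i,j)<L/R$. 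Using the staircase, I would then extract a strictly increasing injection $\pi$ from a subset of $\{1,\dots,n\}$ into $\{1,\dots,m\}$ such that every $(i,\pi(i))$ is a heavy pair, i.e.\ $\lambda(i,\pi(i))\ge L/R$ (which automatically forces $|C_{i\pi(i)}|,|D_{i\pi(i)}|\ge L/R$). The key combinatorial input here is that at most $R$ heavy pairs can share a common block on either side, since each uses up $\ge L/R$ out of the $L$ matched positions available in that block. Setting $q=\#\{i\in\mathrm{dom}(\pi):a_i=b_{\pi(i)}\}$, these $q$ index pairs form a valid match on the symbol strings, so $q\le M^{*}$, where $M^{*}$ is the best possible match size between $a_1 a_2\dots a_n$ and $b_1 b_2\dots b_m$.

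Assembling the bound, I would sum the block--by--block estimates for $\lambda(i,j)$ using $\sum_{j}|C_{ij}|\le L$, $\sum_{i}|D_{ij}|\le L$ and the count $q$ from $\pi$, arriving at an inequality of the shape
\[
M\le \alpha L q+\tfrac{(1-\alpha)L(n+m)}{2}+\tfrac{L(n+m)}{2R}.
\]
Inserting this into the formulas $\overline{f}(A_{a_1}\cdots A_{a_n},A_{b_1}\cdots A_{b_m})=1-\tfrac{2M}{L(n+m)}$ and $\overline{f}(a_1 a_2\dots a_n,b_1 b_2\dots b_m)=1-\tfrac{2M^{*}}{n+m}\le 1-\tfrac{2q}{n+m}$ and simplifying yields the desired inequality. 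The hard part will be the middle step: constructing the strictly increasing injection $\pi$ from the merely weakly monotone staircase of heavy block pairs, and simultaneously controlling the contribution of the light pairs (whose number is a priori unbounded but whose total weight is tamed by $\sum_{j}|C_{ij}|\le L$) and of heavy pairs omitted by $\pi$. The assumption $\alpha<1/7$ together with $R\ge 2$ is precisely what guarantees that the $(1-\alpha)/2$ savings on heavy disagreements dominates all of the error terms, so that the additive correction in the final bound comes out to exactly $1/R$ rather than a larger multiple.
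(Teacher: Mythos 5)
Your global decomposition via $\lambda(i,j),\,C_{ij},\,D_{ij}$ is a genuinely different route from the paper's proof, which instead sweeps through the blocks $A_{a_1},\dots,A_{a_n}$ left to right and, each time a symbol pair is added to an auxiliary symbol match $\mathcal{N}$, actively re-does the local pairing so that exactly $2L$ positions are ``spent.'' Your route can in fact be made to close, and arguably more cleanly than the paper's case analysis; but the step you flag as ``the hard part'' is a genuine gap in the sketch, and the injection $\pi$ is not the device that fills it.

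What is actually needed for the heavy agreeing pairs $\mathcal{H}_+$ is the bound $\sum_{\mathcal{H}_+}\lambda(i,j)\le Lq$, where $q$ is the length of the longest \emph{strictly} increasing chain of cells in $\mathcal{H}_+$ (such a chain is a symbol match, so $q\le M^*$). This follows because any antichain in the strict coordinatewise order among the weakly monotone cells must lie in a single row or a single column (if $i_1\ne i_2$ and the cells are incomparable, then $j_1\le j_2$ and $j_1\ge j_2$, so $j_1=j_2$), hence contributes $\le L$ to $\sum\lambda$ by disjointness of the $C_{ij}$ (or $D_{ij}$); by Mirsky's theorem $\mathcal{H}_+$ is covered by $q$ such antichains. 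Your $\pi$ picks only one cell per row and cannot see the others, so it controls neither $\sum_{\mathcal{H}_+}\lambda$ nor $S_+=\sum_{\mathcal{H}_+}(|C_{ij}|+|D_{ij}|)$. Next, writing $x_+,x_-,x_\ell$ for the sums of $\lambda$ over heavy agreeing, heavy disagreeing, and light pairs (and $S_{\pm},S_\ell$ for the corresponding sums of $|C_{ij}|+|D_{ij}|$), the $\alpha$-scaling on the error comes from combining $x_-\le\frac{1-\alpha}{2}S_-$ with the budget $S_-\le L(n+m)-S_+-S_\ell\le L(n+m)-2x_+-2x_\ell$, which gives $M\le\alpha x_++\alpha x_\ell+\frac{(1-\alpha)L(n+m)}{2}$; then $x_+\le Lq$ and $x_\ell<\frac{(n+m)L}{R}$ (from the staircase count $|\mathcal{L}|\le n+m-1$, not from $\sum_j|C_{ij}|\le L$ as you suggest) yield the $\overline{f}$ defect $\frac{2\alpha}{R}<\frac1R$. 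Only $\alpha<1/2$ is used there, not $\alpha<1/7$, and your closing claim that $\alpha<1/7$ is ``precisely what guarantees'' the correction comes out to $1/R$ has the dependence backwards: the per-symbol savings $\alpha/2$ on heavy disagreements \emph{shrinks} with $\alpha$, and the factor of $\alpha$ multiplying the error comes from the budget argument, not from $\alpha$ being small. Your ``at most $R$ heavy pairs per block'' observation, while true, plays no role.
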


	\begin{proof}
		We may assume $n\le m.$ Suppose $\mathcal{M}$ is a best possible
		match between $A_{a_{1}}A_{a_{2}}\cdots A_{a_{n}}$ and $A_{b_{1}}A_{b_{2}}\cdots A_{b_{m}}.$
		Recall that $|\mathcal{M}|$ denotes the number of pairs of symbols
		in $A_{a_{1}}A_{a_{2}}\cdots A_{a_{n}}$ and $A_{b_{1}}A_{b_{2}}\cdots A_{b_{m}}$
		that are matched by $\mathcal{M}.$ We use the analogous notation
		$|\mathcal{N}|$ for a match $\mathcal{N}$ between the strings $a_{1}a_{2}\cdots a_{n}$
		and $b_{1}b_{2}\cdots b_{m}.$ 
		
		\noindent\emph{Claim. }There exists a match $\mathcal{N}$ (not necessarily
		a best possible match) between the strings $a_{1}a_{2}\dots a_{n}$
		and $b_{1}b_{2}\cdots b_{m}$ such that 
		\begin{equation}
			L(n+m)-2|\mathcal{M}|\ge\alpha\Big\{ L(n+m)-2L|\mathcal{N}|-\frac{3L(n+m)}{R}\Big\}.\label{eq:1}
		\end{equation}
		The lemma will follow from this claim, because 
		\[
		\overline{f}(A_{a_{1}}A_{a_{2}}\cdots A_{a_{n}},A_{b_{1}}A_{b_{2}}\cdots A_{b_{m}})=1-\frac{2|\mathcal{M}|}{L(n+m)}\text{\ and\ }
		\]
		
		\[
		\begin{array}{ccc}
			\frac{\alpha}{L(n+m)}\big\{ L(n+m)-2L|\mathcal{N}|-\frac{3L(n+m)}{R}\big\} & \ge & \alpha\overline{f}(a_{1}a_{2}\cdots a_{n},b_{1}b_{2}\cdots b_{m})-\frac{3\alpha}{R}\\
			& > & \alpha\overline{f}(a_{1}a_{2}\cdots a_{n},b_{1}b_{2}\cdots b_{m})-\frac{1}{R}.
		\end{array}
		\]
		
		\noindent\emph{Proof of Claim. }We will modify $\mathcal{M}$ in
		a series of $i$ steps, for $i=1,2,\dots,n,$ obtaining pairings $\mathcal{M}_{i}$
		between $A_{a_{1}}A_{a_{2}}\cdots A_{a_{n}}$ and $A_{b_{1}}A_{b_{2}}\cdots A_{b_{m}}$
		such that $|\mathcal{M}|\le|\mathcal{M}_{1}|\le\cdots\le$ $|\mathcal{M}_{n-1}|\le|\mathcal{M}_{n}|.$ The positions of the modifications will move from left to right with each step. 
		We will simultaneously construct a sequence of matches $\mathcal{N}_{i}$
		between $a_{1}a_{2}\dots a_{n}$ and $b_{1}b_{2}\dots b_{m}$ such
		that $\mathcal{N}_{1}\subseteq\mathcal{N}_{2}\subseteq\cdots\subseteq\mathcal{N}_{n}.$
		At the end of this construction, we will show that the inequality (\ref{eq:1}) holds with $\mathcal{N}=\mathcal{N}_{n}$
		and $\mathcal{M}$ replaced by $\mathcal{M}_{n}.$ This implies (\ref{eq:1})
		as written. 
		
		We let $\mathcal{M}_{0}=\mathcal{M}$ and $\mathcal{N}_{0}=\emptyset$. In the inductive construction of $\mathcal{M}_{i}$ and $\mathcal{N}_{i}$ (as described below), at most one pair of matching symbols is added to $\mathcal{N}_{i-1}$ to form $\mathcal{N}_{i},$ and each time such an addition occurs, we allow up to $L$ pairs to be in the part of $\mathcal{M}_{i}$ that is newly specified at stage $i$.
		
	For any pairing $\mathcal{K}$ between $A_{a_1}\cdots A_{a_n}$ 
			and $A_{b_1}\cdots A_{b_m}$, and substrings $A$ and $B$ of  
			$A_{a_1}\cdots A_{a_n}$ and $A_{b_1}\cdots A_{b_m}$, 
			respectively,  we let $\mathcal{K}\big| (A,B)$ denote the 
			collection of pairs in $\mathcal{K}$ such that the first 
			symbol in the pair is in $A$ and the second symbol in the pair is in $B$.
		
		Suppose $\mathcal{K}$ is a match between a substring $C$ of a particular $A_{a_i}$ and a substring $D$ of a particular $A_{b_j}$, where $a_i\ne b_j.$ Then there are three possibilities:
			\begin{enumerate}[label=(\Roman*)]
				\item $|C|\ge L/R$ and $|D|\ge L/R.$ Then, by assumption, $\overline{f}(C,D)>\alpha,$ which implies that $2|\mathcal{K}|<\alpha(|C|+|D|).$
				\item One of $C$ and $D$ has length less than $L/R$ and the other has length at least $4L/(3R).$ Then, by Fact 12, $\overline{f}(C,D)>1/7>\alpha,$ and again $2|\mathcal{K}|<\alpha(|C|+|D|).$
				\item Both $C$ and $D$ have length less than $4L/(3R)$ and at least one has length less than $L/R$. This will be called the \emph{short substring case}. In this case we simply use the estimate $|C|+|D|<7L/(3R).$
			\end{enumerate}

		We now define a type of pairing that will occur in our construction. Suppose $C_1$ and $D_1$ are substrings of $A_{a_1}\cdots A_{a_n}$ and $A_{b_1}\cdots A_{b_m}$, respectively, each of length at least $k$. Then a \emph{parallelogram pairing} of length $k$ on $C_1$ and $D_1$ consists of 
			the $k$ pairs obtained by pairing each element of a substring of $C_1$ of length $k$ with an element of a substring of $D_1$ of length $k$.
			
			The following observation, which is easy to verify, is used in Cases 2(ii), 3(ii) and 4(i) below. 
			
			\begin{observation}\label{obs:parallelo}
				If $C=C_1C_2$ and $D=D_1D_2$, where $|C_1|=|D_1|=k$ and $\mathcal{K}$ is a match between $C$ and $D$, then the parallelogram pairing of length $k$ on $C_1$ and $D_1$, together with a best possible match $\mathcal{K}_2$ between $C_2$ and $D_2$, is a pairing that has cardinality at least that of $\mathcal{K}.$ A similar observation holds if we have a parallelogram pairing of the last $k$ symbols in each of $C$ and $D$.
			\end{observation}

	For $k=1,\dots n$, we let $A^{(0)}_k=A_{a_k}$, and we let $B_k^{(0)}$ be strings of symbols such that 
			\[
			\begin{array}{lcc}
				A_{a_1}A_{a_2}\cdots A_{a_n} =  A^{(0)}_1A^{(0)}
				_2\cdots A^{(0)}_n\\
				\text{and}\\
				A_{b_1}A_{b_2}\cdots A_{b_m} =  B^{(0)}_1B^{(0)}
				_2\cdots B^{(0)}_n,\\
			\end{array}
			\]
			where $A_k^{(0)}$ corresponds to $B_k^{(0)}$  under the match $\mathcal{M}_{0}$. \\

	The following conditions will be satisfied in our construction, where we allow $i\in\{1,\dots,n+1\}$ in (1), and $i\in\{1,\dots,n\}$ in (2), (3), (4), and (5):
			
			\begin{enumerate}
				\item For $k=1,\dots,n$, $A_k^{(i-1)}$ and $B_k^{(i-1)}$ are strings of symbols such that  
				\[
				\begin{array}{lcc}
					A_{a_1}A_{a_2}\cdots A_{a_n} =  A^{(i-1)}_1A^{(i-1)}
					_2\cdots A^{(i-1)}_n\\
					\text{and}\\
					A_{b_1}A_{b_2}\cdots A_{b_m} =  B^{(i-1)}_1B^{(i-1)}
					_2\cdots B^{(i-1)}_n,\\
				\end{array}
				\]
				where $A_k^{(i-1)}$ corresponds to $B_k^{(i-1)}$ under the pairing $\mathcal{M}_{i-1}$. \\
				
				\item 
				We have $A_i^{(i-1)}\subseteq A_{a_i}$, and $A_k^{(i-1)}=A_{a_k}$ for $k>i$.  
				\item 
				\[
				\begin{array}{ccc}
					\mathcal{M}_{i-1}\big| \big(A_{i}^{(i-1)}\cdots A_n^{(i-1)},B_{i}^{(i-1)}\cdots B_n^{(i-1)}\big)
					
				\end{array} 
				\]
				is a match, not just a pairing. 
				\item For $i=1,\dots,n,$ if $B_i^{(i-1)}$ starts with a symbol in $A_{b_j}$,
				none of symbols in the matching pairs in $\mathcal{N}_{i-1}$ come from $a_{i}a_{i+1}\dots a_{n}$ or
				$b_{j}b_{j+1}\dots b_{m}.$
				(Therefore these symbols are still available to make a pair to add to $\mathcal{N}_{i-1}$ to form
				$\mathcal{N}_{i}.)$
				
		\end{enumerate}
		
		Suppose $1\le i\le n,$ and $\mathcal{M}_{i-1}$, $A_1^{(i-1)},\dots, A_n^{(i-1)}$, $B_1^{(i-1)},\dots,B_n^{(i-1)},$ and $\mathcal{N}_{i-1}$
		have been constructed so that (1)-(4) are satisfied. We now construct $\mathcal{M}_i,  \mathcal{N}_i$, and $A_k^{(i)}$ and $B_k^{(i)}$, for $k=1,\dots,n$, such that $|\mathcal{M}_{i-1}|\le |\mathcal{M}_i|$, $\mathcal{N}_{i-1}\subseteq \mathcal{N}_{i},$ and condition (1) holds if $i-1$ is replaced by $i$. Moreover, conditions (2)-(4) hold if $i-1$ is replaced by $i$, and $i$ is replaced by $i+1$, provided $1\le i <n$. Furthermore, we will have $A_k^{(i)}=A_k^{(i-1)}$ and $B_k^{(i)}=B_k^{(i-1)}$ for $k=1,\dots, i-1$, and 
			\begin{equation}\label{pairing_change}
				\mathcal{M}_{i}\big|\big(A_1^{(i)}\cdots A_{i-1}^{(i)},B_1^{(i)}\cdots B_{i-1}^{(i)}\big)=\mathcal{M}_{i-1}\big|\big(A_1^{(i-1)}\cdots A_{i-1}^{(i-1)},B_1^{(i-1)}\cdots B_{i-1}^{(i-1)}\big).
		\end{equation}
		
		We consider the following four cases (with various
		subcases). In some cases the $\mathcal{M}_i$ that we construct has to be truncated on the right if we reach the end of the $A_{a_1}\cdots A_{a_n}$ or $A_{b_1}\cdots A_{b_m}$ strings, but our estimate of the upper bound for $\lvert\mathcal{M}_i|(A_i^{(i)},B_i^{(i)})\rvert$ remains valid. 
		
		\noindent{\bf \emph{Case 1. }}$B_i^{ (i-1)}$ contains two or more complete $A_{b_{j}}$'s.
		Then $|B_{i}^{ (i-1)}| { \ge 2L} \ge 2 |A_{ i}^{ (i-1)}|$ and therefore $\overline{f}(A_{ i}^{ (i-1)},B_{i}^{ (i-1)}){ \ge 1/3}  > \alpha$. 
		In this case we take { $\mathcal{N}_i=\mathcal{N}_{i-1}$,} $\mathcal{M}_{i}=\mathcal{M}_{i-1}$, {$ A_k^{(i)} = A_k^{(i-1)},$ and $B_k^{(i)}=B_k^{(i-1)}$, for $k=1,\dots,n.$} {\bf{ Thus $2\big|\mathcal{M}_i|(A_i^{(i)},B_i^{(i)})\big|<\alpha(|A_i^{(i)}|+|B_i^{(i)}|).$}}\\

		\noindent{\bf \emph{Case 2. }} $B_{i}^{ (i-1)}$ consists of one complete $A_{b_{j}}$
		plus either a final string $A_{b_{j-1}}^{\text{fin}}$ in $A_{b_{j-1}}$
		or an initial string $A_{b_{j+1}}^{\text{int}}$ in $A_{b_{j+1}}$, or possibly both, and we are not in Case 1.
		By the induction hypothesis, $b_{j}$,$b_{j+1},$ and $a_{i}$ are
		not in any of the pairs in $\mathcal{N}_{i-1}$, and neither is $b_{j-1}$ if $A_{b_{j-1}}^{\text{fin}}\ne\emptyset.$ \\
		\noindent{\bf \emph{ Case 2(i)}.} At least one of the following holds:  \.(a) $A_{b_{j-1}}^{\text {fin}}\ne \emptyset$ and $a_i=b_{j-1}$, or (b) $a_i=b_j$. If (a) holds, we let $\mathcal{N}_i=\mathcal{N}_{i-1}\cup \{(a_i,b_{j-1})\}$; if (b) holds, but not (a), we let $\mathcal{N}_i=\mathcal{N}_{i-1}\cup \{(a_i,b_j)\}.$ We let $A_k^{(i)}=A_k^{(i-1)}$ and $B_k^{(i)}=B_k^{(i-1)}$, for $k=1,\dots,n.$ Then define $\mathcal{M}_i\big|\big(A_i^{(i)},B_i^{(i)}\big)$ to be a parallelogram pairing of length $|A_i^{(i)}|$ starting with the first symbol in $A_i^{(i)}$ and the first symbol in $B_i^{(i)}.$ This is possible because $|A_i^{(i)}|\le L\le |B_i^{(i)}|.$ We let $\mathcal{M}_i\big|\big(A_k^{(i)},B_k^{(i)}\big)=\mathcal{M}_{i-1}\big|\big(A_k^{(i)},B_k^{(i)}\big)$ for $k\ne i.$ Since the parallelogram pairing is a best possible pairing for $A_i^{(i)}$ and $B_i^{(i)}$, we have $|\mathcal{M}_{i-1}\,| \le |\mathcal{M}_i|.$ (Actually, it is not necessary to use a parallelogram pairing in this case, but it is convenient for consistency with other cases.) {\bf In Case 2(i), $|\mathcal{N}_i|=|\mathcal{N}_{i-1}|+1,$ and there is a parallelogram pairing of length at most $L,$ and no other pairings, in $\mathcal{M}_i\big|\big(A_i^{(i)},B_i^{(i)}\big).$} \\ 
		
		\noindent{\bf \emph{ Case 2(ii)}.} Suppose  Case 2(i) does not hold and 
		$a_{i}=b_{j+1}$. Also assume that $A_{b_{j+1}}^{\text {int}}\ne\emptyset$; if this last assumption is not satisfied,  we can proceed as in Case 2(iii). Let $\mathcal{N}_i=\mathcal{N}_{i-1}\cup \{(a_i,b_{j+1})\}$.  In Case 2(ii) and Case 2(iii), we let
		$A_{i,0}^{(i-1)}, A_{i,1}^{(i-1)}$ and $A_{i,2}^{(i-1)}$ be the subdivision
		of $A_i ^{(i-1)}$ into the substrings corresponding to $A_{b_{j-1}}^{\text{fin}}$,$A_{b_{j}}$, and $A_{b_{j+1}}^{\text {int}},$ respectively, under $\mathcal{M}_{i-1}$. 
		
		Let $P$ be the parallelogram pairing of length $L$ starting with the first symbol in $A_{i,2}^{(i)}$ and the first symbol in $A_{b_{j+1}}$. In this case and other cases below,  define $P_1$ and $P_2$ to be the string of first terms and second terms, respectively, in the pairs in the given $P$. Let $A_{\text {end}}^{(i)}$ be the final string in $A_{a_1}\cdots A_{a_n}$ that starts right after $P_1$ ends, and let $B_{\text {end}}^{(i)}=A_{b_{j+2}}\cdots A_{b_m}$, which starts right after $P_2$ ends. Let  $\mathcal{M}_i|(P_1A_{\text {end}}^{(i)},P_2B_{\text {end}}^{(i)})$ be $P$ together with a best possible match on $(A_{\text {end}}^{(i)}, B_{\text {end}}^{(i)})$. By condition (3) and Observation \ref{obs:parallelo}, $|\mathcal{M}_{i-1}|(P_1A_{\text {end}}^{(i)},P_2B_{\text {end}}^{(i)})|\le |\mathcal{M}_i|(P_1A_{\text {end}}^{(i)},P_2B_{\text {end}}^{(i)})|$. 
		
		Let $A_i^{(i)}$ be the extension of $A_i^{(i-1)}$ to the end of $P_1$, and similarly, let $B_i^{(i)}$ be the extension of $B_i^{(i-1)}$ to the end of $P_2$. Let $A_{i+1}^{(i)}$ be the part of $A_{i+1}^{(i-1)}$ that comes after $P_1$.
			For $k>i+1$, let $A_k^{(i)}=A_k^{(i-1)}$. Then take $B_{i+1}^{(i)}B_{i+2}^{(i)}\cdots B_n^{(i)}$ to be the subdivision of $B_{\text {end}}^{(i-1)}$ that corresponds to the subdivision $A_{i+1}^{(i)}A_{i+2}^{(i)}\cdots A_n^{(i)}$ of $A_{\text {end}}^{(i-1)}$ under $\mathcal{M}_i$. For $k=1,\dots,i-1$, let $A_k^{(i)}=A_k^{(i-1)}$, $B_k^{(i)}=B_k^{(i-1)}$, and $\mathcal{M}_i|(A_k^{(i)},B_k^{(i)})=\mathcal{M}_{i-1}|(A_k^{(i)},B_k^{(i)})$. 
			Furthermore, let $\mathcal{M}_i|(A_{i,0}^{(i-1)}A_{(i,1)}^{(i-1)}, A_{b_{j-1}}^{\text {fin}}A_{b_j})$ = $\mathcal{M}_{i-1}|(A_{i,0}^{(i-1)}A_{i,1}^{(i-1)},A_{b_{j-1}}^{\text {fin}}A_{b_j})$. Overall, we obtain $|\mathcal{M}_{i-1}|\le |\mathcal{M}_i|.$ 
			
			{\bf {In Case 2(ii), we have $|\mathcal{N}_i|=|\mathcal{N}_{i-1}|+1,$ and $\mathcal{M}_i|(A_i^{(i)},B_i^{(i)})$ consists of a parallelogram pairing of length $L$ and matches between two pairs of strings such that the two members of each pair being matched come from blocks with different subscripts.}}\\
		
		\noindent{\bf \emph{Case 2(iii)}.} Case 2(i) does not hold, and
		$a_{i}\ne b_{j+1}.$ Then we keep $A_k^{(i)}=A_k^{(i-1)}$, $ B_k^{(i)}=B_k^{(i-1)}$, for $k=1,\dots,n,$ and $\mathcal{M}_{i}=\mathcal{M}_{i-1}$
			and $\mathcal{N}_{i}=\mathcal{N}_{i-1}.$ Then $\mathcal{M}_i|(A_i^{(i)},B_i^{(i)})$ consists of matches within the following pairs of strings: 
			$(A_{i,0}^{(i)},A_{b_j}^{\text {fin}})$, $(A_{i,1}^{(i)},A_{b_{j-1}}^{(i)})$ 
			and $(A_{i,2}^{(i)},A_{b_{j+1}}^{\text {int}})$. {\bf {In Case 2(iii), we have $|\mathcal{N}_i|=|\mathcal{N}_{i-1}|$ and $\mathcal{M}_i|(A_i^{(i)},B_i^{(i)})$ consists of matches between three pairs of strings such that the two members of each pair (if they are non-empty) come from blocks with different subscripts. At most two pairs (possibly the first and the third pair) are in the short substring case.}}\\
		
		\noindent{\bf \emph{Case 3.}} $B_{i}^{(i-1)}$ contains a final segment $A_{b_{j-1}}^{\text {fin}}$ of $A_{b_{j-1}}$ and an initial segment $A_{b_j}^{\text {int}}$ of $A_{b_j}.$ 
			Let $A_{i,0}^{(i-1)}$ and $A_{i-1}^{(i-1)}$ be strings of symbols corresponding to $A_{b_{j-1}}^{\text {fin}}$ and $A_{b_j}^{\text {int}},$ respectively, under $\mathcal{M}_{i-1}$, such that $A_{i}^{(i-1)}=A_{i,0}^{(i-1)}A_{i,1}^{(i-1)}.$ \\

		\noindent{\bf \emph{Case 3(i)}.} If $a_{i}=b_{j-1},$ then we let $\mathcal{N}_i=\mathcal{N}_{i-1}\cup \{(a_i,b_{j-1})\}$.  We let $A_k^{(i)}=A_k^{(i-1)}$ and $B_k^{(i)}=B_k^{(i-1)}$, for $k=1,\dots,n$. Let $\mathcal{M}_i|(A_i^{(i)},B_i^{(i)})$ be a parallelogram pairing of length $\min(|A_i^{(i)}|,|B_i^{(i)}|)$. For $k\ne i$, 
			let $\mathcal{M}_i|(A_k^{(i)},B_k^{(i)})=\mathcal{M}_{i-1}|(A_k^{(i)},B_k^{(i)}).$ Then $|\mathcal{M}_{i-1}|\le |\mathcal{M}_{i}|.$ {\bf Furthermore $|\mathcal{M}_i|(A_i^{(i)},B_i^{(i)})|\le L$ and $|\mathcal{N}_i|=|\mathcal{N}_{i-1}|+1.$}\\
		
		\noindent{\bf \emph{ Case 3(ii)}.} If $a_{i}\ne b_{j-1}$ and $a_{i}=b_{j},$ then
		let $\mathcal{N}_i=\mathcal{N}_{i-1}\cup \{(a_i,b_j)\}$.
		Let $P$ be the parallelogram pairing of length $L$ starting with the first symbol in $A_{i,1}^{(i)}$ and the first symbol in $A_{b_{j}}$.The definitions of $A_{\text {end}}^{(i)}$, $B_{\text {end}}^{(i)}$, and $\mathcal{M}_i|(P_1A_{\text {end}}^{(i)},P_2B_{\text {end}}^{(i)})$ are the same as in Case 2(ii). By the same argument as in Case 2(ii), again using Observation \ref{obs:parallelo}, 
			$|\mathcal{M}_{i-1}|\le |\mathcal{M}_i|$. {\bf {Furthermore, we have $|\mathcal{N}_i|=|\mathcal{N}_{i-1}|+1,$ and $\mathcal{M}_i|(A_i^{(i)},B_i^{(i)})$
					consists of a parallelogram pairing of length $L$ and a match between one pair of strings from blocks with different subscripts.}} \\

		\noindent{\bf \emph{ Case 3(iii)}.} If $a_{i}\ne b_{j-1}$ and $a_{i}\ne b_{j},$
		then we again have two pairs of strings, $(A_{i,0}^{(i-1)},A_{b_{j-1}}^{\text {fin}})$ and 
			$(A_{i,1}^{(i-1)},A_{b_j}^{\text {int}})$ that are matched under $\mathcal{M}_{i-1}$. We let $\mathcal{N}_i=\mathcal{N}_{i-1}$, 
			$\mathcal{M}_i=\mathcal{M}_{i-1}$, and $A_k^{(i)}=A_k^{(i-1)}$, $B_k^{(i)}=B_k^{(i-1)}$, for $k=1,\dots,n.$ {\bf {Then 
					$|\mathcal{N}_i|=|\mathcal{N}_{i-1}|$ and $\mathcal{M}_i|(A_i^{(i)},B_i^{(i)})$ consists of a match between two pairs of strings
					such that the two members of each pair come from blocks with different subscripts.}} \\
		
		\noindent{\bf \emph{Case 4. }} $B_{i}^{(i-1)}$ is a substring of a single $A_{b_{j}}.$ \\
		
		\noindent{\bf \emph{Case 4(i)}.} If $a_{i}=b_{j},$ then we let $\mathcal{N}_i=\mathcal{N}_{i-1}\cup \{(a_i,b_j)\} $. Let $P$ be a parallelogram pairing of length $L$ starting with the first symbol in $A_i^{(i-1)}$ and the first symbol in $B_i^{(i-1)}$. Let $A_{\text {end}}^{(i)}$ be the final string in $A_{a_1}\cdots A_{a_n}$ starting with the first symbol after the end of $P_1$, and let $B_{\text {end}}^{(i)}$ be the final string in $A_{b_1}\cdots A_{b_m}$ starting with the first symbol after $P_2$. The construction continues as in Case 2(ii).
			{\bf {In Case 4(i), we have $|\mathcal{N}_i|=|\mathcal{N}_{i-1}|+1,$  and $\mathcal{M}_i|(A_i^{(i)},B_i^{(i)})$ consists of a parallelogram pairing of length $L$ and no other pairings.}}\\
		
		\noindent{\bf \emph{Case 4(ii)}.} If $a_i\ne b_j,$ then
		we let $\mathcal{N}_i=\mathcal{N}_{i-1}$, 
			$\mathcal{M}_i=\mathcal{M}_{i-1}$, and $A_k^{(i)}=A_k^{(i-1)}$, $B_k^{(i)}=B_k^{(i-1)}$, for $k=1,\dots,n.$ {\bf {Then 
					$|\mathcal{N}_i|=|\mathcal{N}_{i-1}|$ and $\mathcal{M}_i|(A_i^{(i)},B_i^{(i)})$ consists of a match between a pair of strings
					such that the two members of the pair come from blocks with different subscripts.}} \\

		In summary, after we apply this procedure for $i=1,2,\dots,n,$
		we see that $\cup_{i=1}^{n}\mathcal{M}_i|(A_i^{(i)},B_i^{(i)})$
			consists of at most $|\mathcal{N}_n|$ parallelogram pairings of length at most $L$ each, matches between at most $2n$ pairs of short substrings (as described in (III)), and matches between pairs of strings whose $\overline{f}$ distance is greater than $\alpha$ (either due to Case 1 or condition (I) or (II)). Note that by (\ref{pairing_change}), $\mathcal{M}_n=\cup_{i=1}^n\mathcal{M}_i|(A_i^{(i)},B_i^{(i)}).$ By conditions (I),(II), and (III), and our assumption that $n\le m$, we obtain
		
		\[
		\begin{array}{ccc}
			L(n+m)-2|\mathcal{M}| & \ge & L(n+m)-2|\mathcal{M}_{n}|\\
			& \ge & \alpha\big\{ L(n+m)-2L|\mathcal{N}_{n}|-\frac{14Ln}{3R}\big\}\\
			& \ge & \alpha\big\{ L(n+m)-2L|\mathcal{N}_{n}|-\frac{3L(n+m)}{R}\big\}.
		\end{array}
		\]
	\end{proof}
	
	\subsection{\label{subsec:Basics-in-Descriptive}Some background from descriptive set theory}
	
	To state our results precisely we need some concepts
	explained below. The main tool is the idea of a
	reduction. See \cite{Kechris} and \cite{Fo18} for further information.
	\begin{defn}
		Let $X$ and $Y$ be Polish spaces and $A\subseteq X$, $B\subseteq Y$.
		A function $f:X\to Y$ \emph{reduces} $A$ to $B$ if and only if
		for all $x\in X$: $x\in A$ if and only if $f(x)\in B$. 
		
		Such a function $f$ is called a Borel (respectively, continuous) reduction
		if $f$ is a Borel (respectively, continuous) function.
	\end{defn}
	
	$A$ being reducible to $B$ can be interpreted as saying that $B$
	is at least as complicated as $A$. We note that if $B$ is Borel
	and $f$ is a Borel reduction, then $A$ is also Borel. Equivalently, if $f$ is a Borel reduction of $A$ to $B$ and $A$ is not Borel,
	then $B$ is not Borel.
	
	\begin{defn}\label{dfn:analytic}
		If $X$ is a Polish space and $B\subseteq X$, then $B$ is \emph{analytic}
		if and only if it is the continuous image of a Borel subset of a Polish
		space. Equivalently, there is a Polish space $Y$ and a Borel set
		$C\subseteq X\times Y$ such that $B$ is the $X$-projection of $C$.
	\end{defn}
	
		\begin{defn}
			An analytic subset $A$ of a Polish space $X$ is called \emph{complete analytic} if every analytic set can be continuously reduced to $A$.
		\end{defn}
		Following the interpretation above, a complete analytic set $B$ is said to be at least as
		complicated as each analytic set.
	
	There are analytic sets that are not Borel (see, for example, \cite[section 14]{Kechris}). This implies that a complete analytic set is not Borel. The collection of ill-founded trees, which we describe below, is an example of a complete analytic set, and it is fundamental to our construction, as well as that in \cite{FRW}.
	
	To define the ill-founded trees, we first consider the set $\mathbb{N}^{<\mathbb{N}}$
	of finite sequences of natural numbers. For $\tau \in \mathbb{N}^{<\mathbb{N}},$ 
	let $lh(\tau)$ denote the length of $\tau$. A \emph{tree }is a set $\mathcal{T}\subseteq\mathbb{N}^{<\mathbb{N}}$
	such that if $\tau=\left(\tau_{1},\dots,\tau_{n}\right)\in\mathcal{T}$
	and $\sigma=\left(\tau_{1},\dots,\tau_{s}\right)$ with $s\leq n$
	is an initial segment of $\tau$, then $\sigma\in\mathcal{T}$. If $\sigma$ is an initial segment of $\tau$, then $\sigma$ is a \emph{predecessor}
	of $\tau$ and $\tau$ is a \emph{successor} of $\sigma$. We define
	the level $s$ of a tree $\mathcal{T}$ to be the collection of elements
	of $\mathcal{T}$ that have length $s$. We call a subset $\mathcal{S}$ of a tree $\mathcal{T}$ a \emph{subtree} if $\mathcal{S}$ is a tree. 
	
	\begin{defn}
		A \emph{branch} through a tree $\mathcal{T}$  is a function $f$ into $\mathcal{T}$ with domain either some
		$\{0,\dots,n-1\}$ or $\mathbb{N}$ itself such that $lh(f(s)) = s$, and if $s+1$ is in the domain of $f$, then
		$f(s + 1)$ is an immediate successor of $f(s)$.
		If a tree has an infinite branch (that is, the $f$ above can be taken with domain $\mathbb{N}$), it is called \emph{ill-founded}.
		If it does not have an infinite branch, it is called \emph{well-founded}.
	\end{defn}
	
	We can visualize a tree as in Figure~\ref{fig:tree}.
	
	\begin{figure}
		\centering
		\includegraphics[width=\textwidth]{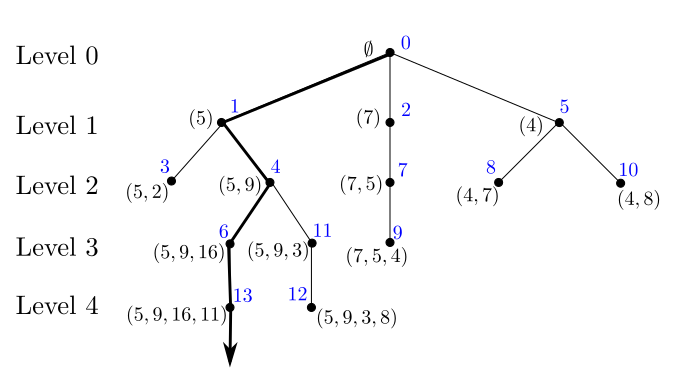}
		\caption{ Visualization of a tree, $\mathcal{T}$. The blue numbers indicate the order in which the nodes appear in the tree $\mathcal{T}$, that is, the blue number $i$ stands for $\sigma_{n_i}$, where $(n_i)_{i\in \N}$ is a subsequence of $\N$ such that $\mathcal{T}=\Meng{\sigma_{n_i}}{i\in \N}$. The bold line represents an infinite branch.}
		\label{fig:tree}
	\end{figure}
	
	In the following, let $\left\{ \sigma_{n}:n\in\mathbb{N}\right\} $
	be an enumeration of $\mathbb{N}^{<\mathbb{N}}$ with the property
	that $\sigma_0$ is the empty sequence and every proper predecessor of $\sigma_{n}$ is some $\sigma_{m}$
	for $m<n$. Under this enumeration subsets $S\subseteq\mathbb{N}^{<\mathbb{N}}$
	can be identified with characteristic functions $\chi_{S}:\mathbb{N}\to\left\{ 0,1\right\} $.
	The collection of such $\chi_{S}$ can be viewed as the members of
	an infinite product space $\left\{ 0,1\right\} ^{\mathbb{N}^{<\mathbb{N}}}$
	homeomorphic to the Cantor space. Here, each function $a:\left\{ \sigma_{m}:m<n\right\} \to\left\{ 0,1\right\} $
	determines a basic open set 
	\[
	\left\langle a\right\rangle =\left\{ \chi \in \left\{ 0,1\right\} ^{\mathbb{N}^{<\mathbb{N}}}:\chi\upharpoonright\left\{ \sigma_{m}:m<n\right\} =a\right\} 
	\]
	and the collection of all such $\left\langle a\right\rangle $ forms
	a basis for the topology. The collection of trees is a closed (hence
	compact) subset of $\left\{ 0,1\right\} ^{\mathbb{N}^{<\mathbb{N}}}$
	in this topology. Moreover, the collection of trees containing arbitrarily
	long finite sequences is a dense $\mathcal{G}_{\delta}$ subset. In
	particular, this collection is a Polish space. We will denote the
	space of trees containing arbitrarily long finite sequences by $\mathcal{T}\kern-.5mm rees$.
	
	Since the topology on the space of trees was introduced via basic
	open sets giving us a finite amount of information about the trees in
	it, we can characterize continuous maps defined on $\mathcal{T}\kern-.5mm rees$
	as follows.
	\begin{fact}
		\label{fact:contTree}Let $Y$ be a topological space. Then a map
		$f:\mathcal{T}\kern-.5mm rees\to Y$ is continuous if and only if for all open
		sets $O\subseteq Y$ and all $\mathcal{T}\in\mathcal{T}\kern-.5mm rees$ with
		$f(\mathcal{T})\in O$ there is $M\in\mathbb{N}$ such that for all
		$\mathcal{T}^{\prime}\in\mathcal{T}\kern-.5mm rees$ we have:
		
		if $\mathcal{T}\cap\left\{ \sigma_{n}:n\leq M\right\} =\mathcal{T}^{\prime}\cap\left\{ \sigma_{n}:n\leq M\right\} $,
		then $f\left(\mathcal{T}^{\prime}\right)\in O$. 
	\end{fact}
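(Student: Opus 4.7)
The statement is essentially an unwinding of the definition of continuity relative to the basis for the topology on $\mathcal{T}\kern-.5mm rees$ described just before the fact. The plan is to prove each direction separately by passing between the condition stated in the fact and the characterization of continuity via preimages of open sets being open.

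For the forward direction, I would assume $f$ is continuous and fix an open set $O \subseteq Y$ together with some $\mathcal{T} \in \mathcal{T}\kern-.5mm rees$ with $f(\mathcal{T}) \in O$. Then $f^{-1}(O)$ is open in $\mathcal{T}\kern-.5mm rees$ and contains $\mathcal{T}$, so by definition of the subspace/product topology there exists a basic open set of the form $\langle a\rangle \cap \mathcal{T}\kern-.5mm rees$, where $a$ is a function defined on $\{\sigma_m : m < n\}$ for some $n$, such that $\mathcal{T} \in \langle a\rangle \cap \mathcal{T}\kern-.5mm rees \subseteq f^{-1}(O)$. Set $M = n-1$. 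By construction, $a = \chi_{\mathcal{T}} \upharpoonright \{\sigma_m : m \le M\}$. Then any $\mathcal{T}'\in \mathcal{T}\kern-.5mm rees$ with $\mathcal{T}'\cap \{\sigma_m : m \le M\} = \mathcal{T}\cap \{\sigma_m : m \le M\}$ satisfies $\chi_{\mathcal{T}'} \upharpoonright \{\sigma_m : m \le M\} = a$, so $\mathcal{T}' \in \langle a\rangle$ and hence $f(\mathcal{T}') \in O$.

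For the converse, I would assume the stated local condition and show that $f^{-1}(O)$ is open for every open $O \subseteq Y$. Given $\mathcal{T} \in f^{-1}(O)$, the hypothesis yields some $M\in\mathbb{N}$ such that every $\mathcal{T}' \in \mathcal{T}\kern-.5mm rees$ agreeing with $\mathcal{T}$ on $\{\sigma_n : n \le M\}$ lies in $f^{-1}(O)$. Letting $a = \chi_{\mathcal{T}} \upharpoonright \{\sigma_m : m \le M\}$, this is precisely the statement that $\langle a\rangle \cap \mathcal{T}\kern-.5mm rees \subseteq f^{-1}(O)$, exhibiting a basic open neighborhood of $\mathcal{T}$ inside $f^{-1}(O)$. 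Hence $f^{-1}(O)$ is open and $f$ is continuous.

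No real obstacle appears here; the only point to double check is that the basic open sets $\langle a\rangle$ described in the paper, which specify the values of $\chi_S$ on an initial segment $\{\sigma_m : m < n\}$ of the enumeration of $\mathbb{N}^{<\mathbb{N}}$, form a basis for the topology on $\{0,1\}^{\mathbb{N}^{<\mathbb{N}}}$ and hence (after intersecting with $\mathcal{T}\kern-.5mm rees$) for the subspace topology. This is guaranteed by the explicit description of the topology preceding the fact, so the argument reduces to the translation carried out above.
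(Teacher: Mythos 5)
Your proof is correct and is the standard unwinding of continuity in terms of the basis of the product topology restricted to $\mathcal{T}\kern-.5mm rees$; the paper states this fact without a formal proof (it offers only the one-sentence heuristic that the basic open sets encode finite information about trees), so there is no alternative argument to compare against. The only (trivial) point to tidy up is the edge case $n=0$ in the forward direction, where $\langle a\rangle$ is the whole space and $M=n-1=-1\notin\mathbb{N}$; there one simply takes $M=0$, since every $\mathcal{T}'\in\mathcal{T}\kern-.5mm rees$ already satisfies $f(\mathcal{T}')\in O$.
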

	
	As mentioned before, we have the following classical fact (see, for example, \cite[section 27]{Kechris}).
	\begin{fact}
		\label{fact:ill}The collection of ill-founded trees is a complete
		analytic subset of $\mathcal{T}\kern-.5mm rees$. 
	\end{fact}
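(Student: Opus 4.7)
The plan has two parts: showing that the set of ill-founded trees is analytic, and showing that it is complete for analytic sets.

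For analyticity, I would represent the set of ill-founded trees as a projection of a closed set. Let
\[
P = \bigl\{(\mathcal{T}, f) \in \mathcal{T}\kern-.5mm rees \times \mathbb{N}^{\mathbb{N}} : (f(0), f(1), \dots, f(n-1)) \in \mathcal{T} \text{ for every } n \in \mathbb{N}\bigr\}.
\]
For each fixed $n$, the condition ``$(f(0), \dots, f(n-1)) \in \mathcal{T}$'' depends only on $f \upharpoonright n$ together with the single bit recording whether the finite sequence $(f(0), \dots, f(n-1))$ lies in $\mathcal{T}$, and is therefore clopen in the product topology. Hence $P$ is closed, and its projection onto the first coordinate, which is precisely the collection of ill-founded trees, is analytic.

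For completeness, since every Polish space embeds as a $G_\delta$ in Baire space $\mathbb{N}^{\mathbb{N}}$ and continuous reductions compose with such embeddings, it suffices to handle analytic $A \subseteq \mathbb{N}^{\mathbb{N}}$. Any such $A$ is the projection of a closed set $F \subseteq \mathbb{N}^{\mathbb{N}} \times \mathbb{N}^{\mathbb{N}}$, and $F$ is in turn the set of infinite branches $[T]$ of some tree $T \subseteq (\mathbb{N} \times \mathbb{N})^{<\mathbb{N}}$ (via interleaving coordinates). Given such a $T$, I would define
\[
\Phi(x) = \Bigl\{ \sigma \in \mathbb{N}^{<\mathbb{N}} : \bigl((x(0), \sigma(0)), \dots, (x(|\sigma|-1), \sigma(|\sigma|-1))\bigr) \in T \Bigr\}.
\]
This is a tree because $T$ is; an infinite branch of $\Phi(x)$ is precisely a $y \in \mathbb{N}^{\mathbb{N}}$ with $(x, y) \in [T] = F$, so $\Phi(x)$ is ill-founded iff $x \in A$. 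Continuity follows from Fact \ref{fact:contTree}: for any finite collection of test sequences $\sigma$, whether $\sigma \in \Phi(x)$ depends only on $x \upharpoonright |\sigma|$, a clopen condition on $x$.

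The one detail requiring care is that $\Phi(x)$ must actually lie in $\mathcal{T}\kern-.5mm rees$, i.e., contain finite sequences of unbounded length. This is automatic when $\Phi(x)$ is ill-founded, but not in general. I would handle it by grafting on a fixed well-founded tree with arbitrarily long branches, using a disjoint portion of the alphabet: replace each $\sigma \in \Phi(x)$ by its image under the doubling map $\sigma(i) \mapsto 2\sigma(i)$, and union the result with the fixed well-founded tree consisting of all prefixes of sequences $(2n+1, 0, 0, \dots, 0)$ with at most $n$ trailing zeros. This padding does not depend on $x$ and admits no infinite branch, so it preserves both continuity and the equivalence. This bookkeeping is the only genuine technical subtlety; everything else, including both halves of the equivalence and the continuity of $\Phi$, is immediate from the definitions together with the standard tree representation of closed subsets of Baire space.
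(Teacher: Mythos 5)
The paper does not prove this Fact; it simply cites \cite[section 27]{Kechris}. So your proposal is, in effect, a reconstruction of the standard proof. The two substantive ideas—representing ill-foundedness as the projection of the closed set $P$, and reducing an analytic $A = p[T]$ to ill-founded trees by sending $x$ to the section tree $\Phi(x)$—are both correct and are exactly the classical argument. Your grafting device to push $\Phi(x)$ into $\mathcal{T}\kern-.5mm rees$ (the space of trees with arbitrarily long finite sequences, rather than all trees) is also correct and is a genuine point worth handling, since Kechris's statement is usually phrased for the full space of trees.

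There is, however, one step that is simply false: ``every Polish space embeds as a $G_\delta$ in Baire space.'' Baire space $\mathbb{N}^{\mathbb{N}}$ is zero-dimensional, so only zero-dimensional spaces embed in it; $[0,1]$ and $\mathcal{E}$ do not. This matters because the paper's notion of ``complete analytic'' is phrased in terms of reductions from analytic subsets of arbitrary Polish spaces. The fix depends on which version you want. For \emph{continuous} reductions the usual convention (and the one Kechris uses) restricts the source to zero-dimensional Polish spaces, for which embedding into $\mathbb{N}^{\mathbb{N}}$ as a closed—hence $G_\delta$—subset does hold. For \emph{Borel} reductions from a general Polish space $X$, you should instead invoke the Borel isomorphism theorem: if $X$ is uncountable it is Borel isomorphic to $\mathbb{N}^{\mathbb{N}}$, and composing that isomorphism with your continuous reduction from $\mathbb{N}^{\mathbb{N}}$ gives the required Borel reduction (countable $X$ is trivial since every subset is Borel). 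With that correction in place, the rest of your argument—the closedness of $P$, the equivalence $\Phi(x)$ ill-founded $\iff x \in A$, the continuity of $\Phi$, and the padding by a fixed well-founded tree on the odd coordinates—goes through as written.
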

	
	As described in the next section, we will prove the main results of
	this paper by reducing the collection of ill-founded trees to pairs
	of Kakutani-equivalent ergodic transformations. Then Fact \ref{fact:ill}
	will show that this set of Kakutani-equivalent pairs of transformations
	is also complete analytic.
	
	During our construction the following maps will prove useful.
	\begin{defn}
		\label{def:M-and-s}We define a map $M:\mathcal{T}\kern-.5mm rees\to\mathbb{N}^{\mathbb{N}}$
		by setting $M\left(\mathcal{T}\right)(s)=n$ if and only if $n$ is
		the least number such that $\sigma_{n}\in\mathcal{T}$ and $lh(\sigma_{n})=s$.
		Dually, we also define a map $s:\mathcal{T}\kern-.5mm rees\to\mathbb{N}^{\mathbb{N}}$
		by setting $s\left(\mathcal{T}\right)(n)$ to be the length of the
		longest sequence $\sigma_{m}\in\mathcal{T}$ with $m\leq n$. 
	\end{defn}
	
	\begin{rem*}
		When $\mathcal{T}$ is clear from the context we write $M(s)$ and
		$s(n)$. We also note that $s(n)\leq n$ and that $M$ as well as
		$s$ is a continuous function when we endow $\mathbb{N}$ with the
		discrete topology and $\mathbb{N}^{\mathbb{N}}$ with the product
		topology.
	\end{rem*}
	
	\section{Main Results}
	
	\subsection{Statement of results}
	
	We are ready to state the main result of this paper.
	\begin{thm}
		\label{thm:CompleteAnalytic}The collection 
		\[
		\left\{ (S,T):S\text{ and }T\text{ are ergodic and Kakutani equivalent}\right\} \subseteq\mathcal{E}\times\mathcal{E}
		\]
		is a complete analytic set. In particular, it is not Borel.
	\end{thm}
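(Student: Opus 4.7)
The plan is to establish the theorem by exhibiting a continuous reduction from the collection of ill-founded trees in $\mathcal{T}\kern-.5mm rees$ to the collection of Kakutani equivalent pairs in $\mathcal{E}\times\mathcal{E}$. Since ill-foundedness is complete analytic by Fact \ref{fact:ill}, such a reduction immediately yields completeness of the Kakutani equivalence relation on ergodic transformations, and in particular that it fails to be Borel. Concretely, I would construct a continuous map $\Psi:\mathcal{T}\kern-.5mm rees\to\mathcal{E}$ with the property that $\Psi(\mathcal{T})$ is Kakutani equivalent to $\Psi(\mathcal{T})^{-1}$ if and only if $\mathcal{T}$ has an infinite branch. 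Since inversion is continuous in the weak topology, the map $\mathcal{T}\mapsto(\Psi(\mathcal{T}),\Psi(\mathcal{T})^{-1})$ is then the desired continuous reduction.

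To construct $\Psi(\mathcal{T})$, I would build an odometer-based symbolic system from a uniquely readable, strongly uniform construction sequence $(W_n)_{n\in\mathbb{N}}$, defined inductively in a way that at each stage depends only on a finite initial segment of $\mathcal{T}$. Following the scheme of \cite{FRW}, each level $s$ of $\mathcal{T}$ would encode an equivalence relation $\mathcal{Q}_s$ on certain $n(s)$-words with $\mathcal{Q}_{s+1}$ refining $\mathcal{Q}_s$, so that successive refinements mirror the branching of $\mathcal{T}$. The step from $W_n$ to $W_{n+1}$ would deterministically substitute $\mathcal{Q}_{s+1}$-classes inside $\mathcal{Q}_s$-classes by means of Feldman patterns (Section \ref{sec:Feldman}); these are tailored so that $n$-blocks lying in different $\mathcal{Q}_s$-classes are uniformly $\overline{f}$-far apart on all sufficiently long substrings, which is exactly the hypothesis Lemma \ref{lem:symbol by block replacement} needs in order to propagate $\overline{f}$-separation from one scale to the next. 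Because the initial portion of $(W_n)$ only depends on $\mathcal{T}\cap\{\sigma_m:m\le M\}$ for some $M$ determined by $n$ (governed by the maps $M$ and $s$ of Definition \ref{def:M-and-s}), Fact \ref{fact:contTree} would give the continuity of $\Psi$.

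For the forward direction, if $\mathcal{T}$ has an infinite branch, the branch picks out at every level a coherent chain of $\mathcal{Q}_s$-classes that can be used, stage by stage on the cutting-and-stacking towers, to specify a sequence of measure-preserving maps between $\Psi(\mathcal{T})$ and $\Psi(\mathcal{T})^{-1}$ that converge to an honest isomorphism, just as in \cite{FRW} and \cite{FW2}. Since isomorphism implies Kakutani equivalence, this direction is essentially automatic once the construction is in place.

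The main obstacle, and the technical heart of the paper, is the converse: if $\mathcal{T}$ is well-founded, then $\Psi(\mathcal{T})$ is not Kakutani equivalent to $\Psi(\mathcal{T})^{-1}$. I would approach this by the finite coding strategy announced in \cite{FRW06}. First, using the methods of \cite{ORW}, I would reduce an assumed Kakutani equivalence between $\Psi(\mathcal{T})$ and $\Psi(\mathcal{T})^{-1}$ to even Kakutani equivalence between these two systems. Then the \cite{ORW} characterization produces, for arbitrarily small $\eps>0$, a finite code on the $\Psi(\mathcal{T})^{-1}$ process whose output is within $\overline{f}$-distance $\eps$ of the $\Psi(\mathcal{T})$ process. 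Applying Lemma \ref{lem:symbol by block replacement} scale by scale at the heights $h_n$, together with the $\overline{f}$-rigidity built into the Feldman patterns, such a code would be forced to match, through every level $s$, blocks from corresponding $\mathcal{Q}_s$-classes in $\mathbb{K}$ and $rev(\mathbb{K})$; iterating down the tree would produce a coherent nested sequence of class-assignments, i.e., an infinite branch of $\mathcal{T}$, contradicting well-foundedness. Once both directions are in place, the reduction $\mathcal{T}\mapsto(\Psi(\mathcal{T}),\Psi(\mathcal{T})^{-1})$ combined with Fact \ref{fact:ill} yields the conclusion.
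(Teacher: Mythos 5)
Your proposal is correct and follows essentially the same route as the paper: it reduces the theorem to the construction of a continuous map $\Psi:\mathcal{T}\kern-.5mm rees\to\mathcal{E}$ satisfying the dichotomy of Theorem~\ref{thm:Main} / Proposition~\ref{prop:criterion}, then composes with the continuous map $T\mapsto(T,T^{-1})$ and invokes Fact~\ref{fact:ill}. Your sketch of how to build $\Psi$ (odometer-based systems with Feldman-pattern substitutions, the isomorphism in the ill-founded case via the tree's group of involutions, and the finite-coding argument through even equivalence via \cite{ORW} in the well-founded case) matches the paper's strategy in Sections~\ref{sec:Setup}--\ref{sec:Non-Equiv}.
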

	
	As we show below, this will follow from the subsequent theorem which is proved in Section \ref{sec:Non-Equiv}.
	\begin{thm}
		\label{thm:Main}There is a continuous one-to-one map 
		\[
		\Psi:\mathcal{T}\kern-.5mm rees\to\mathcal{E}
		\]
		such that for $\mathcal{T}\in\mathcal{T}\kern-.5mm rees$ and $T=\Psi(\mathcal{T})$:
		
		$\mathcal{T}$ has an infinite branch if and only if $T$ and $T^{-1}$
		are Kakutani equivalent.
	\end{thm}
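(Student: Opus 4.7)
The plan is to construct $\Psi(\mathcal{T})$ as an odometer-based symbolic system associated to a uniquely readable uniform construction sequence $(W_n)_{n\in\mathbb{N}}$ whose words encode the combinatorial structure of $\mathcal{T}$. Following the introduction, each level $s$ of $\mathcal{T}$ corresponds to an equivalence relation $\mathcal{Q}_s$ on (certain) words in $W_{n(s)}$, with $\mathcal{Q}_{s+1}$ refining $\mathcal{Q}_s$ as new nodes appear. The crucial ingredient is the Feldman patterns of Section \ref{sec:Feldman}: instead of the random substitution of \cite{FRW}, we use these patterns to deterministically assemble each $w'\in W_{n+1}$ as a concatenation of words $w\in W_n$ so that any two sufficiently long substrings drawn from $(n{+}1)$-words in distinct $\mathcal{Q}_{s+1}$-classes are $\overline{f}$-far apart by a fixed amount $\alpha$. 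Continuity of $\Psi$ follows from Fact \ref{fact:contTree}, because the finite collection $W_n$ depends only on $\mathcal{T}\cap\{\sigma_m:m\le N(n)\}$ for an explicit $N(n)$, and the first $n$ generating partitions can be read off from this finite data. Injectivity is arranged by having each word in $W_n$ visibly record the finite initial segment of $\mathcal{T}$ used in its construction, and the symbolic system is identified with an element of $\mathcal{E}$ via Foreman's Borel map $\psi$.

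For the ``if'' direction, suppose $b$ is an infinite branch through $\mathcal{T}$. The coherent sequence of $\mathcal{Q}_{b(n)}$-classes along $b$ yields, for $\nu$-almost every $x\in\mathbb{K}(\mathcal{T})$, a canonical element $\varphi(x)\in rev(\mathbb{K}(\mathcal{T}))$ produced by reversing each successive $n$-block around the origin of $x$ and replacing it with the distinguished representative of the corresponding class in $rev(W_n)$ (the definition stabilizes on a conull set by Fact \ref{fact:MeasureConstrSeq}(3)). As in the inverse-isomorphism argument of \cite{FRW,FW2}, the Feldman pattern design guarantees that $\varphi$ is measurable, measure-preserving, and intertwines the shifts, producing an isomorphism $\Psi(\mathcal{T})\cong\Psi(\mathcal{T})^{-1}$; in particular they are Kakutani equivalent.

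For the ``only if'' direction---the main obstacle---assume $\Psi(\mathcal{T})$ and $\Psi(\mathcal{T})^{-1}$ are Kakutani equivalent. First, using the methods of \cite{ORW} announced in the introduction, we upgrade this to \emph{even} Kakutani equivalence, exploiting the symmetric (time-reversal) structure built into the construction. Next, we apply the finite coding theorem of \cite{ORW}: for every $\varepsilon>0$ there is a finite code on the $(\Psi(\mathcal{T})^{-1},P)$ process whose output process is $\overline{f}$-within $\varepsilon$ of the $(\Psi(\mathcal{T}),P)$ process. Choosing $\varepsilon$ small and the code length small compared to $h_n$, typical long sample paths of $\Psi(\mathcal{T})$ are $\overline{f}$-close to coded sample paths from $\Psi(\mathcal{T})^{-1}$; by Fact \ref{fact:MeasureConstrSeq}(3) both can be parsed into words from $W_n$ and $rev(W_n)$, respectively. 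Lemma \ref{lem:symbol by block replacement}, applied with $\{A_{a_i}\}$ the blocks in $W_n\cup rev(W_n)$ and $\alpha$ the Feldman-pattern separation between distinct $\mathcal{Q}_{s(n)}$-classes, pushes the $\overline{f}$-closeness at the symbol level back to the $n$-word level and forces most $n$-words of $\Psi(\mathcal{T})$ to be paired, modulo reversal, with $n$-words in the same $\mathcal{Q}_{s(n)}$-class. A compactness/diagonal argument in $\mathbb{N}^{\mathbb{N}}$ (using Definition \ref{def:M-and-s}) then extracts a coherent sequence of nodes of $\mathcal{T}$ of unbounded depth, i.e., an infinite branch.

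The hard part is precisely this last propagation: one must choose the Feldman pattern parameters so that the loss terms $\alpha$ and $1/R$ from Lemma \ref{lem:symbol by block replacement}, together with the error introduced by the finite coding and the boundary contributions from Definition \ref{def:ConstrSeq}(3), compound summably across levels rather than catastrophically. This requires tuning the pattern separation to grow with $n$ and the coding parameters to be chosen adaptively in terms of $(h_n,\varepsilon_n)$, ensuring that a nontrivial fraction of $n$-block pairings survives to produce a genuinely coherent tower of class identifications, hence a true infinite branch through $\mathcal{T}$.
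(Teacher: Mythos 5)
Your outline captures the right overall shape of the argument---Feldman-pattern substitutions, the reduction to even equivalence via the methods of \cite{ORW}, the finite-coding-plus-$\overline{f}$-estimates strategy, and the use of Fact \ref{fact:contTree} for continuity---but it omits the structural device that actually drives both implications: the groups of involutions $G^n_s(\mathcal{T})$ (Subsection \ref{subsec:Groups}), their free actions on the quotients $\mathcal{W}_n/\mathcal{Q}^n_s$ via the skew diagonal action (specifications (A7), (A8)), and the notion of parity. This is not bookkeeping; it is the mechanism.

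In the ``if'' direction, your proposed map $\varphi$ (``reverse each $n$-block around the origin and replace it with the distinguished representative of the corresponding class in $rev(W_n)$'') is not obviously well-defined or shift-equivariant: there is no canonical choice of representative, and simply reversing blocks does not interact correctly with the nested parsing of $\mathbb{K}$ unless one has a coherent family of bijections $\mathcal{W}_n/\mathcal{Q}^n_s\to rev(\mathcal{W}_n)/\mathcal{Q}^n_s$ compatible across $s$. In the paper this family is supplied by an odd-parity element $g\in G_\infty(\mathcal{T})$ (which exists exactly when $\mathcal{T}$ is ill-founded, Fact \ref{fact:oddElement}); the isomorphism is then $\eta_g$ acting via the skew diagonal action, and coherence across levels (Lemma \ref{lem:siso}, Lemma \ref{lem:iso}) is precisely specification (A7). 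Without some equivalent of this, the conull-set stabilization you invoke does not by itself produce a map intertwining the shifts.

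In the ``only if'' direction, the step you describe as ``a compactness/diagonal argument in $\mathbb{N}^{\mathbb{N}}$'' hides the genuine difficulty. After Lemma \ref{lem:symbol by block replacement} forces each well-coded $n$-word $w$ to pair with some $\overline{w}\in rev(\mathcal{W}_n)$ in a controlled way, one must show (i) the pairing $[w]_s\mapsto[\overline{w}]_s$ is realized by a \emph{single} group element $g_s\in G_s$, uniform over all $n$-words and over all sufficiently long finite codes, and (ii) the resulting sequence satisfies $g_s=\rho_{s+1,s}(g_{s+1})$, so it defines an element of $G_\infty(\mathcal{T})$ of odd parity. Point (i) requires comparing two different codes $\phi_N$ and $\phi_{N+k}$; it is exactly for this that the paper proves the approximate consistency property in Lemma \ref{lem:ConsistentCode} and Corollary \ref{cor:CodeOnWords} (the second $\overline{f}$-inequality there), a nontrivial extra input from the $\ORW$-type equivalence theory. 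Point (ii) is the content of Lemma \ref{lem:codeGroup} and uses subordinacy of the $G_{s+1}$ action to the $G_s$ action. A generic compactness argument does not produce a coherent sequence of tree nodes; the coherence has to be engineered. Finally, a smaller point: injectivity in the paper comes from choosing distinct primes $p_n$ that appear in the Kronecker factor (footnote in Section \ref{sec:Construction}, Lemma \ref{lem:odometer}), not from words ``visibly recording'' an initial segment of $\mathcal{T}$; a recording device would have to be reconciled with the strong uniformity requirement (E2).

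(Here $\ORW$ abbreviates Ornstein--Rudolph--Weiss.)
\newcommand{\ORW}{\text{ORW}}
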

	Using the notions from descriptive set theory in Section \ref{subsec:Basics-in-Descriptive},
	Theorem \ref{thm:Main} can also be expressed as follows.
	\begin{cor}
		\label{cor:Reductiona}There is a continuous one-to-one map 
		\[
		\Psi:\mathcal{T}\kern-.5mm rees\to\mathcal{E}
		\]
		that reduces the collection of ill-founded trees to the collection
		of ergodic transformations $T$ such that $T$ and $T^{-1}$ are Kakutani
		equivalent.
	\end{cor}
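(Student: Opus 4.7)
The plan is to observe that Corollary \ref{cor:Reductiona} is essentially a translation of Theorem \ref{thm:Main} into the language of Definition of reduction from Section \ref{subsec:Basics-in-Descriptive}, so almost nothing beyond unwinding definitions is required. Let $\Psi : \mathcal{T}\kern-.5mm rees \to \mathcal{E}$ be the continuous injective map furnished by Theorem \ref{thm:Main}. Let
\[
A = \{\mathcal{T} \in \mathcal{T}\kern-.5mm rees : \mathcal{T}\text{ is ill-founded}\}
\]
and
\[
B = \{T \in \mathcal{E} : T\text{ and } T^{-1}\text{ are Kakutani equivalent}\}.
\]
By the definition in Section \ref{subsec:Basics-in-Descriptive}, a tree is ill-founded iff it has an infinite branch, so membership in $A$ is exactly the hypothesis appearing on the left-hand side of the equivalence in Theorem \ref{thm:Main}, while membership of $\Psi(\mathcal{T})$ in $B$ is exactly the right-hand side.

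Therefore, the plan is simply to invoke Theorem \ref{thm:Main}: for every $\mathcal{T} \in \mathcal{T}\kern-.5mm rees$ we have $\mathcal{T} \in A$ iff $\Psi(\mathcal{T}) \in B$, which is precisely the statement that $\Psi$ reduces $A$ to $B$ in the sense of the reduction definition. Continuity and injectivity of $\Psi$ are already part of the conclusion of Theorem \ref{thm:Main}, so no further work is needed. There is no real obstacle here: the entire content of the corollary is the substantive theorem, and the corollary is a half-line bookkeeping remark making the descriptive-set-theoretic reading explicit, useful for combining with Fact \ref{fact:ill} in the proof of Theorem \ref{thm:CompleteAnalytic}.
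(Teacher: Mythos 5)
Your proposal is correct and matches the paper exactly: the paper gives no separate proof of Corollary \ref{cor:Reductiona}, introducing it only with the remark that Theorem \ref{thm:Main} ``can also be expressed'' this way, which is precisely your unwinding of the definition of a reduction together with the identification of ill-foundedness with having an infinite branch. Nothing further is needed.
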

	
	\begin{proof}[Proof of Theorem \ref{thm:CompleteAnalytic}]
		The map $i(T)=\left(T,T^{-1}\right)$ is a continuous mapping from
		$\mathcal{E}$ to $\mathcal{E}\times\mathcal{E}$ and reduces $\left\{ T\in\mathcal{E}:T\text{ is Kakutani equivalent to }T^{-1}\right\} $
		to
		\[
		\left\{ (S,T)\in\mathcal{E}\times\mathcal{E}:S\text{ and }T\text{ are Kakutani equivalent}\right\} .
		\]
		Then Theorem \ref{thm:CompleteAnalytic} follows from Fact \ref{fact:ill} and Corollary \ref{cor:Reductiona}.
	\end{proof}
	
	To prove Theorem \ref{thm:Main} we actually show the following stronger
	result.
	\begin{prop}
		\label{prop:criterion}There is a continuous one-to-one map 
		\[
		\Psi:\mathcal{T}\kern-.5mm rees\to\mathcal{E}
		\]
		such that for $\mathcal{T}\in\mathcal{T}\kern-.5mm rees$ and $T=\Psi(\mathcal{T})$:
		\begin{enumerate}
			\item If $\mathcal{T}$ has an infinite branch, then $T$ and $T^{-1}$
			are isomorphic.
			\item If $T$ and $T^{-1}$ are Kakutani equivalent, then $\mathcal{T}$
			has an infinite branch.
		\end{enumerate}
	\end{prop}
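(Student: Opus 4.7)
The plan is to construct $\Psi(\mathcal{T})$ as an odometer-based symbolic system associated to a uniquely readable, strongly uniform construction sequence $(W_n(\mathcal{T}))_{n\in\mathbb{N}}$ whose combinatorics are governed by the tree. At each level $n$ I maintain an equivalence relation $\mathcal{Q}_n$ on $W_n$ indexed by (a finite part of) $\mathcal{T}$, with $\mathcal{Q}_{n+1}$ refining the pullback of $\mathcal{Q}_n$; the $(n{+}1)$-words are assembled as Feldman patterns built by deterministic substitution from $\mathcal{Q}_n$-classes of $n$-words, which is the point where the Feldman machinery of Section \ref{sec:Feldman} replaces the random substitution of \cite{FRW}. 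Continuity of $\Psi$ (in the sense of Fact \ref{fact:contTree}) follows because the first $n$ levels of the construction depend only on $\mathcal{T}\cap\{\sigma_m:m\le N(n)\}$ for some computable $N(n)$, and injectivity follows because the level-$n$ class structure is an isomorphism invariant of the symbolic system from which $\mathcal{T}$ can be reconstructed.

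For part (1), suppose $\mathcal{T}$ has an infinite branch $\beta$. I would arrange the construction so that along $\beta$ every $\mathcal{Q}_{s(n)}$-class has a paired ``reversal class'' and the Feldman substitution at each subsequent stage respects this pairing, so that for $w\in W_n$ in a $\beta$-class the reverse $\mathrm{rev}(w)$ lies in the paired class, with matching Haar-type measures. By an argument analogous to the one sketched in \cite{FRW}, the compatible pairings at every level patch together to a measure-preserving involution of $\mathbb{K}$ conjugating $sh$ to $sh^{-1}$ (equivalently, an isomorphism of $\mathbb{K}$ with $\mathrm{rev}(\mathbb{K})$), which yields $T\cong T^{-1}$.

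Part (2) is the contrapositive: if $\mathcal{T}$ is well-founded then $T$ and $T^{-1}$ are not Kakutani equivalent, and this is where the main difficulty lies. Passing to the conjugate system $\mathrm{rev}(\mathbb{K})$, I would assume for contradiction that $\mathbb{K}$ and $\mathrm{rev}(\mathbb{K})$ are Kakutani equivalent; by the finite-coding characterization of \cite{ORW}, there is then a finite code $\Phi$ from the $\mathrm{rev}(\mathbb{K})$ process to a process $\overline{f}$-close to the canonical $\mathbb{K}$ process. The core inductive estimate is that if words in distinct $\mathcal{Q}_{s(n)}$-classes at level $n$ are $\overline{f}$-separated by some $\alpha_n$ on all sufficiently long substrings, then by the symbol-by-block replacement lemma (Lemma \ref{lem:symbol by block replacement}) words assembled from inequivalent $\mathcal{Q}_{s(n+1)}$-classes at level $n+1$ remain $\overline{f}$-separated by roughly $\alpha_n\cdot\overline{f}(\text{corresponding index patterns})-1/R_n$; choosing the Feldman parameters so the index-pattern distance stays bounded away from zero yields a lower bound $\alpha_{n+1}$ that does not decay to $0$. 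Because $\mathcal{T}$ is well-founded, at some finite level $n$ no $\mathcal{Q}_{s(n)}$-class is self-reverse-paired, so for typical $w\in W_n$ the block $\mathrm{rev}(w)$ lies in an inequivalent class and the preserved $\overline{f}$-lower bound contradicts the existence of the approximation $\Phi$. The hard part, which occupies Section \ref{sec:Non-Equiv}, is engineering the Feldman patterns so that $\alpha_n$ propagates through every substitution step and remains robust against the boundary perturbations of an arbitrary finite code of bounded window.
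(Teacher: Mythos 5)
Your outline captures the broad architecture of the paper — odometer-based symbolic systems with deterministic Feldman substitution, continuity via dependence on finite tree-data, group-action pairings to get isomorphism when the tree is ill-founded, and $\overline{f}$-separation propagated by Lemma \ref{lem:symbol by block replacement} to defeat any finite code — but there is a genuine gap in Part (2) precisely where the paper does substantial work.

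You pass directly from ``$T$ and $T^{-1}$ are Kakutani equivalent'' to ``there is a finite code $\Phi$ from the $\mathrm{rev}(\mathbb{K})$ process to a process $\overline{f}$-close to the canonical $\mathbb{K}$ process.'' That implication is false in general. The finite-coding lemma from \cite{ORW} that the paper invokes (reproduced as Lemma \ref{lem:ConsistentCode}) requires \emph{even} Kakutani equivalence, i.e.\ the two cross-sections in the flow have equal measure, so the time-scales of the two processes agree on average. For a general Kakutani equivalence, $T_A\cong(T^{-1})_B$ with $\mu(A)\ne\mu(B)$, the coded process runs at a different rate from the target and no single finite code can be $\overline{f}$-close in the sense you need. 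The paper therefore first proves that for these specific systems $T^f\not\cong T$ for any nontrivial integrable roof $f$ (Proposition \ref{prop:NonIsom}), and then, via an induced-map argument borrowed from \cite{ORW} p.~90 (Lemmas \ref{lem:ORW13}, \ref{lem:Induce} and Proposition \ref{prop:only-even}), concludes that any Kakutani equivalence between $T$ and $T^{-1}$ must already be an even equivalence. Only at that point is the finite-coding machinery available. Proposition \ref{prop:NonIsom} is itself a nontrivial Feldman-pattern argument (Lemmas \ref{lem:distDiff} and \ref{lem:densitySame}) exploiting the uneven expansion of return times under $T^f$; it is not a throwaway step, and omitting it leaves your proof unable to get started.

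A secondary imprecision is in your description of how well-foundedness produces the contradiction. You assert that ``at some finite level $n$ no $\mathcal{Q}_{s(n)}$-class is self-reverse-paired.'' But at every finite level $s$ with a node the group $G_s(\mathcal{T})$ contains odd-parity elements; the obstruction from well-foundedness is global, not local: there is no \emph{coherent} sequence $(g_s)_s$ of odd-parity elements with $\rho_{s+1,s}(g_{s+1})=g_s$. The paper extracts such a coherent sequence from the codes (Lemmas \ref{lem:groupelement} and \ref{lem:codeGroup}, using the consistency between $\phi_N$ and $\phi_{N+k}$ coming from Corollary \ref{cor:CodeOnWords}) and then invokes Fact \ref{fact:oddElement} to read off an infinite branch. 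Making the coherence step rigorous — the same group element works for all sufficiently late codes, and the $g_s$ cohere across $s$ — is where the ``additional challenge that we do not know the precise words being coded, only their equivalence classes'' (and the Coding Lemma \ref{lem:CodingLemma}) enters, and your sketch does not address it.
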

	
	This stronger proposition also implies the anti-classification result
	for the isomorphism relation in \cite{FRW}.
	\begin{cor*}
		There is a continuous one-to-one map 
		\[
		\Psi:\mathcal{T}\kern-.5mm rees\to\mathcal{E}
		\]
		that reduces the collection of ill-founded trees to the collection
		of ergodic automorphisms $T$ such that $T$ is isomorphic to $T^{-1}$. Hence, the
		collection 
		\[
		\left\{ (S,T):S\text{ and }T\text{ are ergodic and isomorphic}\right\} \subseteq\mathcal{E}\times\mathcal{E}
		\]
		is a complete analytic set.
	\end{cor*}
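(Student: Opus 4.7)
The plan is to derive this corollary directly from Proposition \ref{prop:criterion}, exploiting only the observation that isomorphism is a stronger equivalence relation than Kakutani equivalence. Let $\Psi$ be the continuous, one-to-one map furnished by Proposition \ref{prop:criterion}; I claim $\Psi$ itself already reduces the collection of ill-founded trees to $\{T\in\mathcal{E}:T\text{ is isomorphic to }T^{-1}\}$. The forward direction is precisely clause (1) of Proposition \ref{prop:criterion}: if $\mathcal{T}$ has an infinite branch, then $T=\Psi(\mathcal{T})$ and $T^{-1}$ are isomorphic. For the converse, suppose $T$ and $T^{-1}$ are isomorphic. Any isomorphism between two invertible measure-preserving transformations is \emph{a fortiori} a Kakutani equivalence (realize both as cross-sections of the suspension flow under a constant ceiling function), so $T$ and $T^{-1}$ are Kakutani equivalent, and clause (2) of Proposition \ref{prop:criterion} then forces $\mathcal{T}$ to have an infinite branch.

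With this reduction in hand, the first assertion of the corollary is a formal manipulation of the notions from Section \ref{subsec:Basics-in-Descriptive}. By Fact \ref{fact:ill} the set of ill-founded trees is complete analytic in $\mathcal{T}\kern-.5mm rees$, so every analytic set admits a Borel reduction to it. Precomposing such a reduction with the continuous map $\Psi$ yields a Borel reduction of every analytic set to $\{T\in\mathcal{E}:T\cong T^{-1}\}$, so that set is complete analytic; being complete analytic it cannot be Borel.

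To pass from the singleton-indexed set to the pairwise isomorphism relation I would repeat the transfer trick used in the proof of Theorem \ref{thm:CompleteAnalytic}. The map $i(T)=(T,T^{-1})$ is continuous from $\mathcal{E}$ into $\mathcal{E}\times\mathcal{E}$ and reduces $\{T:T\cong T^{-1}\}$ to $\{(S,T)\in\mathcal{E}\times\mathcal{E}:S\cong T\}$, because $i(T)$ lies in the latter set precisely when $T\cong T^{-1}$. The composite $i\circ\Psi$ is then a continuous reduction of the ill-founded trees to the pairwise isomorphism set, which therefore inherits completeness as an analytic set and hence fails to be Borel. No genuine obstacle appears at this stage: the entire substance of the argument has been absorbed into Proposition \ref{prop:criterion}, whose proof is carried out in Section \ref{sec:Non-Equiv}; the role of the corollary is merely to observe that the same construction that handles Kakutani equivalence automatically subsumes the original isomorphism anti-classification result of \cite{FRW}.
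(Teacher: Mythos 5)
Your proof is correct and matches the argument the paper leaves implicit: clause (1) of Proposition \ref{prop:criterion} gives the forward direction, and since isomorphism implies Kakutani equivalence, clause (2) gives the converse, after which the passage to the pairwise relation and to complete analyticity is the same transfer used in the proof of Theorem \ref{thm:CompleteAnalytic}. No gaps.
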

	In Section \ref{sec:Transfer} we are able to upgrade these results
	to the setting of diffeomorphisms. 
	\begin{thm}
		\label{thm:smooth}Let $M$ be the disk, annulus or torus with Lebesgue
		measure $\lambda$. Then the collection
		\[
		\left\{ (S,T):S\text{ and }T\text{ are ergodic diffeomorphisms and Kakutani equivalent}\right\} 
		\]
		in $\text{Diff}^{\,\infty}(M,\lambda)\times\text{Diff}^{\,\infty}(M,\lambda)$
		is a complete analytic set and hence not a Borel set with respect
		to the $C^{\infty}$ topology.
	\end{thm}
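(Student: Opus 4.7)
The strategy is to pull the anti-classification result of Theorem \ref{thm:Main} into the smooth category by composing the map $\Psi$ from Proposition \ref{prop:criterion} with the Foreman--Weiss functor $\mathcal{F}$ carrying odometer-based systems to circular systems, and then with the untwisted Anosov--Katok smooth realization of circular systems from \cite{FW1}. This produces a continuous one-to-one map
\[
\Phi:\mathcal{T}\kern-.5mm rees\to\text{Diff}^{\,\infty}(M,\lambda)
\]
such that $\Phi(\mathcal{T})$ is measure-theoretically isomorphic to $\mathcal{F}(\Psi(\mathcal{T}))$. Once I prove the equivalence
\[
\mathcal{T}\text{ has an infinite branch}\iff\Phi(\mathcal{T})\text{ is Kakutani equivalent to }\Phi(\mathcal{T})^{-1},
\]
the theorem follows exactly as Theorem \ref{thm:CompleteAnalytic} followed from Theorem \ref{thm:Main}: the map $T\mapsto(T,T^{-1})$ is continuous on $\text{Diff}^{\,\infty}(M,\lambda)$ with the $C^{\infty}$ topology, so Fact \ref{fact:ill} upgrades this reduction to a proof of complete analyticity.

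The forward direction is the routine one. Given an infinite branch of $\mathcal{T}$, Proposition \ref{prop:criterion}(1) supplies an isomorphism $\Psi(\mathcal{T})\cong\Psi(\mathcal{T})^{-1}$. Since $\mathcal{F}$ is defined at the level of construction sequences and intertwines the reverse operation in the sense made precise in \cite{FW2}, the argument of \cite{FW2} gives $\mathcal{F}(\Psi(\mathcal{T}))\cong\mathcal{F}(\Psi(\mathcal{T}))^{-1}$, and the smooth realization of \cite{FW1} then transfers this to a measure-theoretic isomorphism $\Phi(\mathcal{T})\cong\Phi(\mathcal{T})^{-1}$, hence a Kakutani equivalence.

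The main obstacle is the converse. For $\mathcal{T}$ with no infinite branch, Proposition \ref{prop:criterion}(2) gives only that $\Psi(\mathcal{T})$ itself is not Kakutani equivalent to $\Psi(\mathcal{T})^{-1}$; and, as the authors emphasize by pointing to their counterexample in \cite{GeKu}, the functor $\mathcal{F}$ does \emph{not} in general preserve non-Kakutani equivalence. The plan is therefore to prove a transfer principle tailored to the particular $\Psi$ coming from the Feldman-pattern construction, contrapositively: a hypothetical Kakutani equivalence between the circular systems $\mathcal{F}(\Psi(\mathcal{T}))$ and $\mathcal{F}(\Psi(\mathcal{T}))^{-1}$ would, after reducing to an even Kakutani equivalence and invoking the finite-code characterization of \cite{ORW}, yield a finite code between their processes with arbitrarily small $\overline{f}$-error on the circular tower structure. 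I would then collapse this code down to the level of $n$-words by means of Lemma \ref{lem:symbol by block replacement}, which is precisely the device for converting $\overline{f}$-estimates between long circular blocks into $\overline{f}$-estimates between the underlying odometer-level words. The resulting coarse code on the $\Psi(\mathcal{T})$-process witnesses a (necessarily even) Kakutani equivalence between $\Psi(\mathcal{T})$ and $\Psi(\mathcal{T})^{-1}$, contradicting Proposition \ref{prop:criterion}(2).

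Continuity and injectivity of $\Phi$ are standard consequences of the corresponding properties of $\Psi$ established in Theorem \ref{thm:Main}, the continuity of $\mathcal{F}$ on uniform construction sequences as developed in \cite{FW2}, and the continuity of the Anosov--Katok realization in the $C^{\infty}$ topology from \cite{FW1}. The disk, annulus, and torus cases are handled uniformly by this scheme, since the symbolic transfer argument is independent of the ambient manifold and only the smooth realization step uses the geometry of $M$.
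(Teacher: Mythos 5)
Your overall architecture — compose $\Psi$ with the Foreman--Weiss functor $\mathcal{F}$ and then with the Anosov--Katok realization, check continuity, and split into the two directions of the reduction — matches the paper's proof (the paper calls the composite $F^{s}=R\circ\mathcal{F}\circ\Psi$), and your forward direction is essentially identical to the paper's (anti-synchronous isomorphism pushed through $\mathcal{F}$ via Lemma~\ref{lem:functor}). However, your converse direction has a genuine gap.

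You propose to ``collapse'' a finite code between $\mathcal{F}(\Psi(\mathcal{T}))$ and $\mathcal{F}(\Psi(\mathcal{T}))^{-1}$ down to a code between $\Psi(\mathcal{T})$ and $\Psi(\mathcal{T})^{-1}$ using Lemma~\ref{lem:symbol by block replacement}, and then invoke Proposition~\ref{prop:criterion}(2). This does not work, for two reasons. First, Lemma~\ref{lem:symbol by block replacement} goes the wrong direction for your purpose: it is a lower-bound device that promotes $\overline{f}$-separation from building blocks to their concatenations, not a mechanism for producing or pulling back a code. Indeed, the paper uses it in exactly the opposite way from what you suggest: it feeds the odometer-level estimates of Remarks~\ref{rem:PrepTransfer1} and \ref{rem:PrepTransfer2} \emph{into} Lemma~\ref{lem:symbol by block replacement} to derive $\overline{f}$-separation between circular blocks (Lemmas~\ref{lem:CircularBase}--\ref{lem:CircularBlock}, leading to Proposition~\ref{prop:fCircular}). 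Second, even if you could produce odometer-level codes with small $\overline{f}$-error, that alone would not ``witness'' a Kakutani equivalence between $\Psi(\mathcal{T})$ and $\Psi(\mathcal{T})^{-1}$: Lemma~\ref{lem:ConsistentCode} only states that even equivalence yields such codes, not the converse. So your step from ``codes exist'' to ``even Kakutani equivalence exists'' is unjustified, and the contradiction with Proposition~\ref{prop:criterion}(2) does not materialize.

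What the paper does instead in Subsection~\ref{subsec:NonEquivSmooth} is to re-run the entire ORW/Feldman-pattern argument \emph{directly} at the circular level: after establishing the circular $\overline{f}$-separation estimates (Proposition~\ref{prop:fCircular}), it proves circular analogues of the bad-coding lemmas (Lemmas~\ref{lem:BadCodingCircular0}, \ref{lem:BadCodingCircular}), identifies any well-approximating finite code with a group element (Lemma~\ref{lem:groupelementCircular}), and produces a coherent sequence $(g_{s})_{s}$ of odd-parity elements (Lemma~\ref{lem:codeGroupCircular}), which via Fact~\ref{fact:oddElement} forces an infinite branch. The contradiction is obtained inside $G_{\infty}(\mathcal{T})$, not by collapsing to the odometer. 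You would need to supply this circular-level machinery — or an actual proof that Kakutani equivalence of $\mathcal{F}(\Psi(\mathcal{T}))$ and $\mathcal{F}(\Psi(\mathcal{T}))^{-1}$ implies Kakutani equivalence of $\Psi(\mathcal{T})$ and $\Psi(\mathcal{T})^{-1}$, which is exactly the transfer principle the paper says fails in general (\cite{GeKu}) and must be re-established for these particular systems by a direct argument.
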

	
	In case of $M=\mathbb{T}^{2}$ we also obtain anti-classification results in the real-analytic
	category (see Subsection \ref{subsec:DiffeomorphismSpaces} for the definition of the Polish space $\text{Diff}_{\rho}^{\,\omega}(M,\lambda)$).
	\begin{thm}
		\label{thm:analytic}Let $\rho>0$ and $\lambda$ be the Lebesgue
		measure on $\mathbb{T}^{2}$. Then the collection
		\[
		\left\{ (S,T):S\text{ and }T\text{ are ergodic real-analytic diffeomorphisms and Kakutani equivalent}\right\} 
		\]
		in $\text{Diff}_{\rho}^{\,\omega}(\mathbb{T}^{2},\lambda)\times\text{Diff}_{\,\rho}^{\,\omega}(\mathbb{T}^{2},\lambda)$
		is a complete analytic set and hence not a Borel set with respect
		to the $\text{Diff}_{\rho}^{\,\omega}$ topology.
	\end{thm}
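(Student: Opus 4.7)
The plan is to mirror the proof of Theorem \ref{thm:smooth}, replacing the smooth realization with the real-analytic one, and then reduce to the single-transformation version as in the proof of Theorem \ref{thm:CompleteAnalytic}. Specifically, I will compose three maps: the symbolic construction $\Psi:\mathcal{T}\kern-.5mm rees\to\mathcal{E}$ from Proposition \ref{prop:criterion}, the Foreman--Weiss functor $\mathcal{F}$ from odometer-based systems to circular systems, and the real-analytic untwisted Anosov--Katok realization $\mathcal{R}^\omega_\rho$ of circular systems on $\mathbb{T}^2$ provided by Banerjee--Kunde \cite{Ba17,BK2}. The resulting composition $\Phi=\mathcal{R}^\omega_\rho\circ\mathcal{F}\circ\Psi:\mathcal{T}\kern-.5mm rees\to\text{Diff}^{\,\omega}_\rho(\mathbb{T}^2,\lambda)$ is continuous because each factor is, and is one-to-one because $\Psi$ is one-to-one and the realization is faithful.

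Next I would verify that $\Phi$ reduces the collection of ill-founded trees to the set of real-analytic diffeomorphisms that are Kakutani equivalent to their inverses. In one direction, if $\mathcal{T}$ has an infinite branch, then by Proposition \ref{prop:criterion}(1), $\Psi(\mathcal{T})\cong\Psi(\mathcal{T})^{-1}$; applying $\mathcal{F}$ and the realization (both of which respect isomorphism, as shown in \cite{FW2} and \cite{BK2}), we obtain $\Phi(\mathcal{T})\cong\Phi(\mathcal{T})^{-1}$, and isomorphism implies Kakutani equivalence. In the other direction, the crucial point noted in the introduction is that for the specific odometer-based systems produced by $\Psi$, non-Kakutani equivalence of $\Psi(\mathcal{T})$ and $\Psi(\mathcal{T})^{-1}$ transfers through $\mathcal{F}$: that is, $\mathcal{F}(\Psi(\mathcal{T}))$ is not Kakutani equivalent to $\mathcal{F}(\Psi(\mathcal{T})^{-1})$ whenever $\mathcal{T}$ is well-founded. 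Since $\mathcal{R}^\omega_\rho$ produces a measurable isomorphism between the circular system and the constructed diffeomorphism, non-Kakutani equivalence transfers to $\Phi(\mathcal{T})$ and $\Phi(\mathcal{T})^{-1}$. Thus $\mathcal{T}$ is ill-founded if and only if $\Phi(\mathcal{T})$ and $\Phi(\mathcal{T})^{-1}$ are Kakutani equivalent.

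To obtain the theorem as stated, I would then use the continuous map
\[
i:\text{Diff}^{\,\omega}_\rho(\mathbb{T}^2,\lambda)\to\text{Diff}^{\,\omega}_\rho(\mathbb{T}^2,\lambda)\times\text{Diff}^{\,\omega}_\rho(\mathbb{T}^2,\lambda),\qquad i(f)=(f,f^{-1}),
\]
exactly as in the proof of Theorem \ref{thm:CompleteAnalytic}. The composition $i\circ\Phi$ is a continuous reduction from $\mathcal{T}\kern-.5mm rees$ (restricted to ill-founded trees) to the target set of Kakutani-equivalent pairs inside $\text{Diff}^{\,\omega}_\rho(\mathbb{T}^2,\lambda)\times\text{Diff}^{\,\omega}_\rho(\mathbb{T}^2,\lambda)$. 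By Fact \ref{fact:ill}, the collection of ill-founded trees is complete analytic, so the target is complete analytic as well, and in particular not Borel in the $\text{Diff}^{\,\omega}_\rho$ topology.

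The main obstacle is the transfer step on the Kakutani side: generically, $\mathcal{F}$ can destroy non-Kakutani equivalence (as shown by the counterexample in \cite{GeKu}), so the proof must exploit specific features of the Feldman-pattern construction used to define $\Psi$ to guarantee that the conclusion of Proposition \ref{prop:criterion}(2) lifts to the circular model. This is precisely the content of the arguments to be developed in Section \ref{sec:Transfer}, and once it is established together with the continuity and analyticity of $\mathcal{R}^\omega_\rho$ in the $\text{Diff}^{\,\omega}_\rho$ topology from \cite{BK2}, the present theorem follows by the formal scheme just outlined.
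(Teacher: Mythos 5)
Your proposal matches the paper's argument essentially verbatim: the paper proves Theorem \ref{thm:analytic} by repeating the proof of Theorem \ref{thm:smooth} with the real-analytic realization map of Lemma \ref{lem:Analytic Realization} in place of the smooth one (citing \cite[Proposition 7.10]{BK2} instead of \cite[Proposition 61]{FW1} for continuity), with the hard transfer of non-Kakutani equivalence through $\mathcal{F}$ handled in Subsection \ref{subsec:NonEquivSmooth} and the passage to pairs done via $i(f)=(f,f^{-1})$ exactly as you describe. The proposal is correct and takes the same route.
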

	
	 It is natural to equip the space $\text{Diff}^{\,\omega}(\mathbb{T}^{2},\lambda)$ of area-preserving real-analytic diffeomorphisms homotopic to the identity by the direct limit topology \cite[section 2.6]{Krantz}, that is, a set $U \subset  \text{Diff}^{\,\omega}(\mathbb{T}^{2},\lambda)$ is open iff $ U \cap \text{Diff}_{\rho}^{\,\omega}(\mathbb{T}^{2},\lambda)$ is open in $\text{Diff}_{\rho}^{\,\omega}(\mathbb{T}^{2},\lambda)$ for all $\rho>0$. This space is not Polish, but $\text{Diff}_{\rho}^{\,\omega}(\mathbb{T}^{2},\lambda)$ is Borel in $\text{Diff}^{\,\omega}(\mathbb{T}^{2},\lambda)$. The Kakutani equivalence relation on ergodic real-analytic diffeomorphisms of $\mathbb{T}^2$ is not a Borel set, because by Theorem~\ref{thm:analytic} its intersection with the Borel set $\text{Diff}_{\rho}^{\,\omega}(\mathbb{T}^{2},\lambda)\times \text{Diff}_{\rho}^{\,\omega}(\mathbb{T}^{2},\lambda)$ is not Borel.
	
	\subsection{Outline of the proof of Proposition \ref{prop:criterion}}
	
	To construct the reduction $\Psi:\mathcal{T}\kern-.5mm rees\to\mathcal{E}$ we
	build for every $\mathcal{T}\in\mathcal{T}\kern-.5mm rees$ a transformation
	$\Psi(\mathcal{T})$ as a symbolic system with a strongly uniform
	and uniquely readable construction sequence $\left(\mathcal{W}_{n}\left(\mathcal{T}\right)\right)_{n\in\mathbb{N}},$
	which implies its ergodicity.  This
	will be done in such a way that $\mathcal{W}_{n}(\mathcal{T})$ is
	entirely determined by $\mathcal{T}\cap\left\{ \sigma_{m}:m\leq n\right\} $, which, by Fact \ref{fact:contTree} guarantees continuity of $\Psi$ (see Proposition 20 in \cite{FRW} for details).
	(Alternatively, in the cutting-and-stacking model, the $n$th tower
	is determined by $\mathcal{T}\cap\left\{ \sigma_{m}:m\leq n\right\} $,
	and this implies that $\Psi$ is continuous.) 
	Related to the structure of the tree we also specify equivalence relations
	$\mathcal{Q}_{s}^{n}(\mathcal{T})$ on the collections $\mathcal{W}_{n}(\mathcal{T})$
	of $n$-words and group actions on the equivalence classes in $\mathcal{W}_{n}(\mathcal{T})/\mathcal{Q}_{s}^{n}(\mathcal{T})$
	as in \cite{FRW}. These specifications will allow us to prove in
	Section \ref{sec:Isom} that $\Psi(\mathcal{T})$ and $\Psi(\mathcal{T})^{-1}$
	are isomorphic if the tree $\mathcal{T}$ has an infinite branch. 
	
	The harder part is to show that $\Psi(\mathcal{T})$ and $\Psi(\mathcal{T})^{-1}$
	are not Kakutani equivalent if the tree $\mathcal{T}$ does not have
	an infinite branch. For this purpose, we build the $n$-words in our
	construction sequence using specific patterns of blocks (that we call
	Feldman patterns) which originate from Feldman's first example of
	an ergodic zero-entropy automorphism that is not loosely Bernoulli
	\cite{Fe}. As observed in \cite{ORW}, different Feldman patterns
	cannot be matched well in $\overline{f}$ even after a finite coding. We introduce
	these patterns in detail in Section \ref{sec:Feldman}. In our construction
	of $n$-words we apply them repeatedly by substituting Feldman patterns
	of finer equivalence classes into Feldman patterns of coarser classes.
	We present such a substitution step in general in Section \ref{sec:Substitution},
	and then use it in an iterative way to construct the collection $\mathcal{W}_{n}(\mathcal{T})$
	of $n$-words in Section \ref{sec:Construction}. There we also verify
	that $\mathcal{W}_{n}(\mathcal{T})$ satisfies the aforementioned
	specifications. Finally, using techniques from \cite{ORW} based on
	the structure of Feldman patterns, we show in Section \ref{sec:Non-Equiv}
	that $\Psi(\mathcal{T})$ and $\Psi(\mathcal{T})^{-1}$ cannot be
	Kakutani equivalent if the tree $\mathcal{T}$ does not have an infinite
	branch. In the analysis of possible finite codes we face the additional challenge that we do not know the precise words being coded, only their equivalence classes.
	
	We transfer the results to the setting of diffeomorphisms in Section
	\ref{sec:Transfer} and give an outline of this proof at the beginning of that section.
	
	\section{\label{sec:Setup}The Setup}
In this section we present the general setup of our construction with a list of requirements (called \emph{specifications}). We list specifications (E1)-(E3) on construction sequences $\Meng{\mathcal{W}_n(\mathcal{T})}{\sigma_n \in \mathcal{T}}$ for our symbolic systems, specifications (Q4)-(Q6) on equivalence relations $\mathcal{Q}^n_s(\mathcal{T})$ on the collections $\mathcal{W}_n(\mathcal{T})$ of $n$-words, and specifications (A7) and (A8) on group actions on the quotient spaces $\mathcal{W}_n(\mathcal{T})/\mathcal{Q}^n_s(\mathcal{T})$. In Section~\ref{sec:Construction} we will present our actual constructions of words, equivalence relations, and group actions. We will then verify that they satisfy all the requirements from Section~\ref{sec:Setup}. The general setup of our construction is very similar to the one in \cite[Section 4]{FRW} with a few minor changes (for example, specification (Q4) and our specifications on the group actions).

	\subsection{\label{subsec:General-facts}General properties of our construction}
	
	For each $n\in\mathbb{N}$ with $\sigma_{n}\in\mathcal{T}$ we construct
	a set of words $\mathcal{W}_{n}=\mathcal{W}_{n}(\mathcal{T})$ in
	the basic alphabet $\Sigma=\{1,\dots,2^{12}\}$ which
	depend only on $\mathcal{T}\cap\left\{ \sigma_{m}:m\leq n\right\}$, where $\left\{ \sigma_{m}:m\in\mathbb{N}\right\} $
		is an enumeration of $\mathbb{N}^{<\mathbb{N}}$ as described after Definition~\ref{dfn:analytic}.
	To start we set $\mathcal{W}_{0}=\Sigma$. 
	\begin{itemize}
		\item[(E1)]  All words in $\mathcal{W}_{n}$ have the same length $h_{n}$ and
		the cardinality $|\mathcal{W}_{n}|$ is a power of $2$.
		\item[(E2)]  If $\sigma_{m}$ and $\sigma_{n}$ with $m<n$ are consecutive elements of $\mathcal{T}$ (that is, $\sigma_m,\sigma_n \in \mathcal{T}$ and there is no $j$ between $m$ and $n$ with $\sigma_j \in \mathcal{T}$),
		then every word in $\mathcal{W}_{n}$ is built by concatenating words
		in $\mathcal{W}_{m}$. There is $f_{m}\in\mathbb{Z}^{+}$ such that
		every word in $\mathcal{W}_{m}$ occurs in each word of $\mathcal{W}_{n}$
		exactly $f_{m}$ times. This implies $h_n = f_m \cdot \abs{W_m}\cdot h_m$. The number $f_{m}$ will be a product of $\mathfrak{p}_{n}^{2}$
		and powers of $2$, where $\mathfrak{p}_{n}$ is a large prime number chosen
	in \eqref{eq:Pn}.
		\item[(E3)]  If $\sigma_{m}$ and $\sigma_{n}$ with $m<n$ are consecutive elements of $\mathcal{T}$,
		$w=w_{1}\dots w_{h_{n}/h_{m}}\in \mathcal{W}_n$ and $w'=w_{1}^{\prime}\dots w_{h_{n}/h_{m}}^{\prime}\in\mathcal{W}_{n}$, where $w_{i},w_{i}^{\prime}\in\mathcal{W}_{m}$, then for any $k\geq\frac{h_{n}/h_{m}}{2}$ and $1\leq i \leq \frac{h_n}{h_m}-k$, we have $w_{i+1}\dots w_{i+k}\neq w_{1}^{\prime}\dots w_{k}^{\prime}$. 
	\end{itemize}
	\begin{rem}
		In particular, these specifications say that $\left\{ \mathcal{W}_{n}:\sigma_{n}\in\mathcal{T}\right\} $
		is a uniquely readable and strongly uniform construction sequence
		for an odometer-based system. Hence, the corresponding symbolic system
		$\mathbb{K}$ has a unique non-atomic ergodic shift-invariant measure
		$\nu$ by Fact \ref{fact:MeasureConstrSeq}.
	\end{rem}
	
	
	\subsection{\label{subsec:Relations}Equivalence relations and canonical factors}
	
	Related to the structure of the tree we will specify, for each $s \leq s(n)$, an equivalence
	relation $\mathcal{Q}_{s}^{n}$ on our words $\mathcal{W}_{n}$. Recalling Definition~\ref{def:M-and-s}, we note that the case $s\leq s(n)$ corresponds to $n \geq M(s)$.
	We write $Q_{s}^{n}$ for the number of equivalence classes in $\mathcal{Q}_{s}^{n}$
	and enumerate the classes by $\left\{ c_{j}^{(n,s)}:j=1,\dots,Q_{s}^{n}\right\} $.
	In the construction of the factor $\mathbb{K}_s$ described in \ref{subsubsec:factors}, we will identify $\mathcal{W}_{n}/\mathcal{Q}_{s}^{n}$
	with an alphabet denoted by $\left(\mathcal{W}_{n}/\mathcal{Q}_{s}^{n}\right)^{\ast}$
	of $Q_{s}^{n}$ symbols $\left\{ 1,\dots,Q_{s}^{n}\right\} $. 
	
	Each equivalence relation $\mathcal{Q}_{s}^{n}$ will induce an equivalence
	relation on $rev(\mathcal{W}_{n})$, which we will also call $\mathcal{Q}_{s}^{n}$,
	as follows: $rev(w),rev(w')\in rev(\mathcal{W}_{n})$ are equivalent
	with respect to $\mathcal{Q}_{s}^{n}$ if and only if $w,w'\in\mathcal{W}_{n}$
	are equivalent with respect to $\mathcal{Q}_{s}^{n}$. 
	
	To state the specifications on our equivalence relations we recall
	some general notions.
	\begin{defn}
		\label{def:equivrel}Let $X$ be a set and let $\mathcal{Q}$ and $\mathcal{R}$ be equivalence relations on $X$.
		\begin{itemize}
			\item We write $\mathcal{R}\subseteq\mathcal{Q}$ and say that $\mathcal{R}$
			\emph{refines} $\mathcal{Q}$ if considered as sets of ordered pairs
			we have $\mathcal{R}\subseteq\mathcal{Q}$.
			\item We define the \emph{product equivalence relation} $\mathcal{Q}^{n}$
			on $X^{n}$ by setting $x_{0}\dots x_{n-1}\sim x_{0}^{\prime}\dots x_{n-1}^{\prime}$
			if and only if $x_{i}\sim x_{i}^{\prime}$ for all $i=0,\dots,n-1$.
		\end{itemize}
	\end{defn}
	
	Our equivalence relations will satisfy the following specifications (Q4)-(Q6) below. Let $\left(\epsilon_{n}\right)_{n\in\mathbb{N}}$ be a decreasing sequence of positive numbers
	such that
	\begin{equation}
		\sum_{n\in\mathbb{N}}\epsilon_{n} < \infty.\label{eq:Condeps}
	\end{equation}
	To start, we let $\mathcal{Q}_{0}^{0}$ be the equivalence relation
	on $\mathcal{W}_{0}=\Sigma$ which has one equivalence class, that is,
	any two elements of $\Sigma$ are equivalent. 
	\begin{itemize}
		\item[(Q4)]  Suppose that $n=M(s)$. There is a specific number $J_{s(n),n}\in\mathbb{Z}^{+}$
		such that each word in $w_{n}\in\mathcal{W}_{n}$ is a concatenation
		$w_{n}=w_{n,1}\dots w_{n,J}$ of $J=J_{s(n),n}$ strings of equal
		length. Then any two words in the same $\mathcal{Q}_{s}^{n}$ class
		agree with each other except possibly on initial or final strings
		of length at most $\frac{\epsilon_{n}}{2}\frac{h_{n}}{J_{s(n),n}}$
		on the segments $w_{n,i}$ for $i=1,\dots,J_{s(n),n}$.
		\item[(Q5)]  For $n\geq M(s)+1$ we can consider words in $\mathcal{W}_{n}$
		as concatenations of words from $\mathcal{W}_{M(s)}$ and define $\mathcal{Q}_{s}^{n}$
		as the product equivalence relation of $\mathcal{Q}_{s}^{M(s)}$.
		\item[(Q6)]  $\mathcal{Q}_{s+1}^{n}$ refines $\mathcal{Q}_{s}^{n}$ and each
		$\mathcal{Q}_{s}^{n}$ class contains $2^{4e(n)}$ many $\mathcal{Q}_{s+1}^{n}$
		classes for some positive integer $e(n)$ chosen in \eqref{eq:kn}.
	\end{itemize}
	\begin{rem*}
		By (Q5) we can view $\mathcal{W}_{n}/\mathcal{Q}_{s}^{n}$ as sequences
		of elements $\mathcal{W}_{M(s)}/\mathcal{Q}_{s}^{M(s)}$ and similarly
		for $rev(\mathcal{W}_{n})/\mathcal{Q}_{s}^{n}$. In particular,
		it follows that $\mathcal{Q}_{0}^{n}$ is the equivalence relation
		on $\mathcal{W}_{n}$ which has one equivalence class. Moreover, it allows us to regard elements in $\mathcal{W}_{n}/\mathcal{Q}_{s}^{n}$
		for $n\geq M(s)+1$ as sequences of symbols in the alphabet $\left(\mathcal{W}_{M(s)}/\mathcal{Q}_{s}^{M(s)}\right)^{\ast}$. 
	\end{rem*}
	\begin{rem*}
		In case that the exponent is not relevant we will refer to the $\mathcal{Q}_{s}^{n}$
		as $\mathcal{Q}_{s}$. For $w\in\mathcal{W}_{n}$ we write $[w]_{s}$
		for its $\mathcal{Q}_{s}^{n}$ class.
	\end{rem*}
	As in \cite[section 5]{FRW} these specifications are sufficient to
	describe a canonical tower of factors. For this purpose, we consider
	the symbolic shift $\mathbb{K}\subset\Sigma^{\mathbb{Z}}$ with measure
	$\nu$ corresponding to $\Psi(\mathcal{T})$ for a fixed $\mathcal{T}\in\mathcal{T}\kern-.5mm rees$.
	
	\subsubsection{The odometer factor} \label{subsubsec:Odometer}
	
	We start by describing an odometer factor of our transformation which
	will turn out to be the Kronecker factor and will determine a natural
	``timing mechanism'' to detect blocks. 
	
	Suppose that $\sigma_{m}$ and $\sigma_{n}$ with $m<n$ are consecutive
	elements of $\mathcal{T}$ and let $x\in\mathbb{K}$. By unique readability
	there are unique $w,w'\in\mathcal{W}_{n}$ and $k\in\left\{ 0,\dots,h_{n}-1\right\} $
	such that $x\upharpoonright[-k,-k+2h_{n}-1]=ww'$. We call the interval
	of integers $[-k,-k+h_{n}-1]$ the $n$-block of $x$ containing $0$. 
	
	If the $m$-block of $x$ containing $0$ is given by $[-k',-k'+h_{m}-1]$,
	then there is a unique $\pi(x,n)$ such that $-k'=-k+\pi(x,n)h_{m}$.
	We can view $\pi(x,n)$ as an element in $\mathbb{Z}_{h_{n}/h_{m}}$
	and we define  
	\[
	\pi_{0}:\mathbb{K}\to\prod\left\{ \mathbb{Z}_{h_{n}/h_{m}}:\,\sigma_{m}\text{ and }\sigma_{n}\text{ with $m<n$ are consecutive elements of }\mathcal{T}\right\},
	\]
	by $\pi_0(x)=(\pi(x,n))_{\sigma_n\in \mathcal{T}}.$
	Since $\pi(sh(x),\cdot)$ is obtained from $\pi(x,\cdot)$ by adding
	$\bar{1}=(1,0,0,\dots)$ in the odometer $$O_{\mathcal{T}}\coloneqq\prod\left\{ \mathbb{Z}_{h_{n}/h_{m}}:\,\sigma_{m}\text{ and }\sigma_{n}\text{ with $m<n$ are consecutive elements of }\mathcal{T}\right\},$$
	we denote the corresponding odometer transformation by $\mathcal{O}_{\mathcal{T}}$
	and collect the following properties (see \cite[Lemma 22]{FRW}).
	\begin{lem}
		\label{lem:odometer} The map $\pi_{0}$ is a factor map from $\Psi(\mathcal{T})$
		to $\mathcal{O}_{\mathcal{T}}$. If $p>2$ is a prime number, then
		$\mathrm{e}^{2\pi\mathrm{i}/p}$ is an eigenvalue of the unitary operator
		associated with $\mathcal{O}_{\mathcal{T}}$ if and only if $p=\mathfrak{p}_{n}$
		for some $n$ with $\sigma_{n}\in\mathcal{T}$, where $\mathfrak{p}_n$ is the prime number from (E2).
	\end{lem}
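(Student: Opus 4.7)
The plan is to treat the two assertions in turn. For the factor map claim, I would begin by checking that $\pi_0$ is well-defined $\nu$-almost everywhere. Part~(3) of Fact~\ref{fact:MeasureConstrSeq}, applied to the uniquely readable construction sequence $\{\mathcal{W}_n : \sigma_n \in \mathcal{T}\}$, ensures that for $\nu$-a.e.\ $x$ and every sufficiently large $n$ with $\sigma_n \in \mathcal{T}$ there exist (necessarily unique, by unique readability) integers $a_n \le 0 < b_n$ with $x \upharpoonright [a_n, b_n) \in \mathcal{W}_n$. This pins down the location of the $n$-block of $x$ containing $0$ and, combined with the analogous data at level $m$, determines $\pi(x,n) \in \mathbb{Z}_{h_n/h_m}$ for each consecutive pair $\sigma_m, \sigma_n \in \mathcal{T}$. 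The equivariance identity $\pi_0(sh(x)) = \pi_0(x) + \bar{1}$ is then immediate from the definition: shifting $x$ by one either moves the $m$-block position by one inside its enclosing $n$-block, or crosses a boundary and triggers exactly the odometer carry. The pushforward $(\pi_0)_*\nu$ is therefore a shift-invariant Borel probability measure on $O_\mathcal{T}$, and by unique ergodicity of the odometer rotation it must agree with Haar measure, so $\pi_0$ is a measure-preserving factor map.

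For the eigenvalue claim, I would invoke the standard spectral description of an odometer. Since $\mathcal{O}_\mathcal{T}$ is a rotation on the compact abelian group $O_\mathcal{T}$ by the element $\bar{1}$, its $L^2$-spectrum is pure point and is given by the values $\chi(\bar{1})$ as $\chi$ ranges over the dual group. A direct computation shows that $\mathrm{e}^{2\pi \mathrm{i}/p}$ is an eigenvalue if and only if the prime $p$ divides some ratio $h_n/h_m$ for a pair $\sigma_m, \sigma_n$ that are consecutive in $\mathcal{T}$. Strong uniformity yields $h_n/h_m = |\mathcal{W}_m|\cdot f_m$; by (E1) the factor $|\mathcal{W}_m|$ is a power of $2$, and by (E2) the factor $f_m$ is a product of $p_n^2$ and powers of $2$. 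Hence the ratio itself is a product of $p_n^2$ and powers of $2$, so for $p > 2$ we have $p \mid h_n/h_m$ if and only if $p = p_n$. Taking the union over consecutive pairs yields the asserted equivalence.

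I expect the main bookkeeping nuisance to be isolating the $\nu$-null set on which $x$ either fails to have a valid decomposition into $n$-words at every level or has $0$ landing on an $n$-block boundary in an ambiguous way; fortunately, Fact~\ref{fact:MeasureConstrSeq}(3) is tailor-made to absorb both anomalies. The spectral calculation itself is essentially routine given the concrete product-of-cyclic-groups structure of $O_\mathcal{T}$ and the arithmetic supplied by specifications (E1)--(E2), so no genuine obstacle should arise there.
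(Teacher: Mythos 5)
Your argument is correct and is essentially the standard proof of this statement; note that the paper itself does not prove Lemma~\ref{lem:odometer} locally but simply cites \cite[Lemma~22]{FRW}, so you are supplying the details that the paper leaves to the reference. Both halves of your outline go through as you expect: Fact~\ref{fact:MeasureConstrSeq}(3) together with unique readability does give, for $\nu$-a.e.\ $x$, a unique $n$-block of $x$ containing $0$ for every $n$ (existence for one large $n$ propagates down to all smaller levels because the construction is odometer-based), the carry computation gives equivariance, and unique ergodicity of the odometer identifies $(\pi_0)_*\nu$ with Haar measure. On the spectral side, the only arithmetic input really needed is that for consecutive $\sigma_m,\sigma_n\in\mathcal{T}$ the digit size $h_n/h_m$ has prime factorization supported on $\{2,p_n\}$, which you correctly extract from (E1) and (E2) via $h_n/h_m = |\mathcal{W}_m|\cdot f_m$; combined with the Halmos--von Neumann description of the eigenvalues of a rotation on a compact abelian group (equivalently, the observation that $\mathrm{e}^{2\pi\mathrm{i}/p}$ is an eigenvalue of the odometer iff the prime $p$ divides one of the digit sizes), this yields the stated equivalence for $p>2$. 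No gaps.
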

	
	By the analogous procedure one associates the same odometer transformation
	$\mathcal{O}_{\mathcal{T}}$ to $\mathbb{K}^{-1}$ and we let $\pi_{0}^{\ast}:\mathbb{K}^{-1}\to\prod\mathbb{Z}_{h_{n}/h_{m}}$
	be the analogous map. We note that $\pi_{0}^{\ast}(rev(x))=i(\pi_{0}(x))$
	for the involution $i$ of $\mathcal{O}_{\mathcal{T}}$ given by $i(x)=-x$.
	
	\subsubsection{The canonical factors } \label{subsubsec:factors}
	
	Now we define a canonical sequence of invariant sub-$\sigma$-algebras
	of the algebra $\mathcal{B}(\mathbb{K})$ of measurable subsets of
	$\mathbb{K}$. 
	
	We let $\mathbb{K}_{0}$ be the odometer factor $O_{\mathcal{T}}$
	introduced above. For each $s\geq1$ a typical $x\in\mathbb{K}$ gives
	a bi-infinite sequence of the classes $\left\{ c_{j}^{(M(s),s)}:j=1,\dots,Q_{s}^{M(s)}\right\} $
	in $\mathcal{Q}_{s}^{M(s)}$ which yields a shift-equivariant map
	from $\mathbb{K}$ to $\left\{ 1,\dots,Q_{s}^{M(s)}\right\} ^{\mathbb{Z}}$.
	To be more precise, we define a map $\tilde{\pi}_{s}:\mathbb{K}\to\left\{ 1,\dots,Q_{s}^{M(s)}\right\} =\left(\mathcal{W}_{M(s)}/\mathcal{Q}_{s}^{M(s)}\right)^{\ast}$
	which assigns to a $x\in\mathbb{K}$ the letter $j$ if the word $w\in\mathcal{W}_{M(s)}$
	on the $M(s)$-block of $x$ containing the position $0$ satisfies
	$[w]_{s}=c_{j}^{(M(s),s)}$. Hereby, we define a 
	map $\pi_{s}:\mathbb{K}\to\mathbb{K}_{s}\coloneqq\left\{ 1,\dots,Q_{s}^{M(s)}\right\} ^{\mathbb{Z}}$
	by letting  $\pi_{s}(x)=\left(\tilde{\pi}_{s}(sh^{k}(x))\right)_{k\in\mathbb{Z}}.$ Note that this map is shift-equivariant, that is, $\pi_s(sh(x))=sh \left(\pi_{s}(x)\right)$. There is an analogous map from $rev(\mathbb{K})$ to $rev(\mathbb{K}_s)$ that we also denote by $\pi_s$.
	
	Informally, the definition of the map $\pi_s$ can be viewed as finding names as described in Section~\ref{subsec:basicsET} with respect to the partition of $\mathcal{W}_{M(s)}$-words into $\mathcal{Q}_{s}^{M(s)}$-classes.
	
	Next, we describe a convenient base for the topology on $\mathbb{K}_{s}$:
	For $n\geq M(s)$, $w\in\mathcal{W}_{n}$ and $0\leq k<h_{n}$, we
	let $\left\langle [w]_{s},k\right\rangle $ be the collection of $x\in\mathbb{K}_{s}$
	such that the position $0$ is at the $k$-th place in the $n$-block
	$B$ of $x$ containing $0$ and if $v\in\left\{ 1,\dots,Q_{s}^{M(s)}\right\} ^{h_{n}}$
	is the word in $x$ at the block $B$, then $v$ is the sequence of
	$\mathcal{Q}_{s}^{M(s)}$ classes given by $[w]_{s}$. Then the collection
	of those $\left\langle [w]_{s},k\right\rangle $ for $\sigma_{n}\in\mathcal{T}$,
	$w\in\mathcal{W}_{n}$ and $0\leq k<h_{n}$ forms a basis for the
	topology of $\mathbb{K}_{s}$ consisting of clopen sets. In particular,
	for $m=M(s)$ a word $w\in\mathcal{W}_{m}$ gives a word of length
	$h_{m}$ in our alphabet $\left(\mathcal{W}_{M(s)}/\mathcal{Q}_{s}^{M(s)}\right)^{\ast}$
	that is the repetition of the same letter. We denote the collection
	of these words by $\left(\mathcal{W}_{m}\right)_{s}^{\ast}$. Then
	for $n>m=M(s)$ with $\sigma_{n}\in\mathcal{T}$ each word in $\mathcal{W}_{n}$
	is a concatenation of $\frac{h_{n}}{h_{m}}$ words in $\mathcal{W}_{m}$
	by specification (E2) and, thus, determines a sequence of $\frac{h_{n}}{h_{m}}$
	many elements of $\left(\mathcal{W}_{m}\right)_{s}^{\ast}$. We let
	$\left(\mathcal{W}_{n}\right)_{s}^{\ast}$ be the collection of words
	in the alphabet $\left(\mathcal{W}_{M(s)}/\mathcal{Q}_{s}^{M(s)}\right)^{\ast}$
	arising this way. Then the sequence $\left((\mathcal{W}_{n})_{s}^{\ast}\right)_{n\geq M(s)}$
	gives a well-defined odometer-based construction sequence for $\mathbb{K}_{s}$
	in the alphabet $\left(\mathcal{W}_{M(s)}/\mathcal{Q}_{s}^{M(s)}\right)^{\ast}$.
	
	We also define the measure $\nu_{s}\coloneqq\pi_{s}^{\ast}\nu$ on
	$\mathbb{K}_{s}$. To be more explicit, with the aid of specifications
	(E2) and (Q6) we see that $\nu_{s}(\left\langle [w]_{s},k\right\rangle)=\frac{1}{h_nQ_{s}^{n}}$ for any $w\in \mathcal{W}_{n}$ and $0\leq k<h_n$.
	
	Finally, we let $\mathcal{H}_{0}$ be the shift-invariant sub-$\sigma$-algebra
	of the $\mathcal{B}(\mathbb{K})$ generated by the collection of $\pi_{0}^{-1}(B)$,
	where $B$ is a basic open set in $O_{\mathcal{T}}$, and we let $\mathcal{H}_{s}$
	be the shift-invariant sub-$\sigma$-algebra of the $\mathcal{B}(\mathbb{K})$
	generated by the collection of $\pi_{s}^{-1}(B)$, where $B$ is a
	basic open set in $\mathbb{\mathbb{K}}_{s}$. Then $\mathcal{H}_{s}$
	is the sub-$\sigma$-algebra determined by the factor map $\pi_{s}$. 
	
	Since the equivalence relation $\mathcal{Q}_{s+1}^{M(s+1)}$ refines
	$\left(\mathcal{Q}_{s}^{M(s)}\right)^{h_{M(s+1)}/h_{M(s)}}$ by specifications
	(E2) and (Q6), we have $\mathcal{H}_{s+1}\supseteq\mathcal{H}_{s}$
	and a continuous factor map $\pi_{s+1,s}:\mathbb{K}_{s+1}\to\mathbb{K}_{s}$.
	To express this one explicitly, we note that a $\mathbb{Z}$-sequence
	of $\mathcal{Q}_{s+1}^{M(s+1)}$ classes determines a sequence of
	$\mathcal{Q}_{s}^{M(s)}$ classes, because a $\mathcal{Q}_{s+1}^{M(s+1)}$
	class is contained in a $\mathcal{Q}_{s}^{M(s+1)}$ class which is
	a $h_{M(s+1)}/h_{M(s)}$-tuple of $\mathcal{Q}_{s}^{M(s)}$ classes. Analogously we define a map from $rev(\mathbb{K}_{s+1})$ to $rev(\mathbb{K}_{s})$ that we also denote by $\pi_{s+1,s}$.
	
	In analogy with \cite[Proposition 23]{FRW} we can prove the following
	statement making use of specification (Q4).
	\begin{lem}
		\label{lem:algebra}$\mathcal{B}\left(\mathbb{K}\right)$ is the smallest
		invariant $\sigma$-algebra that contains $\bigcup_{s}\mathcal{H}_{s}$.
	\end{lem}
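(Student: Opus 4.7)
The plan is to show that $\mathcal{H} := \sigma\bigl(\bigcup_s \mathcal{H}_s\bigr)$, which is already shift-invariant since each $\mathcal{H}_s$ is, agrees with $\mathcal{B}(\mathbb{K})$ modulo $\nu$-null sets. The inclusion $\mathcal{H} \subseteq \mathcal{B}(\mathbb{K})$ is immediate. For the reverse inclusion, because shifts of the single-site cylinders $C_\sigma := \{x \in \mathbb{K} : x(0)=\sigma\}$, $\sigma \in \Sigma$, generate $\mathcal{B}(\mathbb{K})$, it is enough to exhibit each $C_\sigma$ as an element of $\mathcal{H}$ mod $\nu$.

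The strategy is to approximate $x \mapsto x(0)$ by a sequence of $\mathcal{H}$-measurable functions built from the factor maps $\pi_0$ and $\pi_s$. Fix $s \geq 1$ and set $m=M(s)$. The letter of $\pi_s(x)$ at coordinate $0$ records the $\mathcal{Q}_s^m$-class $[w(x)]_s$ of the unique $m$-block of $x$ containing position $0$, while $\pi_0(x)$ determines the location $k(x) \in \{0,\dots,h_m-1\}$ of $0$ inside that block; both quantities are $\mathcal{H}$-measurable. I then define
\[
X_s(x) := \begin{cases} w(k(x)), & \text{if } w(k(x)) \text{ is constant over } w \in [w(x)]_s, \\ \star, & \text{otherwise,} \end{cases}
\]
with $\star\notin\Sigma$ an auxiliary symbol; by construction $X_s$ is $\mathcal{H}$-measurable.

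For the error estimate, specification (Q4) ensures that any two representatives of a $\mathcal{Q}_s^m$-class can disagree only on the initial or final portions, of length at most $\tfrac{\epsilon_m}{2}\tfrac{h_m}{J_{s,m}}$, of the $J_{s,m}$ equal-length segments composing an $m$-word, for a total of at most $\epsilon_m h_m$ bad positions per $m$-word. The strong uniformity in (E2), together with Fact~\ref{fact:MeasureConstrSeq}, implies that under $\nu$ each pair $(w,k)$ with $w \in \mathcal{W}_m$ and $0\leq k<h_m$ is equally likely to record the $m$-block of $x$ containing $0$ and the position of $0$ inside it, so a direct count yields
\[
\nu(\{x : X_s(x) \neq x(0)\}) \leq \epsilon_{M(s)}.
\]
Since every length-$s$ sequence in $\mathbb{N}^{<\mathbb{N}}$ has $s$ proper predecessors, all enumerated before it, $M(s) \geq s$; combined with $\sum_n \epsilon_n < \infty$ from \eqref{eq:Condeps}, this gives $\sum_s \epsilon_{M(s)} < \infty$, so by Borel--Cantelli $X_s(x) = x(0)$ for all sufficiently large $s$ on a set of full $\nu$-measure. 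Hence $x \mapsto x(0)$ is $\mathcal{H}$-measurable mod $\nu$, as required.

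The step that will require real care, rather than routine bookkeeping, is translating the purely combinatorial bound from (Q4) into the $\nu$-measure bound on $\{X_s \neq x(0)\}$; this step hinges on the strong uniformity in (E2), which makes (word, in-block position) pairs uniformly distributed under $\nu$. The rest of the argument is a standard Borel--Cantelli approximation.
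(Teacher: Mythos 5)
Your proof is correct and rests on exactly the same mechanism as the paper's: specification (Q4) guarantees that outside boundary portions of relative density at most $\epsilon_{M(s)}$ within each $M(s)$-word, the $\mathcal{Q}_s$-class together with the in-block position (both recoverable from $\pi_s$ and $\pi_0$) determines the symbol, and the uniform distribution of positions converts this into a measure bound. The only difference is packaging — the paper approximates each cylinder $\left\langle u\right\rangle$ in measure by shifts of $\pi_s$-preimages, whereas you approximate the coordinate map pointwise and invoke Borel--Cantelli (as the paper itself does immediately afterwards for the set $L$) — so this is essentially the same argument.
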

	
	\begin{proof}
		It suffices to show that for any $m\in\mathbb{N}$ with $\sigma_m \in \mathcal{T}$ and for any $u\in\mathcal{W}_{m}$
		the basic open set $\left\langle u\right\rangle \subseteq\mathbb{K}$
		can be approximated arbitrarily well in measure by elements of $\bigcup_{s}\mathcal{H}_{s}$.
		For this purpose, we fix $\varepsilon>0$ and choose $n>m$ such that
		$n=M(s)$ for some $s\in\mathbb{N}$ and $\epsilon_{n}+2\frac{h_{m}J_{s(n),n}}{h_{n}}<\varepsilon$.
		Then we let $G\subset\mathbb{K}$ be the collection of $x$ such that
		within the $n$-word $w_{n}\in\mathcal{W}_{n}$ located at the $n$-block
		of $x$ containing the position $0$, $x(0)$ is not among
		the first or last $\left(\frac{\epsilon_{n}}{2}+\frac{h_{m}J_{s(n),n}}{h_{n}}\right)\frac{h_{n}}{J_{s(n),n}}=\frac{\epsilon_{n}}{2}\frac{h_{n}}{J_{s(n),n}}+h_{m}$
		letters of one of the $J_{s(n),n}$ many segments $w_{n,i}$ in specification
		(Q4), that is, the $m$-block of $x$ containing position $0$ belongs to central part of one of the $w_{n,i}$ strings  of the $n$-block. By the ergodic
		theorem and our assumption on $n$ the measure of $G$ is at least
		$1-\varepsilon$. 
		
		Now let $x\in G\cap\left\langle u\right\rangle $ and let $w\in\mathcal{W}_{n}$
		be located at the $n$-block $B=[-k,-k+h_{n})$ of $x$ containing $0$.
		If $c_{j}^{(n,s)}$ is the $\mathcal{Q}_{s}^{M(s)}$ class of $w$,
		then 
		\[
		sh^{-k}(x)\in\pi_{s}^{-1}\left(\left\langle c_{j}^{(n,s)},0\right\rangle \right)\subseteq sh^{-k}\left(\left\langle u\right\rangle \right)
		\]
		by specification (Q4). Hence, $G\cap\left\langle u\right\rangle $
		is a union of shifts of sets of the form $\pi_{s}^{-1}\left(\left\langle c_{j}^{(n,s)},0\right\rangle \right)$
		lying in $\mathcal{H}_{s}$.
	\end{proof}
	As in the proof above, for $n=M(s)$, the set $L_{n}$ of $x$ such
	that within the $n$-word $w_{n}\in\mathcal{W}_{n}$ located at the
	$n$-block of $x$ containing the position $0$ we have that $x(0)$
	is among the first or last $\frac{\epsilon_{n}}{2}\frac{h_{n}}{J_{s(n),n}}$
	letters of one of the $J_{s(n),n}$ many segments $w_{n,i}$ in specification
	(Q4), has measure $\epsilon_{n}$. Let $L$
	be the collection of $x\in\mathbb{K}$ such that there exists $s(x)$ such that for all $s\ge s(x)$,
	if $n=M(s)$ then $x$ does not belong to $L_{n}$, that is, $x(0)$ is
	not among the first or last $\frac{\epsilon_{n}}{2}\frac{h_{n}}{J_{s(n),n}}$
	letters of one of the $J_{s(n),n}$ many segments $w_{n,i}$ within
	the $n$-word $w_{n}\in\mathcal{W}_{n}$ located at the $n$-block
	of $x$ containing the position $0$. Then $L$ has measure one by
	equation (\ref{eq:Condeps}) and the Borel-Cantelli Lemma.
	
	We collect the following properties as in Propositions 24 and 25 of
	\cite{FRW}.
	\begin{lem}
		\label{lem:subalgebra}
		\begin{enumerate}
			\item For all $x\neq y$ belonging to $L$, there is an open set $S\in\bigcup_{s}\mathcal{H}_{s}$
			such that $x\in S$ and $y\notin S.$
			\item For all $s\geq1$, $\mathcal{H}_{s}$ is a strict subalgebra of $\mathcal{H}_{s+1}$.
			Moreover, if $\left\{ \nu_{x}:x\in\mathbb{K}_{s}\right\} $ is the
			disintegration of $\nu_{s+1}$ over $\nu_{s}$, then for $\nu_{s}$-a.e.
			$x$ the measure $\nu_{x}$ is nonatomic. 
		\end{enumerate}
	\end{lem}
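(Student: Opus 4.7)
The plan is to follow the pattern of Propositions 24 and 25 in \cite{FRW}, adapted to the specifications (E1)--(E3) and (Q4)--(Q6). For part 1, I would fix $x\neq y$ in $L$ and pick $k\in\mathbb{Z}$ with $x(k)\neq y(k),$ splitting into two cases. If $\pi_{0}(x)\neq\pi_{0}(y),$ then for some $n$ with $\sigma_{n}\in\mathcal{T}$ the offset of position $0$ inside the $n$-block differs between $x$ and $y;$ since the collection of $z\in\mathbb{K}_{s}$ with a prescribed $n$-block offset is a union of basic clopen sets $\langle[w]_{s},k\rangle$ over all $[w]_{s}\in\mathcal{W}_{n}/\mathcal{Q}_{s}^{n},$ we obtain $\mathcal{H}_{0}\subseteq\mathcal{H}_{s}$ for any $s$ with $M(s)\geq n,$ and a clopen set in $\mathcal{H}_{s}$ separates $x$ from $y.$

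Otherwise $\pi_{0}(x)=\pi_{0}(y),$ so $x$ and $y$ share a common $n$-block partition for every $n.$ I would then choose $s$ large enough that $n:=M(s)$ satisfies $\tfrac{\epsilon_{n}}{2}\tfrac{h_{n}}{J_{s(n),n}}>|k|;$ this is possible because the construction will be arranged so that this quantity diverges with $s.$ Letting $w_{x}^{n},w_{y}^{n}\in\mathcal{W}_{n}$ be the $n$-words on the common $n$-block $[-a,-a+h_{n})$ containing $0,$ we have $w_{x}^{n}(a+k)\neq w_{y}^{n}(a+k).$ Since $x\in L,$ position $a$ lies in the middle portion of some segment $w_{n,i}$ of $w_{x}^{n}$ at distance more than $|k|$ from the segment boundary; hence position $a+k$ also lies in the middle portion of some segment (either $w_{n,i}$ itself or an adjacent one). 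If $[w_{x}^{n}]_{s}=[w_{y}^{n}]_{s},$ specification (Q4) would force $w_{x}^{n}$ and $w_{y}^{n}$ to agree at $a+k,$ a contradiction. Hence $[w_{x}^{n}]_{s}\neq[w_{y}^{n}]_{s},$ and the basic clopen set $\pi_{s}^{-1}(\langle[w_{x}^{n}]_{s},a\rangle)\in\mathcal{H}_{s}$ separates $x$ from $y.$ The main technical delicacy is ensuring $a+k$ avoids the boundary fringes whenever $a$ does, which forces the lower bound on $s.$

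For part 2, the strictness $\mathcal{H}_{s}\subsetneq\mathcal{H}_{s+1}$ is immediate from (Q6): each $\mathcal{Q}_{s}^{M(s+1)}$ class decomposes into $2^{4e(M(s+1))}\geq 2$ many $\mathcal{Q}_{s+1}^{M(s+1)}$ classes, so a basic clopen subset of $\mathbb{K}_{s+1}$ distinguishing two refinements of a single $\mathcal{Q}_{s}^{M(s+1)}$ class pulls back to an element of $\mathcal{H}_{s+1}\setminus\mathcal{H}_{s}.$ For nonatomicity of the disintegration $\{\nu_{z}\}_{z\in\mathbb{K}_{s}},$ I would observe that a fiber $\pi_{s+1,s}^{-1}(z)$ consists of all $\mathbb{Z}$-sequences of $\mathcal{Q}_{s+1}^{M(s+1)}$-classes that coarsen to $z$'s sequence of $\mathcal{Q}_{s}^{M(s)}$-classes; at each $M(s+1)$-block there are $2^{4e(M(s+1))}\geq 2$ available $\mathcal{Q}_{s+1}^{M(s+1)}$-refinements, and the strong uniformity of the construction sequence together with the tree-indexed group actions on the equivalence classes (from subsection \ref{subsec:Relations}) ensure that $\nu_{z}$ weights these refinements equally. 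Hence $\nu_{z}$ is a product-like measure over infinitely many independent finite choices of size at least two, so no singleton in $\mathbb{K}_{s+1}$ carries positive $\nu_{z}$-mass, and $\nu_{z}$ is nonatomic for $\nu_{s}$-a.e.\ $z.$ Making the ``equal weighting'' precise is the chief obstacle and will rely on the symmetry furnished by the group actions on $\mathcal{W}_{n}/\mathcal{Q}_{s}^{n}.$
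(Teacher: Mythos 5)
The paper gives no explicit proof of this lemma, saying only that it is ``as in Propositions 24 and 25 of \cite{FRW},'' so there is no proof text to compare against directly. Judging your proposal on its own terms:

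For part (1), the case split on $\pi_{0}(x)=\pi_{0}(y)$ and the appeal to (Q4) in the second case are the right moves, but the decisive step contains a genuine gap. You choose $s$ so that $\frac{\epsilon_{n}}{2}\frac{h_{n}}{J_{s(n),n}}>|k|$ with $n=M(s)$, and you then assert that because $a$ lies outside the boundary fringe, so does $a+k$. This does not follow. The $L$-condition only places $a$ at distance $>\frac{\epsilon_{n}}{2}\frac{h_{n}}{J_{s(n),n}}$ from a segment endpoint, so $a+k$ is only at distance $>\frac{\epsilon_{n}}{2}\frac{h_{n}}{J_{s(n),n}}-|k|$ from that endpoint, which can be strictly smaller than the threshold $\frac{\epsilon_{n}}{2}\frac{h_{n}}{J_{s(n),n}}$ that (Q4) needs in order to force $w_{x}^{n}(a+k)=w_{y}^{n}(a+k)$. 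So (Q4) simply may not apply at position $a+k$, and your claimed contradiction evaporates. You flagged this issue in your final sentence of the paragraph but did not actually close it; the ``lower bound on $s$'' you invoke gives $|k|$ small relative to the fringe width, which is not the same as pushing $a+k$ out of the fringe. The fix is to work with the enlarged boundary fringe already used for the set $G$ in the proof of Lemma \ref{lem:algebra}, namely $\frac{\epsilon_{n}}{2}\frac{h_{n}}{J_{s(n),n}}+h_{m}$ (with $\sigma_{m},\sigma_{n}$ consecutive in $\mathcal{T}$). Once $a$ lies outside that larger fringe and $|k|\le h_{m}$ (true for all large enough $n$), every position $a+j$ with $|j|\le h_{m}$ lies outside the (Q4)-fringe, so (Q4) does apply at $a+k$ and your argument goes through. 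This requires either reading $L$ with the enlarged fringe or inserting the corresponding uniformity argument; as written the step fails.

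For part (2), the outline is sound: strictness of $\mathcal{H}_{s}\subsetneq\mathcal{H}_{s+1}$ follows directly from (Q6), and for nonatomicity the explicit cylinder measures $\nu_{s}(\langle[w]_{s},k\rangle)=\frac{1}{h_{n}Q_{s}^{n}}$ and $\nu_{s+1}(\langle[w]_{s+1},k\rangle)=\frac{1}{h_{n}Q_{s+1}^{n}}$ show that the conditional measure of any single cylinder in a fiber of $\pi_{s+1,s}$ decays geometrically as more $M(s+1)$-blocks are specified. Be aware, though, that your ``product-like measure over infinitely many independent choices'' picture is only heuristic: the admissible refinement sequences are constrained by the $\mathbb{K}_{s+1}$ subshift structure, so $\nu_{z}$ is not a product measure; the correct statement is simply that no singleton can carry positive conditional mass, which the uniform cylinder bounds already deliver without a full independence claim.
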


	\subsection{\label{subsec:Groups}Groups of involutions}
	
	To approximate conjugacies we will use groups associated to trees
	as in \cite{FRW}. These groups will be direct sums of $\mathbb{Z}_{2}=\mathbb{Z}/2\mathbb{Z}$
	and are called \emph{groups of involutions}. 
	
	If $G=\bigoplus_{i \in I} \mathbb{Z}_2$ with some index set $I$ is a group of involutions and $B=\left\{ r_{i}:i\in I\right\} $
	is a distinguished basis (that is, a particular choice of basis for $G$ considered as a vector space over $\mathbb{Z}_2$), then we call the elements $r_{i}\in B$ 
	\emph{canonical generators} and we have a well-defined notion of \emph{parity}
	for elements in $G$: an element $g\in G$ is called \emph{even} if
	it can be written as the sum of an even number of elements in $B$.
	Otherwise, it is called \emph{odd}. Parity is preserved under homomorphisms
	sending the canonical generators of one group to the canonical generators of the other. Moreover, for an inverse limit system
	of groups of involutions $\left\{ G_{s}:s\in\mathbb{Z}^+\right\} $ (where each group has a set of canonical generators)
	with parity-preserving homomorphisms
	$\rho_{t,s}:G_{t}\to G_{s}$ for $0<s<t$, the elements of the inverse
	limit $\underleftarrow{\lim}G_{s}$ have a well-defined parity. 
	
	For a tree $\mathcal{T}\subset\mathbb{N}^{\mathbb{N}}$ we assign
	to each level $s>0$ a group of involutions $G_{s}\left(\mathcal{T}\right)$
	by taking a sum of copies of $\mathbb{Z}_{2}$ indexed by the nodes
	of $\mathcal{T}$ at level $s$. Let
	\[
	G_s(\mathcal{T})=\bigoplus_{\tau\in\mathcal{T},\,lh(\tau)=s}\left(\mathbb{Z}_{2}\right)_{\tau}.
	\]
	Each node of $\mathcal{T}$ at level $s$ corresponds to an element of $G_s(\mathcal{T})$ that is 1 in 
	the copy of $\mathbb{Z}_2$ for that node and $0$ in the other copies of $\mathbb{Z}_2$.
	We also define $G_{0}(\mathcal{T})$ to be the trivial group. For levels $0<s<t$ of $\mathcal{T}$ we have a canonical homomorphism
	$\rho_{t,s}:G_{t}\left(\mathcal{T}\right)\to G_{s}\left(\mathcal{T}\right)$
	that sends a generator $\tau$ of $G_{t}\left(\mathcal{T}\right)$
	to the unique generator $\sigma$ of $G_{s}\left(\mathcal{T}\right)$
	that is an initial segment of $\tau$. The map $\rho_{t,0}$ is the trivial homomorphism $\rho_{t,0}:G_t(\mathcal{T})\to G_0(\mathcal{T})=\{0\}.$ We denote the inverse limit
	of $\left\langle G_{s}\left(\mathcal{T}\right),\rho_{t,s}:s<t\right\rangle $
	by $G_{\infty}\left(\mathcal{T}\right)$ and we let $\rho_{s}:G_{\infty}\left(\mathcal{T}\right)\to G_{s}\left(\mathcal{T}\right)$
	be the projection map.
	
	Since there is a one-to-one correspondence between the infinite branches
	of $\mathcal{T}$ and infinite sequences $\left(g_{s}\right)_{s\in\mathbb{Z}^+}$
	of generators $g_{s}\in G_{s}\left(\mathcal{T}\right)$ with $\rho_{t,s}\left(g_{t}\right)=g_{s}$
	for $t>s>0$, we obtain the following characterization.
	\begin{fact}[{\cite[Lemma~17]{FRW}}] \label{fact:oddElement}
		Let $\mathcal{T}\subset\mathbb{N}^{\mathbb{N}}$ be a tree. Then $G_{\infty}\left(\mathcal{T}\right)$
		has a nonidentity element of odd parity if and only if $\mathcal{T}$
		is ill-founded (that is, $\mathcal{T}$ has an infinite branch). 
	\end{fact}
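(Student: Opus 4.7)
The plan is to prove both directions using the one-to-one correspondence between canonical generators of $G_s(\mathcal{T})$ and nodes at level $s$ of $\mathcal{T}$, together with the identity $\rho_{s+1,s}(g_{s+1})=g_s$ satisfied by elements of the inverse limit.

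For the ``if'' direction I would start from an infinite branch $f:\mathbb{N}\to\mathcal{T}$, set $\tau_s:=f(s)$ for $s\geq 1$, and observe that each $\tau_s$ is a canonical generator of $G_s(\mathcal{T})$. Since $\tau_s$ is the length-$s$ initial segment of $\tau_t$ for $t\geq s$, we have $\rho_{t,s}(\tau_t)=\tau_s$, so $(\tau_s)_{s\geq 1}$ defines a nonidentity element $g\in G_\infty(\mathcal{T})$. Each component is a single canonical generator, hence of odd parity, and parity is well-defined on $G_\infty(\mathcal{T})$ because the $\rho_{t,s}$ preserve parity; thus $g$ is odd.

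For the ``only if'' direction I would take a nonidentity element $g=(g_s)_{s\geq 1}\in G_\infty(\mathcal{T})$ of odd parity. Since each $\rho_{s+1,s}$ sends canonical generators to canonical generators, it preserves parity, so every component $g_s$ has odd parity; in particular $|\mathrm{supp}(g_s)|$ is odd and hence nonempty. The central step is to expand the identity $\rho_{s+1,s}(g_{s+1})=g_s$ in the canonical basis: the coefficient of a node $\sigma$ at level $s$ in $g_s$ equals, modulo $2$, the number of immediate successors of $\sigma$ appearing in $\mathrm{supp}(g_{s+1})$. Therefore every $\sigma\in\mathrm{supp}(g_s)$ has an odd, and in particular nonzero, number of extensions in $\mathrm{supp}(g_{s+1})$.

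This lets me build an infinite branch greedily: choose any $\sigma_1\in\mathrm{supp}(g_1)$, and given $\sigma_s\in\mathrm{supp}(g_s)$, pick an immediate successor $\sigma_{s+1}\in\mathrm{supp}(g_{s+1})$ of $\sigma_s$. The resulting sequence $(\sigma_s)_{s\geq 1}$ is an infinite branch of $\mathcal{T}$. The only subtlety, and the point I would highlight as the main observation, is that the projection identity combined with oddness directly supplies a successor at every step, so no compactness or K\"onig-style argument is required.
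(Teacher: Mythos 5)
Your proof is correct and takes essentially the same approach the paper relies on: the paper's one-line justification (the bijection between infinite branches and coherent sequences of single generators) gives the ``if'' direction directly, and your coefficient-counting observation --- that $\rho_{s+1,s}(g_{s+1})=g_s$ forces each $\sigma\in\mathrm{supp}(g_s)$ to have an odd, hence nonzero, number of immediate successors in $\mathrm{supp}(g_{s+1})$ --- is exactly the natural way to extract a branch from an arbitrary odd element, which the paper leaves implicit. Your remark that no K\"onig-style compactness is needed is apt, since oddness hands you a successor at every step without any finiteness hypothesis on the branching.
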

	
	In order to make the elements of $G_{\infty}\left(\mathcal{T}\right)$
	correspond to conjugacies, we will build symmetries into our construction
	using the following finite approximations to $G_{\infty}\left(\mathcal{T}\right)$.
	We let $G_{0}^{n}\left(\mathcal{T}\right)$ be the trivial group and
	for $s>0$ we let 
	\[
	G_{s}^{n}\left(\mathcal{T}\right)=\bigoplus\left(\mathbb{Z}_{2}\right)_{\tau}\text{ where the sum is taken over }\tau\in\mathcal{T}\cap\left\{ \sigma_{m}:m\leq n\right\} ,\,lh(\tau)=s.
	\]
	
	When $\mathcal{T}$ is clear from the context, we will frequently
	write $G_{s}^{n}$. We also introduce the finite approximations $\rho_{t,s}^{(n)}:G_{t}^{n}(\mathcal{T})\to G_{s}^{n}(\mathcal{T})$
	to the canonical homomorphisms.
	
	During the course of construction we will define group actions of
	$G_{s}^{n}$ on our quotient spaces $\mathcal{W}_{n}/\mathcal{Q}_{s}^{n}$.
	Here, we will need to control systems of such group actions on the
	refining equivalence relations. For that purpose, the following general
	definitions will prove useful.
	\begin{defn}
		Suppose
		\begin{itemize}
			\item $\mathcal{Q}$ and $\mathcal{R}$ are equivalence relations on a set
			$X$ with $\mathcal{R}$ refining $\mathcal{Q}$,
			\item $G$ and $H$ are groups with $G$ acting on $X/\mathcal{Q}$ and
			$H$ acting on $X/\mathcal{R}$,
			\item $\rho:H\to G$ is a homomorphism.
		\end{itemize}
		Then we say that the $H$ action is \emph{subordinate} to the $G$
		action if for all $x\in X$, whenever $[x]_{\mathcal{R}}\subset[x]_{\mathcal{Q}}$
		we have $h[x]_{\mathcal{R}}\subset\rho(h)[x]_{\mathcal{Q}}$. 
	\end{defn}
	
	\begin{defn}
		If $G$ acts on $X$, then the canonical \emph{diagonal action} of
		$G$ on $X^{n}$ is defined by 
		\[
		g\left(x_{0}x_{1}\dots x_{n-1}\right)=gx_{0}\,gx_{1}\dots gx_{n-1} \ \text{ for any } g \in G.
		\]
		If $G$ is a group of involutions with a collection
		of canonical generators, then we define the \emph{skew diagonal action}
		on $X^{n}$ by setting 
		\[
		g\left(x_{0}x_{1}\dots x_{n-1}\right)= \begin{cases}
			gx_{0}\,gx_{1}\dots gx_{n-1}, & \text{ if $g \in G$ is of even parity,} \\
			gx_{n-1}\,gx_{n-2}\dots gx_{0}, & \text{ if $g\in G$ is of odd parity.}
		\end{cases}
		\]
	\end{defn}

	\begin{rem*}
		Recalling the notion of a product equivalence relation from Definition
		\ref{def:equivrel} we can identify $X^{n}/\mathcal{Q}^{n}$ with
		$\left(X/\mathcal{Q}\right)^{n}$ in an obvious way. Then we can also
		extend an action of $G$ on $X/\mathcal{Q}$ to the diagonal or skew
		diagonal actions on $\left(X/\mathcal{Q}\right)^{n}$ in a straightforward
		way.
	\end{rem*}
	To extend our group actions we will use the following extension lemma
	from \cite[Lemma 46]{FRW}. 
	\begin{lem}
		\label{lem:extension}Let $X$ be a set and $\mathcal{R}\subseteq\mathcal{Q}$
		be equivalence relations on $X$. Suppose that
		\begin{itemize}
			\item $\rho:H\to G\times\mathbb{Z}_{2}$ is a homomorphism,
			\item $G\times\mathbb{Z}_{2}$ acts on $X/\mathcal{Q}$,
			\item $H$ acts on $X/\mathcal{R}$ by a free action subordinate to the
			$G\times\mathbb{Z}_{2}$ action on $X/\mathcal{Q}$,
			\item if $H_{1}=\left\{ h\in H:\rho(h)=(0,i),i\in\left\{ 0,1\right\} \right\} $,
			then every orbit of the $\mathbb{Z}_{2}$ factor of $G\times\mathbb{Z}_{2}$
			contains an even number of $H_{1}$ orbits.
		\end{itemize}
		Then there is a free action of $H\times\mathbb{Z}_{2}$ on $X/\mathcal{R}$
		subordinate to the $G\times\mathbb{Z}_{2}$ action via the map $\rho^{\prime}(h,i)=\rho(h)+(0,i)$.
	\end{lem}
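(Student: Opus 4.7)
The plan is to construct a fixed-point-free involution $\iota$ on $X/\mathcal{R}$ that commutes with the given $H$-action and whose projection via the quotient $\pi \colon X/\mathcal{R} \to X/\mathcal{Q}$ is the $(0,1)$-action of $G \times \mathbb{Z}_2$. Given such an $\iota$, the desired $(H \times \mathbb{Z}_2)$-action is $(h, i) \cdot y = h \cdot \iota^{i}(y)$; the subordinacy relation $\rho'(h, i) = \rho(h) + (0, i)$ is then built in by construction, and it remains only to check freeness.

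To construct $\iota$, I would work separately on the preimages $Y_Z = \pi^{-1}(Z)$ of each orbit $Z \subseteq X/\mathcal{Q}$ of the $\{0\} \times \mathbb{Z}_2$-factor. Because $\rho(H_1) \subseteq \{0\} \times \mathbb{Z}_2$, each $Y_Z$ is $H_1$-invariant, and the hypothesis provides that the free $H_1$-action partitions $Y_Z$ into an even number $2k$ of orbits. I would pair these as $(O_j, O_j')_{j=1}^{k}$, arranged so that each pair jointly projects onto the full $\mathbb{Z}_2$-orbit $Z$; for each pair I pick base points $p_j \in O_j$ and $q_j \in O_j'$ with $\pi(q_j) = (0,1) \cdot \pi(p_j)$, and set $\iota(h_1 \cdot p_j) = h_1 \cdot q_j$ together with $\iota(h_1 \cdot q_j) = h_1 \cdot p_j$ for all $h_1 \in H_1$. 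Freeness of the $H_1$-action makes this well-defined, and $\iota|_{Y_Z}$ is by construction an $H_1$-equivariant involution projecting to the $(0,1)$-action on $Z$. To extend $\iota$ to all of $X/\mathcal{R}$, I pick one representative $Y_Z$ in each $H$-orbit of such fibers and propagate $\iota$ via $\iota(h \cdot y) = h \cdot \iota(y)$ for $h \in H$; $H_1$-equivariance on each representative fiber ensures consistency of this extension.

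The main obstacle is the pairing step when $(0,1) \notin \rho(H_1)$: in that case every $H_1$-orbit in $Y_Z$ lies above a single point of $Z$, and one needs equal numbers of $H_1$-orbits above each of the two points of $Z$ in order to cross-pair. This is exactly where the even-count hypothesis enters, combined with the presence of an element $h \in H$ with $\rho(h)$ in $G \times \{1\}$ that identifies the two sides of $Z$ as $H_1$-sets. Once a suitable pairing is fixed, choosing representatives so that $O_j$ and $O_j'$ lie in distinct $H$-orbits whenever possible, the freeness of the extended $(H \times \mathbb{Z}_2)$-action follows: the $(h, 0)$ case reduces to freeness of the original $H$-action, while the $(h, 1)$ case reduces to $\iota(y) \neq h \cdot y$, which holds because $\iota$ exchanges pairs of distinct $H_1$-orbits not connected by any single element of the original $H$-action.
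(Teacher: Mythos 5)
The paper does not prove this lemma; it cites it verbatim as Lemma~46 of \cite{FRW} and applies it as a black box, so there is no internal proof to compare against. I will therefore assess your argument on its own terms.

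Your overall architecture is sound and is the natural one: produce a fixed-point-free involution $\iota$ of $X/\mathcal{R}$ that commutes with $H$ and projects via $\pi\colon X/\mathcal{R}\to X/\mathcal{Q}$ to the $(0,1)$-action, then set $(h,i)\cdot y=h\cdot\iota^{i}(y)$. Commutation with $H$ makes this a group action, the subordinacy formula $\rho'(h,i)=\rho(h)+(0,i)$ is built in, and freeness reduces to $\iota(y)\notin Hy$. Two small remarks here. First, for the reduction you should record that the $G\times\mathbb{Z}_2$ action on $X/\mathcal{Q}$ is free (implicit in this paper via specification (A7)): with that, any $h\in H$ carrying a point of $\pi^{-1}(Z)$ to another point of $\pi^{-1}(Z)$ necessarily has $\rho(h)\in\{0\}\times\mathbb{Z}_2$, i.e.\ $h\in H_1$. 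Second, this same observation makes your ``choosing representatives so that $O_j$ and $O_j'$ lie in distinct $H$-orbits whenever possible'' hedge unnecessary: distinct $H_1$-orbits inside $\pi^{-1}(Z)$ automatically lie in distinct $H$-orbits, so once $O_j\neq O_j'$ freeness is guaranteed.

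The genuine gap is precisely in the pairing step you flag as ``the main obstacle.'' When $(0,1)\notin\rho(H_1)$, each $H_1$-orbit in $\pi^{-1}(Z)$ projects to a single point of $Z=\{p,(0,1)p\}$, and you need equally many $H_1$-orbits over $p$ as over $(0,1)p$ in order to cross-pair. You assert this follows from the even-count hypothesis ``combined with the presence of an element $h\in H$ with $\rho(h)$ in $G\times\{1\}$ that identifies the two sides of $Z$ as $H_1$-sets.'' This does not work. If $\rho(h)=(g,1)$ with $g\neq 0$ (and $g=0$ is excluded by the case assumption, since it would put $(0,1)$ in $\rho(H_1)$), then $h$ sends $\pi^{-1}(p)$ to $\pi^{-1}((g,1)p)$, which lies over a $\mathbb{Z}_2$-orbit $Z'\neq Z$. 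So $h$ compares $\pi^{-1}(Z)$ with $\pi^{-1}(Z')$, not the two halves of $\pi^{-1}(Z)$ with each other; there is in fact no element of $H$ that carries $\pi^{-1}(p)$ to $\pi^{-1}((0,1)p)$ in this case, since such an element would have $\rho$-image $(0,1)$. Nor does the parity hypothesis by itself force equal halves: an even total can be $1+3$. Indeed $|\pi^{-1}(p)|=|\pi^{-1}((0,1)p)|$ is a \emph{necessary} condition for any subordinate $\mathbb{Z}_2$-extension, and it must be imported as an additional assumption; in the applications in this paper it is supplied by the uniformity specification (Q6), which makes every fiber of $\pi$ have cardinality $2^{4e(n)}$. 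Your write-up needs to invoke that balance (or the corresponding hypothesis in the full FRW statement) explicitly rather than derive it from a balancing element of $H$, because no such element exists.
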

	
	We now list specifications on the group actions.
	\begin{itemize}
		\item[(A7)]  $G_{s}^{n}$ acts freely on $\mathcal{W}_{n}/\mathcal{Q}_{s}^{n}$
		and the $G_{s}^{n}$ action is subordinate to the $G_{s-1}^{n}$ action
		on $\mathcal{W}_{n}/\mathcal{Q}_{s-1}^{n}$ via the canonical homomorphism
		$\rho_{s,s-1}^{(n)}:G_{s}^{n}\to G_{s-1}^{n}$.
		\item[(A8)]  Suppose $M(s)<n$, $\sigma_{m}$ and $\sigma_{n}$ are consecutive
		elements of $\mathcal{T}$ and we view $G_{s}^{n}=G_{s}^{m}\oplus H$.
		Then the action of $G_{s}^{m}$ on $\mathcal{W}_{m}/\mathcal{Q}_{s}^{m}$
		is extended to an action on $\mathcal{W}_{n}/\mathcal{Q}_{s}^{n}$
		by the skew diagonal action. 
	\end{itemize}
	\begin{rem}
		\label{rem:Closed-under-skew} In particular, in the above situation
		with $M(s)<n$ and $\sigma_{m},\sigma_{n}$ consecutive elements of
		$\mathcal{T}$ both specifications together yield that $\mathcal{W}_{n}/\mathcal{Q}_{s}^{n}$
		is closed under the skew diagonal action by $G_{s}^{m}$. Clearly,
		this also holds if we view each element in $\mathcal{W}_{n}/\mathcal{Q}_{s}^{n}$
		as a sequence in the alphabet $\left(\mathcal{W}_{M(s)}/\mathcal{Q}_{s}^{M(s)}\right)^{\ast}$.
	\end{rem}

	\section{\label{sec:Isom}Infinite branches give isomorphisms}
	
	As in \cite{FRW} the specifications on our transformations stated
	in the previous section suffice to build an isomorphism between $\Psi(\mathcal{T})$
	and $\Psi(\mathcal{T})^{-1}$ in case that the tree $\mathcal{T}$
	has an infinite branch.
	\begin{lem}
		\label{lem:siso}Let $s\in\mathbb{N}$ and $g\in G_{s}^{m}$ for some
		$m\in\mathbb{N}$. Suppose that $g$ has odd parity. Then there is 
		a shift-equivariant isomorphism $\eta_{g}:\mathbb{K}_{s}\to rev(\mathbb{K}_{s})$ canonically associated
		to $g$. 
		
		Moreover, if $s'>s$ and $g^{\prime}\in G_{s'}^{n}$ for some $n\geq m$
		with $\rho_{s',s}(g^{\prime})=g$, then $\pi_{s',s}\circ\eta_{g^{\prime}}=\eta_{g}\circ\pi_{s',s}$.
	\end{lem}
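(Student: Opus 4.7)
The plan is to construct $\eta_g$ by applying $g$ to the $m$-block structure of $x$ at any sufficiently large level $n\geq m$ with $\sigma_n\in\mathcal{T}$. Fix $x\in\mathbb{K}_s$; the $n$-block of $x$ containing position $0$ is an equivalence class $[w_n]_s\in (\mathcal{W}_n)_s^{\ast}$ located at an interval $[-k_n,-k_n+h_n)$ in $x$. By specifications (A7) and (A8) together with Remark \ref{rem:Closed-under-skew}, the skew diagonal action of $g$ produces $g\cdot[w_n]_s\in\mathcal{W}_n/\mathcal{Q}_s^n$; since $g$ has odd parity, this action reverses the sequence of $m$-blocks composing $w_n$ while simultaneously applying the $G_s^m$-action to each $m$-block. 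I would then define $\eta_g(x)$ so that its $n$-block at the same interval $[-k_n,-k_n+h_n)$ equals $rev(g\cdot[w_n]_s)$, which lies in $rev((\mathcal{W}_n)_s^{\ast})$. In explicit form, writing $w_n=w_{n,1}w_{n,2}\dots w_{n,h_n/h_m}$ as a concatenation of $m$-blocks, the $m$-block of $\eta_g(x)$ at the sub-interval occupied by $w_{n,i}$ equals $g\cdot[w_{n,i}]_s$; this is the content of \eqref{eq:giso}.

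The first step is to verify that this definition is independent of the choice of level $n$. If $n'>n$ are both levels of $\mathcal{T}$, then by (E2) the $n'$-block of $x$ parses into $n$-blocks, and by iterated application of (Q5) the equivalence relation $\mathcal{Q}_s^{n'}$ is naturally the product of $\mathcal{Q}_s^{n}$. The skew diagonal action of $g$ at level $n'$ restricts, on the sub-interval occupied by our $n$-block, to the skew diagonal action at level $n$, once the intermediate reversal is accounted for; composing with the outer reversal then produces the same assignment. The second step is shift-equivariance: a shift of $x$ by one either decreases $k_n$ by one inside the same $n$-block or advances to an adjacent $n$-block. The pointwise prescription $w_{n,i}\mapsto g\cdot[w_{n,i}]_s$ assigns the same $m$-block to each $m$-block position regardless of the shift, so $\eta_g\circ sh=sh\circ\eta_g$ follows directly; the reversal built into the skew diagonal action precisely cancels the orientation change that would otherwise obstruct commutation with the forward shift.

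For the isomorphism property, $\eta_g(x)$ lands in $rev(\mathbb{K}_s)$ by construction since its $n$-blocks lie in $rev((\mathcal{W}_n)_s^{\ast})$, which is the construction sequence for $rev(\mathbb{K}_s)$. Continuity follows because each basic cylinder $\langle[w]_s,k\rangle$ is sent bijectively onto a basic cylinder in $rev(\mathbb{K}_s)$. Since $g$ is self-inverse in the group of involutions $G_s^m$, applying the prescription twice unwinds both the $G_s^m$-action and the two reversals, yielding the identity, so $\eta_g$ is a bijection. Measure-preservation follows from shift-equivariance combined with the uniqueness of the non-atomic shift-invariant ergodic measure supported on the uniform construction sequence for $rev(\mathbb{K}_s)$ (Fact \ref{fact:MeasureConstrSeq}).

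Finally, for the naturality statement, let $g'\in G_{s'}^n$ with $\rho_{s',s}(g')=g$; since $\rho_{s',s}$ preserves parity, $g'$ is also odd. By (A7) the $G_{s'}^n$-action on $\mathcal{W}_n/\mathcal{Q}_{s'}^n$ is subordinate to the $G_s^n$-action on $\mathcal{W}_n/\mathcal{Q}_s^n$ via $\rho_{s',s}$, so applying $\pi_{s',s}$ to $g'\cdot[w]_{s'}$ yields $g\cdot[w]_s$. Reversal manifestly commutes with $\pi_{s',s}$, and unwinding the definitions at the $n$-block level produces $\pi_{s',s}\circ\eta_{g'}=\eta_g\circ\pi_{s',s}$. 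The main obstacle I expect is verifying the independence of the level $n$: one must track precisely how the skew diagonal action at a deeper level restricts through the product equivalence relation (Q5) to the induced action at shallower levels, and confirm that the reversal-and-substitute prescription yields consistent output at every level.
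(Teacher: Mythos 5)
Your proposal is correct and follows essentially the same path as the paper: define $\eta_g$ by applying $g$ block-by-block via \eqref{eq:giso}, verify that the image lies in $rev((\mathcal{W}_n)_s^*)$ using the closure of $(\mathcal{W}_n)_s^*$ under the skew diagonal action (Remark \ref{rem:Closed-under-skew}), interpret the resulting map on construction sequences as a shift-equivariant map on cylinder sets, and deduce the coherence with $\pi_{s',s}$ from specification (A7). Your additional elaborations (independence of the reference level $n$, continuity, bijectivity from $g^2=\mathrm{id}$, measure-preservation via uniqueness of the ergodic measure) are sound and flesh out points the paper's terse proof leaves implicit, though the ``obstacle'' you flag at the end -- consistency of the block-replacement prescription across levels -- resolves routinely once one observes that $\eta_g = rev\circ(g\cdot)$ is compatible with passing to finer decompositions because $rev$ of a concatenation reverses both the sub-block order and each sub-block internally, which exactly undoes the reversal built into the skew diagonal action.
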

	
	\begin{proof}
		We recall from Section \ref{subsec:Relations} that $\left((\mathcal{W}_{m})_{s}^{\ast}\right)_{m\geq M(s)}$
		defines a construction sequence for $\mathbb{K}_{s}$. Similarly,
		$\left(rev\left((\mathcal{W}_{m})_{s}^{\ast}\right)\right)_{m\geq M(s)}$
		is a construction sequence for $rev(\mathbb{K}_{s})$. For
		$n>m$ there is $K\in\mathbb{N}$ such that we can write any element
		$[w]_{s}^{\ast}\in(\mathcal{W}_{n})_{s}^{\ast}$ as $[w]_{s}^{\ast}=[w_{0}]_{s}^{\ast}[w_{1}]_{s}^{\ast}\dots[w_{K-1}]_{s}^{\ast}$
		with $[w_{i}]_{s}^{\ast}\in(\mathcal{W}_{m})_{s}^{\ast}$ for $i=0,\dots,K-1$.
		By specification (A8) the group action is given by the skew diagonal
		action and, hence, we have
		\[
		g\left([w_{0}]_{s}^{\ast}[w_{1}]_{s}^{\ast}\dots[w_{K-1}]_{s}^{\ast}\right)=g[w_{K-1}]_{s}^{\ast}g[w_{K-2}]_{s}^{\ast}\dots g[w_{0}]_{s}^{\ast}.
		\]
		As pointed out in Remark \ref{rem:Closed-under-skew}, $(\mathcal{W}_{n})_{s}^{\ast}$
		is closed under the skew diagonal action. Thus $g[w_{K-1}]_{s}^{\ast}g[w_{K-2}]_{s}^{\ast}\dots g[w_{0}]_{s}^{\ast}\in(\mathcal{W}_{n})_{s}^{\ast}$,
		which implies $g[w_{0}]_{s}^{\ast}g[w_{1}]_{s}^{\ast}\dots g[w_{K-1}]_{s}^{\ast}\in rev\left((\mathcal{W}_{n})_{s}^{\ast}\right)$.
		Hence the map 
		\begin{equation} \label{eq:giso}
			[w_{0}]_{s}^{\ast}[w_{1}]_{s}^{\ast}\dots[w_{K-1}]_{s}^{\ast}\mapsto g[w_{0}]_{s}^{\ast}g[w_{1}]_{s}^{\ast}\dots g[w_{K-1}]_{s}^{\ast}   
		\end{equation}
		is an invertible map from the construction
		sequence for $\mathbb{K}_{s}$ to the construction
		sequence for $rev(\mathbb{K}_{s})$. It can also be interpreted as a shift-equivariant map from cylinder sets in $\mathbb{K}_{s}$ to cylinder sets located in the same position in $rev(\mathbb{K}_{s})$.  This yields the isomorphism in the 
		first assertion.
		
		
		The second assertion follows from specification (A7), which says that the
		action by $g^{\prime}$ is subordinate to the action by $g$ via the
		homomorphism $\rho_{s',s}$. 
	\end{proof}
	
	\begin{rem} \label{rem:etag}
		Let $s\in\mathbb{N}$ and $g\in G_{s}^{m}$ for some
		$m\in\mathbb{N}$. Suppose that $g$ has odd parity. Then $g$ yields
		a shift-equivariant isomorphism $\eta_{g}:\mathbb{K}_{s}\to rev(\mathbb{K}_{s})$ as in Lemma \ref{lem:siso}. For every $n \geq m$ it also induces a map $\mathcal{W}_n/\mathcal{Q}^n_s \to rev(\mathcal{W}_n)/\mathcal{Q}^n_s$ that we denote by $\eta_g$ as well.
	\end{rem}
	
	In the following we call a sequence of isomorphisms $\zeta_{s}$ between
	$\mathbb{K}_{s}$ and $rev(\mathbb{K}_{s})$ \emph{coherent} if $\pi_{s+1,s}\circ\zeta_{s+1}=\zeta_{s}\circ\pi_{s+1,s}$
	for every $s\in\mathbb{N}$.
	\begin{lem}
		\label{lem:iso}Let $\left(\zeta_{s}\right)_{s\in\mathbb{N}}$ be a
		coherent sequence of isomorphisms between $\mathbb{K}_{s}$ and $rev(\mathbb{K}_{s})$.
		Then there is an isomorphism $\zeta:\mathbb{K}\to rev(\mathbb{K})$
		such that $\pi_{s}\circ\zeta=\zeta_{s}\circ\pi_{s}$ for every $s\in\mathbb{N}$.
	\end{lem}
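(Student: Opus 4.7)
The plan is to realize the system $(\mathbb{K},\nu,sh)$ as a measure-theoretic inverse limit of the factor tower $\{(\mathbb{K}_s,\nu_s,sh)\}_{s\in\mathbb{N}}$ with connecting maps $\pi_{s+1,s}$, and likewise for $(rev(\mathbb{K}),rev(\nu),sh)$. A coherent sequence of isomorphisms between corresponding levels of two such inverse systems lifts canonically to an isomorphism of the inverse limits, and this lift is forced to satisfy $\pi_s\circ\zeta=\zeta_s\circ\pi_s$. The ingredients that make this work are already in place: Lemma \ref{lem:algebra} tells us that $\bigcup_s\mathcal{H}_s$ generates $\mathcal{B}(\mathbb{K})$, and Lemma \ref{lem:subalgebra}(1) tells us that $\bigcup_s\mathcal{H}_s$ separates points on a full-measure shift-invariant set $L\subseteq\mathbb{K}$. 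The same construction applied to the reversed construction sequence $\left(rev(W_n)\right)_{n\in\mathbb{N}}$ produces an analogous full-measure shift-invariant set $L^{\ast}\subseteq rev(\mathbb{K})$.

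For the pointwise construction, fix $x\in L$ and set $y_s:=\zeta_s(\pi_s(x))\in rev(\mathbb{K}_s)$. Coherence of $(\zeta_s)$ gives
\[
\pi_{s+1,s}(y_{s+1})=\pi_{s+1,s}\circ\zeta_{s+1}(\pi_{s+1}(x))=\zeta_s\circ\pi_{s+1,s}(\pi_{s+1}(x))=\zeta_s(\pi_s(x))=y_s,
\]
so $(y_s)_{s\in\mathbb{N}}$ is a compatible thread in the inverse system $(rev(\mathbb{K}_s),\pi_{s+1,s})$. I claim that for $\nu$-a.e.\ $x\in L$ there is a unique $\zeta(x)\in L^{\ast}$ with $\pi_s(\zeta(x))=y_s$ for every $s$. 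Uniqueness is immediate from the separation property on $L^{\ast}$. Existence follows from identifying distributions: the map $x\mapsto(\pi_s(x))_{s}$ pushes $\nu$ forward to a measure on $\prod_s\mathbb{K}_s$ concentrated on compatible threads and determined by $(\nu_s)$ together with the factor maps (by Lemma \ref{lem:algebra}), and likewise $y\mapsto(\pi_s(y))_{s}$ pushes $rev(\nu)$ forward to the analogous inverse-limit measure on $\prod_s rev(\mathbb{K}_s)$. Applying $(\zeta_s)$ coordinatewise is a measure-preserving bijection between these two inverse-limit measures (since each $\zeta_s$ is measure-preserving and coherence guarantees the image lands in the compatible-thread set), so the pushforward of $\nu$ under $(\zeta_s\circ\pi_s)_s$ coincides with the pushforward of $rev(\nu)$ under $(\pi_s)_s$, and thus for $\nu$-a.e.\ $x\in L$ the thread $(y_s)_s$ is realized by some (unique) $y\in L^{\ast}$.

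Define $\zeta$ by this recipe on the set where it is well-defined, and extend arbitrarily on the remaining null set. Measure preservation is built in, shift-equivariance follows from the shift-equivariance of each $\zeta_s$ together with $sh\circ\pi_s=\pi_s\circ sh$, and $\pi_s\circ\zeta=\zeta_s\circ\pi_s$ is the defining relation. Running the same argument with $(\zeta_s^{-1})_{s\in\mathbb{N}}$, which is also a coherent sequence (between $rev(\mathbb{K}_s)$ and $\mathbb{K}_s$), produces a measurable inverse, so $\zeta$ is an isomorphism. The main technical obstacle is the existence claim in the second paragraph: one must verify that the pushforward of $\nu$ under $(\pi_s)_s$ equals the inverse-limit measure determined by the $(\nu_s)$, which reduces precisely to Lemma \ref{lem:algebra} (the cylinder sets from the $\mathcal{H}_s$ generate $\mathcal{B}(\mathbb{K})$), and the corresponding statement for $rev(\mathbb{K})$.
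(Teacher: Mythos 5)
Your proposal is correct and follows essentially the same route as the paper: both invoke coherence to lift the sequence $(\zeta_s)$ to an isomorphism at the inverse-limit level, use Lemma \ref{lem:algebra} (the $\mathcal{H}_s$ generate $\mathcal{B}(\mathbb{K})$) for the extension step, and use Lemma \ref{lem:subalgebra}(1) (separation of points on the full-measure set $L$) to realize the map pointwise. The only difference is presentational: the paper works at the level of $\sigma$-algebras (isomorphism of generated subalgebras, extend by generation, then realize pointwise), whereas you build the pointwise map directly from compatible threads and verify measure-preservation via the pushforward identity; the underlying content is the same.
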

	
	\begin{proof}
		Since the isomorphisms $\zeta_{s}$ cohere, their inverse limit defines
		a measure-preserving isomorphism between the subalgebra of $\mathcal{B}\left(\mathbb{K}\right)$
		generated by $\bigcup_{s}\mathcal{H}_{s}$ and the subalgebra of $\mathcal{B}\left(rev(\mathbb{K})\right)$
		generated by $\bigcup_{s}rev(\mathcal{H}_{s})$. By Lemma \ref{lem:algebra}
		this extends uniquely to a measure-preserving isomorphism $\tilde{\zeta}$
		between $\mathcal{B}\left(\mathbb{K}\right)$ and $\mathcal{B}\left(rev(\mathbb{K})\right)$.
		Then by part (1) of Lemma \ref{lem:subalgebra} we can find sets $D\subset\mathbb{K}$,
		$D'\subset rev(\mathbb{K})$ of measure zero such that $\tilde{\zeta}$
		determines a shift-equivariant isomorphism $\zeta$ between $\mathbb{K}\setminus D$
		and $rev(\mathbb{K})\setminus D'$.
	\end{proof}
	Now we are ready to prove the first half of Proposition \ref{prop:criterion}.
	\begin{proof}[Proof of part (1) in Proposition \ref{prop:criterion}]
		Suppose that $\mathcal{T}\in\mathcal{T}\kern-.5mm rees$ has an infinite branch.
		Then $G_{\infty}(\mathcal{T})$ has an element $g$ of odd parity
		according to Fact \ref{fact:oddElement}. By Lemma \ref{lem:siso} we obtain
		a coherent sequence of isomorphisms $\zeta_s \coloneqq \eta_{\rho_{s}(g)}$ between
		$\mathbb{K}_{s}$ and $rev(\mathbb{K}_{s})$. Hence, Lemma \ref{lem:iso}
		yields an isomorphism between $\mathbb{K}$ and $rev(\mathbb{K})$.
		Since $rev(\mathbb{K})$ is isomorphic to $\mathbb{K}^{-1}$, we conclude
		that $\mathbb{K}\cong\mathbb{K}^{-1}$.
	\end{proof}
	
	\section{\label{sec:Feldman}Feldman Patterns}
	
	In \cite{Fe} Feldman constructed the first example of an ergodic
	zero-entropy automorphism that is not loosely Bernoulli. The construction
	is based on the idea that no pair of the following strings 
	\begin{align*}
		abababab\\
		aabbaabb\\
		aaaabbbb
	\end{align*}
	can be matched very well. In our construction we will concatenate
	variants of his blocks that we call \emph{Feldman patterns}. 
	
	We start with an observation that follows along the lines of Lemma
	6.5 and Remark 6.6 in \cite{GeKu}, whose proofs were inspired by
	\cite[Theorem 4]{Fe} and \cite[Proposition 1.1, p.79]{ORW}.
	\begin{lem}
		\label{lem:DifferentT}Suppose $a_{1},a_{2},\dots,a_{N}$ are distinct
		symbols in an alphabet $\Sigma.$ Let $M,S,T,j,k\in\mathbb{Z}^{+}$,
		$S\geq T$, $M\geq j>k$, and
		\begin{align*}
			B_{j}=\left(a_{1}^{SN^{2j}}a_{2}^{SN^{2j}}\dots a_{N}^{SN^{2j}}\right)^{N^{2(M+1-j)}}, &  & B_{k}= & \left(a_{1}^{TN^{2k}}a_{2}^{TN^{2k}}\dots a_{N}^{TN^{2k}}\right)^{N^{2(M+1-k)}}.
		\end{align*}
		Suppose $B$ and $\overline{B}$ are strings of consecutive symbols
		in $B_{j}$ and $B_{k},$ respectively, where $|B|\ge SN^{2M+2}$.
		Assume that $N\ge20$. Then 
		\[
		\overline{f}(B,\overline{B})>1-\frac{4}{\sqrt{N}}.
		\]
	\end{lem}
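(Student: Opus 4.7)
The plan is to bound from above the size of any best possible match $\mathcal{M}$ between $B\subseteq B_j$ and $\overline{B}\subseteq B_k$ by counting, for each maximal constant run in $B$, how many positions of $\overline{B}$ can be used. First I would decompose $B$ into its maximal constant runs $R_1,R_2,\dots,R_p$; each $R_i$ is a substring of a run in $B_j$, so has length at most $SN^{2j}$, and all but possibly the first and the last have length exactly $SN^{2j}$. This immediately gives
\[
p\;\le\;\frac{|B|}{SN^{2j}}+2.
\]

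Next, fix $i$ and let $a$ denote the symbol of $R_i$. Any matched position in $R_i$ is sent by $\mathcal{M}$ to a position of $\overline{B}$ also carrying $a$, and order-preservation forces these images into pairwise disjoint subintervals $W_1,\dots,W_p$ of $\overline{B}$, so $\sum_i|W_i|\le|\overline{B}|$. Since $\overline{B}$ lies in $B_k$, where the letter $a$ occupies one run of length $TN^{2k}$ per period of length $TN^{2k+1}$, the number of $a$-positions inside any interval of length $L$ is at most $L/N+2TN^{2k}$. Writing $m_{R_i}$ for the number of matches in $R_i$, this yields $m_{R_i}\le|W_i|/N+2TN^{2k}$, and summing over $i$
\[
|\mathcal{M}|\;\le\;\frac{|\overline{B}|}{N}+2pTN^{2k}\;\le\;\frac{|\overline{B}|}{N}+\frac{2T|B|}{SN^{2(j-k)}}+4TN^{2k}.
\]

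Now $T\le S$ and $j-k\ge1$ give $2T|B|/(SN^{2(j-k)})\le 2|B|/N^{2}$, while $T\le S$, $k\le M-1$, and $|B|\ge SN^{2M+2}$ give $4TN^{2k}\le 4SN^{2M-2}\le 4|B|/N^{4}\le 2|B|/N^{2}$. Hence $|\mathcal{M}|\le|\overline{B}|/N+4|B|/N^{2}$. Dividing by $(|B|+|\overline{B}|)/2$ and subtracting from $1$ one obtains $\overline{f}(B,\overline{B})\ge 1-2/N-8/N^{2}$; since $2+8/N\le 4\sqrt{N}$ for $N\ge 20$, this is larger than $1-4/\sqrt{N}$.

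I do not expect any serious obstacle: the argument is elementary counting. The only point requiring care is the bookkeeping of partial runs at the two ends of $B$ and of partial periods at the two ends of $\overline{B}$, which produce the $+2$ in the bound on $p$ and the additive $2TN^{2k}$ in the per-run count. Both boundary contributions are absorbed by the hypothesis $|B|\ge SN^{2M+2}$ together with the factor $N^{2(j-k)}\ge N^{2}$ separating the two run lengths $SN^{2j}$ and $TN^{2k}$.
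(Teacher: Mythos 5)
Your proof is correct, and it takes a genuinely different route from the paper's. The paper trims the partial runs at the two ends of $B$ (at a cost of $4/N^{2}$ via Fact \ref{fact:omit_symbols}), writes the remainder as complete runs $C_{1},\dots,C_{r}$ of length $SN^{2j}$, takes the corresponding decomposition $\overline{C}_{1},\dots,\overline{C}_{r}$ of $\overline{B}$ under a best match, and then runs a dichotomy on each piece: either $|\overline{C}_{i}|<\tfrac{3}{2\sqrt{N}}|C_{i}|$, so Fact \ref{fact:string_length} forces $\overline{f}(C_{i},\overline{C}_{i})>1-3/\sqrt{N}$, or else $\overline{C}_{i}$ contains more than $\sqrt{N}$ complete cycles of $B_{k}$ and only a $1/N$-fraction of its symbols can equal the constant symbol of $C_{i}$; the conclusion then follows from Fact \ref{fact:substring_matching}. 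The threshold $\tfrac{3}{2\sqrt{N}}$ is what produces the $\sqrt{N}$ in the stated bound. You instead bound $|\mathcal{M}|$ globally: each maximal run of $B$ is matched into a window of $\overline{B}$, the windows are pairwise disjoint intervals by order-preservation, and the density of any fixed symbol in an interval of $B_{k}$ is $1/N$ up to an additive boundary term $2TN^{2k}$ per window; the accumulated boundary error $2pTN^{2k}$ is absorbed using $TN^{2k}\le SN^{2j}/N^{2}$ (from $T\le S$, $j>k$) and $|B|\ge SN^{2M+2}$. This yields the sharper estimate $\overline{f}(B,\overline{B})\ge 1-2/N-8/N^{2}$, which implies the stated $1-4/\sqrt{N}$ for $N\ge20$. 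Both arguments are elementary counting; yours avoids the case split and gives a better constant, at the cost of arguing directly about order-preserving matches rather than invoking the paper's packaged facts. The two points worth double-checking in your write-up both hold: adjacent runs of $B_{j}$ carry distinct symbols (so the maximal-run decomposition really has $p\le|B|/(SN^{2j})+2$), and the convex hulls $W_{i}$ of the image sets are genuinely pairwise disjoint because a match is strictly increasing in both coordinates.
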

	
	\begin{proof}
		By removing fewer than $2SN^{2j}$ symbols from the beginning and
		end of $B,$ we can decompose the remaining part of $B$ into strings
		$C_{1},C_{2},\dots,C_{r}$ each of the form $a_{i}^{SN^{2j}}.$ Since
		$2SN^{2j}\le2\frac{|B|+|\overline{B}|}{N^{2}},$ it follows from Fact
		\ref{fact:omit_symbols} that removing these symbols increases the
		$\overline{f}$ distance between $B$ and $\overline{B}$ by less
		than $\frac{4}{N^{2}}.$ Let $\overline{C}_{1},\overline{C}_{2},\dots,\overline{C}_{r}$
		be the decomposition of $\overline{B}$ into substrings corresponding
		to $C_{1},C_{2},\dots,C_{r}$ under a best possible match between
		$C_{1}C_{2}\cdots C_{r}$ and $\overline{B}.$
		
		Let $i\in\{1,2,\dots,r\}.$
		\begin{casenv}
			\item $|\overline{C}_{i}|<\frac{3}{2\sqrt{N}}|C_{i}|$. Then
			$\overline{f}(C_{i},\overline{C}_{i})>1-\frac{3}{\sqrt{N}}$.
			\item $|\overline{C}_{i}|\ge\frac{3}{2\sqrt{N}}|C_{i}|=\frac{3}{2}SN^{2j-(1/2)}$.
			The length of a cycle $a_{1}^{TN^{2k}}a_{2}^{TN^{2k}}\cdots a_{N}^{TN^{2k}}$
			in $B_{k}$ is at most $TN^{2j-1}.$ Therefore $\overline{C}_{i}$
			contains at least $\lfloor\frac{3\sqrt{N}}{2}\rfloor-1>\sqrt{N}$
			complete cycles. Thus deleting any partial cycles at the beginning
			and end of $\overline{C}_{i}$ would increase the $\overline{f}$
			distance between $C_{i}$ and $\overline{C}_{i}$ by less than $\frac{2}{\sqrt{N}}.$
			On the rest of $\overline{C}_{i}$, only $\frac{1}{N}$ of the symbols
			can match the symbol in $C_{i}.$ Thus $\overline{f}(C_{i},\overline{C}_{i})>1-\frac{2}{\sqrt{N}}-\frac{2}{N}>1-\frac{3}{\sqrt{N}}.$
		\end{casenv}
		Therefore, by Fact \ref{fact:substring_matching}, $\overline{f}(C_{1}C_{2}\cdots C_{r},\overline{C}_{1},\overline{C}_{2}\cdots,\overline{C}_{r})>1-\frac{3}{\sqrt{N}}.$ Hence $\overline{f}(B,\overline{B})>1-\frac{3}{\sqrt{N}}-\frac{4}{N^2}>1-\frac{4}{\sqrt{N}}.$
	\end{proof}
	Let $T,N,M\in\mathbb{Z}^{+}$. A $(T,N,M)$-Feldman pattern in building
	blocks $A_{1},\dots,A_{N}$ of equal length $L$ is one of the strings
	$B_{1},\dots,B_{M}$ that are defined by
	\begin{eqnarray*}
		B_{1}= &  & \left(A_{1}^{TN^{2}}A_{2}^{TN^{2}}\dots A_{N}^{TN^{2}}\right)^{N^{2M}}\\
		B_{2}= &  & \left(A_{1}^{TN^{4}}A_{2}^{TN^{4}}\dots A_{N}^{TN^{4}}\right)^{N^{2M-2}}\\
		\vdots &  & \vdots\\
		B_{M}= &  & \left(A_{1}^{TN^{2M}}A_{2}^{TN^{2M}}\dots A_{N}^{TN^{2M}}\right)^{N^{2}}
	\end{eqnarray*}
	
	Thus $N$ denotes the number of building blocks, $M$ is the number
	of constructed patterns, and $TN^2$ gives the minimum number of consecutive occurrences
	of a building block. We also note that each building block $A_{i}$,
	$1\leq i\leq N$, occurs $TN^{2M+2}$ many times in each pattern.
	Every block $B_{j}$, $1\leq j\leq M$, has total length $TN^{2M+3}L$.
	Moreover, we notice that $B_{j}$ is built with $N^{2(M+1-j)}$ many
	so-called \emph{cycles}: Each cycle winds through all the $N$ building
	blocks.
	
	
	We conclude this analysis with a statement on the $\overline{f}$-distance between
	different $(T,N,M)$-Feldman patterns.
	\begin{prop}[Distance between different Feldman patterns in $\overline{f}$]
		\label{prop:Feldman} Let $N\geq20$, $M\geq2$, and $B_{j}$, $1\leq j\leq M$,
		be the $(T,N,M)$-Feldman patterns in the building blocks $A_{1},\dots,A_{N}$
		of equal length $L$. Assume that $\alpha\in\left(0,\frac{1}{7}\right)$,
		$R\ge 2$, and $\overline{f}(C,D)>\alpha$, for all substrings $C$ and
		$D$ consisting of consecutive symbols from $A_{i_{1}}$ and $A_{i_{2}}$,
		respectively, where $i_{1}\neq i_{2}$, with $|C|,|D|\geq\frac{L}{R}$. 
		
		Then for all $j,k\in\left\{ 1,\dots,M\right\} $, $j\neq k$, and
		all sequences $B$ and $\bar{B}$ of at least $TN^{2M+2}L$ consecutive
		symbols from $B_{j}$ and $B_{k}$, respectively, we have 
		\begin{equation}
			\overline{f}\left(B,\bar{B}\right)\geq\alpha-\frac{4}{\sqrt{N}}-\frac{1}{R}.
		\end{equation}
	\end{prop}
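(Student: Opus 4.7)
The plan is to combine the purely symbolic estimate of Lemma \ref{lem:DifferentT} with the Symbol by Block Replacement Lemma \ref{lem:symbol by block replacement}. By the symmetry of $\overline{f}$ I may assume $j>k$. Each pattern $B_{i}$ is a concatenation of the building blocks $A_{1},\dots,A_{N}$, so after trimming off at most $L$ symbols from each end of $B$ and $\bar{B}$ I obtain strings $\tilde{B}=A_{a_{1}}\cdots A_{a_{n}}$ and $\tilde{\bar{B}}=A_{b_{1}}\cdots A_{b_{m}}$ that are exact concatenations of complete $A_{i}$-blocks. Since $|B|,|\bar{B}|\geq TN^{2M+2}L$, Fact \ref{fact:omit_symbols} guarantees that this trimming changes the $\overline{f}$-distance by at most $O\bigl(1/(TN^{2M+2})\bigr)$, which is negligible compared to $1/\sqrt{N}$ for $N\geq 20$ and $M\geq 2$.

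The index sequences $a_{1}\cdots a_{n}$ and $b_{1}\cdots b_{m}$ are consecutive substrings of the symbolic patterns
\[
\tilde{B}_{j}=\bigl(1^{TN^{2j}}\,2^{TN^{2j}}\,\dots\,N^{TN^{2j}}\bigr)^{N^{2(M+1-j)}}\quad\text{and}\quad\tilde{B}_{k}=\bigl(1^{TN^{2k}}\,2^{TN^{2k}}\,\dots\,N^{TN^{2k}}\bigr)^{N^{2(M+1-k)}},
\]
of symbolic lengths $|\tilde{B}|/L$ and $|\tilde{\bar{B}}|/L$; in particular the string coming from $B_{j}$ has symbolic length at least $TN^{2M+2}$. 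Applying Lemma \ref{lem:DifferentT} with $S=T$ (so the condition $S\geq T$ is immediate) and the pair $j>k$ therefore gives
\[
\overline{f}\bigl(a_{1}\cdots a_{n},\,b_{1}\cdots b_{m}\bigr)>1-\frac{4}{\sqrt{N}}.
\]

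The hypothesis of Lemma \ref{lem:symbol by block replacement} is exactly the assumption of the present proposition on $\overline{f}(C,D)$ for substrings $C\subseteq A_{i_{1}}$, $D\subseteq A_{i_{2}}$ of length $\geq L/R$ when $i_{1}\neq i_{2}$. Applying that lemma to $\tilde{B}$ and $\tilde{\bar{B}}$ yields
\[
\overline{f}(\tilde{B},\tilde{\bar{B}})>\alpha\,\overline{f}(a_{1}\cdots a_{n},\,b_{1}\cdots b_{m})-\frac{1}{R}>\alpha\Bigl(1-\frac{4}{\sqrt{N}}\Bigr)-\frac{1}{R}.
\]
Combining this with the trimming estimate and using $\alpha<1$ to absorb the factor $\alpha$ in front of $4/\sqrt{N}$ (together with the tiny $O(1/(TN^{2M+2}))$ trimming loss) gives $\overline{f}(B,\bar{B})\geq\alpha-4/\sqrt{N}-1/R$, as required. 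I do not anticipate any essential difficulty: the proposition is a direct assembly of the two preceding lemmas, with the only bookkeeping being the end-trimming of $B$ and $\bar{B}$ into complete $A_{i}$-blocks before invoking symbol-by-block replacement.
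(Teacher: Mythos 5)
Your proof is correct and takes essentially the same route as the paper, which simply invokes Lemma \ref{lem:symbol by block replacement} together with Lemma \ref{lem:DifferentT} (with $S=T$); you spell out the end-trimming and the absorption of the slack $(1-\alpha)\cdot 4/\sqrt{N}$ more explicitly than the paper does, but the argument is the same.
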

	
	\begin{proof}
		Using Lemma \ref{lem:symbol by block replacement} the statement follows
		from Lemma \ref{lem:DifferentT} with $S=T$.
	\end{proof}
	
	In our iterative construction process, that we present in Sections \ref{sec:Substitution} and \ref{sec:Construction}, we substitute Feldman patterns of finer equivalence classes of words into Feldman patterns of coarser classes. In Section \ref{sec:Non-Equiv} we need estimates on the $\overline{f}$ distance between such different Feldman patterns of equivalence classes even under finite coding. Since we do not know the precise words being coded but just their equivalence classes, the following Coding Lemma will prove useful, especially in Lemma \ref{lem:distDiff} and Lemma \ref{lem:BadCoding0}.
	
	\begin{rem*}
		The Coding Lemma below can be applied in the case $A=B_{k}$ and $B=B_{j}$,
		where $B_{k}$ and $B_{j}$ are as in Lemma \ref{lem:DifferentT}, with $k\ne j,$ except
		in the definition of $B_{j}$, the $a_{i}$'s are replaced by $b_{i}$'s,
		and we assume $T=S.$ In case $j>k,$ each $\Lambda_{im}$ in the
		Coding Lemma is equal to $N^{2(j-k)-1}$ cycles in $B_{k},$ and the
		permutations are the identity, while $ \Gamma_{im}=b_{m}^{TN^{2j}}.$
		If $k>j,$ each substring $\Lambda_{i1}\Lambda_{i2}\cdots\Lambda_{iN}$
		of $B_{k}$ actually consists of repetitions of a single symbol, and
		we again take $\Gamma_{im}=b_{m}^{TN^{2j}}.$ The permutations in
		the Coding Lemma allow for the more general situation that occurs
		in Lemma \ref{lem:Occurrence-Substitutions} and Remark \ref{rem:Substitution-Dagger} below.
	\end{rem*}
	
	\begin{lem}[Coding Lemma]\label{lem:CodingLemma} 
		Let $B_{m \ell}^{(i)}$, $1\le i\le  p,$ $1\le m\le N$, $1\le \ell\le q,$
		and $A_{i\ell}$, $1\le i\le p$, $1\le \ell\le q$, be blocks of symbols,
		with each block of length $L.$ Assume $q\geq N>2^{16}.$ Suppose $R_{1}\ge2,$
		$\alpha\in(0,1/7),$ and for all substrings $C$ and $D$ consisting
		of consecutive symbols in $B_{m\ell}^{(i)}$ and $B_{m'\ell'}^{(i')},$
		respectively, with $|C|,|D|\ge L/R_{1},$ we have $\overline{f}(C,D)\ge\alpha$
		if $m\ne m'.$ For $1\le i\le p$, $1\le m\le N,$ let $\Lambda_{im}$
		be a permutation (depending on $i$ and $m$) of the strings $A_{i1},A_{i2},\dots,A_{iq},$ and
		let $\Gamma_{im}=B_{m1}^{(i)}B_{m2}^{(i)}\cdots B_{mq}^{(i)}.$
		Let 
		\[
		A=\Lambda_{11}\Lambda_{12}\cdots\Lambda_{1N}\Lambda_{21}\Lambda_{22}\cdots\Lambda_{2N}\cdots\Lambda_{p1}\Lambda_{p2}\cdots\Lambda_{pN}
		\]
		and 
		\[
		B=\Gamma   _{11}\Gamma_{12}\cdots\Gamma_{1N}\Gamma_{21}\Gamma_{22}\cdots\Gamma_{2N}\cdots\Gamma_{p1}\Gamma_{p2}\cdots\Gamma_{pN}.
		\]
		Suppose $R_{2}\ge2$ and $\overline{A}$,$\overline{B}$ are strings
		of consecutive symbols in $A$,$B$, respectively, each of length at
		least $pqNL/R_{2}.$ Then 
		\[
		\overline{f}(\overline{A},\overline{B})\ge\alpha\left[\frac{1}{8}-\frac{2}{\sqrt[4]{N}}\right]-\frac{1}{R_{1}}-\frac{4R_{2}}{p}.
		\]
	\end{lem}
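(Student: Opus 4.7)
The strategy is to (i) trim $\overline{A}$ and $\overline{B}$ to unions of complete macro-blocks, (ii) decompose a best possible match along the $\Upsilon$-structure of $\overline{B}$ using Fact~\ref{fact:substring_matching}, and (iii) bound each piece using the repetition of the $\Lambda_{im}$'s together with the hypothesis on $B$-blocks, via a reduction to Lemma~\ref{lem:symbol by block replacement} at the $L$-block level.

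For (i), I would extend $\overline{A}$ and $\overline{B}$ by at most one full macro-block (of length $qNL$) at each end so that both become unions of complete macro-blocks $\Lambda_{i1}\cdots\Lambda_{iN}$ and $\Upsilon_{i1}\cdots\Upsilon_{iN}$. Since $|\overline{A}|,|\overline{B}| \ge zqNL/R_2$ while the total length added is at most $4qNL$, Fact~\ref{fact:omit_symbols} applies with $\gamma \le 2R_2/z$, so the $\overline{f}$-distance is changed by at most $4R_2/z$, which is the final error term in the claimed bound. For (ii), fix a best possible match $\mathcal{M}$ of the aligned strings and parse $\overline{B}$ at its $\Upsilon$-block boundaries (each of length $qL$); Fact~\ref{fact:substring_matching} produces the matching parsing of $\overline{A}$ into pieces $\tilde\Lambda^{(\ell)}$ and the identity
\[
\overline{f}(\overline{A},\overline{B}) \;=\; \sum_\ell v_\ell\,\overline{f}\bigl(\tilde\Lambda^{(\ell)},\Upsilon^{(\ell)}\bigr),
\]
so it suffices to give a uniform lower bound on each summand.

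For (iii), I would distinguish cases on the length of $\tilde\Lambda^{(\ell)}$. If $|\tilde\Lambda^{(\ell)}|$ differs substantially from $|\Upsilon^{(\ell)}|=qL$, Fact~\ref{fact:string_length} yields a strong lower bound on $\overline{f}(\tilde\Lambda^{(\ell)},\Upsilon^{(\ell)})$. Otherwise $\tilde\Lambda^{(\ell)}$ is a contiguous window in $\overline{A}$ of length about $qL$ spanning essentially one $\Lambda_{im}$, so it comprises roughly $q$ $L$-blocks drawn (in some permuted order) from the multiset $\{A_{ij}\}_{j=1}^q$, while $\Upsilon^{(\ell)}=\Upsilon_{i'm}$ consists of $q$ $B$-blocks all with first index $m$. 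A triangle-type consequence of the hypothesis -- if $A_{ij}$ were $\overline{f}$-close (below $\alpha/2$) to substrings of $B$-blocks of two distinct first indices of length at least $L/R_1$, the two $B$-substrings would be forced to be $\overline{f}$-closer than $\alpha$, contradicting the hypothesis -- shows that each $A_{ij}$ is $\overline{f}$-close to $B$-blocks of at most one first index. Hence only a $\sim 1/N$ fraction of the $A$-blocks in any window $\tilde\Lambda^{(\ell)}$ are ``compatible'' with the first index of $\Upsilon^{(\ell)}$; the incompatible blocks contribute at least $\alpha$ to the $\overline{f}$-distance at the $L$-block level, and feeding this into Lemma~\ref{lem:symbol by block replacement} applied inside the pair $(\tilde\Lambda^{(\ell)},\Upsilon^{(\ell)})$ (with block length $L$, taking the role of $\alpha$ in that lemma, and the role of $R$ played by $R_1$) yields $\overline{f}(\tilde\Lambda^{(\ell)},\Upsilon^{(\ell)}) \ge \alpha\bigl(\tfrac18 - \tfrac{2}{\sqrt[4]{N}}\bigr) - \tfrac{1}{R_1}$. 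Summing against the weights $v_\ell$ and combining with Step (i) gives the lemma.

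\emph{Main obstacle.} The quantitative bookkeeping in step (iii) is the hardest part: making precise the compatibility argument without a strict triangle inequality for $\overline{f}$, handling $A_{ij}$'s that are not close to \emph{any} $B$-block, controlling the combinatorics of arbitrary permutations $\Lambda_{im}$, and extracting the explicit constant $1/8$ together with the $2/\sqrt[4]{N}$ correction. The appearance of the exponent $1/4$ (rather than $1/2$ as in Lemma~\ref{lem:DifferentT}) strongly suggests that a two-scale case analysis is needed -- one threshold comparing $|\tilde\Lambda^{(\ell)}|$ with $qL$ and a second one distinguishing compatible from incompatible blocks inside those windows -- and the constants must be balanced so the two layers produce $1/N^{1/4}$ losses rather than $1/\sqrt{N}$ ones.
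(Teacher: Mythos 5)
Your step (i) --- trimming $\overline{A},\overline{B}$ to complete macro-blocks at a cost of $4R_2/z$ --- matches the paper, and your guess that the $N^{-1/4}$ comes from a two-scale analysis is correct. But the core of the lemma is precisely the part you flag as the ``main obstacle,'' and your proposed resolution via a triangle-type argument at the $L$-block level does not close. The difficulty is not only that $\overline{f}$ has no triangle inequality for strings of unequal length; even after restricting to fixed-length windows, the scheme gives no transparent route to the constant $1/8$, and handling $A_{ij}$'s that are close on some length-$\ge L/R_1$ window to $B$-substrings with several different first indices is exactly the combinatorics your sketch defers.

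The device the paper uses, and which your proposal is missing, is to replace the $L$-blocks by abstract symbols \emph{before} running any $\overline{f}$-argument: one introduces symbols $a_{ij}$ and $b_{mj}^{(\ell)}$ with the hard constraint $b_{mj}^{(\ell)} \ne b_{m'j'}^{(\ell')}$ whenever $m\ne m'$. Since $\mathcal{A}_{ij}$ is a permutation of $q$ distinct symbols, if a window $\overline{\mathcal{A}}_{ij}$ of length $\ge q/R_3$ satisfies $\overline{f}\le 1/8$ against a window of $\mathcal{B}_m^{(\ell)}$, then at least $7/9$ of its symbols literally \emph{equal} some $b_{m\cdot}^{(\ell)}$. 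Disjointness of the sets $\{b_{m1}^{(\ell)},\dots,b_{mq}^{(\ell)}\}$ across $m$ then gives, by pigeonhole, at most $9R_3/7$ exceptional values of $m$; for the rest the symbol-level $\overline{f}$ exceeds $1/8$. This counting uses exact equality of symbols, not any approximate metric property, and $1/8$ is just the threshold making the count work. Taking $R_3 = \sqrt[4]{N}$ and a second threshold $R_4 = \sqrt{N}$ at the next scale (inside $\mathcal{A}_{i1}\cdots\mathcal{A}_{iN}$ against $\mathcal{B}_1^{(\ell)}\cdots\mathcal{B}_N^{(\ell)}$), and then lifting back to $L$-blocks via Lemma~\ref{lem:symbol by block replacement}, produces exactly $\alpha\bigl[\tfrac{1}{8} - \tfrac{2}{\sqrt[4]{N}}\bigr] - \tfrac{1}{R_1} - \tfrac{4R_2}{z}$. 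The missing idea in your proposal is this symbolization step, which converts soft $\overline{f}$-closeness into hard equality of labels so that the pigeonhole is elementary.
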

	
	\begin{proof}
		We begin by considering, instead of strings of symbols $B_{m\ell}^{(i)}$
		and $A_{i\ell},$ individual symbols, $b_{m\ell}^{(i)},  \text{\ 1\ensuremath{\le i\le p,} } 1\le m\le N, \text{\ 1\ensuremath{\le \ell\le q,}}$
		such that $b_{m\ell}^{( i)}\ne b_{m'\ell'}^{(i')}$ for $m\ne m',$
		and $a_{i{\ell}},$ $1\le i\le p,$ $1\le \ell\le q.$ Let $\lambda_{im}$
		be the permutation of $a_{i1},a_{i2},\dots , a_{iq}$ that is analogous to the permutation $\Lambda_{im}$ of $A_{i1},A_{i2},\dots, A_{iq}$ and let $ \gamma_{m}^{( i)}=b_{m1}^{(i)}b_{m2}^{(i)}\cdots b_{mq}^{(i)}$.
		Let $R_{3}=\sqrt[4]{N}$ and $R_{4}=\sqrt{N}.$
		
		If a substring $\overline{\lambda}_{i m}$ of $\lambda_{im}$
		of length at least $q/R_{3}$ and a substring $\overline{\gamma}^{(i')}_{m'}$
		of $\gamma^{(i')}_{m'}$ satisfy $\overline{f}(\overline{\lambda}_{im},\overline{\gamma}^{(i')}_{m'})\le1/8,$
		then at least 7/9 of the symbols in $\overline{\lambda}_{im}$
		are equal to a symbol in $\{b_{m'1}^{(i')},b_{m'2}^{(i')},\dots,b_{m'q}^{(i')}\}.$ 
		Then at least $7q/(9R_{3})$ symbols among $a_{i1},a_{i2},\dots , a_{iq}$
		are elements of $\{b_{m'1}^{(i')},b_{m'2}^{(i')},\dots,b_{m'q}^{(i')}\}.$ For fixed $i$,
		this can happen for at most $9R_{3}/7$ values of $m'$ (which ones depends on $i$).  For the other
		values of $m',$ and all values of $m$ and $i'$,  $\overline{f}(\overline{\mathcal{\lambda}}_{im},\overline{\gamma}^{(i')}_{m'})>1/8.$
		
		For the moment, we fix $i$ and $i'$, and suppose $\overline{\mathcal{A}}_{i}$ and $\overline{\mathcal{B}}^{({i'})}$ are substrings consisting 
		of consecutive symbols in $\lambda_{i1}\lambda_{i2}\cdots\lambda_{iN}$
		and $\gamma_1^{(i')}\gamma_2^{(i')}\cdots\gamma_N^{(i')}$,
		respectively, both of length at least $qN/R_{4}.$ Then there are
		at least $(N/R_{4})-2$ complete $\lambda_{im}$'s in $\overline{\mathcal{A}}_{i}$
		and at least $(N/R_{4})-2$ complete $\gamma_{m'}^{(i')}$'s
		in $\overline{\mathcal{B}}^{(i')}.$ By Fact \ref{fact:omit_symbols}, if we remove the incomplete strings
		at the beginning and end of $\overline{\mathcal{A}}_{i}$ and $\overline{\mathcal{B}}^{(i')}$
		we increase the $\overline{f}$ distance by at most $4R_{4}/N.$ We
		apply Lemma \ref{lem:symbol by block replacement} with $\alpha=1/8$ to the resulting complete strings
		and then compensate by subtracting $4R_{4}/N.$ Thus 
		\[
		\overline{f}(\overline{\mathcal{A}}_{i},\overline{\mathcal{B}}^{( i')})>\frac{1}{8}\left(1-\frac{9R_{3}/7}{(N/R_{4})-2}\right)-\frac{1}{R_{3}}-\frac{4R_{4}}{N}>\frac{1}{8}-\frac{7/6}{\sqrt[4]{N}}-\frac{4}{\sqrt{N}}=:\beta.
		\]
		If $\mathcal{A}_{i}=\lambda_{i1}\lambda_{i2}\cdots\lambda_{iN}$ and $\mathcal{B}^{(i')}=\gamma_1^{(i')}\gamma_2^{(i')}\cdots\gamma_N^{(i')}$ for $1\le i,i'\le p$, then by Lemma \ref{lem:symbol by block replacement},
		\begin{equation}
			\overline{f}(\mathcal{A}_{n_1}\mathcal{A}_{n_1+1}\cdots\mathcal{A}_{n_1+n_2},\mathcal{B}^{(n_3)}\mathcal{B}^{(n_3+1)}\cdots\mathcal{B}^{( n_3+n_4)})>\beta-\frac{1}{R_{4}},\label{eq:coding}
		\end{equation}
		for $1\le n_1\le n_1+n_2\le p$ and $1\le n_3\le n_3+n_4\le p$.
		
		Suppose $\overline{A}$ and $\overline{B}$ are as in the statement
		of the lemma. We will apply Lemma \ref{lem:symbol by block replacement} again, now regarding $\mathcal{A}_{ n_1}\mathcal{A}_{ n_1+1}\cdots\mathcal{A}_{ n_1+n_2}$
		and $\mathcal{B}^{( n_3)}\mathcal{B}^{(n_3+1)}\cdots\mathcal{B}^{(n_3+n_4)}$
		as indices on the blocks in $\overline{A}$ and $\overline{B}.$ By
		removing any incomplete sequence of blocks at the beginning and end
		of $\overline{A}$ and $\overline{B},$ we may assume that $\overline{A}$
		consists of complete sequences $\Lambda_{i1}\Lambda_{i2}\cdots\Lambda_{iN}$
		and $\overline{B}$ consists of complete sequences $\Gamma_{i'1}\Gamma_{i'2}\cdots\Gamma_{i'N},$
		thereby possibly increasing the $\overline{f}$ distance by at most
		$4R_{2}/{p}.$ By Lemma \ref{lem:symbol by block replacement} and (\ref{eq:coding}), $\overline{f}(\overline{A},\overline{B})>\alpha(\beta-\frac{1}{\sqrt{N}})-\frac{1}{R_{1}}-\frac{4R_{2}}{ p}>\alpha(\frac{1}{8}-\frac{2}{\sqrt[4]{N}})-\frac{1}{R_{1}}-\frac{4R_{2}}{p}.$ 
	\end{proof}
	
	\section{\label{sec:Substitution}A General Substitution Step}
	
	In this section we describe a step in our iteration of substitutions
	in a very general framework. The substitution will have the following
	initial data: 
	\begin{itemize}
		\item An alphabet $\Sigma$ and a collection of words $X\subset\Sigma^{\mathfrak{h}}$
		\item Equivalence relations $\mathcal{P}$ and $\mathcal{R}$ on $X$ with
		$\mathcal{R}$ refining $\mathcal{P}$
		\item Groups of involutions $G$ and $H$ with distinguished generators
		\item A homomorphism $\rho:H\to G$ that preserves the distinguished generators.
		We denote the range of $\rho$ by $G^{\prime}$ and its kernel by
		$H_{0}$ with cardinality $|H_{0}|=2^{t}$ for some $t\in\mathbb{N}$.
		\item A free $G$ action on $X/\mathcal{P}$ and a free $H$ action on $X/\mathcal{R}$
		such that the $H$ action is subordinate to the $G$ action via $\rho$.
		In particular, for each $k\in\mathbb{Z}^{+}$ the skew diagonal actions
		of $G$ on $\left(X/\mathcal{P}\right)^{k}$ and $H$ on $\left(X/\mathcal{R}\right)^{k}$
		are defined. 
		\item There are $N$ different equivalence classes in $X/\mathcal{P}$ denoted
		by $[A_{i}]_{\mathcal{P}}$, $i=1,\dots,N$, where $N=2^{\nu+N'}$
		with $N',\nu\in\mathbb{N}$.
		\item Each equivalence class $[A_{i}]_{\mathcal{P}}$ contains $2^{4e}$
		elements of $X/\mathcal{R}$, where $e\in\mathbb{Z}^{+}$ with $e\geq \max(2,t)$.
		We subdivide these $\mathcal{R}$ classes into tuples as follows.
		Pick an arbitrary class $[A_{1}]_{\mathcal{P}}$ and take a set $\left\{ \left[A_{1,1}\right]_{\mathcal{R}},\dots,\left[A_{1,2^{4e-t}}\right]_{\mathcal{R}}\right\} $
		of $\mathcal{R}$ classes in $[A_{1}]_{\mathcal{P}}$ that intersects
		each orbit of the $H_{0}$ action exactly once. This yields the first
		tuple $\left(\left[A_{1,1}\right]_{\mathcal{R}},\dots,\left[A_{1,2^{4e-t}}\right]_{\mathcal{R}}\right)$.
		Then we obtain $2^{t}-1$ further tuples as the images $\left(h\left[A_{1,1}\right]_{\mathcal{R}},\dots,h\left[A_{1,2^{4e-t}}\right]_{\mathcal{R}}\right)$
		for each $h\in H_{0}\setminus\left\{ \text{id}\right\} $. Hereby,
		we have divided the $2^{4e}$ elements of $X/\mathcal{R}$ in $[A_{1}]_{\mathcal{P}}$
		into $2^{t}$ many tuples such that each tuple intersects each orbit
		of the $H_{0}$ action exactly once and the tuples are images of each
		other under the action by $H_{0}$. In the next step, we choose one
		element $h\in H$ in each of the $H/H_{0}$ cosets and apply each such $h$ to the tuples in $[A_{1}]_{\mathcal{P}}$
		to obtain tuples in $\rho(h)[A_{1}]_{\mathcal{P}}$. If there are any $[A_{j}]_{\mathcal{P}}$'s
		that are not in the $G'$ orbit of $[A_{1}]_{\mathcal{P}}$, then we repeat
		the procedure for such a $[A_{j}]_{\mathcal{P}}$, etc. Our procedure guarantees
		that the choices of elements in the $H_{0}$ orbits are consistent
		between different $[A_{i}]_{\mathcal{P}}$ equivalence classes, so that the
		action of $H$ sends tuples to tuples. We let the $u$-th tuple in $\left[A_i\right]_{\mathcal{P}}$
		be written as 
		\[
		\left(\left[A_{i,u2^{4e-t}+1}\right]_{\mathcal{R}},\left[A_{i,u2^{4e-t}+2}\right]_{\mathcal{R}},\dots,\left[A_{i,(u+1)2^{4e-t}}\right]_{\mathcal{R}}\right),
		\]
		where $u\in\{0,\dots,2^t-1\}.$
		\item For some $R\ge 2$
		and some $\alpha\in\left(0,\frac{1}{8}\right)$
		we have 
		\begin{equation}
			\overline{f}(A,\bar{A})\geq\alpha\label{eq:assumpCoarserClass}
		\end{equation}
		for any substantial substrings $A$ and $\bar{A}$ of at least $\mathfrak{h}/R$
		consecutive symbols in any representatives of two different $\mathcal{P}$-equivalence
		classes, that is, representatives of $\left[A_{i_{1},j_{1}}\right]_{\mathcal{R}}$
		and $\left[A_{i_{2},j_{2}}\right]_{\mathcal{R}}$ for $i_{1}\neq i_{2}$
		and any $j_{1},j_{2}\in\left\{ 1,\dots,2^{4e}\right\} $.
		\item For some $\beta\in(0,\alpha]$ we have 
		\begin{equation}
			\overline{f}(A,\bar{A})\geq\beta\label{eq:assumpFinerClasses}
		\end{equation}
		for any substantial substrings $A$ and $\bar{A}$ of at least $\mathfrak{h}/R$
		consecutive symbols in any representatives of two different $\mathcal{R}$-equivalence
		classes, that is, representatives of $\left[A_{i,j_{1}}\right]_{\mathcal{R}}$
		and $\left[A_{i,j_{2}}\right]_{\mathcal{R}}$, respectively, for $j_{1}\neq j_{2}$.
	\end{itemize}
	\begin{rem*}
			While \eqref{eq:assumpCoarserClass} gives a lower bound on the $\overline{f}$ distance on substantial substrings of different $\mathcal{P}$ classes in $\Omega$, we have a lower bound $\beta\leq \alpha$ for different equivalence classes of the finer relation $\mathcal{R}$ in \eqref{eq:assumpFinerClasses}.
	\end{rem*}
	
	Let $K,\overline{T}\in\mathbb{Z}^{+}$ and $\tilde{R}\ge 2$ be given. The number $\tilde{R}$
	will be used in Proposition~\ref{prop:finer} below to quantify substantial  
	substrings of newly constructed concatenations. Then in Section \ref{sec:Construction} we specify the
	size of $\tilde{R}$ needed for the application of Proposition \ref{prop:finer}. We also suppose that
	there are numbers $M,P,U\in\mathbb{Z}^{+}$, where $\nu\cdot(2M+3)\geq\nu+N'$
	and $U$ is a multiple of $2^{t}$ such that 
	\begin{equation}
		U\geq2\tilde{R}^{2}.\label{eq:U}
	\end{equation}
	Finally, let $\overline{M}\in\mathbb{Z}^{+}$ with
	\begin{equation}
		\overline{M}\geq K\cdot P\cdot U\cdot2^{\nu\cdot(2M+3)}.\label{eq:M2}
	\end{equation}
	Hereby, we define the numbers
	\begin{equation}
		T\coloneqq \overline{T}\cdot2^{(4e-t)\cdot(2\overline{M}+3)},\label{eq:T1}
	\end{equation}
	\begin{equation}
		\overline{U}\coloneqq U\cdot2^{\nu\cdot(2M+3)},\label{eq:U2}
	\end{equation}
	and
	\begin{equation}
		k=U\cdot T\cdot2^{\nu\cdot(2M+3)}.\label{eq:Lstep}
	\end{equation}
	Note that $k$ is a multiple of $N=2^{\nu+N'}$ by our assumption $\nu\cdot(2M+3)\geq\nu+N'$.
	
	Suppose we have a collection $\Omega\subset\left(X/\mathcal{P}\right)^{k}$
	of cardinality $|\Omega|=P$ that satisfies the following properties:
	\begin{itemize}
		\item $\Omega$ is closed under the skew diagonal action of $G$,
		\item each $\omega\in\Omega$ is a concatenation of $U$ many
		different $\left(T,2^{\nu},M\right)$-Feldman patterns  as described in Section \ref{sec:Feldman}, each
		of which is constructed out of a tuple consisting of $2^{\nu}$ many
		$[A_{i}]_{\mathcal{P}}$, 
		\item each $[A_{i}]_{\mathcal{P}}\in X/\mathcal{P}$ occurs exactly $\frac{k}{N}$
		times in each $\omega\in\Omega$.
	\end{itemize}
	Then we construct a collection $S\subset\left(X/\mathcal{R}\right)^{k}$
	of substitution instances of $\Omega$ as follows.
	\begin{enumerate}
		\item We start by choosing a set $\Upsilon\subset\Omega$ that intersects
		each orbit of the action by the group $G^{\prime}$ exactly once.
		\item We construct a collection
		of $\overline{M}$ many different $\left(\overline{T},2^{4e-t},\overline{M}\right)$-Feldman
		patterns, where the tuple of building blocks is to be determined in
		step (6). Note that each such pattern is constructed as a concatenation
		of $\overline{T}2^{(4e-t)\cdot(2\overline{M}+3)}$ many building blocks in total
		which motivates the definition of the number $T$ from above.
		\item By assumption on $\Omega$ we can subdivide each element $r\in\Upsilon$
		as a concatenation of $U2^{\nu\cdot(2M+3)}=\overline{U}$ strings
		of the form $[A_{i}]_{\mathcal{P}}^{T}$ and each $i\in\left\{ 1,\dots,N\right\} $
		occurs exactly $V\coloneqq\frac{1}{N}\overline{U}$ many times in this decomposition. 
		\item For each $r\in\Upsilon$ we choose $K$ different sequences of $\overline{U}$
		concatenations of different $\left(\overline{T},2^{4e-t},\overline{M}\right)$-Feldman
		patterns in ascending order from our collection in the second step
		and enumerate the sequences by $j\in\left\{ 1,\dots,K\right\} $.
		We require that each of the constructed patterns appears at most once,
		that is, we needed to construct at least $|\Upsilon|\cdot K\cdot \overline{U}$
		many different patterns in that step. Since $\abs{\Upsilon}\leq \abs{\Omega}=P$, this explains the condition
		on the number $\overline{M}$ in equation (\ref{eq:M2}). 
		\item We define a sequence $\psi=\left(\psi_{1},\dots,\psi_{V}\right)$ 
		with $V=\frac{1}{N}U2^{\nu\cdot(2M+3)}$, by 
		\[\psi_{v}=u\in\{0,\dots,2^t-1\}, \text{ where } u\equiv v\:\mod\:2^{t}.
		\]
		That is, the sequence $\psi$ cycles through the symbols
		in $\left\{ 0,\dots,2^{t}-1\right\}.$
		Since $U$ was chosen as a multiple of $2^{t}$ and $\frac{1}{N}2^{\nu\cdot(2M+3)}\in\mathbb{Z}$
		by the assumption $\nu\cdot(2M+3)\geq\nu+N'$, each symbol from
		$\left\{ 0,\dots,2^{t}-1\right\} $ occurs the same number $\frac{V}{2^{t}}$
		of times in the sequence $\psi$.
		\item Let $r\in\Upsilon$ and $j\in\left\{ 1,\dots,K\right\} $, and write 
		\[
		r=\left[A_{i(1)}\right]_{\mathcal{P}}^{T}\cdots\left[A_{i(\ell)}\right]_{\mathcal{P}}^{T}\cdots
		\left[A_{i(\overline{U})}\right]_{\mathcal{P}}^{T},
		\]
		where $i(1),\dots,i(\overline{U})\in\{1,\dots,N\}.$ If $i(\ell)=i_0$
		and this is the $m$th occurrence of $i_0$ in the sequence $i(1),\dots,i(\ell)$
		then we let $u=\psi_m$ and substitute a Feldman pattern built with the tuple 
		\[
		\left(\left[A_{i_0,u2^{4e-t}+1}\right]_{\mathcal{R}},\dots,\left[A_{i_0,(u+1)2^{4e-t}}\right]_{\mathcal{R}}\right)
		\]
		into $\left[A_{i_0}\right]_{\mathcal{P}}^{T}.$ 
		The Feldman pattern that is used is the $\ell$th pattern among the $\overline{U}$ 
		patterns previously chosen in step (4) for the given $r$ and $j$. We follow this procedure for each $\ell=1,\dots,\overline{U}$
		to obtain an element $s\in (X/R)^k$. Let $S$ be the collection of such $s\in (X/R)^k$
		obtained for all $r\in\Upsilon$ and $j\in\{1,\dots,K\}.$ Up to this point we have constructed $K$ different substitution instances for each $r\in \Upsilon$. In the later applications in Sections~\ref{subsec:Case-1} and~\ref{subsec:Case-2} the number $K$ will be chosen sufficiently large to produce the required number of substitution instances.
		
	\end{enumerate}
	Using this collection $S\subset\left(X/\mathcal{R}\right)^{k}$ we
	define 
	\[
	\Omega^{\prime}=HS,
	\]
	that is, $\Omega^{\prime}$ is the image of $S$ under the skew diagonal action by $H$.

		\begin{rem*}
			The substitution of Feldman patterns of finer equivalence
			classes into Feldman patterns of coarser classes in step (6) lies at the heart of our construction mechanism. It is visualized in Figure~\ref{fig:fig0}.
		\end{rem*}

	\begin{figure}
		\centering
		\includegraphics[width=\textwidth]{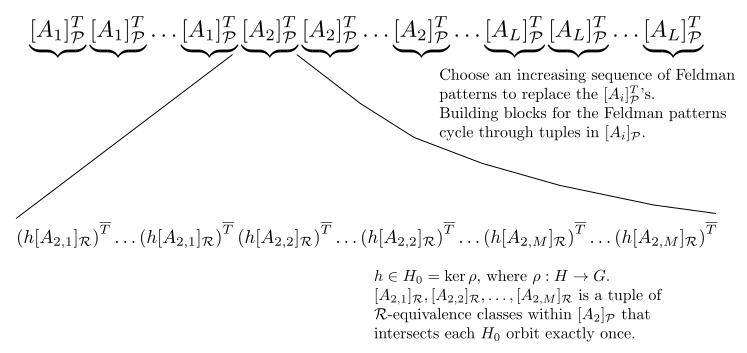}
		\caption{Visualization of the substitution step.}
		\label{fig:fig0}
	\end{figure}
	
	\begin{prop}
		\label{prop:PropertiesCollection}This collection $\Omega^{\prime}\subset\left(X/\mathcal{R}\right)^{k}$
		satisfies the following properties.
		\begin{enumerate}
			\item $\Omega^{\prime}$ is closed under the skew diagonal action by $H$.
			\item For each element in $\omega\in\Omega$ there are $K\cdot|H_{0}|$
			many substitution instances in $\Omega^{\prime}$. 
			\item Each element of $\Omega^{\prime}$ contains each $\left[A_{i,j}\right]_{\mathcal{R}}$
			the same number of times.
		\end{enumerate}
	\end{prop}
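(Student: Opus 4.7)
My plan is to treat the three assertions in order of increasing difficulty, relying throughout on the following general observation: since $\rho: H \to G$ preserves distinguished generators, any nontrivial element of $H_0 = \ker\rho$ must be a sum of an \emph{even} number of generators of $H$ (in order for the sum of their images in $G$ to vanish, each distinguished generator of $G$ must appear among the images an even number of times), so every element of $H_0$ has even parity, and its skew diagonal action on $(X/\mathcal{R})^k$ reduces to the ordinary diagonal action. Assertion (1) is then immediate from the definition $\Omega' = H \cdot S$, which already exhibits $\Omega'$ as a union of $H$-orbits under the skew diagonal action.

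For assertion (3), write $\Theta_j(r)$ for the substitution instance produced in step~(6) from $r \in \Upsilon$ with the $j$-th Feldman pattern sequence. I would count occurrences of a given $\left[A_{i,j'}\right]_{\mathcal{R}}$ inside a single $\Theta_j(r)$. Each $\left[A_i\right]_{\mathcal{P}}$ appears exactly $V = U_2/N$ times in $r$, and by the cycling structure of $\psi$ each of the $2^t$ tuples inside $\left[A_i\right]_{\mathcal{P}}$ is used exactly $V/2^t$ times. Each $(T_2, 2^{4e-t}, M_2)$-Feldman pattern contains each of its $2^{4e-t}$ building blocks exactly $C := T_2 \cdot 2^{(4e-t)(2M_2+2)}$ times, so every $\left[A_{i,j'}\right]_{\mathcal{R}}$ appears $(V/2^t) \cdot C$ times in $\Theta_j(r)$, a value independent of $(i,j')$. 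Passing from $\Theta_j(r)$ to $h \cdot \Theta_j(r)$ merely permutes the $\mathcal{R}$-class labels and possibly reverses the order of positions, preserving the uniform count.

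For assertion (2), fix $\omega \in \Omega$. Since $\Omega$ is $G$-invariant and $\Upsilon$ is a transversal for the $G'$-orbits in $\Omega$, there is a unique $r \in \Upsilon$ with $\omega \in G' \cdot r$, and by freeness of the $G$-action a unique $g' \in G'$ with $\omega = g' \cdot r$. A substitution instance of $\omega$ in $\Omega'$ is an element $h \cdot \Theta_j(r') \in \Omega'$ whose projection in $(X/\mathcal{P})^k$ is $\omega$; by subordination the projection equals $\rho(h) \cdot r'$, which matches $g' \cdot r$ if and only if $r' = r$ and $\rho(h) = g'$. The latter determines $h$ up to an $H_0$-coset of size $|H_0|$, yielding at most $K \cdot |H_0|$ candidates $(h, j)$.

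The main obstacle is to prove that these candidates produce distinct elements of $\Omega'$. Suppose $h_1 \cdot \Theta_{j_1}(r) = h_2 \cdot \Theta_{j_2}(r)$. Projecting forces $\rho(h_1) = \rho(h_2)$, so $h := h_2 h_1 \in H_0$ has even parity and acts diagonally, reducing the equation to $h \cdot \Theta_{j_1}(r) = \Theta_{j_2}(r)$. The key point is that the Feldman pattern \emph{type index} $\ell \in \{1,\dots,M_2\}$ occupying each position of $\Theta_j(r)$ is intrinsic to the repetition structure of the substituted block---it records the run lengths of consecutive identical building blocks---and is therefore invariant under relabeling of the building blocks by $h$. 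By the distinctness requirement imposed in step~(4), the sequences of type indices used in $\Theta_{j_1}(r)$ and $\Theta_{j_2}(r)$ share no common entry unless $j_1 = j_2$; hence $j_1 = j_2$, and then freeness of the $H$-action on $(X/\mathcal{R})^k$ (which holds here because $h \in H_0$ has even parity) applied to $h \cdot \Theta_{j_1}(r) = \Theta_{j_1}(r)$ forces $h = e$, whence $h_1 = h_2$.
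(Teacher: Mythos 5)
Your proof is correct and follows essentially the same approach as the paper's: part (1) is immediate from $\Omega' = HS$, part (2) rests on $\Upsilon$ being a $G'$-transversal combined with the $K$ substitution instances per $r \in \Upsilon$ and the $|H_0|$-coset fiber of $\rho$, and part (3) rests on the uniformity of $\psi$'s cycling through the $2^t$ tuples together with the uniformity of Feldman patterns in their building blocks. You supply genuinely useful detail that the paper's terse proof leaves implicit, most notably the explicit verification that the $K \cdot |H_0|$ candidate substitution instances of a fixed $\omega$ are pairwise distinct: your observation that every $h \in H_0 = \ker\rho$ has even parity (because $\rho$ sends canonical generators to canonical generators) reduces the skew diagonal action to the ordinary diagonal one so freeness applies, and the invariance of the Feldman type index under relabeling separates $j_1 \ne j_2$ via the step-(4) disjointness requirement.
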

	
	\begin{proof}
		The first statement follows from the definition of $\Omega^{\prime}$
		immediately. For part (2) we recall that $\Upsilon\subset\Omega$
		was chosen to intersect each orbit of the action by the group $G^{\prime}$
		exactly once and that for each element in $\Upsilon$ there are $K$
		many substitution instances in $S$. To see the third part we use
		that all the $[A_{i}]_{\mathcal{P}}\in X/\mathcal{P}$ occurred uniformly
		in each $\omega\in\Omega$. By construction of the sequence $\psi$
		in step (5) each tuple $\left(\left[A_{i,a+1}\right]_{\mathcal{R}},\dots,\left[A_{i,a+2^{4e-t}}\right]_{\mathcal{R}}\right)$
		occurs the same number of times as building blocks for $\left(\overline{T},2^{4e-t},\overline{M}\right)$-Feldman
		patterns substituted into strings $[A_{i}]_{\mathcal{P}}^{T}$.
		Since Feldman patterns are uniform in their building blocks, we conclude
		the third statement.
	\end{proof}
	In the following, we estimate the $\overline{f}$ distance of elements
	in $\Omega'$ that are equivalent with respect to the $\mathcal{P}$
	product equivalence relation but are not $\mathcal{R}$-equivalent. This gives a lower bound on the $\overline{f}$ distance of different $\mathcal{R}$ classes in $\Omega'$.
	We conclude that they are still approximately $\beta$ apart from
	each other in $\overline{f}$, where $\beta$ is as in (\ref{eq:assumpFinerClasses}).
	The $\overline{f}$ distance for strings of symbols in $\Sigma_{\mathcal{R}}$ will
	be denoted $\overline{f}_{\Sigma_{\mathcal{R}}}$. When there is no subscript on the
	$\overline{f}$, it is understood that the alphabet is $\Sigma$.
	\begin{prop}
		\label{prop:finer}Let $\omega_{1}^{\prime}$ and $\omega_{2}^{\prime}$
		be two different substitution instances in $\Omega^{\prime}$ of an
		element $\omega\in\Omega$. We consider $\omega_{1}^{\prime},\omega_{2}^{\prime}\in\Omega'\subset\left(X/\mathcal{R}\right)^{k}$
		as strings of length $k$ in the alphabet $\Sigma_{\mathcal{R}}\coloneqq\left\{ \left[A_{i,j}\right]_{\mathcal{R}}:1\leq i\leq N,\,1\leq j\leq2^{4e}\right\} $.
		Then we have 
		\begin{equation}
			\overline{f}_{\Sigma_{\mathcal{R}}}\left(W^{\prime},\overline{W}^{\prime}\right)>1-\frac{4}{2^{e}}-\frac{1}{\tilde{R}}\label{eq:PropFiner1}
		\end{equation}
		for any substrings $W^{\prime}$ and $\overline{W}^{\prime}$ of at
		least $k/\tilde{R}$ consecutive $\Sigma_{\mathcal{R}}$-symbols from
		$\omega_{1}^{\prime}$ and $\omega_{2}^{\prime}$, respectively.
		
		Moreover, we have 
		\begin{equation}
			\overline{f}\left(V,\overline{V}\right)>\beta-\frac{1}{R}-\frac{4}{2^{e}}-\frac{1}{\tilde{R}}\label{eq:PropFiner}
		\end{equation}
		for any substrings $V$, $\overline{V}$ of at least $k\mathfrak{h}/\tilde{R}$
		consecutive symbols in any representatives $W_{1}$ and $W_{2}$ of
		$\omega_{1}^{\prime}$ and $\omega_{2}^{\prime}$, respectively, in
		the alphabet $\Sigma$.
	\end{prop}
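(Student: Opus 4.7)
The plan is to establish~\eqref{eq:PropFiner1} first, working entirely in the alphabet $\Sigma_{\mathcal{R}}$, and then deduce~\eqref{eq:PropFiner} from it via the symbol-by-block replacement of Lemma~\ref{lem:symbol by block replacement}. Setting $N' = 2^{4e-t}$, I would view each $\omega_i'$ as a concatenation $\omega_i' = P_{i,1}P_{i,2}\cdots P_{i,U_2}$ of $(T_2,N',M_2)$-Feldman patterns in $\Sigma_{\mathcal{R}}$, each of length $T_1 = T_2\cdot N'^{2M_2+3}$. For any two positions $\ell$ and $\ell'$, $P_{1,\ell}$ and $P_{2,\ell'}$ are Feldman patterns whose building blocks are tuples of $\mathcal{R}$-classes inside the $\mathcal{P}$-classes prescribed by $\omega$. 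A short case analysis of the two ways $\omega_1'$ and $\omega_2'$ can differ (different $j\in\{1,\dots,K\}$ for the same $r\in\Upsilon$, or $h_1,h_2\in H$ differing by a nontrivial element of $H_0$, or both) shows that the underlying tuples are always either identical or disjoint, and that on the identical-tuple case the two Feldman patterns are forced by the distinctness condition in step~(4) of the construction to use different schemas $B_j$. Therefore, on substrings $C$, $D$ of length at least $T_1/N' = T_2 N'^{2M_2+2}$ taken from $P_{1,\ell}$ and $P_{2,\ell'}$, either the alphabets are disjoint and $\overline{f}_{\Sigma_{\mathcal{R}}}(C,D)=1$, or Lemma~\ref{lem:DifferentT} (applied with $N'$ in place of $N$) yields $\overline{f}_{\Sigma_{\mathcal{R}}}(C,D) > 1 - 4/\sqrt{N'}$.

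To assemble these positionwise bounds, I fix a best possible match $\mathcal{M}$ between $W'$ and $\overline{W}'$, use the Feldman-pattern boundaries in $\omega_1'$ together with their images under $\mathcal{M}$ to decompose $W'$ and $\overline{W}'$ into corresponding subpieces, and apply Fact~\ref{fact:substring_matching}. Subpieces on which one side has length less than $T_1/N'$ admit only the trivial bound, but their total fractional contribution is controlled by a length-ratio argument (based on Fact~\ref{fact:omit_symbols}) and is $O(1/N')\le O(1/2^{3e})$; the partial Feldman patterns at the two ends of $W'$ and $\overline{W}'$ contribute at most $2T_1/(k/\tilde R)=2\tilde R/U_2\le 1/\tilde R$ by the choice~\eqref{eq:U} of $U_1$. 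Since $4e-t\ge 2e$ (because $e\ge t$), one has $4/\sqrt{N'}\le 4/2^e$, so these three errors combine into the advertised $4/2^e + 1/\tilde R$, giving~\eqref{eq:PropFiner1}. For~\eqref{eq:PropFiner}, I would apply Lemma~\ref{lem:symbol by block replacement} to $W_1$ and $W_2$ with $\alpha=\beta$: the $\mathcal{R}$-classes play the role of symbols and their representative words in $\Sigma^{\mathfrak{h}}$ play the role of blocks, hypothesis~\eqref{eq:assumpFinerClasses} supplies the per-block distance lower bound, any substrings $V,\overline V$ of length at least $k\mathfrak h/\tilde R$ project to $\Sigma_{\mathcal{R}}$-substrings of length at least $k/\tilde R$, and plugging~\eqref{eq:PropFiner1} into the conclusion of the symbol-by-block lemma gives $\overline{f}(V,\overline V) > \beta(1-4/2^e-1/\tilde R) - 1/R$, from which the claimed inequality follows since $\beta\le 1$.

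The main obstacle is the bookkeeping in the combination step: one has to ensure that the within-pattern short-subpiece error and the partial-pattern boundary error at the two ends of $W'$ and $\overline{W}'$ both fit into the $4/2^e + 1/\tilde R$ budget, and this is precisely what forces the quadratic requirement $U_1\ge 2\tilde R^2$ in~\eqref{eq:U}. A secondary delicacy is the case analysis of substitution instances of a common $\omega\in\Omega$, which must simultaneously track the parity-dependent skew diagonal action of $H$ (which reverses the order of the Feldman patterns when the acting element has odd parity) and the cycling sequence $\psi$ that prescribes which of the $2^t$ tuples is used at each successive occurrence of a given $\mathcal{P}$-class.
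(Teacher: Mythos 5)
Your proposal follows essentially the same route as the paper's proof: the same dichotomy on how two substitution instances of $\omega$ can differ (entirely distinct Feldman schemas, versus identical schemas whose building tuples are moved to disjoint tuples by a nontrivial element of $H_{0}$), the same application of Lemma~\ref{lem:DifferentT} in the alphabet $\Sigma_{\mathcal{R}}$ combined with a best-match decomposition, length-ratio control of short and boundary pieces via \eqref{eq:U}, and the same final passage to the alphabet $\Sigma$ through Lemma~\ref{lem:symbol by block replacement} using \eqref{eq:assumpCoarserClass} and \eqref{eq:assumpFinerClasses}. The only (harmless) imprecision is your $O(1/N')$ accounting of the short-subpiece contribution: the paper's version of that bookkeeping, which first splits off the patterns whose matched image has length outside $[T_{1}/2^{e},2^{e}T_{1}]$, produces errors of order $1/2^{e}$ rather than $1/2^{3e}$, but these still fit within the stated bound $4/2^{e}+1/\tilde{R}$.
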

	
	\begin{proof}
		Let $W^{\prime}$ and $\overline{W}^{\prime}$ be any substrings of
		at least $k/\tilde{R}$ consecutive $\Sigma_{\mathcal{R}}$-symbols
		from $\omega_{1}^{\prime}$ and $\omega_{2}^{\prime}$, respectively.
		We subdivide both strings into sequences coming from the $\left(\overline{T},2^{4e-t},\overline{M}\right)$-Feldman
		patterns and we recall that their length is $\overline{T}2^{(4e-t)\cdot(2\overline{M}+3)}=T$.
		By adding fewer than $2T$ many $\Sigma_{\mathcal{R}}$-symbols
		to each of $W^{\prime}$ and $\overline{W}^{\prime}$ we can complete
		partial patterns at the beginning and end. Let $W_{aug}^{\prime}$
		and $\overline{W}_{aug}^{\prime}$ be the augmented $W^{\prime}$
		and $\overline{W}^{\prime}$ strings obtained in this way. By Fact
		\ref{fact:omit_symbols} and condition (\ref{eq:U}) we have 
		\begin{equation}
			\overline{f}_{\Sigma_{\mathcal{R}}}\left(W^{\prime},\overline{W}^{\prime}\right)>\overline{f}_{\Sigma_{\mathcal{R}}}\left(W_{aug}^{\prime},\overline{W}_{aug}^{\prime}\right)-\frac{2\tilde{R}}{U2^{v\cdot\left(2M+3\right)}}>\overline{f}_{\Sigma_{\mathcal{R}}}\left(W_{aug}^{\prime},\overline{W}_{aug}^{\prime}\right)-\frac{1}{\tilde{R}}.\label{eq:augment}
		\end{equation}
		We write $W_{aug}^{\prime}$ and $\overline{W}_{aug}^{\prime}$ as
		\begin{alignat*}{2}
			W_{aug}^{\prime}=P_{1}^{(p_{1},a_{1},k_{1})}\dots P_{s}^{(p_{s},a_{s},k_{s})},\;\; &  &  & \overline{W}_{aug}^{\prime}=\overline{P}_{1}^{(q_{1},b_{1},l_{1})}\dots\overline{P}_{t}^{(q_{t},b_{t},l_{t})},
		\end{alignat*}
		where $P_{i}^{(p_{i},a_{i},k_{i})}$, $1\leq i\leq s$, as well as
		$\overline{P}_{j}^{(q_{j},b_{j},l_{j})}$, $1\leq j\leq t$, are complete
		$\left(\overline{T},2^{4e-t},\overline{M}\right)$-Feldman patterns in the $\Sigma_{\mathcal{R}}$-alphabet
		and the superscripts $p_{i},q_{j}\in\left\{ 1,\dots,\overline{M}\right\} $
		indicate which pattern structure is used. Moreover, the other superscripts
		identify the building blocks. Here, $a_{i},b_{j}\in\left\{ 1,\dots,N\right\} $
		refer to the $\mathcal{P}$-equivalence class of the building blocks
		and $k_{i},l_{j}\in\left\{ 1,\dots,2^{4e-t}\right\} $ refer to the
		tuple of $\mathcal{R}$-equivalence classes of building blocks. If we apply
		Lemma \ref{lem:DifferentT} with $N$ replaced by $2^{4e-t}$, we obtain 
		\begin{equation}
			\overline{f}_{\Sigma_{\mathcal{R}}}\left(P,\overline{P}\right)\begin{cases}
				=1, & \text{if }\left(a_{i},k_{i}\right)\neq\left(b_{j},l_{j}\right)\\
				>1-\frac{4}{2^{2e-0.5t}}, & \text{if }\left(a_{i},k_{i}\right)=\left(b_{j},l_{j}\right)\text{ and }p_{i}\neq q_{j}
			\end{cases}\label{eq:casesSymbols}
		\end{equation}
		for any sequences $P$ and $\overline{P}$ of at least $\frac{T}{2^{4e-t}}=\overline{T}2^{(4e-t)\cdot(2\overline{M}+2)}$
		consecutive $\Sigma_{\mathcal{R}}$-symbols in $P_{i}^{(p_{i},a_{i},k_{i})}$
		and $\overline{P}_{j}^{(q_{j},b_{j},l_{j})}$, respectively. 
		
		We examine the two possible cases: $\omega_{1}^{\prime}$ and $\omega_{2}^{\prime}$
		have disjoint $H$ orbits (case 1) or $\omega_{1}^{\prime}=h\omega_{2}^{\prime}$
		for some non-identity element $h\in H_{0}$ (case 2). From the construction we recall that
		in the first case all occurring $\left(\overline{T},2^{4e-t},\overline{M}\right)$-Feldman
		patterns in $\omega_{1}^{\prime}$ and $\omega_{2}^{\prime}$ are
		different from each other. To investigate the second case we recall
		that a tuple of building blocks is mapped to a different tuple under
		the action of a non-identity element $h\in H_{0}$. Hence, identical Feldman patterns
		in $\omega_{1}^{\prime}=h\omega_{2}^{\prime}$ and $\omega_{2}^{\prime}$
		have different tuples of building blocks. 
		
		In both cases we consider a best possible $\overline{f}$ match between
		$W_{aug}^{\prime}$ and $\overline{W}_{aug}^{\prime}$. To simplify
		notation we ignore the superscripts of the occurring $\left(\overline{T},2^{4e-t},\overline{M}\right)$-Feldman
		patterns. Recall that the length of a $\left(\overline{T},2^{4e-t},\overline{M}\right)$-Feldman
			pattern is given by $\overline{T}\cdot2^{(4e-t)\cdot(2\overline{M}+3)}=T$ as defined in \eqref{eq:T1}. We denote the string in $\overline{W}_{aug}^{\prime}$ matched
		with $P_{i}$ by $\tilde{P}_{i}$. If $|\tilde{P}_{i}|<\frac{T}{2^{e}}$
		or $|\tilde{P}_{i}|>2^{e}T$, then 
		\begin{equation}
			\overline{f}_{\Sigma_{\mathcal{R}}}\left(P_{i},\tilde{P}_{i}\right)>1-\frac{2}{2^{e}}.\label{eq:Case1}
		\end{equation}
		Otherwise, we subdivide $\tilde{P}_{i}$ into the strings coming from
		the Feldman patterns in $\overline{W}_{aug}^{\prime}$ and we denote
		the part of $\tilde{P}_{i}$ belonging to $\overline{P}_{j}$ by $\tilde{P}_{i,j}$.
		Note that there are at most $2^{e}+2$ many $j$'s with nonempty $\tilde{P}_{i,j}$
		due to $|\tilde{P}_{i}|\leq2^{e}T$. In the following we ignore
		all $\tilde{P}_{i,j}$ of length $|\tilde{P}_{i,j}|<\frac{T}{2^{2e}}$.
		Hereby, we omit at most $\frac{2T}{2^{2e}}$ many symbols from
		$\tilde{P}_{i}$. We denote the remaining $\tilde{P}_{i}$ string
		by $\tilde{P}_{i,red}$ and we obtain from Fact \ref{fact:omit_symbols}
		that 
		\begin{equation}
			\overline{f}_{\Sigma_{\mathcal{R}}}\left(P_{i},\tilde{P}_{i}\right)>\overline{f}_{\Sigma_{\mathcal{R}}}\left(P_{i},\tilde{P}_{i,red}\right)-\frac{2}{2^{2e}}.\label{eq:TildeReduced}
		\end{equation}
		For all the remaining $j$'s with $|\tilde{P}_{i,j}|\geq\frac{T}{2^{2e}}$
		the corresponding string in $P_{i}$ under the matching is called
		$P_{i,j}$. If $|P_{i,j}|<\frac{T}{2^{4e-t}}$, then 
		\[
		\overline{f}_{\Sigma_{\mathcal{R}}}\left(P_{i,j},\tilde{P}_{i,j}\right)>1-\frac{2|P_{i,j}|}{|\tilde{P}_{i,j}|+|P_{i,j}|}=1-\frac{2}{\frac{|\tilde{P}_{i,j}|}{|P_{i,j}|}+1}>1-\frac{2}{2^{e}}.
		\]
		Otherwise, we use the estimates from formula (\ref{eq:casesSymbols}).
		This yields
		\[
		\overline{f}_{\Sigma_{\mathcal{R}}}\left(P_{i},\tilde{P}_{i}\right)>1-\frac{2}{2^{e}}-\frac{2}{2^{2e}}\geq 1-\frac{3}{2^{e}}
		\]
		by equation (\ref{eq:TildeReduced}). Combining this with equation
		(\ref{eq:Case1}), we obtain 
		\begin{equation}
			\overline{f}_{\Sigma_{\mathcal{R}}}\left(W_{aug}^{\prime},\overline{W}_{aug}^{\prime}\right)>1-\frac{4}{2^{e}}.\label{eq:EstimAug}
		\end{equation}
		With the aid of equation (\ref{eq:augment}) we finish the proof of
		(\ref{eq:PropFiner1}).
		
		Now we turn to the proof of (\ref{eq:PropFiner}) in the alphabet
		$\Sigma$. We treat $V_{aug}$ and $\overline{V}_{aug}$ as concatenations
		of complete $\left(\overline{T},2^{4e-t},\overline{M}\right)$-Feldman patterns
		by adding fewer than $2Th$ many $\Sigma$-symbols to complete
		partial patterns at the beginning and end of $V$ and $\overline{V}$.
		Note that $V_{aug}$ and $\overline{V}_{aug}$ correspond to some
		$W_{aug}^{\prime}$ and $\overline{W}_{aug}^{\prime}$ in the $\Sigma_{\mathcal{R}}$-alphabet
		of the same type as above. By Lemma \ref{lem:symbol by block replacement},
		with $\overline{f}$-distance at least $1-\frac{4}{2^{e}}$ coming from equation (\ref{eq:EstimAug}),
		and our assumptions from equations (\ref{eq:assumpCoarserClass})
		and (\ref{eq:assumpFinerClasses}), we obtain 
		\[
		\overline{f}\left(V_{aug},\overline{V}_{aug}\right)>\left(1-\frac{4}{2^{e}}\right)\cdot\beta-\frac{1}{R}>\beta-\frac{1}{R}-\frac{4}{2^{e}}.
		\]
		Finally, we conclude (\ref{eq:PropFiner}) with the aid of Fact \ref{fact:omit_symbols}
		and equation (\ref{eq:U}).
	\end{proof}
	
	\section{\label{sec:Construction}The Construction Process}
	
	To construct the map $\Psi:\mathcal{T}\kern-.5mm rees\to\mathcal{E}$ we must
	build a construction sequence $\left(\mathcal{W}_{n}\left(\mathcal{T}\right)\right)_{n\in\mathbb{N}}$
	satisfying our specifications for each $\mathcal{T}\in\mathcal{T}\kern-.5mm rees$.
	Moreover, this has to be done in such a way that $\mathcal{W}_{n}\left(\mathcal{T}\right)$
	is entirely determined by $\mathcal{T}\cap\left\{ \sigma_{m}:m\leq n\right\} $,
	that is, 
	\begin{equation}
		\begin{split} & \text{if }\mathcal{T}\cap\left\{ \sigma_{m}:m\leq n\right\} =\mathcal{T}^{\prime}\cap\left\{ \sigma_{m}:m\leq n\right\} ,\\
			& \text{then for all }m\leq n:\:\mathcal{W}_{m}\left(\mathcal{T}\right)=\mathcal{W}_{m}(\mathcal{T}^{\prime}),
		\end{split}
		\label{eq:ContinuityF}
	\end{equation}
	and we guarantee the continuity of our map $\Psi:\mathcal{T}\kern-.5mm rees\to\mathcal{E}$
	by Fact \ref{fact:contTree}. Therefore, we follow \cite{FRW} to
	organize our construction: For each $n$ and for each subtree $\mathcal{S}\subseteq\left\{ \sigma_{m}:m\leq n\right\} $
	and each $\sigma_{m}\in\mathcal{S}$ we build $\mathcal{W}_{m}\left(\mathcal{S}\right)$.
	By induction we want to pass from stage $n-1$ to stage $n$. So,
	we assume that we have constructed $\left(\mathcal{W}_{m}\left(\mathcal{S}\right)\right)_{\sigma_{m}\in\mathcal{S}}$, $\left(\mathcal{Q}^m_s(\mathcal{S})\right)_{\sigma_{m}\in\mathcal{S}}$, and the action of $\left(G^m_s(\mathcal{S})\right)_{\sigma_{m}\in\mathcal{S}}$
	for each subtree $\mathcal{S}\subseteq\left\{ \sigma_{m}:m\leq n-1\right\} $.
	In the inductive step, we now have to construct $\left(\mathcal{W}_{m}\left(\mathcal{S}\right)\right)_{\sigma_{m}\in\mathcal{S}}$
	for each subtree $\mathcal{S}\subseteq\left\{ \sigma_{m}:m\leq n\right\} $. 
	
	Obviously, for those subtrees $\mathcal{S}$ with $\sigma_{n}\notin\mathcal{S}$
	there is nothing to do. So, we have to work on the finitely many subtrees
	$\mathcal{S}$ with $\sigma_{n}\in\mathcal{S}$. We list those in
	arbitrary manner as $\left\{ \mathcal{S}_{1},\dots,\mathcal{S}_{E}\right\} $.
	Assume that $\mathcal{T}$ is the $a$-th subtree on this list and
	that if $a>1$ we have constructed the collections $\mathcal{W}_{n}\left(\mathcal{S}_{i}\right)$
	of $n$-words for all $i\in\left\{ 1,\dots,a-1\right\} $. Let $\mathfrak{P}$
	be the collection of prime numbers occurring in the prime factorization
	of any of the lengths of the words that we have constructed so far.\footnote{In our construction of $\mathcal{W}_{n}\left(\mathcal{T}\right)$ we will choose a prime number
		$\mathfrak{p}_{n}$ larger than the maximum of $\mathfrak{P}$. By Lemma \ref{lem:odometer} we see that our map $\Psi:\mathcal{T}\kern-.5mm rees\to\mathcal{E}$ takes distinct
		trees to transformations with nonisomorphic Kronecker factors. Hence, for $\mathcal{T} \neq \mathcal{T}^{\prime}$ we get that $\Psi(\mathcal{T})$ is not isomorphic to $\Psi(\mathcal{T}^{\prime})$ or $\Psi(\mathcal{T}^{\prime})^{-1}$ (compare with  \cite[Corollary 34]{FRW}). As in \cite{FRW} this is actually not necessary for the proof of the main theorem but we keep this order of induction to parallel the strategy in \cite{FRW} and to allow possible extensions in future work.}
	
	We want to construct $\mathcal{W}_{n}\left(\mathcal{T}\right)$, $\mathcal{Q}_{s}^{n}\left(\mathcal{T}\right)$,
	and the action of $G_{s}^{n}\left(\mathcal{T}\right)$, which we abbreviate by $\mathcal{W}_{n}$,
	$\mathcal{Q}_{s}^{n}$, and $G_{s}^{n}$, respectively. 
	
	For that purpose,
	let $m$ be the largest number less than $n$ such that $\sigma_{m}\in\mathcal{T}$.
	If $m=0$ we have $\mathcal{W}_{0}=\Sigma=\left\{ 1,\dots,2^{12}\right\} $
	(see Remark \ref{rem:BaseCase} for comments on this base case) and
	for $m>0$ our {\bf induction assumption} says that we have $\mathcal{W}_{m}=\mathcal{W}_{m}\left(\mathcal{T}\right)$,
	$\mathcal{Q}_{s}^{m}=\mathcal{Q}_{s}^{m}\left(\mathcal{T}\right)$,
	and $G_{s}^{m}=G_{s}^{m}\left(\mathcal{T}\right)$ satisfying our
	specifications. Furthermore, there are  $\frac{1}{8}>\alpha_{1,m}>\dots>\alpha_{s(m),m}>\beta_{m}>0$ and
	a positive integer $R_{m}$ with
	\begin{equation}
		R_{m}\geq\frac{9}{\beta_{m}} \label{eq:Rassum2}
	\end{equation} 
	such that the following assumptions on $\overline{f}$
	distances hold:
	\begin{itemize}
		\item For every $s\in\left\{ 1,\dots,s(m)\right\} $  we have 
		\begin{equation}
			\overline{f}\left(W,W^{\prime}\right)>\alpha_{s,m}\label{eq:fClasses}
		\end{equation}
		on any substrings $W$ and $W^{\prime}$ of at least $\frac{h_{m}}{R_{m}}$
		consecutive symbols in any words $w,w^{\prime}\in\mathcal{W}_{m}$
		with $[w]_{s}\neq[w^{\prime}]_{s}$. Here, we recall that $[w]_s$ denotes the $\mathcal{Q}^m_s$ class of $w\in \mathcal{W}_m$.
		\item We have 
		\begin{equation}
			\overline{f}\left(W,W^{\prime}\right)>\beta_{m}\label{eq:fBlocks}
		\end{equation}
		on any substrings $W$ and $W^{\prime}$ of at least $\frac{h_{m}}{R_{m}}$
		consecutive symbols in any words $w,w^{\prime}\in\mathcal{W}_{m}$
		with $w\neq w^{\prime}$.
	\end{itemize}

	{\bf Induction step:} We start the construction in stage $n$ by choosing a prime number
	$\mathfrak{p}_{n}$ larger than the maximum of $\mathfrak{P}$ and satisfying
	\begin{equation}
		\mathfrak{p}_{n}>\max\left(4R_{m},2^n,\frac{1}{\epsilon_{n}}\right),\label{eq:Pn}
	\end{equation}
	where we recall the definition of the sequence $\left(\epsilon_{n}\right)_{n\in\mathbb{N}}$
	from (\ref{eq:Condeps}). Then we choose an integer
	$R_{n}$ such that
	\begin{equation}
		R_{n}\geq \frac{40\mathfrak{p}_{n}}{\beta_{m}} \label{eq:Rn}
	\end{equation}
	Moreover, we choose the integer $e(n)>e(m)$ sufficiently large such that 
	\begin{equation}
		2^{e(n)}>\max\left(10R_{n},\max_{s\leq s(n)}|G_{s}^{n}|\right).\label{eq:kn}
	\end{equation}
	Recall that specification (Q6) requires $2^{4e(n)}$ to be the number of $\mathcal{Q}^n_{s+1}$ classes in one $\mathcal{Q}^n_s$ class.
	
	In the two possible cases $s(n)=s(m)$ and $s(n)=s(m)+1$ we now present
	the construction of $n$-words out of $m$-words as well as the extension
	of $G_{s}^{m}$ actions to $G_{s}^{n}$ actions. In case of $s(n)=s(m)+1$
	we also define the new equivalence relation $\mathcal{Q}_{s(n)}^{n}$.
	
	\subsection{\label{subsec:Case-1}Case 1: s(n)=s(m)}
	
	\subsubsection{Construction of $\mathcal{W}_{n}$}
	
	We apply the substitution step described in Section \ref{sec:Substitution}
	$s(m)+1$ many times to construct $\mathcal{W}_{n}/\mathcal{Q}_{s}^{n}$
	for $s=0,\dots,s(m)$ and eventually $\mathcal{W}_{n}$. Adapting
	to the notation in the general substitution step we set $X=\mathcal{W}_{m}$
	and 
	\begin{align*}
		\mathcal{P}_{i}=\mathcal{Q}_{i-1}^{m}, & \ \ \mathcal{R}_{i}=\mathcal{Q}_{i}^{m}, & G_{i}=G_{i-1}^{m}, & \ \ H_{i}=G_{i}^{m}
	\end{align*}
	for the $i$-th application, where $i=1,\dots,s(m)$. In the final
	$(s(m)+1)$-th application we let $\mathcal{P}_{s(m)+1}=\mathcal{Q}_{s(m)}^{m}$,
	$\mathcal{R}_{s(m)+1}=\left\{ (w,w):w\in\mathcal{W}_{m}\right\} $,
	that is, $X/\mathcal{R}_{s(m)+1}=\mathcal{W}_{m}$, $G_{s(m)+1}=G_{s(m)}^{m}$,
	and we let $H_{s(m)+1}$ be the trivial group acting trivially on
	$\mathcal{W}_{m}$. 
	
	To carry out the successive application of the substitution step we now define several parameters which will play the roles of the corresponding letters in our general substitution mechanism from Section~\ref{sec:Substitution}. We
	always set $\tilde{R}=R_{n}$ and introduce $t_{i}\in\mathbb{N}$
	for $i=1,\dots,s(m)+1$ such that $t_{s(m)+1}=0$ and 
	\[
	2^{t_{i}}=|ker(\rho_{i,i-1}^{(m)})|,\text{ for }i=1,\dots,s(m).
	\]
	Here, we recall that $\rho_{i,i-1}^{(m)}:G_{i}^{m}\to G_{i-1}^{m}$
	is the canonical homomorphism defined in Subsection~\ref{subsec:Groups}. Using these numbers we also define
		\begin{align*}
			\nu_1=0, \ \text{ and } \ \nu_i=4e(m)-t_{i-1}, \text{ for } i=2,\dots,s(m)+1. 
	\end{align*}
	Moreover, we define parameters as follows 
	\begin{eqnarray*}
		P_{i}=2^{(i-1)\cdot4e(n)}, \ \text{for }i=1,\dots,s(m)+1,\\
		N_{i}=2^{(i-1)\cdot4e(m)}, \ \text{for }i=1,\dots,s(m)+1.
	\end{eqnarray*}
	Then we have $N_{i}=|\mathcal{W}_{m}/\mathcal{Q}_{i-1}^{m}|$ and the
	number $P_{i}$ will describe $|\mathcal{W}_{n}/\mathcal{Q}_{i-1}^{n}|$.
	We also set $K_{s(m)+1}=2^{4e(n)}$, and choose numbers $K_{i}$,
	$i=1,\dots,s(m)$, such that $K_{i}\cdot|ker(\rho_{i,i-1}^{(m)})|=2^{4e(n)}$
	for $i=1,\dots,s(m)$. We point out that these numbers are chosen
	in this way to always obtain exactly $2^{4e(n)}$ substitution instances
	of each coarser equivalence class (see part (2) of Proposition \ref{prop:PropertiesCollection}) in order to satisfy specification
	property (Q6). Moreover, we choose a number $U_{1}=2^{U_{1}^{\prime}}$
	with 
	\begin{align*}
		U_{1}^{\prime}\geq\max_{i=1,\dots,s(m)}t_{i} &  & \text{ and } &  & U_{1}\geq\max\left(2R_{n}^{2},2^{8e(m)}\right).
	\end{align*}
	In particular, condition (\ref{eq:U}) is satisfied in all construction
	steps. 
	
	By forward recursion we define the parameters $M_{i}$, $i=1,\dots,s(m)+2$,
	as well as $U_{i}$, $i=2,\dots,s(m)+1$: 
	\begin{align*}
		M_{1}=1, &  & M_{2}=K_{1}P_{1}U_{1}, &  & M_{i}=K_{i-1}P_{i-1}U_{i-1}2^{(4e(m)-t_{i-2})\cdot(2M_{i-1}+3)} & \text{ for \ensuremath{i\geq3},}\\
		U_{2}=U_{1}, &  &  &  & U_{i}=U_{i-1}2^{(4e(m)-t_{i-2})\cdot(2M_{i-1}+3)} & \text{ for \ensuremath{i\geq3}}.
	\end{align*}
	Hereby, we define the parameters $T_{i}$, $i=1,\dots,s(m)+2$, by
	backward recursion: 
	\begin{align*}
		T_{s(m)+2}=\mathfrak{p}_{n}^{2}, &  & T_{i}=T_{i+1}\cdot2^{(4e(m)-t_{i})\cdot\left(2M_{i+1}+3\right)} & \text{ for \ensuremath{i\leq s(m)+1}}.
	\end{align*}
	We use the parameters $t=t_{i}$, $\nu=\nu_i$, $K=K_i$, $P=P_i$, $M=M_i$, $\overline{M}=M_{i+1}$, $N=N_i$, $U=U_i$, $\overline{U}=U_{i+1}$, $T=T_i$, $\overline{T}=T_{i+1}$ in the $i$-th application of the general substitution mechanism in our successive construction, where $i=1,\dots,s(m)+1$. From this point of view, we stress that the definitions of $T_{i}$ and $M_{i}$ are directly
	linked to the corresponding conditions in equations (\ref{eq:M2})
	and (\ref{eq:T1}) from the general substitution step. After our $(s(m)+1)$-th application, we conclude
	from equation (\ref{eq:Lstep}) that the number $k$ of concatenations
	working for all substitutions steps is given by 
	\begin{equation}
		k=U_{s(m)+1}\cdot T_{s(m)+1}\cdot2^{(4e(m)-t_{s(m)})\cdot(2M_{s(m)+1}+3)}=U_{s(m)+1}\cdot T_{s(m)}=U_{2}\cdot T_{1},\label{eq:concat}
	\end{equation}
	which implies that the length of $n$-words is given by 
	\[
	h_{n}=U_{s(m)+1}\cdot T_{s(m)+1}\cdot2^{(4e(m)-t_{s(m)})\cdot(2M_{s(m)+1}+3)}\cdot h_{m}=U_{1}\cdot T_{1}\cdot h_{m}.
	\]
	We denote the number $k$ from \eqref{eq:concat} by $k_m$.
	
	After determining all these parameters we describe the successive
	application of the general substitution step. We start by concatenating
	$k=U_{1}\cdot T_{1}$ many times the single element of $X/\mathcal{P}_{1}=\mathcal{W}_{m}/\mathcal{Q}_{0}^{m}$
	which yields $\Omega_{1}=\mathcal{W}_{n}/\mathcal{Q}_{0}^{n}\subseteq\left(\mathcal{W}_{m}/\mathcal{Q}_{0}^{m}\right)^{k}=\left(X/\mathcal{Q}_{0}\right)^{k}$
	of cardinality $|\Omega_{1}|=P_{1}=1$. Note that this element can
	be considered as a repetition of $U_{1}$ many $\left(T_{1},2^{0},1\right)$-Feldman
	patterns. This viewpoint allows the application of our first substitution
	step $i=1$ using the notation from above. More explicitly, the first
	substitution step produces a collection $\Omega_{1}^{\prime}\subset\left(X/\mathcal{R}_{1}\right)^{k}=\left(\mathcal{W}_{m}/\mathcal{Q}_{1}^{m}\right)^{k}$
	of $K_{1}\cdot|ker(\rho_{1,0}^{(m)})|=2^{4e(n)}=P_{2}$ substitution
	instances, which are concatenations of $U_{2}$ many $\left(T_{2},2^{4e(m)-t_{1}},M_{2}\right)$-Feldman
	patterns. On the one hand, $\Omega_{1}^{\prime}$ will serve as our
	collection $\Omega_{2}\subset\left(X/\mathcal{P}_{2}\right)^{k}=\left(\mathcal{W}_{m}/\mathcal{Q}_{1}^{m}\right)^{k}$
	for the application of the substitution step $i=2$. On the other
	hand, $\Omega_{1}^{\prime}$ will turn out to be our collection of
	equivalence classes in $\mathcal{W}_{n}/\mathcal{Q}_{1}^{n}$. Successively,
	we apply the substitution step for $i=2,\dots,s(m)+1$. We visualize this iterative application of the substitution mechanism in Figure~\ref{fig:figSubst}.
	
	\begin{figure}
		\centering
		\includegraphics[width=\textwidth]{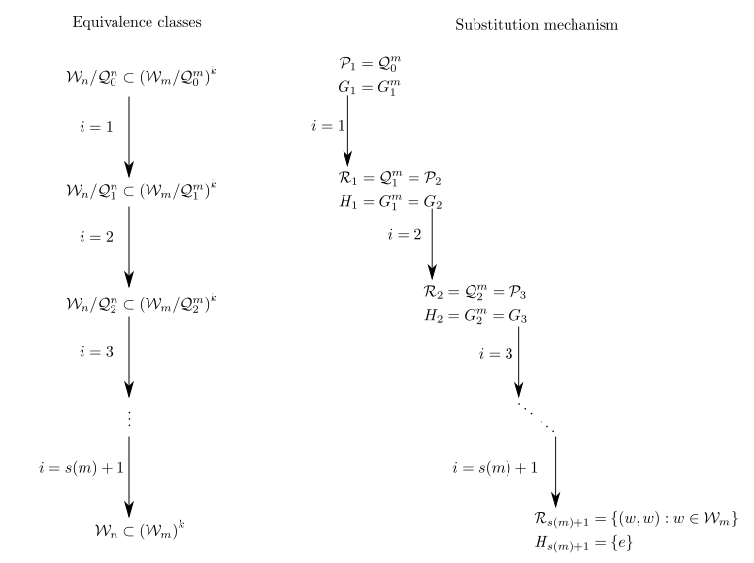}
		\caption{Visualization of the iterative application of the substitution mechanism.}
		\label{fig:figSubst}
	\end{figure}

	\begin{rem}
		\label{rem:s-pattern-type}In correspondence with specification (Q5),
		$\mathcal{Q}_{s}^{n}$ is defined as the product equivalence relation
		on $\mathcal{W}_{n}$ for each $s=1,\dots,s(m)$. Then we see from
		the procedure of the substitution step that for each $w\in\mathcal{W}_{n}$
		its class $[w]_{s}\in\mathcal{W}_{n}/\mathcal{Q}_{s}^{n}$ is a concatenation
		of $U_{s+1}\geq2^{8e(m)}$ different $\left(T_{s+1},2^{4e(m)-t_{s}},M_{s+1}\right)$-Feldman
		patterns (with building blocks out of $\mathcal{W}_{m}/\mathcal{Q}_{s}^{m}$).
		We call these the \emph{$s$-Feldman patterns} and refer to this sequence
		of Feldman patterns as the \emph{$s$-pattern type} of $w$. Using the same enumeration of $\left(T_{s+1},2^{4e(m)-t_{s}},M_{s+1}\right)$-Feldman patterns,
		we also define the $s$-pattern types for words in $rev(\mathcal{W}_{n})$.
	\end{rem}
	
	For such a $s$-Feldman pattern $P_{m}$ we make the following observation.
	\begin{lem}
		\label{lem:Occurrence-Substitutions}Let $w\in\mathcal{W}_{n}$ and
		$[P_{m}]_{s}$ be a $\left(T_{s+1},2^{4e(m)-t_{s}},M_{s+1}\right)$-Feldman
		pattern in $[w]_{s}$ with building blocks from the tuple $\left([A_{1}]_{s},\dots,[A_{2^{4e(m)-t_{s}}}]_{s}\right)$
		in $\mathcal{W}_{m}/\mathcal{Q}_{s}^{m}$. Then, for every $i=1,\dots,2^{4e(m)-t_{s}},$ the repetitions
		$\left(a_{i,j}\right)^{\mathfrak{p}_{n}^{2}}$ of each substitution instance $a_{i,j}$ in $\mathcal{W}_{m}$
		of $[A_{i}]_{s}$ are substituted the same number of times into each occurrence of $[A_i]_s^{T_{s+1}\cdot2^{t_{s+1}}}$ in $[P_m]_s$. This also yields that the repetitions
		$\left(a_{i,j}\right)^{\mathfrak{p}_{n}^{2}}$ of each substitution instance $a_{i,j}$ of each $[A_{i}]_{s}$, $i=1,\dots,2^{4e(m)-t_{s}}$, occur the same number of times in each cycle of $P_{m}$. 
	\end{lem}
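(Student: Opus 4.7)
The plan is to unwind the iterative substitution procedure from Section \ref{sec:Substitution} level by level, starting with the $s$-Feldman pattern $P_m$ (which, by Remark \ref{rem:s-pattern-type}, is produced at step $s$ of the iteration) and continuing down to the final step $s(m)+1$ that produces the $\mathcal{W}_m$-level realization. By the definition of a Feldman pattern in Section \ref{sec:Feldman}, each run of a building block $[A_i]_s$ inside $[P_m]_s$ has length $T_{s+1}\cdot 2^{2j}$ for some $j\in\{1,\dots,M_{s+1}\}$, and this run subdivides into $2^{2j}$ consecutive units of the form $[A_i]_s^{T_{s+1}}$. These units are exactly the targets of the substitution performed at step $s+1$ of the iteration.

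At step $s+1$, each such $[A_i]_s^{T_{s+1}}$ unit is replaced by a $(T_{s+2},2^{4e(m)-t_{s+1}},M_{s+2})$-Feldman pattern built from one of the $2^{t_{s+1}}$ tuples that partition the $2^{4e(m)}$ many $\mathcal{Q}_{s+1}^m$-classes contained in $[A_i]_s$. The choice of tuple is governed by the sequence $\psi$ from step (5) of the general substitution step: the $v$-th occurrence of $[A_i]_s^{T_{s+1}}$ is assigned tuple index $\psi_v\equiv v\pmod{2^{t_{s+1}}}$. Since $\psi$ is a pure cyclic rotation of period $2^{t_{s+1}}$, any $2^{t_{s+1}}$ consecutive entries contain every tuple index exactly once, regardless of phase. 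Consequently, within each occurrence of $[A_i]_s^{T_{s+1}\cdot 2^{t_{s+1}}}$, every one of the $2^{t_{s+1}}$ tuples is used exactly once. Combined with the uniformity of the Feldman patterns (each building block appears the same number of times in each pattern), this forces every $\mathcal{Q}_{s+1}^m$-sub-class of $[A_i]_s$ to appear with the same multiplicity inside each such window.

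The same argument now iterates: within each realized $[A_{i,k}]_{s+1}^{T_{s+2}\cdot 2^{t_{s+2}}}$ sub-window produced above, the cycling $\psi$ at step $s+2$ yields the analogous balance property, refining uniformity from $\mathcal{Q}_{s+1}^m$-classes to $\mathcal{Q}_{s+2}^m$-classes, and so on through step $s(m)+1$. At the final step one has $t_{s(m)+1}=0$, $T_{s(m)+2}=p_n^2$, and the building blocks are the words in $\mathcal{W}_m$ themselves, so every substitution instance $a_{i,j}\in\mathcal{W}_m$ of $[A_i]_s$ appears with equal multiplicity in each occurrence of $[A_i]_s^{T_{s+1}\cdot 2^{t_{s+1}}}$. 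The grouping $(a_{i,j})^{p_n^2}$ is then natural because the minimum consecutive run of any $\mathcal{W}_m$-word in the final Feldman pattern is a multiple of $T_{s(m)+2}=p_n^2$. For the cycle statement, each cycle of $P_m$ has the form $[A_1]_s^{L}\cdots [A_N]_s^{L}$ with $L=T_{s+1}\cdot 2^{2j}$; one checks (using the choice of $e(m)$ that controls the group sizes via equation (\ref{eq:kn})) that $L$ is a multiple of $T_{s+1}\cdot 2^{t_{s+1}}$, so each cycle decomposes into whole $[A_i]_s^{T_{s+1}\cdot 2^{t_{s+1}}}$ windows and the first conclusion applies to each.

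The main delicate point is the alignment of the global cycling sequence $\psi$ with the local windows $[A_i]_s^{T_{s+1}\cdot 2^{t_{s+1}}}$: since $\psi$ indexes occurrences of $[A_i]_s^{T_{s+1}}$ across the entire word $w$ rather than within a single Feldman pattern, one must exploit the rigidity of $\psi$ as a pure cyclic rotation of period $2^{t_{s+1}}$ to conclude that any consecutive window of the correct length automatically covers a full period. Beyond this observation the argument is essentially bookkeeping, relying on the strong uniformity of Feldman patterns from Section \ref{sec:Feldman} and the periodicity of $\psi$ at each substitution level.
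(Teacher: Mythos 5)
Your argument follows the same route as the paper: (i) the Feldman-pattern structure makes each cycle contain exactly one maximum repetition of each building block; (ii) the pure cyclic nature of $\psi$ from step~(5) of the general substitution step guarantees that any $2^{t_{s+1}}$ consecutive occurrences of $[A_i]_s^{T_{s+1}}$ exhaust the tuples exactly once, regardless of phase; (iii) one iterates down the levels until $T_{s(m)+2}=p_n^2$ inserts the actual $(a_{i,j})^{p_n^2}$ repetitions. The observation you flag as the ``delicate point''---that $\psi$ is indexed globally over all occurrences of $[A_i]_s^{T_{s+1}}$ in $w$, not locally within one Feldman pattern, so the full-period-per-window claim relies on phase-independence of the cyclic rotation---is correct and worth making explicit; the paper's proof takes it for granted.

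One genuine error of bookkeeping that needs fixing: throughout, you write the maximum-repetition length as $T_{s+1}\cdot 2^{2j}$ and say the run subdivides into $2^{2j}$ units of $[A_i]_s^{T_{s+1}}$. For a $(T_{s+1},\,2^{4e(m)-t_s},\,M_{s+1})$-Feldman pattern the correct count is $N^{2j}$ with $N=2^{4e(m)-t_s}$, i.e.\ $2^{(4e(m)-t_s)\cdot 2j}$, not $2^{2j}$. If one took $2^{2j}$ literally, the divisibility by $2^{t_{s+1}}$ fails whenever $t_{s+1}>2j$, and your argument would not get off the ground. With the correct exponent $(4e(m)-t_s)\cdot 2j$, divisibility is comfortable because equation~(\ref{eq:kn}) forces $e(m)>t_i$ for all relevant $i$, so $4e(m)-t_s > 3e(m) \geq t_{s+1}$. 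The same slip appears in your treatment of the cycle length $L$, and in both places it should read $N^{2j}$, after which the divisibility and the whole-window decomposition you invoke are valid.
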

	
	\begin{proof}
		By definition of $\left(T_{s+1},2^{4e(m)-t_{s}},M_{s+1}\right)$-Feldman
		patterns we have in each complete cycle that for every $i\in\left\{ 1,\dots,2^{4e(m)-t_{s}}\right\} $
		the maximum repetition 
		\[
		[A_{i}]_{s}^{T_{s+1}\cdot2^{(4e(m)-t_{s})\cdot2j}}
		\]
		occurs exactly once, where $j\in\left\{ 1,\dots,M_{s+1}\right\} $
		labels our Feldman pattern. This maximum repetition contains all the occurrences of the class $[A_i]_s$ in the complete cycle and, hence, will allow us to count the number of occurrences of its substitution instances. Into each $[A_{i}]_{s}^{T_{s+1}}$ we
		substitute a $\left(T_{s+2},2^{4e(m)-t_{s+1}},M_{s+2}\right)$-Feldman
		pattern built out of one of the $2^{t_{s+1}}$ many tuples $\left([A_{i,1}]_{s+1},\dots,[A_{i,2^{4e(m)-t_{s+1}}}]_{s+1}\right)$.
		According to our choice of the tuples in step (5) of the general substitution
		step each of these tuples is used $2^{(4e(m)-t_{s})\cdot2j-t_{s+1}}$
		many times within one maximum repetition $[A_{i}]_{s}^{T_{s+1}\cdot2^{(4e(m)-t_{s})\cdot2j}}$.
		Then we continue inductively for each of the obtained $\left(T_{s+2},2^{4e(m)-t_{s+1}},M_{s+2}\right)$-Feldman
		patterns until we finally insert repetitions of the form $\left(a_{i,j}\right)^{\mathfrak{p}_{n}^{2}}$
		(recall that $T_{s(m)+2}=\mathfrak{p}_{n}^{2}$).
	\end{proof}
	
	\subsubsection{Group actions}
	
	We also need to define the group actions for $G_{s}^{n}$ for $s\leq s(n)$.
	Except for $s_{0}=lh(\sigma_{n})$ we have $G_{s}^{n}=G_{s}^{m}$
	and the group actions are extended via the skew-diagonal actions.
	In the remaining case we have $G_{s_{0}}^{n}=G_{s_{0}}^{m}\oplus\mathbb{Z}_{2}$
	and we use Lemma \ref{lem:extension} to extend the $G_{s_{0}}^{m}$
	action to a $G_{s_{0}}^{n}$ action subordinate to the $G_{s_{0}-1}^{n}$
	action. To be more precise,  in the notation of
	the lemma we set 
	\begin{itemize}
		\item $G\times\mathbb{Z}_{2}\coloneqq G_{s_{0}-1}^{n}=G_{s_{0}-1}^{m}$,
		\item $H\coloneqq G_{s_{0}}^{m}$,
		\item $X/\mathcal{Q}\coloneqq\mathcal{W}_{n}/\mathcal{Q}_{s_{0}-1}^{n}$,
		\item $X/\mathcal{R}\coloneqq\mathcal{W}_{n}/\mathcal{Q}_{s_{0}}^{n}$,
		\item $\rho\coloneqq\rho_{s_{0},s_{0}-1}^{(m)}$.
	\end{itemize}

	\subsubsection{Properties}
	
	By construction the specifications are satisfied. In particular, specification (E3) holds since we obtain words by concatenating different Feldman patterns. 
	
	We also make the
	following observations regarding $\overline{f}$ distances.
	\begin{lem}
		\label{lem:case1f}For every $s\in\left\{ 1,\dots,s(m)\right\} $
		we have 
		\begin{equation}
			\overline{f}\left(W,W^{\prime}\right)>\alpha_{s,m}-\frac{2}{R_{m}}-\frac{1}{R_{n}}\label{eq:Case1fClass}
		\end{equation}
		on any substrings $W$ and $W^{\prime}$ of at least $\frac{h_{n}}{R_{n}}$
		consecutive symbols in any words $w,w^{\prime}\in\mathcal{W}_{n}$
		with $[w]_{s}\neq[w^{\prime}]_{s}$.
		
		Moreover, we have 
		\begin{equation}
			\overline{f}\left(W,W^{\prime}\right)>\beta_{m}-\frac{2}{R_{m}}-\frac{1}{R_{n}}\label{eq:Case1fBlock}
		\end{equation}
		on any substrings $W$ and $W^{\prime}$ of at least $\frac{h_{n}}{R_{n}}$
		consecutive symbols in any words $w,w^{\prime}\in\mathcal{W}_{n}$
		with $w\neq w^{\prime}$.
	\end{lem}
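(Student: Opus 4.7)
My plan is to reduce both inequalities to a single application of Proposition \ref{prop:finer} at the coarsest level in the substitution chain where $w$ and $w'$ become distinguishable, invoking the inductive $\overline{f}$-bounds (\ref{eq:fClasses}) and (\ref{eq:fBlocks}) on $\mathcal{W}_m$ to supply the hypotheses $\alpha$ and $\beta$ of that proposition.

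For (\ref{eq:Case1fClass}), given $w, w' \in \mathcal{W}_n$ with $[w]_s \neq [w']_s$, let $i \in \{1, \dots, s\}$ be the smallest index for which $[w]_i \neq [w']_i$; such an $i$ exists because $\mathcal{Q}_0^n$ has a single equivalence class. By minimality $[w]_{i-1} = [w']_{i-1}$, so $w$ and $w'$ are representatives of two distinct substitution instances in $\Omega_i'$ of a common $\omega \in \Omega_i$. Proposition \ref{prop:finer} applied to the $i$-th substitution step with $\mathfrak{h} = h_m$, $R = R_m$, $\tilde R = R_n$, $e = e(n)$, $\beta = \alpha_{i,m}$ (from (\ref{eq:fClasses}) at level $i$) and, for $i \geq 2$, $\alpha = \alpha_{i-1,m}$ (from (\ref{eq:fClasses}) at level $i-1$; the case $i=1$ is covered because $\mathcal{Q}_0^m$ has a single class, making the coarser hypothesis vacuous) then yields
\[
\overline{f}(W, W') > \alpha_{i,m} - \frac{1}{R_m} - \frac{4}{2^{e(n)}} - \frac{1}{R_n}.
\]
The length condition $|W|, |W'| \geq h_n/R_n = k\, h_m/\tilde R$ matches the requirement of (\ref{eq:PropFiner}), and the monotonicity $\alpha_{i,m} \geq \alpha_{s,m}$ combined with the estimate $4/2^{e(n)} \leq 1/R_m$, which follows by chaining (\ref{eq:kn}), (\ref{eq:Rn}), and (\ref{eq:Pn}) (and $\beta_m < 1$), delivers (\ref{eq:Case1fClass}).

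The proof of (\ref{eq:Case1fBlock}) proceeds identically, except that the smallest $i$ with $[w]_i \neq [w']_i$ may now be as large as $s(m)+1$, in which case $\mathcal{R}_{s(m)+1}$ is equality on $\mathcal{W}_m$ and the hypothesis (\ref{eq:assumpFinerClasses}) of the proposition is supplied by (\ref{eq:fBlocks}), giving $\beta = \beta_m$. For smaller $i$ one has $\beta = \alpha_{i,m} > \beta_m$, so in every case the resulting bound is at least $\beta_m - 1/R_m - 4/2^{e(n)} - 1/R_n$; the same simplification yields (\ref{eq:Case1fBlock}).

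The only point requiring care is verifying that the numerical preconditions of Proposition \ref{prop:finer}, namely (\ref{eq:U}), (\ref{eq:M2}), and (\ref{eq:T1}), hold at every substitution step $i \in \{1, \dots, s(m)+1\}$. This is precisely what the bookkeeping in the choices of $U_1$, $M_i$, $T_i$, and $U_i$ in Subsection \ref{subsec:Case-1} was engineered to achieve (in particular $U_1 \geq 2 R_n^2$ ensures (\ref{eq:U}) at every level), so the obstacle is cosmetic rather than substantive.
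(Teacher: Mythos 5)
Your proof is essentially correct and takes the same route as the paper: reduce the $n$-level estimate to Proposition~\ref{prop:finer} applied at the $i$-th substitution step, with $\mathfrak{h}=h_m$, $R=R_m$, $\tilde R=R_n$, $k\mathfrak{h}/\tilde R=h_n/R_n$, and $\beta$ supplied by the inductive bounds (\ref{eq:fClasses}), (\ref{eq:fBlocks}) on $\mathcal{W}_m$. You also correctly make explicit a detail the paper leaves implicit: Proposition~\ref{prop:finer} requires $\omega_1',\omega_2'$ to be substitution instances of the \emph{same} $\omega$, so one should pass to the smallest $i$ with $[w]_i\neq[w']_i$ and then use the monotonicity $\alpha_{i,m}\ge\alpha_{s,m}$.

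One small inaccuracy: the parameter $e$ in Proposition~\ref{prop:finer} counts the number of $\mathcal{R}_i=\mathcal{Q}_i^m$ classes inside each $\mathcal{P}_i=\mathcal{Q}_{i-1}^m$ class, which by specification (Q6) at stage $m$ equals $e(m)$, not $e(n)$. The error term in the proposition is therefore $4/2^{e(m)}$, and the paper absorbs it via $2^{e(m)}>10R_m$ (equation (\ref{eq:kn}) applied at stage $m$), giving $4/2^{e(m)}<1/R_m$ directly. Your chain through (\ref{eq:Rn}) and (\ref{eq:Pn}) to bound $4/2^{e(n)}$ is unnecessary and bounds the wrong quantity, though since $e(n)>e(m)$ the numerical conclusion you reach is still valid. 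Apart from this mislabeling the argument matches the paper's.
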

	
	\begin{proof}
		For each $s\in\left\{ 1,\dots,s(m)\right\} $ we use Proposition \ref{prop:finer}
		and assumption (\ref{eq:fClasses}) to obtain the following estimate
		\[
		\overline{f}\left(W,W^{\prime}\right)>\alpha_{s,m}-\frac{1}{R_{m}}-\frac{4}{2^{e(m)}}-\frac{1}{R_{n}}>\alpha_{s,m}-\frac{2}{R_{m}}-\frac{1}{R_{n}}
		\]
		by $2^{e(m)}>10R_{m}$, which follows from equation (\ref{eq:kn})
		on stage $m$ of the induction. Similarly, we use Proposition \ref{prop:finer}
		and assumption (\ref{eq:fBlocks}) to conclude equation (\ref{eq:Case1fBlock}).
	\end{proof}
	The expression on the right hand side in equation (\ref{eq:Case1fClass})
	will give us the definition 
	\begin{equation}
		\alpha_{s,n}\coloneqq\alpha_{s,m}-\frac{2}{R_{m}}-\frac{1}{R_{n}}\label{eq:AlphaCase1}
	\end{equation}
	in this case of the construction. Similarly, equation (\ref{eq:Case1fBlock})
	yields the definition 
	\[
	\beta_{n}\coloneqq\beta_{m}-\frac{2}{R_{m}}-\frac{1}{R_{n}}.
	\]
	We point out that 
	\[
	\beta_{n}>\frac{\beta_{m}}{2}>\frac{10}{R_{n}}>0
	\]
	by the conditions (\ref{eq:Rassum2}) and (\ref{eq:Rn}). This
	also implies $R_{n}>\frac{10}{\beta_{n}}$, that is, the induction assumption
	(\ref{eq:Rassum2}) for the next step is satisfied.
	\begin{rem}
		\label{rem:PrepTransfer1}To prepare the transfer to the setting of
		diffeomorphisms, we consider the constructed words in $\mathcal{W}_{n}$
		as concatenations of $k_{m}$ symbols in the alphabets $\Sigma_{m,s}$
		for $s\leq s(m)$. Here, $\Sigma_{m,s}$ is the alphabet where we
		identify each element in $\mathcal{W}_{m}/\mathcal{Q}_{s}^{m}$ of length
		$h_{m}$ with one symbol in $\left\{ 1,\dots,Q_{s}^{M(s)}\right\} $.
		As in the proof of Lemma \ref{lem:case1f} we use Proposition \ref{prop:finer}
		to obtain 
		\[
		\overline{f}_{\Sigma_{m,s}}\left(W,W^{\prime}\right)>1-\frac{4}{2^{e(m)}}-\frac{1}{R_{n}}
		\]
		on any substrings $W$ and $W^{\prime}$ of at least $\frac{k_{m}}{R_{n}}$
		consecutive $\Sigma_{m,s}$-symbols in any words $w,w^{\prime}\in\mathcal{W}_{n}$
		with $[w]_{s}\neq[w^{\prime}]_{s}$. Similarly, we let $\Sigma_{m}=\{1,\dots, |\mathcal{W}_m|\}$ be the alphabet such that each word in $\mathcal{W}_m$ of length $h_m$ is represented by one symbol in $\Sigma_m$. Then we can consider words
		in $\mathcal{W}_{n}$ as concatenations of $k_{m}$ symbols in the
		alphabets $\Sigma_{m}$.
		As above, we get
		\[
		\overline{f}_{\Sigma_{m}}\left(W,W^{\prime}\right)>1-\frac{4}{2^{e(m)}}-\frac{1}{R_{n}}
		\]
		on any substrings $W$ and $W^{\prime}$ of at least $\frac{k_{m}}{R_{n}}$
		consecutive $\Sigma_{m}$-symbols in any words $w,w^{\prime}\in\mathcal{W}_{n}$
		with $w\neq w^{\prime}$.
	\end{rem}

	\subsection{\label{subsec:Case-2}Case 2: s(n)=s(m)+1}
	
	In this case $\sigma_{n}$ is the only sequence in $\mathcal{T}\cap\left\{ \sigma_{k}:k\leq n\right\} $
	of length $s(n)$. Hence, we have $G_{s(n)}^{n}=\mathbb{Z}_{2}$ and
	we additionally need to define $\mathcal{Q}_{s(n)}^{n}$ as well as
	the action of $\mathbb{Z}_{2}$ on $\mathcal{W}_{n}/\mathcal{Q}_{s(n)}^{n}$. 
	
	Recall that the specification (Q4) on $\mathcal{Q}_{s(n)}^{n}$ is
	given in terms of a number $J_{s(n),n}$. We start the construction by
	choosing $J_{s(n),n}^{\prime}$ sufficiently
		large such that $J_{s(n),n}\coloneqq 2^{J_{s(n),n}^{\prime}}$ satisfies
	\begin{equation}
		J_{s(n),n}>2R_{n}^{2}.\label{eq:J}
	\end{equation}
	
	This time the construction of $\mathcal{W}_{n}$ uses two collections
	$\mathcal{W}^{\dagger}$ and $\mathcal{W}^{\dagger\dagger}$ of concatenation
	of $m$-words. Here, both collections $\mathcal{W}^{\dagger}$ and
	$\mathcal{W}^{\dagger\dagger}$ are obtained by a successive application
	of $s(m)+1$ substitution steps. Their construction parallels the
	one in Case 1 but with different choices of parameters which will play the roles of the corresponding letters in our general substitution mechanism from Section~\ref{sec:Substitution}.
	
	This time we use $\tilde{R}=2J_{s(n),n}^{3}$ in each substitution
	step and we repeat the definition of parameters $t_{i}$, $\nu_i$, $i=1,\dots,s(m)+1$,
	and
	\begin{eqnarray*}
		N_{i}=2^{(i-1)\cdot4e(m)}, &  & \text{for }i=1,\dots,s(m)+1.
	\end{eqnarray*}
	Moreover, we choose a number $U_{1}=2^{U_{1}^{\prime}}$ with $U_{1}^{\prime}\geq\max_{i=1,\dots,s(m)}t_{i}$
	and
	\begin{equation}
		U_{1}\geq\max\left(8J_{s(n),n}^{6},2^{8e(m)}\right).\label{eq:case2U}
	\end{equation}
	
	By forward recursion we define the parameters $M_{i}$, $i=1,\dots,s(m)+2$,
	as well as $U_{i}$, $i=2,\dots,s(m)+1$: 
	\begin{align*}
		M_{1}=1, &  & M_{2}=2^{4e(n)+1}U_{1}, &  & M_{i}=2^{(i+2)\cdot4e(n)}U_{i-1}2^{(4e(m)-t_{i-2})\cdot(2M_{i-1}+3)} & \text{ for \ensuremath{i\geq3},}\\
		U_{2}=U_{1}, &  &  &  & U_{i}=U_{i-1}2^{(4e(m)-t_{i-2})\cdot(2M_{i-1}+3)} & \text{ for \ensuremath{i\geq3}}.
	\end{align*}
	Then for each $i=1,\dots,s(m)+1$ we construct $M_{i+1}$ many $\left(T,2^{4e(m)-t_{i}},M_{i+1}\right)$-Feldman
	patterns, where the parameter $T$ will be chosen later and will differ
	between the two cases $\mathcal{W}^{\dagger}$ and $\mathcal{W}^{\dagger\dagger}$. 
	
	\subsubsection{Construction of $\mathcal{W}^{\dagger\dagger}$} \label{subsubsec:DaggerDagger}
	
	To construct the collection $\mathcal{W}^{\dagger\dagger}$ we choose
	the remaining parameters as follows: We set
	$K_{i}^{\dagger\dagger}=1$ for $i=1,\dots,s(m)$ 
	and
	\begin{equation}
		K_{s(m)+1}^{\dagger\dagger}=2^{8e(n)}.\label{eq:Kdaggerdagger}
	\end{equation}
	Moreover, we define the parameters $T_{i}^{\dagger\dagger}$, $i=1,\dots,s(m)+2$,
	by backward recursion: 
	\begin{align*}
		T_{s(m)+2}^{\dagger\dagger}=\mathfrak{p}_{n}, &  & T_{i}^{\dagger\dagger}=T_{i+1}^{\dagger\dagger}\cdot2^{(4e(m)-t_{i})\cdot\left(2M_{i+1}+3\right)} & \text{ for \ensuremath{i\leq s(m)+1}}.
	\end{align*}
	Finally, we let $P_{1}^{\dagger\dagger}=1$, $P_{2}^{\dagger\dagger}=|G^m_1|$, and $P_{i}^{\dagger\dagger}=|G^m_1|\cdot \prod^{i-1}_{j=2}|ker(\rho_{j,j-1}^{(m)})|$ for
		$i=3,\dots,s(m)+1$. 
		
		Then we apply the general substitution step $s(m)+1$ many times as
		in Case 1. For $i=1,\dots,s(m)+1$, we use the parameters $t=t_{i}$, $\nu=\nu_i$, $K=K_i^{\dagger\dagger}$, $P=P_i^{\dagger\dagger}$, $M=M_i$, $\overline{M}=M_{i+1}$, $N=N_i$, $U=U_i$, $\overline{U}=U_{i+1}$, $T=T_i^{\dagger\dagger}$, $\overline{T}=T_{i+1}^{\dagger\dagger}$ in the $i$-th application of the general substitution mechanism in our successive construction.
	
	In particular, during the $i$-th application
	we use the first at most $K_{i}^{\dagger\dagger}P_{i}^{\dagger\dagger}U_{i}$
	many different $\left(T_{i+1}^{\dagger\dagger},2^{4e(m)-t_{i}},M_{i+1}\right)$-Feldman
	patterns from our collection above in step (4) of the general substitution mechanism. At the end, we obtain a collection
	$\mathcal{W}^{\dagger\dagger}\subset\left(\mathcal{W}_{m}\right)^{k^{\dagger\dagger}}$
	with 
	\begin{align*}
		k^{\dagger\dagger}= & U_{s(m)+1}\cdot T_{s(m)+1}^{\dagger\dagger}\cdot2^{(4e(m)-t_{s(m)})\cdot(2M_{s(m)+1}+3)}\\
		= & U_{s(m)+1}\cdot T_{s(m)+2}^{\dagger\dagger}\cdot2^{8e(m)\cdot(M_{s(m)+2}+M_{s(m)+1}+3)-t_{s(m)}\cdot(2M_{s(m)+1}+3)},
	\end{align*}
	where we use $t_{s(m)+1}=0$, and $|\mathcal{W}^{\dagger\dagger}/\mathcal{Q}_{s}|=P^{\dagger \dagger}_{s+1}$ for $s=1,\dots,s(m)$ as well as $|\mathcal{W}^{\dagger\dagger}|=P^{\dagger \dagger}_{s(m)+1}2^{8e(n)}$.
	\begin{rem}
		\label{rem:fdaggerdagger} We recall that the choice of $U_{1}$ in
		(\ref{eq:case2U}) is made in this way to satisfy condition (\ref{eq:U})
		from the general substitution step for $\tilde{R}=2J_{s(n),n}^{3}$.
		Thus, for the collection $\mathcal{W}^{\dagger\dagger}$ we obtain
		the same statements as in Lemma \ref{lem:case1f} on substantial
		substrings of length at least $\frac{k^{\dagger\dagger}h_{m}}{2J_{s(n),n}^{3}}$.
	\end{rem}

	\subsubsection{Construction of $\mathcal{W}^{\dagger}$}
	
	We continue by constructing the collection $\mathcal{W}^{\dagger}$. As before, the parameters will play the roles of the corresponding letters in our general substitution mechanism from Section~\ref{sec:Substitution}.
	This time we choose the parameters $K_{i}^{\dagger}\in\mathbb{Z}^{+}$,
	$i=1,\dots,s(m)+1$, as follows: 
	\begin{align} \label{eq:Kdagger}
		K_{i}^{\dagger}\cdot|ker(\rho_{i,i-1}^{(m)})| & =2^{4e(n)}\text{ for }i=2,\dots,s(m), &  & K_{s(m)+1}^{\dagger}=2^{4e(n)}.
	\end{align}
	The parameters $T_{i}$, $i=1,\dots,s(m)+2$, are defined by backward
	recursion: 
	\begin{align*}
		T_{s(m)+2}^{\dagger}=\mathfrak{p}_{n}^{2}-\mathfrak{p}_{n}, &  & T_{i}^{\dagger}=T_{i+1}^{\dagger}\cdot2^{(4e(m)-t_{i})\cdot(2M_{i+1}+3)} & \text{ for \ensuremath{i\leq s(m)+1}}.
	\end{align*}
	Once again, we apply the general substitution step $s(m)+1$ many
	times as in Case 1. Here, we define $P_{i}^{\dagger}=2^{(i-1)\cdot4e(n)}$
	for $i=1,\dots,s(m)+1$. In the $i$-th application with $i\in\left\{ 1,\dots,s(m)+1\right\} $
	we use at most $K_{i}^{\dagger}P_{i}^{\dagger}U_{i}$ many different
	$\left(T_{i+1}^{\dagger},2^{4e(m)-t_{i}},M_{i+1}\right)$-Feldman
	patterns from our collection above that have not been used in the
	construction of $\mathcal{W}^{\dagger\dagger}$. In particular, in
	the enumeration of patterns, they come after the patterns used for
	$\mathcal{W}^{\dagger\dagger}$. Note that there are enough patterns
	since 
	\[
	K_{i}^{\dagger}P_{i}^{\dagger}U_{i}+K_{i}^{\dagger\dagger}P_{i}^{\dagger\dagger}U_{i}\leq\left(2^{4e(n)}+K_{i}^{\dagger\dagger}\right)\cdot2^{(i-1)\cdot4e(n)}\cdot U_{i}\leq M_{i+1}.
	\]
	Then we obtain a collection $\mathcal{W}^{\dagger}\subset\left(\mathcal{W}_{m}\right)^{k^{\dagger}}$
	with 
	\begin{align*}
		k^{\dagger}= & U_{s(m)+1}\cdot T_{s(m)+1}^{\dagger}\cdot2^{(4e(m)-t_{s(m)})\cdot(2M_{s(m)+1}+3)}\\
		= & U_{s(m)+1}\cdot T_{s(m)+2}^{\dagger}\cdot2^{8e(m)\cdot(M_{s(m)+2}+M_{s(m)+1}+3)-t_{s(m)}\cdot(2M_{s(m)+1}+3)}
	\end{align*}
	and $|\mathcal{W}^{\dagger}|=2^{(s(m)+1)\cdot e(n)}$ as well as $|\mathcal{W}^{\dagger}/\mathcal{Q}_{s}|=P_{s+1}^{\dagger}$
	for $s=1,\dots,s(m)$.
	\begin{rem}
		\label{rem:fdagger} By the same reasoning as in Remark \ref{rem:fdaggerdagger}
		we obtain for the collection $\mathcal{W}^{\dagger\dagger}$ the same
		statements as in Lemma \ref{lem:case1f} on substantial substrings
		of length at least $\frac{k^{\dagger}h_{m}}{2J_{s(n),n}^{3}}$.
	\end{rem}
	
	We also make the following observation on the $\overline{f}$ distance
	between strings from $\mathcal{W}^{\dagger}$ and $\mathcal{W}^{\dagger\dagger}$.
	\begin{lem}
		\label{lem:differenceDaggers}Let $w\in\mathcal{W}^{\dagger}$ and
		$w'\in\mathcal{W}^{\dagger\dagger}$. For any substrings $W$ and
		$W'$ of $w$ and $w'$, respectively, of lengths at least $\frac{k^{\dagger\dagger}h_{m}}{2J_{s(n),n}^{2}}$
		we have 
		\[
		\overline{f}\left(W,W'\right)>\alpha_{1,m}-\frac{1}{R_{m}}-\frac{4}{2^{e(m)}}-\frac{1}{2J_{s(n),n}^{3}}.
		\]
	\end{lem}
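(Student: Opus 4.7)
The plan is to follow the strategy of Proposition \ref{prop:finer} closely, with the role of ``two substitution instances of a common $\omega \in \Omega$'' replaced by ``one word in $\mathcal{W}^{\dagger}$ paired with one word in $\mathcal{W}^{\dagger\dagger}$.'' First I would pass to the alphabet $\Sigma_{m,1}$ of $\mathcal{Q}_1^m$-equivalence classes of $\mathcal{W}_m$, under which each $m$-word of length $h_m$ collapses to a single symbol; by construction $w$ then becomes a concatenation of $U_2$ many $(T_2^{\dagger}, 2^{4e(m)-t_1}, M_2)$-Feldman patterns, while $w'$ becomes a concatenation of $U_2$ many $(T_2^{\dagger\dagger}, 2^{4e(m)-t_1}, M_2)$-Feldman patterns.

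The hard part, and the only real departure from Proposition \ref{prop:finer}, will be verifying that any 1-Feldman pattern appearing in $w$ carries an index $j$ distinct from the index $k$ of any 1-Feldman pattern appearing in $w'$, within the common enumeration $\{1,\dots,M_2\}$ produced at the first substitution step. This is precisely what the construction's convention, that $\mathcal{W}^{\dagger}$ uses only patterns strictly later in the enumeration than those already assigned to $\mathcal{W}^{\dagger\dagger}$, is designed to deliver. Once $j \ne k$ is in hand, I would apply Lemma \ref{lem:DifferentT} with $N = 2^{4e(m)-t_1}$, $M = M_2$, and $\{S,T\} = \{T_2^{\dagger}, T_2^{\dagger\dagger}\}$ ordered so that $S \ge T$, to obtain
\[
\overline{f}_{\Sigma_{m,1}}(P,\bar P) > 1 - \frac{4}{\sqrt{2^{4e(m)-t_1}}} \ge 1 - \frac{4}{2^{e(m)}}
\]
for any substantial pair $P,\bar P$ of consecutive-symbol substrings drawn from one $\dagger$-pattern and one $\dagger\dagger$-pattern, where $t_1 \le 2e(m)$ is guaranteed by the specification $e \ge t$ in Section \ref{sec:Setup}.

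The remaining bookkeeping mirrors the proof of Proposition \ref{prop:finer}. I would augment $W$ and $W'$ to concatenations of complete 1-Feldman patterns; the lower bound (\ref{eq:case2U}) on $U_1$, combined with $J_{s(n),n}$ being much larger than $p_n$, ensures via Fact \ref{fact:omit_symbols} that this augmentation contributes less than $\frac{1}{2 J_{s(n),n}^3}$ to the $\overline{f}_{\Sigma_{m,1}}$-distance. The partial-pattern case analysis corresponding to equations (\ref{eq:Case1})--(\ref{eq:EstimAug}) then yields $\overline{f}_{\Sigma_{m,1}}(W,W') > 1 - \frac{4}{2^{e(m)}} - \frac{1}{2 J_{s(n),n}^3}$. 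Finally I would pass back from $\Sigma_{m,1}$ to $\Sigma$ via Lemma \ref{lem:symbol by block replacement}, using the inductive hypothesis (\ref{eq:fClasses}) that $\overline{f}(C,D) > \alpha_{1,m}$ on substantial substrings of length at least $h_m/R_m$ in distinct $\mathcal{Q}_1^m$-classes; this gives
\[
\overline{f}(W,W') > \alpha_{1,m}\bigl(1 - \tfrac{4}{2^{e(m)}} - \tfrac{1}{2 J_{s(n),n}^3}\bigr) - \tfrac{1}{R_m},
\]
and since $\alpha_{1,m} < 1$ the claimed bound follows.
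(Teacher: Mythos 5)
Your proposal is correct and follows essentially the same route as the paper's proof: both hinge on the observations that the 1-Feldman patterns used for $\mathcal{W}^{\dagger}$ and $\mathcal{W}^{\dagger\dagger}$ occupy disjoint, order-compatible ranges in the enumeration (with the $\dagger$ patterns coming strictly later) and that $T_2^{\dagger} > T_2^{\dagger\dagger}$, so Lemma \ref{lem:DifferentT} applies, after which the case analysis of Proposition \ref{prop:finer}, Fact \ref{fact:omit_symbols}, and Lemma \ref{lem:symbol by block replacement} with (\ref{eq:fClasses}) yield the bound. The paper simply cites Lemma \ref{lem:case1f} and Proposition \ref{prop:finer} where you unpack that argument step by step, and it concludes by noting that the given substring length $\frac{k^{\dagger\dagger}h_m}{2J^2}$ exceeds $\frac{k^{\dagger}h_m}{2J^3}$ because $k^{\dagger}/k^{\dagger\dagger} = p_n - 1 < J_{s(n),n}$, which is the same length accounting you carry out via (\ref{eq:case2U}).
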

	
	\begin{proof}
		By construction, already in the first application of the substitution
		step the Feldman patterns on the level of the $1$-equivalence classes
		of $w$ and $w'$ differ. In the construction of $\mathcal{W}^{\dagger\dagger}$ in Section~\ref{subsubsec:DaggerDagger}, we use the first at most $K_{1}^{\dagger\dagger}P_{1}^{\dagger\dagger}U_{1}$
			many different $\left(T_{2}^{\dagger\dagger},2^{4e(m)-t_{1}},M_{2}\right)$-Feldman
			patterns from our collection of $(T,2^{4e(m)-t_{1}},M_{2})$-Feldman patterns. Hence, in the enumeration of $(T,2^{4e(m)-t_{1}},M_{2})$-Feldman
		patterns, the patterns used in the construction of $\mathcal{W}^{\dagger\dagger}$
		come before the ones in $\mathcal{W}^{\dagger}$. Since we also have
		$T_{2}^{\dagger\dagger}<T_{2}^{\dagger}$, we can apply Lemma \ref{lem:DifferentT}.
		As in Lemma \ref{lem:case1f} we obtain from Proposition \ref{prop:finer}
		that 
		\[
		\overline{f}(\tilde{W},\tilde{W}')>\alpha_{1,m}-\frac{1}{R_{m}}-\frac{4}{2^{e(m)}}-\frac{1}{2J_{s(n),n}^{3}}
		\]
		on all substrings $\tilde{W}$ and $\tilde{W}'$ of $w$ and $w'$,
		respectively, of lengths at least $\frac{k^{\dagger}h_{m}}{2J_{s(n),n}^{3}}$.
		Since $\frac{k^{\dagger}}{k^{\dagger\dagger}}=\mathfrak{p}_{n}-1<J_{s(n),n},$ we have
			\[
			\frac{k^{\dagger\dagger}h_{m}}{2J_{s(n),n}^{2}}>\frac{k^{\dagger}h_{m}}{2J_{s(n),n}^{3}},
			\]
			and hence our strings $W$ and $W'$ are long enough to conclude the statement.
	\end{proof}
	\begin{rem}
		\label{rem:Substitution-Dagger}As in Remark \ref{rem:s-pattern-type}
		we introduce the notion of $s$-pattern type for strings in $\mathcal{W}^{\dagger}$
		and $rev(\mathcal{W}^{\dagger})$. As in Lemma \ref{lem:Occurrence-Substitutions}
		we make the following observation. Let $w\in\mathcal{W}^{\dagger}$
		and $[P_{m}]_{s}$ be a $\left(T_{s+1}^{\dagger},2^{4e(m)-t_{s}},M_{s+1}\right)$-Feldman
		pattern in $[w]_{s}$ with building blocks from the tuple $\left([A_{1}]_{s},\dots,[A_{2^{4e(m)-t_{s}}}]_{s}\right)$
		in $\mathcal{W}_{m}/\mathcal{Q}_{s}^{m}$. Then in each complete cycle
		of $P_{m}$ and for each substitution instance $a_{i,j}$ in $\mathcal{W}_{m}$
		of each $[A_{i}]_{s}$, $i=1,\dots,2^{4e(m)-t_{s}}$, its repetition
		$\left(a_{i,j}\right)^{\mathfrak{p}_{n}^{2}-\mathfrak{p}_{n}}$ occurs the same number of
		times.
	\end{rem}

	\subsubsection{Construction of $\mathcal{W}_{n}$} \label{subsubsec:Wn}
	
	To construct $\mathcal{W}_{n}$ we recall that $U_{1}$ (and, hence,
	$U_{s(m)+1}$) is a multiple of $2J_{s(n),n}$. Thus, we can divide
	each element $v_{1}\in\mathcal{W}^{\dagger}$ and $v_{2}\in\mathcal{W}^{\dagger\dagger}$
	into $J=J_{s(n),n}$ or $2J$ many pieces of equal length: 
	\[
	v_{1}=v_{1,1}v_{1,2}\dots v_{1,J}\text{ and }v_{2}=v_{2,1}v_{2,2}\dots v_{2,2J}.
	\]
	Then we can concatenate their substrings in the following way:
	\begin{equation}
		v_{1}\ast v_{2}\coloneqq v_{2,1}v_{1,1}v_{2,2}v_{2,3}v_{1,2}v_{2,4}\dots v_{2,2J-1}v_{1,J}v_{2,2J}\label{eq:Star}
	\end{equation}
	
	Using this notation we build $\mathcal{W}_{n}/\mathcal{Q}_{s}^{n}$
	for $0\leq s\leq s(m)$ inductively: 
	\begin{itemize}
		\item Trivially, for $s=0$ we set
		$\mathcal{W}_{n}/\mathcal{Q}_{0}^{n}=\left\{ [v_{1}]_{0}\ast[v_{2}]_{0}\right\} $,
		where $[v_{1}]_{0}$ and $[v_{2}]_{0}$ are the single elements in
		$\mathcal{W}^{\dagger}/\mathcal{Q}_{0}$ and $\mathcal{W}^{\dagger\dagger}/\mathcal{Q}_{0}$,
		respectively.
		\item For $0\leq s<s(m)$ we choose a collection $\Upsilon_s \subseteq \mathcal{W}_{n}/\mathcal{Q}_{s}^{n}$ that intersects each orbit of the action by the group $\rho^{m}_{s+1,s}(G^m_{s+1})$ exactly once. For any element $C\ast D\in\Upsilon_s \subseteq\mathcal{W}_{n}/\mathcal{Q}_{s}^{n}$
		with $C\in\mathcal{W}^{\dagger}/\mathcal{Q}_{s}$, $D\in\mathcal{W}^{\dagger\dagger}/\mathcal{Q}_{s}$
		we take a collection $\mathcal{C}$ of substitution instances of $C$ in $\mathcal{W}^{\dagger}/\mathcal{Q}_{s+1}$ that intersects each orbit of $ker(\rho^{m}_{s+1,s})$ exactly once. We note that $|\mathcal{C}|=K^{\dagger}_{s+1}$ by construction. Moreover, we choose one substitution instance $d\in\mathcal{W}^{\dagger\dagger}/\mathcal{Q}_{s+1}$
		of $D$. For every $c \in \mathcal{C}$ we let $c\ast d$ and its images under the action by $G^m_{s+1}$ belong to $\mathcal{W}_{n}/\mathcal{Q}_{s+1}^{n}$. Notice that for
		any $g\in G_{s+1}^{m}$ and $[w]_{s+1}=c\ast d\in\mathcal{W}_{n}/\mathcal{Q}_{s+1}^{n}$
		we have $g[w]_{s+1}=(gc)\ast(gd)$. Since $\mathcal{W}^{\dagger}/\mathcal{Q}_{s+1}$ as well as $\mathcal{W}^{\dagger\dagger}/\mathcal{Q}_{s+1}$ is closed under the skew diagonal action by construction, we have $gc\in \mathcal{W}^{\dagger}/\mathcal{Q}_{s+1}$ and $gd\in \mathcal{W}^{\dagger\dagger}/\mathcal{Q}_{s+1}$. Altogether there are $2^{4e(n)}$ substitution instances of each element of $\mathcal{W}_{n}/\mathcal{Q}_{s}^{n}$ in $\mathcal{W}_{n}/\mathcal{Q}_{s+1}^{n}$ by our condition $K_{s+1}^{\dagger}\cdot|ker(\rho_{s+1,s}^{(m)})| =2^{4e(n)}$ on $K_{s+1}^{\dagger}$ in (\ref{eq:Kdagger}).
		\item For any element $C\ast D\in\mathcal{W}_{n}/\mathcal{Q}_{s(m)}^{n}$
		with $C\in\mathcal{W}^{\dagger}/\mathcal{Q}_{s(m)}$, $D\in\mathcal{W}^{\dagger\dagger}/\mathcal{Q}_{s(m)}$
		we pair each of the $2^{4e(n)}$ many substitution instances $v_i\in\mathcal{W}^{\dagger}$
		of $C$ with $2^{4e(n)}$ many substitution instances $d_{1}^{(i)},\dots,d_{2^{4e(n)}}^{(i)}\in\mathcal{W}^{\dagger\dagger}$
		of $D$.  Here, we notice that we have $2^{4e(n)}$ many substitution instances $v_i\in\mathcal{W}^{\dagger}$
			of $C$ by $K^{\dagger}_{s(m)+1}=2^{4e(n)}$ from (\ref{eq:Kdagger}), and $2^{8e(n)}$ substitution instances of $D$
		in $\mathcal{W}^{\dagger\dagger}$ by our choice $K_{s(m)+1}^{\dagger\dagger}=2^{8e(n)}$
		in (\ref{eq:Kdaggerdagger}). Then, for such elements of the form $w=v_{i}\ast d_{j}^{(i)},$ we define
		the new equivalence classes 
		\begin{equation} \label{eq:NewEquiRel}
			\left[w\right]_{s(m)+1}=\left\{ v_{i}\ast d_{j}^{(i)}:j=1,\dots,2^{4e(n)}\right\} .
		\end{equation}
	\end{itemize}
	
	Moreover, we set
	\[
	\mathcal{W}_{n}=\left\{ v_{i}\ast d_{j}^{(i)}:v_{i}\in\mathcal{W}^{\dagger},j=1,\dots,2^{4e(n)}\right\} .
	\]
	We note that these definitions fulfill specification (Q6).
	
	\subsubsection{Equivalence relations on $\mathcal{W}_{n}$}
	
	The equivalence classes $\mathcal{W}_{n}/\mathcal{Q}_{s}^{n}$ for
	$s\leq s(m)$ are given by the product equivalence relation. The new
	equivalence relation $\mathcal{Q}_{s(n)}^{n}$ satisfies 
	\[
	v_{1}\ast v_{2}\sim v_{1}^{\prime}\ast v_{2}^{\prime}\text{ if and only if }v_{1}=v_{1}^{\prime}\text{ and }\left[v_{2}\right]_{s(m)}=\left[v_{2}^{\prime}\right]_{s(m)},
	\]
	that is, two words in $\mathcal{W}_{n}$ are equivalent with respect to
	$\mathcal{Q}_{s(n)}^{n}$ if and only if when considering each as a concatenation
	of $J_{s(n),n}$ strings of equal length all their central strings
	(whose concatenation forms an element from $\mathcal{W}^{\dagger}$)
	coincide and their initial and terminal strings (whose concatenation
	forms an element from $\mathcal{W}^{\dagger\dagger}$) are equivalent
	with respect to the product equivalence relation induced from $\mathcal{Q}_{s(m)}^{m}$.
	Hence, by the definition of the equivalence relation $\mathcal{Q}^n_{s(n)}$ in (\ref{eq:NewEquiRel}), for $w=v_{1}\ast v_{2}\in\mathcal{W}_{n},$ we have
	\[
	[w]_{s(n)}=\left\{ v_{1}\ast v_{2}^{\prime}:\left[v_{2}\right]_{s(m)}=\left[v_{2}^{\prime}\right]_{s(m)}\right\}
	\]
	We also observe that 
	\begin{equation}
		\frac{k^{\dagger}}{k^{\dagger}+k^{\dagger\dagger}}=\frac{T_{s(m)+2}^{\dagger}}{T_{s(m)+2}^{\dagger}+T_{s(m)+2}^{\dagger\dagger}}=1-\frac{1}{\mathfrak{p}_{n}}>1-\epsilon_{n}\label{eq:LengthProportion}
	\end{equation}
	by our condition on $\mathfrak{p}_{n}$ in equation (\ref{eq:Pn}). In this way,
	specification (Q4) is satisfied and the equivalence relation $\mathcal{Q}_{s(n)}^{n}$
	refines $\mathcal{Q}_{s(m)}^{n}$. 
	
	\subsubsection{Group actions}
	
	For all $s\leq s(m)$ we have $G_{s}^{n}=G_{s}^{m}$ and the group
	actions are defined via the skew-diagonal action. According to our construction in Section \ref{subsubsec:Wn}, $\mathcal{W}_{n}/\mathcal{Q}_{s}^{n}$
	is closed under the skew diagonal action by $G_{s}^{n}$.
	
	To define the action by $G_{s(n)}^{n}=\mathbb{Z}_{2}$ we apply Lemma
	\ref{lem:extension} with 
	\begin{itemize}
		\item $G\times\mathbb{Z}_{2}\coloneqq G_{s(m)}^{n}=G_{s(m)}^{m}$,
		\item $H$ the trivial group,
		\item $H\times\mathbb{Z}_{2}\coloneqq G_{s(n)}^{n}$,
		\item $X/\mathcal{Q}\coloneqq\mathcal{W}_{n}/\mathcal{Q}_{s(m)}^{n}$,
		\item $X/\mathcal{R}\coloneqq\mathcal{W}_{n}/\mathcal{Q}_{s(n)}^{n}$,
		\item $\rho\coloneqq\rho_{s(n),s(m)}.$
	\end{itemize}
	Hence, we obtain an action of $G_{s(n)}^{n}=\mathbb{Z}_{2}$ on $\mathcal{W}_{n}/\mathcal{Q}_{s(n)}^{n}$
	subordinate to the action of $G_{s(m)}^{m}$.
	
	\subsubsection{Further properties}
	
	In this case we also make observations regarding $\overline{f}$ distances.
	\begin{lem}
		\label{lem:fCase2}
		\begin{enumerate}
			\item For every $s\in\left\{ 1,\dots,s(m)\right\} $ we have 
			\begin{equation}
				\overline{f}\left(W,W^{\prime}\right)>\alpha_{s,m}-\frac{3}{R_{m}}-\frac{2}{R_{n}}\label{eq:Case2fClass}
			\end{equation}
			on any substrings $W$ and $W^{\prime}$ of at least $\frac{h_{n}}{R_{n}}$
			consecutive symbols in any words $w,w^{\prime}\in\mathcal{W}_{n}$
			with $[w]_{s}\neq[w^{\prime}]_{s}$.
			\item In case of $s=s(n)$ we have 
			\begin{equation}
				\overline{f}\left(W,W^{\prime}\right)>\beta_{m}-\frac{3}{R_{m}}-\frac{2}{R_{n}}\label{eq:Case2fClass-1}
			\end{equation}
			on any substrings $W$ and $W^{\prime}$ of at least $\frac{h_{n}}{R_{n}}$
			consecutive symbols in any words $w,w^{\prime}\in\mathcal{W}_{n}$
			with $[w]_{s(n)}\neq[w^{\prime}]_{s(n)}$.
			\item Moreover, we have 
			\begin{equation}
				\overline{f}\left(W,W^{\prime}\right)>\frac{1}{\mathfrak{p}_{n}}\cdot\left(\beta_{m}-\frac{1}{R_{m}}-\frac{4}{2^{e(m)}}-\frac{1}{2J_{s(n),n}^{3}}\right)-\frac{2}{R_{n}}\label{eq:Case2fBlock}
			\end{equation}
			on any substrings $W$ and $W^{\prime}$ of at least $\frac{h_{n}}{R_{n}}$
			consecutive symbols in any words $w,w^{\prime}\in\mathcal{W}_{n}$
			with $w\neq w^{\prime}$.
		\end{enumerate}
	\end{lem}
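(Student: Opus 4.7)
The plan is to prove Lemma \ref{lem:fCase2} by transferring the $\overline{f}$-bounds already known for $\mathcal{W}^\dagger$ (Remark \ref{rem:fdagger}) and $\mathcal{W}^{\dagger\dagger}$ (Remark \ref{rem:fdaggerdagger}) across the $*$-interleaving (\ref{eq:Star}). The preliminary observation I will use is that at each level $s \leq s(m)$ the cardinality match $|\mathcal{W}_n/\mathcal{Q}_s^n| = 2^{s\cdot 4e(n)} = |\mathcal{W}^\dagger/\mathcal{Q}_s|$ forces the projection $(C,D)\mapsto C$ of $\mathcal{W}_n/\mathcal{Q}_s^n \subseteq (\mathcal{W}^\dagger/\mathcal{Q}_s)\times(\mathcal{W}^{\dagger\dagger}/\mathcal{Q}_s)$ onto its first coordinate to be a bijection. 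Thus $[v_1]_s \mapsto [v_2]_s$ is functional inside $\mathcal{W}_n$, so the hypothesis $[w]_s \neq [w']_s$ in Part (1) automatically entails $[v_1]_s \neq [v_1']_s$; for Part (2), $[w]_{s(n)} = [v_i]_{s(n)}$ depends only on the element of $\mathcal{W}^\dagger$ chosen, so $[w]_{s(n)} \neq [w']_{s(n)}$ entails $v_1 \neq v_1'$.

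For Parts (1) and (2) I extract the $v_1$-portions $\tilde{W}, \tilde{W}'$ of $W, W'$ by collecting the $v_{1,\ell}$-pieces from each unit of the $*$-structure; these are contiguous substrings of $v_1, v_1'$. Using (\ref{eq:J}), (\ref{eq:LengthProportion}), and $J_{s(n),n} \geq 2R_n^2$, the lengths of $\tilde{W}, \tilde{W}'$ are at least $L(1-1/p_n) - 2k^\dagger h_m/J_{s(n),n}$, comfortably exceeding the threshold $k^\dagger h_m/(2J_{s(n),n}^3)$ required by Remark \ref{rem:fdagger}. That remark then gives
\[
\overline{f}(\tilde{W}, \tilde{W}') \geq \alpha_{s,m} - 1/R_m - 4/2^{e(m)} - 1/(2J_{s(n),n}^3)
\]
for Part (1) and the analogous $\beta_m$-bound for Part (2). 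Fact \ref{fact:omit_symbols} applied to the deletion of the $v_2$-symbols of $W, W'$ (of fractional size $\gamma \leq 1/p_n + 2R_n/J_{s(n),n} \leq 1/p_n + 1/R_n$) transfers this to $\overline{f}(W, W')$. The bounds $p_n > 4R_m$ from (\ref{eq:Pn}), $2^{e(m)} > 10R_m$, and $J_{s(n),n}^3 \gg R_n$ absorb the small residual terms into the announced constants $3/R_m$ and $2/R_n$.

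For Part (3) I split into two subcases. If $v_1 \neq v_1'$, the argument of Parts (1)--(2) with Remark \ref{rem:fdagger} applied to $\beta_m$ yields a bound strictly stronger than claimed. If $v_1 = v_1'$, then necessarily $v_2 \neq v_2'$, with both being distinct substitution instances in $\mathcal{W}^{\dagger\dagger}$ of the common $\mathcal{Q}_{s(m)}$-class paired with $v_1$ in the construction of Section \ref{subsubsec:Wn}. Remark \ref{rem:fdaggerdagger} applied to the $v_2$-portions $V_2, V_2'$ (which are contiguous substrings of $v_2, v_2'$ of length $\geq L/p_n - O(h_n/J_{s(n),n}) \geq k^{\dagger\dagger}h_m/(2J_{s(n),n}^3)$) supplies
\[
\overline{f}(V_2, V_2') \geq \beta_m - 1/R_m - 4/2^{e(m)} - 1/(2J_{s(n),n}^3).
\]
Since $v_1 = v_1'$, the $v_1$-contribution to any match is essentially cost-free; I then use Fact \ref{fact:substring_matching} on the unit decomposition of $W$ to show $\overline{f}(W, W') \geq (1/p_n)\overline{f}(V_2, V_2') - 2/R_n$, the weighting factor $1/p_n$ being the length proportion of $v_2$-pieces.

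\emph{The main obstacle} is the final step of Part (3): rigorously justifying that, when $v_1 = v_1'$, a best match between $W$ and $W'$ cannot exploit ``cross-type'' pairings (between $v_1$-positions in one string and $v_2$-positions in the other) to bypass the lower bound on $\overline{f}(V_2, V_2')$. My plan is to handle this by an exchange argument on the unit decomposition, using that the central $v_{1,\ell}$-piece of each unit is identical in $w$ and $w'$ and that the unit period $h_n/J_{s(n),n}$ is small compared with $|W|, |W'|$ by $J_{s(n),n} \geq 2R_n^2$: one shows that redirecting any cross-type pair to a within-type pair never decreases the match size, so the best match splits into a near-perfect $v_1$-match and a match between $V_2$ and $V_2'$, with boundary losses from partial units controlled by the same inequality.
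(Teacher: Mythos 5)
Your treatment of Parts (1) and (2) is essentially the paper's argument (complete partial $*$-units at the ends, discard the $\mathcal{W}^{\dagger\dagger}$-portion of relative size $1/p_n$ via Fact \ref{fact:omit_symbols}, apply Remark \ref{rem:fdagger} at level $s$ resp.\ at the level of actual $\mathcal{W}^{\dagger}$-words), and your observation that $[w]_{s}\neq[w']_{s}$ forces the $\mathcal{W}^{\dagger}$-components to lie in different $\mathcal{Q}_{s}$-classes is a legitimate justification of a step the paper leaves implicit. The gap is in Part (3), subcase $v_{1}=v_{1}'$, exactly where you flagged your ``main obstacle.'' The proposed exchange argument --- ``redirecting any cross-type pair to a within-type pair never decreases the match size'' --- is not true on purely combinatorial grounds. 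A match is an order-preserving injection, and if a $v_{2}$-piece of $W$ happened to resemble a substring of a $v_{1}$-piece of $W'$, a best match could absorb the $\mathcal{W}^{\dagger\dagger}$-material into slightly shifted matches against the (identical) $\mathcal{W}^{\dagger}$-material and drive $\overline{f}(W,W')$ close to $0$; nothing in $v_{1}=v_{1}'$ alone, nor in Remark \ref{rem:fdaggerdagger} (which only separates two $\mathcal{W}^{\dagger\dagger}$-strings from each other), rules this out. You cannot invoke Fact \ref{fact:substring_matching} with the decomposition you want, because under a best match the $v_{2}$-pieces of $W$ need not correspond to the $v_{2}$-pieces of $W'$.

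The missing ingredient is Lemma \ref{lem:differenceDaggers}: substantial substrings of words in $\mathcal{W}^{\dagger}$ and of words in $\mathcal{W}^{\dagger\dagger}$ are at $\overline{f}$-distance at least $\alpha_{1,m}-\frac{1}{R_{m}}-\frac{4}{2^{e(m)}}-\frac{1}{2J_{s(n),n}^{3}}$, a fact that rests on the construction itself ($\mathcal{W}^{\dagger\dagger}$ uses Feldman patterns that precede those of $\mathcal{W}^{\dagger}$ in the enumeration, with $T_{2}^{\dagger\dagger}<T_{2}^{\dagger}$, so Lemma \ref{lem:DifferentT} applies). With this in hand the paper avoids any ad hoc exchange argument: it subdivides both $W$ and $W'$ into pieces of length $\frac{k^{\dagger\dagger}h_{m}}{2J_{s(n),n}}$, notes that any pair of pieces with distinct labels (whether of type $\dagger$ versus $\dagger\dagger$, via Lemma \ref{lem:differenceDaggers}, or both of type $\dagger\dagger$, via Remark \ref{rem:fdaggerdagger}) is at least $\beta_{m}-\frac{1}{R_{m}}-\frac{4}{2^{e(m)}}-\frac{1}{2J_{s(n),n}^{3}}$ apart on substantial substrings, and feeds this into Lemma \ref{lem:symbol by block replacement} with symbol-level distance $\geq\frac{1}{p_{n}}$. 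The pairing-modification work you envision is in effect already packaged inside the proof of Lemma \ref{lem:symbol by block replacement}, but that lemma's hypothesis is precisely the cross-type separation you have not supplied. Add Lemma \ref{lem:differenceDaggers} as input and replace the exchange argument by an application of Lemma \ref{lem:symbol by block replacement}, and Part (3) closes.
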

	
	\begin{proof}
		We decompose the strings $W$ and $W^{\prime}$ into the substrings
		of length $\frac{h_{n}}{J_{s(n),n}}$ according to equation (\ref{eq:Star}).
		To complete partial ones at the beginning and end we add fewer than
		$4\frac{h_{n}}{J_{s(n),n}}$ symbols in total which corresponds to
		a proportion of less than
		\[
		\frac{2R_{n}}{J_{s(n),n}}<\frac{1}{R_{n}}
		\]
		of the total length by equation (\ref{eq:J}). This might increase
		the $\overline{f}$ distance by at most $\frac{1}{R_{n}}$ by Fact
		\ref{fact:omit_symbols}.
		\begin{enumerate}
			\item On each of these substrings of length $\frac{h_{n}}{J_{s(n),n}}$
			the part belonging to $\mathcal{W}^{\dagger}$ has length $\frac{k^{\dagger}h_{m}}{J_{s(n),n}},$
			while the part belonging to $\mathcal{W}^{\dagger\dagger}$ represents
			a proportion of $\frac{1}{\mathfrak{p}_{n}}$ of the length by equation (\ref{eq:LengthProportion}).
			We delete them for the consideration of this subcase, increasing the
			$\overline{f}$ distance by at most $\frac{2}{\mathfrak{p}_{n}}$ according to
			Fact \ref{fact:omit_symbols}. On the remaining part we use Remark
			\ref{rem:fdagger} on the level of the equivalence class $s\in\left\{ 1,\dots,s(m)\right\} $.
			With the aid of conditions (\ref{eq:Pn}) on $\mathfrak{p}_{n}$ and (\ref{eq:kn})
			applied on $e(m),$ we obtain the following estimate: 
			\[
			\overline{f}\left(W,W^{\prime}\right)>\alpha_{s,m}-\frac{1}{R_{m}}-\frac{4}{2^{e(m)}}-\frac{1}{2J_{s(n),n}^{3}}-\frac{2}{\mathfrak{p}_{n}}-\frac{1}{R_{n}}>\alpha_{s,m}-\frac{3}{R_{m}}-\frac{2}{R_{n}}.
			\]
			\item We follow the argument from the previous subcase but apply Remark
			\ref{rem:fdagger} on the level of the actual blocks in $\mathcal{W}^{\dagger}$.
			\item While the strings might coincide on the central parts belonging to
			$\mathcal{W}^{\dagger}$, this time the $\overline{f}$ distance comes
			from the initial and terminal parts of length $\frac{k^{\dagger\dagger}h_{m}}{2J_{s(n),n}}$
			belonging to $\mathcal{W}^{\dagger\dagger}$. To see this, we further
			subdivide the strings of length $\frac{kh_{m}}{J_{s(n),n}}$ into
			pieces of length $\frac{k^{\dagger\dagger}h_{m}}{2J_{s(n),n}}$. Lemma
			\ref{lem:differenceDaggers} allows us to estimate the $\overline{f}$
			distance if one of those pieces lies in $\mathcal{W}^{\dagger}$ and
			the other one lies in $\mathcal{W}^{\dagger\dagger}$. For both pieces
			lying in $\mathcal{W}^{\dagger\dagger}$ we apply Remark \ref{rem:fdaggerdagger}
			for blocks in $\mathcal{W}^{\dagger\dagger}$. Hence, we obtain by
			applying Lemma \ref{lem:symbol by block replacement} with $\overline{f}\geq\frac{1}{\mathfrak{p}_{n}}$
			that
			\begin{align*}
				\overline{f}\left(W,W^{\prime}\right) & >\frac{1}{\mathfrak{p}_{n}}\cdot\left(\beta_{m}-\frac{1}{R_{m}}-\frac{4}{2^{e(m)}}-\frac{1}{2J_{s(n),n}^{3}}\right)-\frac{1}{J_{s(n),n}}-\frac{1}{R_{n}}\\
				& >\frac{1}{\mathfrak{p}_{n}}\cdot\left(\beta_{m}-\frac{1}{R_{m}}-\frac{4}{2^{e(m)}}-\frac{1}{2J_{s(n),n}^{3}}\right)-\frac{2}{R_{n}}.
			\end{align*}
		\end{enumerate}
	\end{proof}
	The expressions on the right hand side in Lemma \ref{lem:fCase2}
	motivate the following definitions. For $s\in\left\{ 1,\dots,s(m)\right\} $
	we set 
	\begin{equation}
		\alpha_{s,n}\coloneqq\alpha_{s,m}-\frac{3}{R_{m}}-\frac{2}{R_{n}},\label{eq:AlphaCase2}
	\end{equation}
	while for $s=s(n)$ we introduce 
	\begin{equation}
		\alpha_{s(n),n}\coloneqq\beta_{m}-\frac{3}{R_{m}}-\frac{2}{R_{n}}.\label{eq:NewAlpha}
	\end{equation}
	We notice that 
	\begin{equation*}
		\alpha_{s(n),n}>\beta_{m}-\frac{1}{3}\beta_{m}-\frac{1}{20\mathfrak{p}_{n}}\beta_{m}\geq\frac{\beta_{m}}{2}>0,
	\end{equation*}
	by equations (\ref{eq:Rassum2}) and (\ref{eq:Rn}). Hence, we get
	\begin{equation}\label{eq:EstimateAlphaR}
		\frac{1}{R_n} < \frac{\alpha_{s(n),n}}{20\mathfrak{p}_n},
	\end{equation}
	by condition (\ref{eq:Rn}). Finally,
	we set
	\[
	\beta_{n}\coloneqq\frac{1}{\mathfrak{p}_{n}}\cdot\left(\beta_{m}-\frac{1}{R_{m}}-\frac{4}{2^{e(m)}}-\frac{1}{2J_{s(n),n}^{3}}\right)-\frac{2}{R_{n}},
	\]
	and we note with the aid of equation (\ref{eq:Rn}) again that
	\[
	\beta_{n}>\frac{1}{\mathfrak{p}_{n}}\cdot\frac{\beta_{m}}{2}-\frac{2}{R_{n}}\geq\frac{9}{20}\cdot\frac{\beta_{m}}{\mathfrak{p}_{n}}\geq\frac{9}{R_{n}},
	\]
	which not only yields $\beta_{n}>0$, but also $R_{n}>\frac{9}{\beta_{n}}$.
	That is, the induction assumption (\ref{eq:Rassum2}) for the next step
	is satisfied.
	\begin{rem}
		\label{rem:PrepTransfer2}As in Remark \ref{rem:PrepTransfer1}
		we prepare the transfer to the setting of diffeomorphisms by considering
		the words in $\mathcal{W}_{n}$ as concatenations of $k_{m}$ symbols
		in the alphabets $\Sigma_{m}$ and $\Sigma_{m,s}$ for $s\leq s(m)$.
		As in the proof of Lemma \ref{lem:fCase2} we use Proposition \ref{prop:finer}
		to obtain 
		\[
		\overline{f}_{\Sigma_{m,s}}\left(W,W^{\prime}\right)>1-\frac{4}{2^{e(m)}}-\frac{1}{R_{n}}-\frac{1}{R_{n}}-\frac{2}{\mathfrak{p}_{n}}>1-\frac{1}{R_{m}}-\frac{2}{R_{n}}
		\]
		on any substrings $W$ and $W^{\prime}$ of at least $\frac{k_{m}}{R_{n}}$
		consecutive $\Sigma_{m,s}$-symbols in any words $w,w^{\prime}\in\mathcal{W}_{n}$
		with $[w]_{s}\neq[w^{\prime}]_{s}$ for $s\leq s(m)$. Similarly,
		the proof of part (3) in Lemma \ref{lem:fCase2} yields 
		\begin{equation}
			\overline{f}_{\Sigma_{m}}\left(W,W^{\prime}\right)>\frac{1}{\mathfrak{p}_{n}}\cdot\left(1-\frac{4}{2^{e(m)}}-\frac{1}{R_{n}}\right)-\frac{2}{R_{n}}>\frac{1}{2\mathfrak{p}_{n}}\label{eq:BaseWords}
		\end{equation}
		on any substrings $W$ and $W^{\prime}$ of at least $\frac{k_{m}}{R_{n}}$
		consecutive $\Sigma_{m}$-symbols in any words $w,w^{\prime}\in\mathcal{W}_{n}$
		with $w\neq w^{\prime}$. Following the proof of part (2) in Lemma
		\ref{lem:fCase2} we can also express the $\overline{f}$-distance
		on substantial substrings of different classes of the new equivalence
		relation $\mathcal{Q}_{s(n)}^{n}$ in the alphabet $\Sigma_{m}$:
		We have 
		\begin{equation}
			\overline{f}_{\Sigma_{m}}\left(W,W^{\prime}\right)>1-\frac{4}{2^{e(m)}}-\frac{1}{R_{n}}-\frac{1}{R_{n}}-\frac{2}{\mathfrak{p}_{n}}>1-\frac{1}{R_{m}}-\frac{2}{R_{n}}\label{eq:BaseClass}
		\end{equation}
		on any substrings $W$ and $W^{\prime}$ of at least $\frac{k_{m}}{R_{n}}$
		consecutive $\Sigma_{m}$-symbols in any words $w,w^{\prime}\in\mathcal{W}_{n}$
		with $[w]_{s(n)}\neq[w^{\prime}]_{s(n)}$.
	\end{rem}
	
	\begin{rem}
		\label{rem:BaseCase}In the base case $m=0$ of our inductive process
		we have $\mathcal{W}_{0}=\left\{ 1,\dots,2^{12}\right\} $, that is, the
		$m$-blocks are actual symbols and $e(0)=3$ by $2^{4e(0)}=2^{12}$. Thus, by the first inequalities in  (\ref{eq:BaseWords})
		and (\ref{eq:BaseClass}), we can set 
		\begin{align*}
			\alpha_{1,n}=\min\left(\frac{1}{2}-\frac{2}{R_{n}}-\frac{2}{\mathfrak{p}_{n}},\frac{1}{9}\right) & , &  & \beta_{n}=\min\left(\frac{1}{\mathfrak{p}_{n}}\cdot\left(\frac{1}{2}-\frac{1}{R_{n}}\right)-\frac{2}{R_{n}},\alpha_{1,n}\right).
		\end{align*}
	\end{rem}

	\subsection{\label{subsec:Concluding-remarks}Conclusion of the construction}
	We recall the definition of the function $M(s)$ from Definition \ref{def:M-and-s}. For every $s \in \mathbb{Z}^+,$ we use equations (\ref{eq:NewAlpha}) and (\ref{eq:AlphaCase2}), as
	well as (\ref{eq:AlphaCase1}), to conclude for every $\ell>M(s)$ with $\sigma_{\ell} \in \mathcal{T}$ that   
	\[
	\alpha_{s,\ell}>\alpha_{s,M(s)}-\frac{3}{R_{M(s)}}-\sum_{k>M(s): \sigma_k \in \mathcal{T}}\frac{5}{R_{k}}.
	\]
	By condition (\ref{eq:Rn}) and $\beta_i < \alpha_{s,M(s)}$ for $i\geq M(s)$, this implies
		\begin{align*}
			\alpha_{s,\ell}& >\alpha_{s,M(s)}-\frac{3}{R_{M(s)}}-\sum_{k>M(s): \sigma_k \in \mathcal{T}}\frac{5\beta_{k-1}}{40\mathfrak{p}_{k}} \\
			& > \alpha_{s,M(s)}-\frac{3}{R_{M(s)}}-\frac{5\alpha_{s,M(s)}}{40}\cdot \sum_{k>M(s): \sigma_k \in \mathcal{T}}\frac{1}{\mathfrak{p}_{k}} \\
			&\geq \alpha_{s,M(s)}-\frac{3}{R_{M(s)}}-\frac{5}{40}\alpha_{s,M(s)},
		\end{align*}
		where we used condition (\ref{eq:Pn}) in the last step. Together with the estimate (\ref{eq:EstimateAlphaR}) we obtain
	\[
	\alpha_{s,\ell}>\alpha_{s,M(s)}-\frac{6\alpha_{s,M(s)}}{40\mathfrak{p}_{M(s)}}-\frac{5}{40}\alpha_{s,M(s)}>\frac{\alpha_{s,M(s)}}{2}.
	\]
	Thus, for every $s\in\mathbb{Z}^+$ we can define
	\[
	\alpha_s \coloneqq \lim_{\substack{\ell \to \infty \\
			\sigma_{\ell}\in \mathcal{T}}} \alpha_{s,\ell}\geq \frac{\alpha_{s,M(s)}}{2},
	\]
	where the limit exists because by ... the sequence is decreasing.
	This yields the following proposition.
	\begin{prop} \label{prop:conclusio}
		We have for every $n\geq M(s)$ with $\sigma_n \in \mathcal{T}$ that
		\begin{equation}
			\overline{f}\left(W,W^{\prime}\right)>\alpha_{s}
		\end{equation}
		on any substrings $W$ and $W^{\prime}$ of at least $\frac{h_{n}}{R_{n}}$
		consecutive symbols in any words $w,w^{\prime}\in\mathcal{W}_{n}$
		with $[w]_{s}\neq[w^{\prime}]_{s}$.
	\end{prop}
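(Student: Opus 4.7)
The proposition is essentially a bookkeeping consequence of the $\overline{f}$-estimates developed across Cases 1 and 2 of Section \ref{sec:Construction}, together with the monotonicity of the sequence $(\alpha_{s,\ell})$ established just before the statement. The plan is to distinguish whether $n=M(s)$ (the stage at which $\mathcal{Q}_s^n$ first appears) or $n>M(s)$ (a later stage at which the estimate has only been refined by a small loss), invoke the appropriate earlier lemma, and then use $\alpha_{s,n}\ge\alpha_s$ to pass to the limiting constant.

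First, I would handle the base case $n=M(s)$. At this stage the construction falls under Case 2 with $s(n)=s$, so part (2) of Lemma \ref{lem:fCase2} applies directly: for $w,w'\in\mathcal{W}_n$ with $[w]_{s(n)}\ne[w']_{s(n)}$ and any substrings $W,W'$ of length at least $h_n/R_n$,
\[
\overline{f}(W,W')>\beta_m-\frac{3}{R_m}-\frac{2}{R_n}=\alpha_{s(n),n}=\alpha_{s,M(s)},
\]
where $m$ is the predecessor of $n$ in $\mathcal{T}$ and the last equality is the definition (\ref{eq:NewAlpha}).

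Next, for the inductive step $n>M(s)$, let again $m<n$ be the largest predecessor of $n$ in $\mathcal{T}$. Then $s\le s(m)$, so depending on whether stage $n$ is Case 1 or Case 2 I would apply Lemma \ref{lem:case1f} or part (1) of Lemma \ref{lem:fCase2}, respectively. Using definitions (\ref{eq:AlphaCase1}) and (\ref{eq:AlphaCase2}), each of these gives
\[
\overline{f}(W,W')>\alpha_{s,n}
\]
for $W,W'$ substantial substrings (of length at least $h_n/R_n$) of words in distinct $\mathcal{Q}_s^n$-classes. Iterating, the bound $\overline{f}(W,W')>\alpha_{s,n}$ holds at every stage $n\ge M(s)$ with $\sigma_n\in\mathcal{T}$.

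Finally, I would close the argument by combining this with the monotonicity remarks preceding the proposition: the sequence $(\alpha_{s,\ell})_{\ell\ge M(s),\,\sigma_\ell\in\mathcal{T}}$ is decreasing and bounded below by $\alpha_{s,M(s)}/2>0$, with limit $\alpha_s$. Hence $\alpha_{s,n}\ge\alpha_s$ for every admissible $n$, and the strict inequality $\overline{f}(W,W')>\alpha_{s,n}$ therefore implies $\overline{f}(W,W')>\alpha_s$, giving the proposition. There is no real obstacle here beyond tracking which of Lemmas \ref{lem:case1f} and \ref{lem:fCase2} applies at each stage; the substantive work has already been done in those lemmas and in the uniform positive lower bound on the $\alpha_{s,\ell}$.
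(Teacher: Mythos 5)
Your proof is correct and follows essentially the same route as the paper: the paper states the proposition as an immediate consequence of the fact that $(\alpha_{s,\ell})_{\ell\ge M(s)}$ is decreasing and bounded below by $\alpha_{s,M(s)}/2$ (hence $\alpha_{s,n}\ge\alpha_s$), combined with Lemmas \ref{lem:case1f} and \ref{lem:fCase2} and the defining equations (\ref{eq:AlphaCase1}), (\ref{eq:AlphaCase2}), (\ref{eq:NewAlpha}), which are exactly the ingredients you invoke. Your base-case/inductive-step presentation simply makes explicit the case split (stage $M(s)$ is Case~2 with the new relation $\mathcal{Q}_s$, while later stages apply part (1) of Lemma \ref{lem:fCase2} or Lemma \ref{lem:case1f}) that the paper leaves implicit when it says ``This yields the following proposition.''
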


	\section{\label{sec:Non-Equiv}Proof of Non-Kakutani Equivalence}
	
	We begin by proving in Proposition \ref{prop:NonIsom} below that for any
	tree $\mathcal{T}\in\mathcal{T}\kern-.5mm rees$ the constructed transformation
	$T=\Psi(\mathcal{T})$ is not isomorphic to a special transformation
	$T^{f}$ for a nontrivial roof function $f:X \to\mathbb{Z}^{+}$. Then, in Subsection \ref{subsec:On-even-Kakutani}, we will use Proposition \ref{prop:NonIsom}
	to show that $\Psi(\mathcal{T})$ and $\Psi(\mathcal{T})^{-1}$
	can only be Kakutani equivalent if they are actually evenly Kakutani equivalent.
	In Subsection \ref{subsec:non-even} we finish the proof by showing
	that for a tree $\mathcal{T}\in\mathcal{T}\kern-.5mm rees$ without an infinite
	branch $\Psi(\mathcal{T})$ and $\Psi(\mathcal{T})^{-1}$ are not
	evenly Kakutani equivalent. 
	
	\subsection{\label{subsec:SpecialTrans}$T^{f}$ is not isomorphic to $T$ for
		nontrivial $f$}
	
	We recall that for an ergodic invertible measure-preserving transformation
	$T:(X,\mu)\to(X,\mu)$ and an integrable function $f:X \to\mathbb{Z}^{+}$
	one defines the \emph{special transformation} (or sometimes called
	\emph{integral transformation}) $T^{f}$ on the set $X^{f}=\left\{ (x,j):x\in X,1\leq j\leq f(x)\right\} $
	by 
	\[
	T^{f}(x,j)=\begin{cases}
		(x,j+1) & \text{if }j<f(x),\\
		(Tx,1) & \text{if }j=f(x).
	\end{cases}
	\]
	The map $T^f$ preserves the probability measure $\mu^f$ that is the restriction to $X^{f}$ of $(\mu\times\lambda)/(\int f\, \mathrm{d}\mu)$ 
	on $X\times\mathbb{Z}^+$, where $\lambda$ is the
	counting measure on $\mathbb{Z}^+.$ Moreover, $T^{f}$ is ergodic with
	respect to $\mu^{f}$ because $T$ is ergodic.
	
	We denote $\left\{ (x,j)\in X^{f}:j>1\right\} $ by $H$. For
	a measurable partition $P=\left\{ C_{1},\dots,C_{N}\right\} $ of
	$X$ we define $P^{f}=\left\{ C_{1},\dots,C_{N},H\right\} $ as the
	corresponding measurable partition of $X^{f}$. Accordingly, if $\left(T^{f}\right)^{i}(x)\in H$, we let $h$ be the letter in the $i$th position of the $(T^f,P^f)$-name of $x$.
	Hence
	the $\left(T^{f},P^{f}\right)$-name is obtained from the $(T,P)$-name
	by inserting  $h$'s. 
	
	In this section we consider a transformation of the form $T=\Psi(\mathcal{T})$
	for any $\mathcal{T}\in\mathcal{T}\kern-.5mm rees$. We recall that $\Psi(\mathcal{T})$
	was built with a uniquely readable and strongly uniform construction
	sequence $\left\{ \mathcal{W}_{n}(\mathcal{T}):\sigma_{n}\in\mathcal{T}\right\} $.
	To simplify notation we enumerate it as $\left\{ \mathcal{W}_{n}\right\} _{n\in\mathbb{N}}$,
	that is, $(n+1)$-words are built by concatenating $n$-words.
	
	Following the approach in \cite[chapter 11]{ORW} (see also \cite[section 4]{Ben}
	for another exposition) we show that for any nontrivial integrable
	function $f:[0,1]\to\mathbb{Z}^{+}$ the special transformation $T^{f}$
	is not isomorphic to $T$. In other words, $T^{f}\cong T$ implies that $f$
	is the constant function equal to $1$. 
	\begin{prop}
		\label{prop:NonIsom}Suppose that $\mathcal{T}\in\mathcal{T}\kern-.5mm rees$, $T=\Psi(\mathcal{T})$,
		and $f:X\to\mathbb{Z}^{+}$ is integrable. If $T^{f}\cong T$, then
		$\int f \, \mathrm{d}\mu=1$. Equivalently, if $A$ is a measurable subset
		of $X$ and $T_{A}\cong T,$ then $\mu(A)=1.$
	\end{prop}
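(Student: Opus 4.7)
The plan is to prove the equivalent assertion that $T_A \cong T$ forces $\mu(A)=1$, following the approach of \cite[Chapter 11]{ORW}. Argue by contradiction: assume there is an isomorphism $\psi\colon X\to A$ with $\psi\circ T = T_A\circ\psi$ and $\mu(A)=\alpha<1$. Fix the canonical generating partition $P$ for $T$ coming from the construction sequence, so that on the $n$-th Rokhlin tower of $T$ the $(T,P)$-names are precisely the elements of $\mathcal{W}_n$. Then $Q:=\psi(P)$ generates for $T_A$ and the $(T_A,Q)$-names on the $n$-th Rokhlin tower of $T_A$ are again $\mathcal{W}_n$-words. Extend $Q$ to a partition $\tilde Q$ of $X$ via the first-return structure to $A$: for $x\in X\setminus A$, set $\tilde Q(x):=Q(T^{\tau(x)}x)$ where $\tau(x)\geq 0$ is the time of first return to $A$. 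For $y\in A$ the $(T,\tilde Q)$-name of $y$ is a time-stretched version of the $(T_A,Q)$-name, in which the symbol $Q(T_A^{i}y)$ is repeated $f(T_A^{i-1}y)$ times, with $f$ the return-time function satisfying $\int f\,\mathrm{d}\mu_A = 1/\alpha$.

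Because $P$ generates for $T$, approximate $\tilde Q$ arbitrarily well in the partition metric by a partition measurable with respect to $\bigvee_{i=-k}^{k} T^{-i}P$. This yields a sliding-window code $\Phi$ whose output on the $(T,P)$-name approximates the $(T,\tilde Q)$-name in $\bar d$. A window of length approximately $h_n/\alpha$ in the $(T,P)$-name is essentially a $\mathcal{W}_m$-word with $h_m\approx h_n/\alpha$, and after applying $\Phi$ and extracting the compressed (one-per-run) version at positions in $A$, one must recover, up to small $\bar d$-error, the $\mathcal{W}_n$-word that is the $(T_A,Q)$-name of the starting point. Thus $\Phi$ together with the sampling at $A$-positions must implement a surjection from the family of $\mathcal{W}_m$-blocks onto the family of $\mathcal{W}_n$-blocks with the correct frequencies.

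The contradiction will come from the $\bar f$-rigidity of the Feldman patterns. By Proposition \ref{prop:conclusio}, distinct words in $\mathcal{W}_n$ are $\bar f$-far apart (at least $\alpha_s$) on substantial substrings; by Fact \ref{fact:AddSymbol}, stretching by a factor of $1/\alpha$ (the uncompression from $\mathcal{W}_n$-words to $(T,\tilde Q)$-names) changes $\bar f$-distances only by a bounded factor depending on $\alpha$, so the target $(T,\tilde Q)$-strings remain $\bar f$-separated. On the input side, the $\mathcal{W}_m$-words at scale $h_m\approx h_n/\alpha$ carry their own Feldman-pattern structure, and two inputs differing in their $s$-pattern type are $\bar f$-separated by a related constant (Proposition \ref{prop:Feldman}). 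Applying the Coding Lemma (Lemma \ref{lem:CodingLemma}) to the bounded-window code $\Phi$ shows that $\bar f$-close inputs must produce $\bar f$-close outputs, up to an error controlled by the code length and $1/\alpha$. Combining these bounds with the rapid growth of $(h_n)$ dictated by the primes $(p_n)$ (and hence the scale mismatch between $h_n$ and $h_n/\alpha$, which cannot equal any $h_m$ for $\alpha<1$ other than in degenerate ways) pins down $\alpha=1$.

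The main obstacle is the last step: quantitatively verifying that the $\bar f$-rigidity of the Feldman patterns together with the combinatorial constraints on a fixed finite code $\Phi$ force $\alpha=1$. This requires careful bookkeeping of the scale parameters $p_n$, $T_i$, $M_i$, $U_i$ from Section \ref{sec:Construction}, a level-by-level application of Lemma \ref{lem:CodingLemma} tracking the equivalence relations $\mathcal{Q}_s^n$, and a measure-theoretic accounting of how typical windows intersect the $T_A$-Rokhlin tower to realize every $\mathcal{W}_n$-word with the correct frequency. The difficulty is essentially that the insertion/deletion induced by the return-time function $f$ cannot reshape $\mathcal{W}_m$-Feldman patterns into $\mathcal{W}_n$-Feldman patterns at a different scale without violating the lower $\bar f$-bounds established in the construction.
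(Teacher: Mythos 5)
Your overall skeleton---pass to the equivalent induced-transformation formulation, approximate the stretched process by a finite sliding code on the $(T,P)$-process, and then try to contradict this using the $\overline{f}$-rigidity of the Feldman patterns and the Coding Lemma---is the same as the paper's. But there is a genuine gap exactly at the point you flag as ``the main obstacle,'' and the mechanism you propose for closing it would not work. The claim that a window of length roughly $h_n/\alpha$ in the $(T,P)$-name is ``essentially a $\mathcal{W}_m$-word with $h_m\approx h_n/\alpha$'' is false: the ratios $h_{m+1}/h_m$ are astronomically large, so $h_n/\alpha$ is not close to any $h_m$, and the argument never needs it to be. The comparison is made entirely at level $n$: by Fact \ref{fact:sameLength} the target name is a $(T,P)$-name in which each $n$-block has been stretched nearly uniformly by the factor $\int f\,\mathrm{d}\mu$, not a $\mathcal{W}_m$-word at some higher level. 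Consequently your proposed source of contradiction---``the scale mismatch between $h_n$ and $h_n/\alpha$, which cannot equal any $h_m$''---is not an obstruction at all; nothing in the construction prevents a code from outputting strings of intermediate length. (Also, the Coding Lemma is a \emph{lower} bound on $\overline{f}$ between suitably structured strings, not the assertion that $\overline{f}$-close inputs produce $\overline{f}$-close outputs under a finite code; you are invoking it for the wrong purpose.)

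The missing idea is a density/counting argument at the level of the $1$-Feldman patterns (Lemma \ref{lem:densitySame}). Each $(n+1)$-word is a concatenation of $U_{2}^{(n+1)}$ Feldman patterns of \emph{pairwise distinct} types, all of the same length $f_n$. Since the stretched patterns have length about $\int f\,\mathrm{d}\mu\cdot f_n$ with $\int f\,\mathrm{d}\mu>1$, the span of each stretched pattern can meet at most one unstretched pattern of the same type, so the set of indices where the two pattern types agree has density at most about $1/\int f\,\mathrm{d}\mu<1$. On the complementary set of positive density one applies Lemma \ref{lem:distDiff}: there the Coding Lemma \ref{lem:CodingLemma} is used, exploiting that the code length is shorter than $h_n$ while each $n$-block occurs in runs of length $p_{n+1}^{\prime}$, so all coded images of such a run are nearly identical and can be organized into the permuted blocks $\Lambda_{im}$, yielding a definite lower bound $\alpha_1/(20\int f\,\mathrm{d}\mu)$ on the error of the code. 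Multiplying this bound by the density $1-1/\int f\,\mathrm{d}\mu$ of mismatched overlaps gives a positive $\overline{d}$-error precisely when $\int f\,\mathrm{d}\mu>1$, which is the contradiction. Without this counting step your argument does not rule out the possibility that the code correctly reproduces same-type patterns everywhere, and the proof is incomplete.
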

	
	If $T$ and $T^{f}$ were isomorphic by an isomorphism $\Phi:X\to X^{f}$,
	then for every $\tau>0$ there would be $K(\tau)\in\mathbb{N}$ and
	a partition $P(\tau)\subset\vee_{i=-K(\tau)}^{K(\tau)}T^{i}P$ such
	that $|P(\tau)|=|P^{f}|$ and $d(P(\tau),\Phi^{-1}(P^{f}))<\tau$. For the remainder of Subsection \ref{subsec:SpecialTrans}, we assume that such an isomorphism $\Phi$ exists and that $\int f\, \mathrm{d}\mu>1.$ We obtain a contradiction, which proves Proposition \ref{prop:NonIsom}.
	\begin{rem}\label{rem:CodePart}
		Such a partition $P(\tau)$ defines an equivalence relation on $\vee_{i=-K(\tau)}^{K(\tau)}T^{i}P$
		whereby two elements $Q_{1},Q_{2}$ of this partition are equivalent if they are contained in the same element of $P(\tau)$.  For $Q\in \vee_{i=-K(\tau)}^{K(\tau)}T^{i}P$, denote the equivalence class containing $Q$ by $\overline{Q}$. From
		the $(T,P)$-trajectory $\dots a_{-1}a_{0}a_{1}\dots$ of a point
		$x\in X$ one can derive a $(T,\vee_{i=-K(\tau)}^{K(\tau)}T^{i}P)$
		trajectory 
		\[
		\dots\left(a_{-K(\tau)-1}\dots a_{-1}\dots a_{K(\tau)-1}\right)\left(a_{-K(\tau)}\dots a_{0}\dots a_{K(\tau)}\right)\left(a_{-K(\tau)+1}\dots a_{1}\dots a_{K(\tau)+1}\right)\dots
		\]
		with $\left(a_{-K(\tau)+i}\dots a_{i}\dots a_{K(\tau)+i}\right)$
		such that 
		\[
		x\in T^{-(-K(\tau)+i)}(C_{a_{-K(\tau)+i}})\cap\dots\cap T^{-i}(C_{a_{i}})\cap\dots\cap T^{-(K(\tau)+i)}(C_{a_{K(\tau)+i}}).
		\]
		By taking equivalence classes we obtain the $(T,P(\tau))$-trajectory
		\[
		\dots\overline{\left(a_{-K(\tau)-1}\dots a_{-1}\dots a_{K(\tau)-1}\right)}\,\overline{\left(a_{-K(\tau)}\dots a_{0}\dots a_{K(\tau)}\right)}\,\overline{\left(a_{-K(\tau)+1}\dots a_{1}\dots a_{K(\tau)+1}\right)}\dots
		\]
		with $\overline{\left(a_{-K(\tau)+i}\dots a_{i}\dots a_{K(\tau)+i}\right)}$
		such that 
		\[
		x\in\overline{T^{-(-K(\tau)+i)}(C_{a_{-K(\tau)+i}})\cap\dots\cap T^{-i}(C_{a_{i}})\cap\dots\cap T^{-(K(\tau)+i)}(C_{a_{K(\tau)+i}})}.
		\]
		In this way the partition $P(\tau)$ can be identified with a \emph{finite
			code} and we use this term synonymously. 
	\end{rem}
	\begin{defn}
		\label{def:code}A \emph{code} of length $2K+1$ is a function $\phi:\Sigma^{\mathbb{Z}\cap[-K,K]}\to\tilde{\Sigma}$, where $\Sigma,\tilde{\Sigma}$ are alphabets. We denote the length of the code by $|\phi|$.
		Given a code $\phi$ of length $2K+1$ the \emph{stationary code}
		$\bar{\phi}$ on $\Sigma^{\mathbb{Z}}$ determined by $\phi$ is defined
		as $\bar{\phi}(s)$ for any $s\in\Sigma^{\mathbb{Z}}$, where for
		any $l\in\mathbb{Z}$
		\[
		\bar{\phi}(s)(l)=\phi\left(s\upharpoonright[l-K,l+K]\right).
		\]
	\end{defn}
	
	Note that $\overline{\phi}$ is not necessarily injective.
	
	In this setting $\bar{\phi}(s)\upharpoonright[-N,N]$ denotes the
	string of symbols
	\[
	\bar{\phi}(s)(-N)\,\bar{\phi}(s)(-N+1)\,\dots\,\bar{\phi}(s)(N-1)\,\bar{\phi}(s)(N)
	\]
	in $\tilde{\Sigma}^{2N+1}$.
	
	\begin{rem}
		\label{rem:code2}As we will see in Lemma~\ref{lem:ConsistentCode}, if $T$ and $S$ are evenly equivalent and have finite generators $P$ and $Q$, respectively, then there is a code from the $(T,P)$ process to the $(S,Q)$ process such that the coded names match arbitrarily well in $\overline{f}$ with the $(S,Q)$ names. In Section~\ref{subsec:non-even} we will apply Lemma~\ref{lem:ConsistentCode} to our symbolic systems $\mathbb{K}$ and $\mathbb{K}^{-1}$ with generators $P=Q=\left\{ \left\langle \sigma\right\rangle :\sigma\in\Sigma\right\} $. In this setting, a $P(\tau)\subset\lor_{i=-K}^{K}T^{i}P$
			as in Remark \ref{rem:CodePart} corresponds to a code $\phi$ as described
			in Definition \ref{def:code}. 
	\end{rem}
	
	\begin{rem}
		\label{rem:end-effects}There is an ambiguity in applying a code $\phi$
		of length $2K+1$ to blocks of the form $s\upharpoonright[a,b]$:
		It does not make sense to apply it to the initial string $s\upharpoonright[a,a+K-1]$
		and end string $s\upharpoonright[b-K+1,b]$. However, if $b-a$ is
		large with respect to the code length $2K+1$, we can fill in $\phi(s)\upharpoonright[a,a+K-1]$
		and $\phi(s)\upharpoonright[b-K+1,b]$ arbitrarily and it makes a
		negligible difference to the $\overline{d}$ or $\overline{f}$ distance.
		In particular, if $\overline{f}\left(\phi(s),\bar{s}\right)<\varepsilon$,
		then for all large enough $a,b\in\mathbb{N}$, we have 
		\[
		\overline{f}\left(\phi(s)\upharpoonright[-a,b],\bar{s}\upharpoonright[-a,b]\right)<2\varepsilon
		\]
		independently of how we fill in the ambiguous portion. The general phenomenon
		of ambiguity or disagreement at the beginning and end of large strings
		is referred to by the phrase \emph{end effects}.
	\end{rem}

	We show that for  a nontrivial function $f$, there exists $\tau>0$ sufficiently small so that there cannot be a finite code from $(T,P)$ to $(T^{f},P^f)$ that approximates $P^f$ within distance $\tau$. First we investigate
	some properties of $(T^{f},P^{f})$-names. We divide such names into $n$-blocks
	as follows: A $(T^{f},P^{f})$-$n$-block consists of all the symbols
	from the leftmost symbol of a $(T,P)$-$n$-block up to (but not including)
	the leftmost symbol in the next $(T,P)$-$n$-block. In the following
	we denote the $(T^{f},P^{f})$-$n$-block corresponding to a $(T,P)$-$n$-block, $w_{n}$, by $\tilde{w}_{n}$. In general, the $(T^{f},P^{f})$-$n$-blocks
	have varying lengths. The following fact from \cite[Proposition 2.1, p.84]{ORW}
	based on the ergodic theorem shows that a large proportion of them have almost the same length.
	\begin{fact}
		\label{fact:sameLength}Let $T:(X,\mu)\to(X,\mu)$ be an ergodic transformation
		and $f:X\to\mathbb{Z}^{+}$ be integrable. For a measurable subset $F$ of $X$, let $R_F, R_F^f:F\to \mathbb{Z}^+$ denote, respectively, the return time under $T$ to $F$, and the return time under $T^f$ to $F$ considered as a subset of the base of the tower $X^f$. Then for every $\varepsilon>0$
		there is $N(\varepsilon)\in\mathbb{N}$ such that for every measurable subset $F$ of $X$ with $R_{F}\geq N(\varepsilon)$ there is a measurable subset $F'$ of $F$
		such that 
		\begin{align*}
			& \int_{F'}R_{F}^{f}\, \mathrm{d}\mu\geq\int f\,\mathrm{d}\mu-\varepsilon\\
			\text{ and }\; & R_{F}(x)\cdot\left(\int f\,\mathrm{d}\mu-\varepsilon\right)\leq R_{F}^{f}(x)\leq R_{F}(x)\cdot\left(\int f\,\mathrm{d}\mu+\varepsilon\right) & \text{ for all }x\in F'.
		\end{align*}
		
	\end{fact}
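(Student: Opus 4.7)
The plan is to use the fundamental identity
\[
R_F^f(x) \;=\; \sum_{i=0}^{R_F(x)-1} f(T^i x) \;=\; S_{R_F(x)} f(x),
\]
which is immediate from tracing the dynamics of $T^f$: starting from $(x,1)$ with $x\in F$, the map $T^f$ exhausts each column of height $f(T^i x)$ for $i=0,1,\dots,R_F(x)-1$ in turn before landing back on the base at $(T^{R_F(x)}x,1)$. Thus the return time to $F$ under $T^f$ is exactly the Birkhoff sum of $f$ over the $T$-return block to $F$, and everything reduces to good control of $S_n f/n$ when $n=R_F(x)$ is large.

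For the pointwise bound on $F'$, I apply Birkhoff's ergodic theorem to $f$ to obtain $\tfrac{1}{n}S_n f(x)\to \int f\,d\mu$ for $\mu$-a.e.\ $x$, and then use Egorov's theorem: for any prescribed tolerance $\eta>0$ (to be fixed later in terms of $\varepsilon$), there exist $N_0\in\mathbb N$ and a measurable set $A\subseteq X$ with $\mu(X\setminus A)<\eta$ such that
\[
\sup_{n\geq N_0}\left|\tfrac{1}{n}S_n f(x)-\int f\,d\mu\right|<\varepsilon \quad\text{for every } x\in A.
\]
Set $N(\varepsilon)=N_0$; for any $F$ with $R_F\geq N(\varepsilon)$ on $F$, put $F'=F\cap A$. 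Since $R_F(x)\geq N_0$ and $x\in A$ for $x\in F'$, the Egorov estimate at $n=R_F(x)$ combined with the identity above delivers the two-sided pointwise bound on $R_F^f(x)/R_F(x)$.

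For the integral bound, Kac's lemma applied to the ergodic transformation $T^f$ with invariant probability $\mu^f$ and base $F$ gives $\int_F R_F^f\,d\mu=\int f\,d\mu$, so the assertion reduces to showing $\int_{F\setminus F'}R_F^f\,d\mu<\varepsilon$. Using the key identity and the Rokhlin tower decomposition $X=\bigsqcup_n\bigsqcup_{i=0}^{n-1}T^i F_n$ with $F_n=\{R_F=n\}$, this bad integral rewrites as $\int_{\tilde E}f\,d\mu$, where $\tilde E=\bigsqcup_n\bigsqcup_{i=0}^{n-1}T^i((F\setminus F')\cap F_n)$ is the union of orbit segments of length $R_F(x)$ starting in $F\setminus F'\subseteq X\setminus A$. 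A truncation $f=f_M+(f-f_M)$ at a level $M$ with $\int(f-f_M)\,d\mu<\varepsilon/2$ then reduces the task to $M\mu(\tilde E)<\varepsilon/2$, i.e., to making $\int_{F\setminus F'}R_F\,d\mu$ small in terms of $M$.

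The main obstacle is precisely this last, uniform-in-$F$ control of $\mu(\tilde E)$: because $R_F$ depends on $F$ and can have a heavy tail, absolute continuity of $R_F\,d\mu$ on $F$ cannot be invoked with an $F$-independent modulus. The argument in \cite[Proposition 2.1]{ORW} handles this by interleaving the Egorov/Birkhoff step with the $L^1$-mean ergodic theorem for $f$, and by exploiting the uniform lower bound $R_F\geq N(\varepsilon)$ to push the problematic portion of the return-time distribution into the range where Birkhoff convergence has already taken hold; the parameters $\eta$, $M$, and $N(\varepsilon)$ are then chosen in the correct order so that both the bounded and the tail contributions are absorbed by the prescribed $\varepsilon$ uniformly over all admissible~$F$.
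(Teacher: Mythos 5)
The paper does not prove this fact; it cites it from \cite[Proposition 2.1, p.84]{ORW}, so there is no in-paper proof to compare against. Your identity $R_F^f(x) = \sum_{i=0}^{R_F(x)-1}f(T^ix)$, the Birkhoff--Egorov derivation of the pointwise bound, and the Kac identity $\int_F R_F^f\,d\mu = \int f\,d\mu$ are all correct and are the right starting point.

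However, your third paragraph acknowledges rather than closes the essential gap: you correctly observe that $\mu(\tilde E)=\int_{F\setminus F'}R_F\,d\mu$ is not controlled by $\mu(F\setminus F')<\eta$ alone, and then defer the resolution to ``the argument in \cite{ORW}'' without supplying it; the sketch you give (interleaving Egorov with the $L^1$ mean ergodic theorem and ``choosing parameters in the correct order'') is not a checkable argument. In fact your choice $F'=F\cap A$ for a fixed Egorov set $A$ does not obviously lead to a proof. A choice that does work is $F'=\{x\in F: |R_F(x)^{-1}S_{R_F(x)}f(x)-\int f\,d\mu|<\varepsilon\}$, which gives the pointwise bound by definition, combined with a \emph{bi-directional} Birkhoff argument for the integral bound: fix $\delta>0$ small, let $B^+$ (respectively $B^-$) be the set of $y$ for which some forward (respectively backward) Birkhoff average of $f$ of length at least $N_1$ misses $\int f\,d\mu$ by at least $\delta$, and note that for $y=T^jx$ in the column $\tilde E$ over $F\setminus F'$ with $N_1\le j$ and $j+1+N_1\le R_F(x)$, one must have $y\in B^-$ or $Ty\in B^+$ -- otherwise $R_F(x)^{-1}S_{R_F(x)}f(x)$ is a convex combination of the backward average from $y$ over $j+1$ steps and the forward average from $Ty$ over $R_F(x)-j-1$ steps, both within $\delta$ of $\int f\,d\mu$, forcing $x\in F'$. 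Hence $\mu(\tilde E)\le\mu(B^-)+\mu(B^+)+(2N_1+1)/N(\varepsilon)$, which Birkhoff applied to $T$ and $T^{-1}$ (to make $\mu(B^\pm)<\delta$ for $N_1$ large) and a sufficiently large $N(\varepsilon)$ make at most $3\delta$, uniformly over all admissible $F$; absolute continuity of the integral of $f$ then gives $\int_{\tilde E}f\,d\mu<\varepsilon$. This uniform-in-$F$ measure bound on the column, via the two-sided Birkhoff trick and the tower-edge estimate, is the idea missing from your write-up.
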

	
	In particular, the last line of Fact \ref{fact:sameLength}
	says that if we think of $F$ as the initial points in $n$-blocks,
	then for $n$ large enough, $n$-blocks are expanded nearly uniformly.
	The inequality $\int_{F'}R_{F}^{f}\,\mathrm{d}\mu\geq\int f \,\mathrm{d}\mu-\varepsilon$ implies that
	the density of the set of indices in a typical $(T^{f},P^{f})$-trajectory in blocks not
	expanded within the above bounds is at most $\varepsilon/(\int f\, \mathrm{d}\mu)\le\varepsilon$.
	
	Now for almost every point $x$ we consider its $(T,P)$-name $a(T,P,x)$,
	its $(T,P(\tau))$-name $a(T,P(\tau),x)$, and its $(T,\Phi^{-1}(P^{f}))$-name
	$a\left(T,\Phi^{-1}(P^{f}),x\right)$, which differs from its $(T,P(\tau))$-name on a set of indices
	with density at most $\tau$. Note that the $\left(T,\Phi^{-1}(P^{f})\right)$-name
	of $x$ is identified with the $\left(T^{f},P^{f}\right)$-name
	of $\Phi(x)$. Thus, for any sequence of integers (considered as indices
	labeling positions in names) we can consider the three names
	defined on it.
	
	We recall from our constructions in Section \ref{sec:Construction}
	that the $(n+1)$-words in $\mathcal{W}_{n+1}$ are built as substitution
	instances of the different $1$-equivalence classes in $\mathcal{W}_{n+1}/\mathcal{Q}_{1}^{n+1}$.
	Considered on the level of the $1$-equivalence relation these elements
	in $\mathcal{W}_{n+1}/\mathcal{Q}_{1}^{n+1}$ are concatenations of
	different Feldman patterns with building blocks in $\mathcal{W}_{n}/\mathcal{Q}_{1}^{n}$ by our enumeration of the construction sequence as $\{\mathcal{W}_n\}_{n\in \mathbb{N}}$. In the following we denote these Feldman patterns of $1$-equivalence
	classes of $n$-blocks in the $P$-name of $x$ by $F_{n,i}$, labeled so that $F_{n,0}$ contains the position $0$. Similarly the strings in the $\Phi^{-1}(P^{f})$-name
	are denoted by $\tilde{F}_{n,i}$, where $\tilde{F}_{n,0}$ contains the position $0$. Thus 
	\begin{align*}
		a(T,P,x)= & \dots F_{n,-2}F_{n,-1}F_{n,0}F_{n,1}F_{n,2}\dots\\
		a(T^{f},P^{f},\Phi(x))=a\left(T,\Phi^{-1}(P^{f}),x\right)= & \dots\tilde{F}_{n,-2}\tilde{F}_{n,-1}\tilde{F}_{n,0}\tilde{F}_{n,1}\tilde{F}_{n,2}\dots
	\end{align*}
	We decompose the set $\mathbb{Z}$ of indices of trajectories into
	sets $I_{i}$, where $I_{i}$ is
	the intersection of the set of indices of a $1$-pattern $F_{n,l}$
	in the $(T,P)$-trajectory and of the set of indices of a $1$-pattern
	$\tilde{F}_{n,\tilde{l}}$ in the $(T^{f},P^{f})$-trajectory (see
	Figure \ref{fig:fig1}). For brevity, we denote such a set of indices by $I_{i}=F_{n,l}\cap\tilde{F}_{n,\tilde{l}}$. Moreover, $I_{i}(T,P)$ denotes the $(T,P)$-trajectory
	of $x$ on the indices in $I_{i}$.
	
	If we are in the second case $s(n+1)=s(n)+1$ of the construction
	(see Subsection \ref{subsec:Case-2}), then we delete those parts
	of the $(n+1)$-words coming from the collection $\mathcal{W}^{\dagger\dagger}$. We also delete the corresponding strings in the $(T^f,P^f)$-trajectory.
	Hereby, we neglect a set of indices with density at most $2\epsilon_{n+1}$ by (\ref{eq:LengthProportion}).
	After this, each element in $\mathcal{W}_{n+1}/\mathcal{Q}_{1}^{n+1}$
	is a concatenation of $U_{2}^{(n+1)}$ many different $(T_{2}^{(n+1)},2^{4e(n)-t_{1}},M_{2}^{(n+1)})$-Feldman
	patterns with building blocks out of $\mathcal{W}_{n}/\mathcal{Q}_{1}^{n}$,
	where $U_{2}^{(n+1)}$, $T_{2}^{(n+1)}$, and $M_{2}^{(n+1)}$ are
	the corresponding parameters chosen at our stage $n+1$ of the construction
	in Subsections \ref{subsec:Case-1} and \ref{subsec:Case-2}. Note
	that we had to remove the parts coming from $\mathcal{W}^{\dagger\dagger}$
	to have unique values of $T_{2}^{(n+1)}$ and to guarantee that all
	Feldman patterns have the same length, which we denote by $f_n$. All of this automatically holds
	in the case $s(n+1)=s(n)$ of the construction (see Subsection~\ref{subsec:Case-1}).

	\begin{figure}
		\centering
		\includegraphics[width=\textwidth]{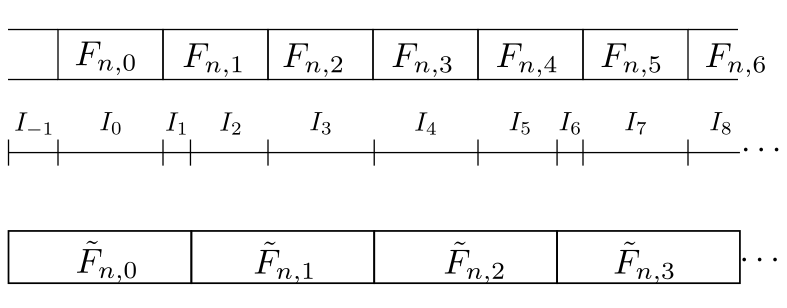}
		\caption{Feldman patterns $F_{n,i}$ of $n$-blocks in the $P$-name of $x$, their corresponding strings $\tilde{F}_{n,i}$ in the $\Phi^{-1}(P^{f})$-name of $x$, and the decomposition of the set $\mathbb{Z}$ of indices into $I_{i}=F_{n,l}\cap\tilde{F}_{n,\tilde{l}}$.}
		\label{fig:fig1}
	\end{figure}
	
	Recall the definition of  $\alpha_1$ from Proposition \ref{prop:conclusio}. We let $$0<\delta< \min \left(\frac{\int f \, \mathrm{d}\mu-1}{3\int f \, \mathrm{d}\mu}, \, \frac{\alpha_1}{160 \int f \, \mathrm{d}\mu}\right),$$
	and let $n$ be sufficiently large such that
	\begin{itemize}
		\item[(n1)]  $\frac{2}{2^{e(n)}}<\delta$, $\epsilon_{n+1}<\delta$, and $\frac{20}{R_n}< \alpha_1$, 
		\item[(n2)]  $2\left(\int f\,\mathrm{d}\mu\right)^2<\min\left(2^{e(n)},U_{2}^{(n+1)},\frac{\alpha_1\mathfrak{p}_{n+1}}{80}\right)$,
		\item[(n3)]  the code length $2K(\tau)+1$ is less than $h_{n}$,
		\item[(n4)]  $h_{n}$ is larger than the number $N(\delta^{3})$
		from Fact \ref{fact:sameLength} for $\delta^{3}$. 
	\end{itemize}
	In particular, $h_{n}>N(\delta^{3})$ implies that the set of indices
	lying in $\left(T,\Phi^{-1}(P^{f})\right)$-$n$-words $\tilde{w}_{n}$,
	that are too long (i.e. $|\tilde{w}_{n}|>\left(\int f\,\mathrm{d}\mu+\delta^{3}\right)h_{n}$)
	or too short (i.e. $|\tilde{w}_{n}|<\left(\int f \, \mathrm{d}\mu-\delta^{3}\right)h_{n}$),
	has density at most $\delta^{3}$. Then at most $\delta$ of the indices are lying in a $1$-pattern
	$\tilde{F}_{n,j}$ that contains a proportion of more than $\delta^{2}$ of indices lying in too long or too short
	$n$-words. Moreover, condition (n4) implies
	that all but a set of indices of density at most $\delta^{3}$ lie
	in $1$-patterns $\tilde{F}_{n,j}$ with $\left(\int f\,\mathrm{d}\mu-\delta^{3}\right)f_{n}\leq|\tilde{F}_{n,j}|\leq\left(\int f\, \mathrm{d}\mu+\delta^{3}\right)f_{n}$.
	Hence, at most $\delta+\delta^3<2\delta$ of indices belong to those ``bad''
	$1$-patterns $\tilde{F}_{n,j}$ and we neglect them until the end of the proof of Proposition~\ref{prop:NonIsom}.
	
	In the first step, we give a lower bound for the $\overline{d}$-distance
	between the strings on $I_{i}$ from the $(T,P(\tau))$-trajectory and the
	$\left(T,\Phi^{-1}(P^{f})\right)$-trajectory of $x$, if the $1$-pattern
	types are different from each other. In fact, in (\ref{eq:estimateCodeAppl1}) we even obtain a lower bound on the $\overline{f}$ distance. The main idea of the proof is as in \cite[Lemma 2.4, p.87]{ORW}, but we face the additional challenge that we do not
		know the precise words being coded, only their equivalence classes. Our proof uses the Coding Lemma \ref{lem:CodingLemma} and relies on the different
	periodicities of maximum repetitions of the same $1$-equivalence
	class of $n$-blocks. 
	\begin{lem}
		\label{lem:distDiff} Suppose for $I_{i}=F_{n,l}\cap\tilde{F}_{n,\tilde{l}}$ that $|I_{i}|\geq\delta f_{n}$ and that $F_{n,l}$ and $\tilde{F}_{n,\tilde{l}}$ have different $1$-pattern types. 
		Then we have 
		\[
		\overline{d}\left(I_{i}\left(T,\Phi^{-1}(P^{f})\right),I_{i}(T,P(\tau))\right)\geq \frac{\alpha_{1}}{20\cdot \int f\,\mathrm{d}\mu},
		\]
		where $\alpha_1$ is defined in Proposition \ref{prop:conclusio}.
	\end{lem}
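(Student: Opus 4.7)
The plan is to argue by contradiction: suppose $d := \overline{d}(y,z) < \alpha_1/(20 \int f\,\mathrm{d}\mu)$, where $y := I_i(T,P(\tau))$ and $z := I_i(T,\Phi^{-1}(P^f))$ are length-$|I_i|$ strings over $\Sigma\cup\{h\}$. The idea is to collapse the $h$-insertions of the tower $T^f$ and then apply the Coding Lemma to show two different Feldman patterns (one of them passed through a finite code) cannot be that close.

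First I would remove the $h$-positions of $z$: set $H_z := \{k\in I_i : z(k)=h\}$, $y' := y|_{I_i\setminus H_z}$, $z' := z|_{I_i\setminus H_z}$, both of length $r := |I_i|-|H_z|$. Since $I_i$ sits inside one of the ``good'' patterns $\tilde F_{n,\tilde l}$ kept before the statement of the lemma (using Fact~\ref{fact:sameLength}), one has $r=(1+o_n(1))|I_i|/\int f\,\mathrm{d}\mu$. Restriction cannot increase the Hamming discrepancy, and $\overline{f}\le\overline{d}$ for equal-length strings, so
\[
\overline{f}(y',z')\le\overline{d}(y',z')\le \frac{d\,|I_i|}{r}=d\cdot\int f\,\mathrm{d}\mu\cdot (1+o_n(1)).
\]
The string $z'$ is a consecutive substring of the $(T,P)$-name of $x$ inside the Feldman pattern $F_{n,\tilde l}$ (the $1$-pattern underlying $\tilde F_{n,\tilde l}$), while $y'$ is the restriction of the stationary code $\bar\phi$ (Remark~\ref{rem:code2}) of length $2K(\tau)+1<h_n$ applied to the $(T,P)$-trajectory; by (n3) end-effects at the boundary of $F_{n,l}$ span an $o_n(1)$ proportion of $I_i$, so $y'$ is effectively a finite-code image of the $(T,P)$-name inside $F_{n,l}$, subsampled at the non-$h$ positions of $z$.

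Second, I would apply the Coding Lemma. By hypothesis $F_{n,l}$ and $F_{n,\tilde l}$ have distinct $1$-pattern types, so they are different $(T_2^{(n+1)},2^{4e(n)-t_1},M_2^{(n+1)})$-Feldman patterns whose building blocks are $\mathcal{Q}_1^n$-classes of $n$-words. By Proposition~\ref{prop:conclusio}, any two substrings of length $\ge h_n/R_n$ from different $\mathcal{Q}_1^n$-classes are at least $\alpha_1$ apart in $\overline{f}$. This is precisely the coding scenario of the remark preceding Lemma~\ref{lem:CodingLemma} (with $B_k\leftrightarrow F_{n,l}$ and $B_j\leftrightarrow F_{n,\tilde l}$), so Lemma~\ref{lem:CodingLemma} yields
\[
\overline{f}(y',z')\ge \alpha_1\Bigl(\tfrac{1}{8}-\tfrac{2}{\sqrt[4]{2^{4e(n)-t_1}}}\Bigr)-\tfrac{1}{R_n}-\tfrac{4R_2}{U_2^{(n+1)}}\;>\;\tfrac{\alpha_1}{10},
\]
the last inequality holding for $n$ large by (n1)--(n2). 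Combining with the upper bound above, $d\cdot \int f\,\mathrm{d}\mu\cdot(1+o_n(1))>\alpha_1/10$, hence $d>\alpha_1/(20\int f\,\mathrm{d}\mu)$ for $n$ large, the desired contradiction.

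The main obstacle is making the application of Lemma~\ref{lem:CodingLemma} rigorous. The string $y'$ is not literally a coded Feldman pattern read consecutively, because its indices are the non-$h$ positions of $z$ --- the base-return times of the $T^f$-orbit, spread irregularly through $F_{n,l}$ with average gap $\int f\,\mathrm{d}\mu$. One must verify that this sparse subsampling of the coded trajectory does not destroy the cycle structure that Lemma~\ref{lem:CodingLemma} relies on, and that with $R_1=R_n$, $R_2$ controlled by $|I_i|\ge\delta f_n$, and cycle count at least $U_2^{(n+1)}$, conditions (n1)--(n2) keep all additive error terms strictly less than $\alpha_1/40$.
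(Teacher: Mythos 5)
Your approach has the right intuitive shape — compare the two Feldman patterns via the Coding Lemma and account for the expansion factor $\int f\,\mathrm{d}\mu$ of the special transformation — but the route of stripping out the $h$-positions and then invoking Lemma~\ref{lem:CodingLemma} has a genuine gap, which you yourself flag as ``the main obstacle'' without closing it. After restriction to the non-$h$ index set $J$ of $z$, the string $z'$ is indeed a consecutive segment of the $(T,P)$-name of the base point of $\Phi(x)$, but $y'$ is the coded name sampled at the irregular set $J$, not a consecutive string. Lemma~\ref{lem:CodingLemma} requires $A$ and $B$ to decompose into consecutive blocks of a single common length $L$ with a fixed permutation/cycle structure; a sparse, position-dependent subsample of a coded trajectory carries no such decomposition, and the local fluctuations of $J$ inside each $n$-block (governed by $f$, not by the Feldman hierarchy) prevent identifying $y'$ with any concatenation $\Lambda_{11}\cdots\Lambda_{zN}$ of the required form. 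As written, the lower bound $\overline{f}(y',z')>\alpha_1/10$ is not established, and there is no obvious way to transfer the block structure through the subsampling. (Your upper bound $\overline{f}(y',z')\le d\cdot\int f\,\mathrm{d}\mu\cdot(1+o_n(1))$ is fine; the absolute count of mismatches does not increase under restriction.)

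The paper avoids this entirely by \emph{keeping} the $h$'s in place. It replaces $I_i(T,\Phi^{-1}(P^f))$ by a string whose $(T,\Phi^{-1}(P^f))$-$n$-words all have the common length $\lfloor\int f\,\mathrm{d}\mu\cdot h_n\rfloor$ (a $2\delta$ cost from the ``good pattern'' selection), and runs the Coding Lemma with $L=p'_{n+1}h_n$ directly on the $h$-inflated strings, taking segments of the $T^f$-expanded $n$-blocks themselves as the $B$-building blocks. The hypothesis of the Coding Lemma — $\overline{f}$-separation of $B$-blocks from distinct $\mathcal{Q}_1$-classes — is then verified not by deleting symbols but via Fact~\ref{fact:AddSymbol}: inserting $h$'s lowers the $\overline{f}$-separation by at most a factor of $\int f\,\mathrm{d}\mu$, giving the working separation $9\alpha_1/(10\int f\,\mathrm{d}\mu)$ in (\ref{eq:AlphaAppl}). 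The factor $1/\int f\,\mathrm{d}\mu$ thus enters through the building-block separation rather than through a density argument on subsampled positions, and the additive errors controlled by (n1)--(n2) then yield $\alpha_1/(20\int f\,\mathrm{d}\mu)$ directly, with no contradiction argument needed. If you wished to salvage the $h$-deletion route you would need a substitute for the Coding Lemma robust to irregular subsampling, which is a harder and unnecessary detour.
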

	
	\begin{proof}
		By the properties of the ``good'' $1$-pattern $\tilde{F}_{n,\tilde{l}}$ as described above, $I_i(T,\Phi^{-1}(P^f))$ is $2\delta$-close in $\overline{f}$ to a string whose $\left(T,\Phi^{-1}(P^{f})\right)$-$n$-words have length $\floor*{ \int f\,\mathrm{d}\mu \cdot h_n } $. Until the end of the proof in inequality~\eqref{eq:estimateCodeAppl1} we identify $I_i(T,\Phi^{-1}(P^f))$ with this string.
		
		We denote the tuple of building blocks for $[F_{n,l}]_{1}$
		by $\left([A_{1}]_{1},\dots,[A_{2^{4e(n)-t_{1}}}]_{1}\right)$ with $[A_i]_1 \in \mathcal{W}_n/\mathcal{Q}^n_1$ and
		the one for $[\tilde{F}_{n,\tilde{l}}]_{1}$ by $([\tilde{A}_{1}]_{1},\dots,[\tilde{A}_{2^{4e(n)-t_{1}}}]_{1})$.
		Furthermore, let $r$ and $\tilde{r}$ be the $1$-pattern types of $F_{n,l}$ and
		$\tilde{F}_{n,\tilde{l}}$, respectively. 
		
		Recalling $\mathfrak{p}_{n+1}^{\prime}=\mathfrak{p}_{n+1}^{2}$
		in Case 1 ($s(n+1)=s(n)$) or $\mathfrak{p}_{n+1}^{\prime}=\mathfrak{p}_{n+1}^{2}-\mathfrak{p}_{n+1}$ in
		Case 2 ($s(n+1)=s(n)+1$) of the construction, we let 
		\begin{align*}
			L & = \mathfrak{p}_{n+1}^{\prime} \cdot h_n, \\
			N &= 2^{4e(n)-t_1}
		\end{align*}
		in the notation of the Coding Lemma \ref{lem:CodingLemma}. Moreover, we subdivide $\tilde{F}_{n,\tilde{l}}$ into substitution instances $\tilde{\Gamma}_{i,m}$ of a maximal repetition $[\tilde{A}_m]^{T_2N^{2\tilde{r}}}_1$. Then we further subdivide $\tilde{\Gamma}_{i,m}$ into 
		\[
		q \coloneqq \floor*{\frac{|\tilde{\Upsilon}_{i,m}|}{T_{2} \cdot N^{2\tilde{r}-1} \cdot h_n}} \cdot \frac{T_{2}}{\mathfrak{p}_{n+1}^{\prime}} \cdot N^{2\tilde{r}-1} = \floor*{ N \cdot \int f\,\mathrm{d}\mu } \cdot \frac{T_{2}}{\mathfrak{p}_{n+1}^{\prime}} \cdot N^{2\tilde{r}-1} 
		\] 
		many strings of $L$ consecutive symbols, where we recall that $T_{2}$ is a multiple of $\mathfrak{p}^{\prime}_{n+1}$. In the notation of the Coding Lemma each of these $L$-strings corresponds to a $B^{(i)}_{m,j}$ and altogether they form $\Gamma_{i,m}=B^{(i)}_{m1}B^{(i)}_{m2}\cdots B^{(i)}_{mq}$. Then we also have
		\[
		p= \floor*{\frac{|F_{n,l}|}{NqL}} = \floor*{\frac{N^{2(M_2-\tilde{r})+3}}{\floor*{N \cdot \int f\,\mathrm{d}\mu}}}.
		\]
		Note that
		\begin{equation} \label{eq:LengthApproxError}
			0 \leq \frac{ |\tilde{\Gamma}_{i,m}| - |\Gamma_{i,m}|}{|\tilde{\Gamma}_{i,m}|} \leq \frac{T_2 \cdot N^{2\tilde{r}-1} \cdot h_n}{T_{2} \cdot N^{2\tilde{r}} \cdot \int f\,\mathrm{d}\mu \cdot h_n} < \frac{1}{N}.
		\end{equation}
		
		Let $B$ and $B^{\prime}$ be any substrings of at least $\frac{L}{\mathfrak{p}_{n+1}}$ consecutive symbols in any $B^{(i)}_{m,j}$ and $B^{(i')}_{m',j'}$, respectively, with $m\neq m'$. We modify $B$ and $B^{\prime}$ by first completing any partial blocks $[\tilde{A}_m]_1$ and $[\tilde{A}_{m'}]_1$, respectively. This can be achieved by adding fewer than $2\cdot \floor*{ \int f\,\mathrm{d}\mu \cdot h_n }$ many symbols to each of $B$ and $B^{\prime}$. We denote the modified strings by $B_{\text{aug}}$ and $B^{\prime}_{\text{aug}}$. By Fact~\ref{fact:omit_symbols}, we have
			\begin{equation*}
				\overline{f}\left(B,B^{\prime}\right)> \overline{f}\left(B_{\text{aug}},B^{\prime}_{\text{aug}}\right)- \frac{2\cdot \floor*{ \int f\,\mathrm{d}\mu \cdot h_n }}{(\mathfrak{p}_{n+1}-1)h_n} >\overline{f}\left(B_{\text{aug}},B^{\prime}_{\text{aug}}\right)- \frac{4\cdot \int f\,\mathrm{d}\mu }{\mathfrak{p}_{n+1}}.
			\end{equation*}
			Combining Fact \ref{fact:AddSymbol}, Lemma \ref{lem:symbol by block replacement}, and Proposition \ref{prop:conclusio} yields 
			\begin{equation*}
				\overline{f}\left(B_{\text{aug}},B^{\prime}_{\text{aug}}\right) > \frac{\alpha_{1}-\frac{1}{R_n}}{\int f\,\mathrm{d}\mu}.
			\end{equation*}
			Altogether, we have 
			\begin{equation} \label{eq:AlphaAppl}
				\overline{f}\left(B,B^{\prime}\right)>\frac{\alpha_{1}-\frac{1}{R_n}}{\int f\,\mathrm{d}\mu} - \frac{4\cdot \int f\,\mathrm{d}\mu }{\mathfrak{p}_{n+1}}>\frac{9\alpha_{1}}{10\cdot \int f\,\mathrm{d}\mu}.
			\end{equation} 
			Here we used conditions (n1) and (n2) in the last estimate. Hence, the assumption of the Coding Lemma is satisfied. Now we examine both possible cases $r<\tilde{r}$ and $r>\tilde{r}$.
		
		In the case $r<\tilde{r}$ the number $T_2N^{2r}$ of maximum repetitions
		of $n$-blocks of the same $1$-class in  $F_{n,l}(T,P)$ is smaller than
		in $\tilde{F}_{n,\tilde{l}}(T,\Phi^{-1}(P^f))$. We subdivide $F_{n,l}(T,P)$ into substitution instances $\tilde{\Lambda}_{i,m}$ of 
		\[
		u_1 \coloneqq \frac{|\Upsilon_{i,m}|}{T_2N^{2r+1}h_n} = \frac{qL}{T_2N^{2r+1}h_n}= \floor*{ N \cdot \int f\,\mathrm{d}\mu } \cdot N^{2(\tilde{r}-r-1)}
		\]
		many consecutive complete cycles in $[F_{n,l}]_1$. Note that
		\begin{equation}\label{eq:LengthApproxError2}
			0 \leq \frac{|F_{n,l}|- pNu_1\cdot T_2N^{2r+1}h_n}{|F_{n,l}|} \leq \frac{pT_2N^{2r+2}h_n}{T_2 N^{2M_2+3}h_n} < \frac{1}{N}.
		\end{equation}
		Lemma \ref{lem:Occurrence-Substitutions} and Remark \ref{rem:Substitution-Dagger}
		yield that for each substitution instance $a_{x,y}$ of each $[A_{x}]_{1}$,
		$x=1,\dots,2^{4e(n)-t_{1}}$, the repetition $\left(a_{x,y}\right)^{\mathfrak{p}_{n+1}^{\prime}}$
		occurs the same number of times in each $\tilde{\Lambda}_{i,m}$. Since the code length $2K(\tau)+1<h_{n}$ and there are $\mathfrak{p}_{n+1}^{\prime}$
		repetitions, we obtain that the coded images of all occurrences of $\left(a_{x,y}\right)^{\mathfrak{p}_{n+1}^{\prime}}$ are $\frac{2}{\mathfrak{p}^{\prime}_{n+1}}$ close to each other in $\overline{d}$. Hence, by making an $\overline{f}$-approximation error of at most $\frac{2}{\mathfrak{p}^{\prime}_{n+1}}$ we can decompose $F_{n,l}(T,P(\tau))$ into permutations $\Lambda_{i,m}$ as in the Coding Lemma.
		
		In the second case $r>\tilde{r}$ we decompose $F_{n,l}(T,P)$ into substitution instances $\tilde{\Lambda}_{i,m}$ of repetitions of the form 
		\[
		[A_x]^{q\mathfrak{p}^{\prime}_{n+1}}_1,  \ x=1,\dots,2^{4e(n)-t_{1}}.
		\]	
		Within the maximum repetition $[A_x]^{T_2N^{2r}}_1$ in $F_{n,l}(T,P)$, there are 
		\[
		u_2 \coloneqq \floor*{ \frac{T_2N^{2r}}{Nq\mathfrak{p}^{\prime}_{n+1}}} = \floor*{ \frac{N^{2(r-\tilde{r})}}{\floor*{ N \cdot \int f\,\mathrm{d}\mu }} }
		\]
		many such $\tilde{\Lambda}_{i,1}\dots \tilde{\Lambda}_{i,N}$. We note that 
		\begin{equation}\label{eq:LengthApproxError3}
			0 \leq \frac{T_2N^{2r}h_n - u_2 Nq\mathfrak{p}^{\prime}_{n+1}h_n}{T_2N^{2r}h_n} \leq \frac{Nq\mathfrak{p}^{\prime}_{n+1}h_n}{T_2N^{2r}h_n} \leq \frac{\int f\,\mathrm{d}\mu}{N^{2(r-\tilde{r})-1}} \leq \frac{\int f\,\mathrm{d}\mu}{N}.
		\end{equation}
		Lemma \ref{lem:Occurrence-Substitutions} and Remark \ref{rem:Substitution-Dagger} yield again that for each substitution instance $a_{x,y}$ of $[A_{x}]_{s}$ the repetition $\left(a_{x,y}\right)^{\mathfrak{p}_{n+1}^{\prime}}$ occurs the same number of times in each $\tilde{\Lambda}_{i,m}$. Again, we can decompose $F_{n,l}(T,P(\tau))$ into permutations $\Lambda_{i,m}$ as in the Coding Lemma by making an $\overline{f}$-approximation error of at most $\frac{2}{\mathfrak{p}^{\prime}_{n+1}}$.
		
		In both cases, we let $A=\Lambda_{1,1}\dots \Lambda_{1,N}\Lambda_{2,1}\dots\Lambda_{2,N}\dots \Lambda_{p,1}\dots \Lambda_{p,N}$ and $B=\Gamma_{1,1}\dots \Gamma_{1,N}\Gamma_{2,1}\dots\Gamma_{2,N}\dots \Gamma_{p,1}\dots \Gamma_{p,N}$ be the strings as in the Coding Lemma. Then the Coding Lemma yields with the aid of (\ref{eq:AlphaAppl}) that
		\[
		\overline{f}(\overline{A},\overline{B})> \frac{9\alpha_{1}}{10\cdot \int f\,\mathrm{d}\mu} \cdot \left(\frac{1}{8}-\frac{2}{2^{e(n)-0.25t_1}}\right)-\frac{1}{\mathfrak{p}_{n+1}}-\frac{4}{N}
		\]
		on any substrings $\overline{A}$ and $\overline{B}$ of at least $pqL$ consecutive symbols in $A$ and $B$, respectively. Since $pqL \leq \frac{1}{N}f_n \leq \delta f_n$ by condition (n1), we can apply this estimate on strings $I_{i}\left(T,\Phi^{-1}(P^{f})\right)$ and $I_{i}(T,P(\tau))$ as in the statement of the lemma. Taking all the aforementioned approximation errors as in \eqref{eq:LengthApproxError}, \eqref{eq:LengthApproxError2}, and \eqref{eq:LengthApproxError3} into account, we conclude
		\begin{equation} \label{eq:estimateCodeAppl1}
			\begin{split}
				& \overline{f}\left(I_{i}\left(T,\Phi^{-1}(P^{f})\right),I_{i}(T,P(\tau))\right) \\
				> & \frac{\alpha_{1}}{10\cdot \int f\,\mathrm{d}\mu} - \frac{1}{\mathfrak{p}_{n+1}}- \frac{6}{N}-\frac{2\cdot \int f\,\mathrm{d}\mu}{N}  -\frac{2}{\mathfrak{p}^{\prime}_{n+1}} - 4\delta \\
				> & \frac{\alpha_{1}}{20\cdot \int f\,\mathrm{d}\mu},
			\end{split}
		\end{equation}
		where we used conditions (n1) and (n2) in the last step.
	\end{proof}
	
	In the second step, we show following \cite[Lemma 2.5, p.89]{ORW} that the $1$-pattern types do not agree
	with each other very often. The proof uses the fact that on average patterns
	in the $T^{f}$ trajectory are expanded by a factor close to $\int f\mathrm{d}\mu$
	and thus only about $\frac{1}{\int f\mathrm{d}\mu}$ of the patterns
	are of the same type.
	\begin{lem}
		\label{lem:densitySame}For almost every point $x$ the set of indices
		in $I_{i}=F_{n,l}\cap\tilde{F}_{n,\tilde{l}}$, for which the $1$-pattern
		types of $F_{n,l}$ and $\tilde{F}_{n,\tilde{l}}$ are the same or $|I_i|<\delta f_n$, has
		density at most 
		\[
		\frac{1}{\int f\, \mathrm{d}\mu}+4\delta.
		\]
	\end{lem}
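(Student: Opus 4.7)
The plan is to split the indices in the ``bad'' set of the lemma (those lying in $I_i$'s with matching $1$-pattern types or with $|I_i|<\delta f_n$) according to whether the enclosing $\tilde{F}_{n,\tilde{l}}$ is \emph{good} (meaning $(\int f\,\mathrm{d}\mu-\delta^3)f_n \le |\tilde{F}_{n,\tilde{l}}| \le (\int f\,\mathrm{d}\mu+\delta^3)f_n$) or \emph{bad}. The paragraph preceding the lemma already shows that the indices in bad $\tilde{F}_{n,\tilde{l}}$'s occupy a set of density at most $2\delta$, so I may charge these a flat $2\delta$ regardless of which $I_i$ they sit in; it remains to bound the matching-type and small-$I_i$ contributions coming from good $\tilde{F}_{n,\tilde{l}}$'s.

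Inside a single good $\tilde{F}_{n,\tilde{l}}$, a small $I_i$ (one with $|I_i|<\delta f_n$) can arise only at the two boundary intersections with the first and last $F_{n,l}$'s that $\tilde{F}_{n,\tilde{l}}$ meets, because every $F_{n,l}$ \emph{fully contained} inside $\tilde{F}_{n,\tilde{l}}$ produces $|I_i|=f_n\ge \delta f_n$. These two boundary intersections contribute fewer than $2\delta f_n$ indices per $\tilde{F}_{n,\tilde{l}}$, so dividing by the minimum good length $(\int f\,\mathrm{d}\mu-\delta^3)f_n$ and using $\int f\,\mathrm{d}\mu>1$ gives a total density of small-$I_i$ indices on good $\tilde{F}_{n,\tilde{l}}$'s bounded by $2\delta/\int f\,\mathrm{d}\mu$, i.e.\ at most $2\delta$ up to negligible $O(\delta^3)$ terms. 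The decisive factor here is the $\int f\,\mathrm{d}\mu>1$ in the denominator: without it, the small-$I_i$ contribution alone would already threaten the $4\delta$ budget.

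For the matching-type $I_i$'s the key structural input is that within each element of $\mathcal{W}_{n+1}/\mathcal{Q}_1^{n+1}$ the $U_2^{(n+1)}$ Feldman patterns used are pairwise distinct, so every $1$-pattern type occurs at most once in each $(n+1)$-word. Since $|\tilde{F}_{n,\tilde{l}}|\le (\int f\,\mathrm{d}\mu+\delta^3)f_n$ and $h_{n+1}/f_n=U_2^{(n+1)}$ with $2\int f\,\mathrm{d}\mu<U_2^{(n+1)}$ by condition (n2), the $F_{n,l}$'s meeting a good $\tilde{F}_{n,\tilde{l}}$ form a consecutive range of at most $\lceil\int f\,\mathrm{d}\mu\rceil+1<U_2^{(n+1)}$ indices. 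When this range lies entirely inside one $(n+1)$-word of the $(T,P)$-name of $x$ (the generic situation), the distinctness of pattern types forces at most one matching $F_{n,l}$, contributing at most $f_n$ indices; dividing by the minimum good length gives a density bound of $1/(\int f\,\mathrm{d}\mu-\delta^3) \le 1/\int f\,\mathrm{d}\mu +O(\delta^3)$ inside that $\tilde{F}_{n,\tilde{l}}$. The exceptional $\tilde{F}_{n,\tilde{l}}$'s that straddle an $(n+1)$-word boundary of $x$ could admit up to two matches, but their density is $O(f_n/h_{n+1})=O(1/U_2^{(n+1)})$, which by (n2) is far smaller than $\delta$ and may be absorbed into negligible terms.

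Adding the three contributions (at most $1/\int f\,\mathrm{d}\mu$ from matching-type on good $\tilde{F}$'s, at most $2\delta$ from small-$I_i$ on good $\tilde{F}$'s, and at most $2\delta$ from all indices in bad $\tilde{F}$'s), plus negligible boundary-crossing corrections, yields the claimed bound $1/\int f\,\mathrm{d}\mu + 4\delta$. The main obstacle is the bookkeeping: three separate small contributions must each be kept below $\sim 2\delta$, and this is only possible because the small-$I_i$ estimate inherits the factor $1/\int f\,\mathrm{d}\mu<1$ from dividing by the length of $\tilde{F}_{n,\tilde{l}}$, and because the rapid growth of $U_2^{(n+1)}$ forces both the $(n+1)$-word boundary corrections and the $\delta^3$-corrections to be utterly negligible relative to $\delta$.
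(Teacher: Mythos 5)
Your argument is correct and follows essentially the same route as the paper's: discard the too-long/too-short $\tilde{F}_{n,\tilde{l}}$'s at density cost $2\delta$ via condition (n4), and within the remaining good ones use the pairwise distinctness of the $U_{2}^{(n+1)}$ Feldman patterns in each $(n+1)$-word together with condition (n2) to get at most one matching $F_{n,l}$, hence density about $1/\int f\,\mathrm{d}\mu$. You are in fact slightly more careful than the paper, which leaves implicit both the $|I_i|<\delta f_n$ boundary contribution and the possibility of an $\tilde{F}_{n,\tilde{l}}$ straddling an $(n+1)$-word boundary of the $(T,P)$-name; your explicit treatment of both is correct, and they are negligible for the reasons you give.
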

	
	\begin{proof}
		We decompose $a(T,\Phi^{-1}(P^{f}),x)$ into its $\left(T,\Phi^{-1}(P^{f})\right)$-$(n+1)$-blocks
		\[
		a(T,\Phi^{-1}(P^{f}),x)=\dots\tilde{w}_{n+1,-2}\tilde{w}_{n+1,-1}\tilde{w}_{n+1,0}\tilde{w}_{n+1,1}\tilde{w}_{n+1,2}\dots.
		\]
		We consider its $j$-th word $\tilde{w}_{n+1,j}$ and denote the corresponding
		sets of indices by $\tilde{B}_{n+1,j}$ (that is, $\tilde{B}_{n+1,j}(T,\Phi^{-1}(P^{f}),x)=\tilde{w}_{n+1,j}$).
		Then we further subdivide $\tilde{w}_{n+1,j}$ into its $1$-patterns.
		We denote those by $\tilde{F}_{n,j,k}$ and their set of indices by
		$\tilde{B}_{n+1,j,k}$. By condition (n4) for almost every $x$ for
		all but a set of indices of density at most $\delta$ the occurrent
		word $\tilde{w}_{n+1,j}$ is such that the subset of its indices
		$\tilde{B}_{n+1,j}$ lying in a too long $1$-pattern (that is, $|\tilde{B}_{n+1,j,k}|>\left(\int f\,\mathrm{d}\mu+\delta^{3}\right)\cdot|f_{n}|)$
		or in a too short $1$-pattern ($|\tilde{B}_{n+1,j,k}|<\left(\int f\,\mathrm{d}\mu-\delta^{3}\right)\cdot|f_{n}|)$
		has density at most $\delta$. 
		
		Hence, neglecting a set of indices with density at most $2\delta$
		we can assume that $\left(\int f\,\mathrm{d}\mu-\delta^{3}\right)\cdot|f_{n}|<|\tilde{B}_{n+1,j,k}|<\left(\int f\,\mathrm{d}\mu+\delta^{3}\right)\cdot|f_{n}|$.
		Since $U_{2}^{(n+1)}>\int f\,\mathrm{d}\mu+\delta^{3}$ by our assumption
		(n2), there is at most one $1$-pattern $F_{n,l}$ in $\tilde{B}_{n+1,j,k}(T,P)$
		that has the same $1$-pattern type as $\tilde{F}_{n,j,k}$. Thus,
		in $\tilde{B}_{n+1,j,k}(T,P)$ the set of indices that lie in the
		same $1$-pattern type as $\tilde{F}_{n,j,k}$ has density at most
		\[
		\frac{|f_{n}|}{|\tilde{B}_{n+1,j,k}|}<\frac{1}{\int f\,\mathrm{d}\mu-\delta^{3}}<\frac{1}{\int f\,\mathrm{d}\mu}+\delta^{3}.
		\]
		Taking the neglected indices into account the lemma follows.
	\end{proof}
	Combining both results yields the proof of Proposition \ref{prop:NonIsom}.
	\begin{proof}[Proof of Proposition \ref{prop:NonIsom}]
		For almost every $x\in X$ we obtain from Lemma \ref{lem:distDiff}
		and Lemma \ref{lem:densitySame} that 
		\begin{align*}
			& \overline{d}\left(a\left(T,\Phi^{-1}(P^{f}),x\right),a(T,P(\tau),x\right)\\
			\geq & \frac{\alpha_{1}}{20\cdot \int f\,\mathrm{d}\mu}\cdot\left(1-\frac{1}{\int f\,\mathrm{d}\mu}-10\delta\right),
		\end{align*}
		where we also took into account the indices of density $\delta$ and
		$2\epsilon_{n+1}<2\delta$ that we neglected at the very beginning.
		Letting $\delta\to0$ and $n\to\infty$ we conclude
		\[
		\overline{d}\left(a\left(T,\Phi^{-1}(P^{f}),x\right),a(T,P(\tau),x\right)\geq\frac{\alpha_{1}}{20 \cdot \int f\, \mathrm{d}\mu}\cdot\frac{\int f\, \mathrm{d}\mu-1}{\int f\, \mathrm{d}\mu}.
		\]
		For nontrivial $f$ and $\tau<\frac{\alpha_{1}}{20 \cdot \int f \, \mathrm{d}\mu}\cdot\frac{\int f \, \mathrm{d}\mu-1}{\int f \, \mathrm{d}\mu}$
		this contradicts the assumption that $P(\tau)$ is a finite code with
		$d\left(\Phi^{-1}(P^{f}),P(\tau)\right)<\tau$.
	\end{proof}
	
	\subsection{\label{subsec:On-even-Kakutani}On even Kakutani equivalence}
	
	In this subsection we state several properties of even Kakutani equivalence.
	We start by recalling its definition.
	\begin{defn}
		\label{def:EvenEquiv}Two transformations $S:(X,\mu)\to(X,\mu)$ and
		$T:(Y,\nu)\to(Y,\nu)$ are said to be \emph{evenly equivalent} if
		there are subsets $A\subseteq X$ and $B\subseteq Y$ of equal measure
		$\mu(A)=\nu(B)>0$ such that $S_{A}$ and $T_{B}$ are isomorphic
		to each other. Equivalently, there is $A\subseteq X$ and $f:A\to\mathbb{Z}^{+}$
		with $\mu(A)\cdot\int_{A}f\mathrm{d}\mu_{A}=1$ such that $\left(S_{A}\right)^{f}\cong T$. There is also another equivalent definition in \cite{FieldsteelRudolph}, but we do not make use of it.
	\end{defn}

	The following lemma from \cite{ORW} is related to the second characterization in Definition \ref{def:EvenEquiv}.
	\begin{lem}
		\label{lem:ORW13}Suppose $S:(X,\mu)\to(X,\mu)$ is an ergodic measure-preserving
		automorphism.
		\begin{enumerate}
			\item [(a)]If $f,g:X\to\mathbb{Z}^{+}$ such that $\int g\ d\mu<\int f\ d\mu$,
			then there exists $h:X^{g}\to\mathbb{Z}^{+}$ such that $\int_{X^{g}}h\ d\mu^{g}=(\int f\ d\mu)/(\int g\ d\mu)$
			and $(S^{g})^{h}\cong S^{f}.$ 
			\item [(b)]If $f:A\to\mathbb{Z}^{+}$ and $\mu(A)\int_{A}f\ d\mu_{A}<1,$
			then there exists $A'\subset X$ with $\mu(A')=\mu(A)\int_{A}f\ d\mu_{A}$
			such that $(S_{A})^{f}\cong S_{A'}.$ 
			\item [(c)]If $f:A\to\mathbb{Z}^{+}$ and $\mu(A)\int_{A}f\ d\mu_{A}>1,$
			then there exists $h:X\to\mathbb{Z}^{+}$ with $\int h\ d\mu=\mu(A)\int_{A}f\ d\mu_{A}$
			such that $(S_{A})^{f}\cong S^{h}.$
		\end{enumerate}
	\end{lem}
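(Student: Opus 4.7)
The plan is to handle the three parts via explicit Kakutani-tower manipulations, reducing each in turn to a regime where a pointwise inequality between the relevant roof/return-time functions holds, after which the required isomorphism becomes a transparent orbit-wise bookkeeping.

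For part (a), given $f,g:X\to\mathbb{Z}^+$ with $\int g\,d\mu<\int f\,d\mu$, I reduce to the case $g\le f$ pointwise by first replacing $g$ with a function $\tilde g\le f$ cohomologous to $g$ over $S$ via an integer-valued transfer function, so that $S^{\tilde g}\cong S^g$ by a level-by-level reindexing along $S$-orbits; the slack $\int g<\int f$ together with ergodicity of $S$ permits such a rearrangement. Once $\tilde g\le f$, defining
\[
h(x,j)=\begin{cases}1 & \text{if }1\le j<\tilde g(x),\\ f(x)-\tilde g(x)+1 & \text{if }j=\tilde g(x),\end{cases}
\]
gives $(S^{\tilde g})^h\cong S^f$ by identifying levels one by one, and the integral $\int h\,d\mu^{\tilde g}=\int f\,d\mu/\int g\,d\mu$ is a direct computation.

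For part (b), with $c:=\int_A f\,d\mu<1$, the plan is to construct $A'$ as a union of $S$-orbit segments rooted in $A$, provided the pointwise inequality $f(a)\le R_A(a)$ holds, where $R_A$ denotes the first-return time of $S$ to $A$. Under this pointwise condition, setting
\[
A':=\bigcup_{a\in A}\left\{S^j(a):0\le j\le f(a)-1\right\}
\]
yields $\mu(A')=\int_A f\,d\mu=c$ via the standard tower decomposition $X=\bigsqcup_{n\ge 1}\bigsqcup_{k=0}^{n-1}S^k B_n$ with $B_n=\{a\in A:R_A(a)=n\}$, and $S_{A'}$ visits the selected points in the order $a,S(a),\ldots,S^{f(a)-1}(a),S_A(a),\ldots$, reproducing $(S_A)^f$ exactly. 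The general case reduces to this regime by replacing $A$ with a smaller cross-section $A_0\subset A$ on which the $S$-return time is sufficiently large (a Rokhlin-tower argument enabled by $c<1$), combined with part (a) applied inside the induced system to handle the compensating adjustment from $f$ to the appropriate $f_0$ on $A_0$; standard properties of iterated induced transformations ($(S_A)_{A_0}=S_{A_0}$) ensure $(S_{A_0})^{f_0}\cong(S_A)^f$.

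Part (c) is the dual regime $c>1$: the $(S_A)^f$-tower holds more mass than fits inside $X$, and is instead realized as $S^h$ for some $h:X\to\mathbb{Z}^+$. The plan mirrors (b): first reduce to $f(a)\ge R_A(a)$ pointwise (by a cohomological rearrangement of $f$ along $S_A$-orbits, using the excess $c>1$), then define $h:X\to\mathbb{Z}^+$ by distributing each $f(a)$ across the $R_A(a)$ orbit points $a,S(a),\ldots,S^{R_A(a)-1}(a)$, for instance by $h(S^k(a))=1$ for $0\le k<R_A(a)-1$ and $h(S^{R_A(a)-1}(a))=f(a)-R_A(a)+1$. Kac's formula then gives $\int h\,d\mu=\int_A f\,d\mu=c$, and the orbit-wise map sending the $f$-tower over $a$ to the $S^h$-trajectory above $a$ up to its first return to $A$ furnishes the isomorphism. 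The main obstacle in all three parts is the preliminary cohomological/orbit-selection reduction to the pointwise-domination regime; once in place, the isomorphisms themselves and the integral identities follow from Kac's lemma and direct bookkeeping along $S$-orbits.
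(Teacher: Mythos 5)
Your proposal is built on a single engine repeated three times: reduce to a pointwise-domination regime (for (a), replace $g$ by a cohomologous $\tilde g\le f$; for (b), pass to a sub-cross-section so that $f\le R_{A_0}$; for (c), rearrange $f$ so that $f\ge R_A$), after which the isomorphism is explicit bookkeeping along orbits. The explicit constructions you give \emph{in} the pointwise-domination regime, and the accompanying Kac-type integral computations, are correct. But the reduction step is where the whole content of the lemma lives, and you leave it as an assertion. The claim you need for (a) --- that whenever $\int g\,d\mu<\int f\,d\mu$ there is an integer-valued $\psi$ with $1\le g+\psi\circ S-\psi\le f$ a.e. --- is a two-sided ``sandwich cohomology'' statement. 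It is not a consequence of the ergodic theorem alone: the difference $\tilde g-g$ has to be an actual coboundary (not merely mean zero), and it is emphatically false that every mean-zero integer-valued function is a coboundary. So you are implicitly invoking an existence theorem (essentially a tiling/Rokhlin-tower argument of Alpern--Ornstein--Weiss type) that is at least as hard as, and arguably equivalent to, part (a) itself. As written this is a genuine gap. The same gap reappears in (c) (``cohomological rearrangement of $f$ along $S_A$-orbits, using the excess $c>1$''), and the route you sketch for (b) is additionally circular: you reduce (b) to part (a) plus a sub-cross-section argument, but (a) in your plan rests on the same unproven rearrangement lemma.

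Also note that you are working much harder than necessary on (b) and (c). The paper derives both \emph{directly} from (a) without any further tower manipulations, by taking the second roof function to be the first-return time $r_A$ of $S$ to $A$. Since $S\cong(S_A)^{r_A}$ and $\int_A r_A\,d\mu_A=1/\mu(A)$, the hypothesis $\mu(A)\int_A f\,d\mu_A<1$ in (b) is exactly $\int_A f\,d\mu_A<\int_A r_A\,d\mu_A$, so (a) applied to $S_A$ with the pair $(f,r_A)$ produces $h$ with $((S_A)^f)^h\cong(S_A)^{r_A}\cong S$; then $(S_A)^f$ is an induced transformation of $S$ on a set of measure $1/\int h=\mu(A)\int_A f\,d\mu_A$. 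Part (c) is the mirror image, applying (a) to the pair $(r_A,f)$. If you adopt this route you can discard the cross-section and rearrangement steps entirely and only have to worry about (a), for which the paper cites \cite[Lemma 1.3]{ORW}; if you instead want a self-contained proof of (a), you must prove the sandwich cohomology lemma, which is the real substance that is currently missing.
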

	
	\begin{proof}
		Part (a) follows from \cite[Lemma 1.3, p. 3]{ORW} and its proof.
		Parts (b) and (c) are stated on p. 91 of \cite{ORW}, and they follow
		easily from (a) and the observation that $S\cong(S_{A})^{r_{A}}$
		and $\int_{A}r_{A}\ d\mu_{A}=1/\mu(A).$
	\end{proof}
	\begin{lem}
		\label{lem:Induce}If $U,V:(X,\mu)\to(X,\mu)$ are ergodic measure-preserving
		automorphisms and $U_{A}\cong V_{B}$ with $0<\mu(A)<\mu(B)\le1,$
		then there exists $C$ with $0<\mu(C)<1$ such that $U_{C}\cong V.$
	\end{lem}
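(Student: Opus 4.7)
The plan is to combine the natural identification of $V$ as a tower over $V_B$ with part (b) of Lemma \ref{lem:ORW13}. First, let $\phi:A\to B$ be an isomorphism witnessing $U_{A}\cong V_{B}$ (so $\phi_{*}\mu_{A}=\mu_{B}$ and $\phi\circ U_{A}=V_{B}\circ\phi$ a.e.). Let $r_{B}:B\to\mathbb{Z}^{+}$ denote the return time of $V$ to $B$; then $V\cong(V_{B})^{r_{B}}$, and by transporting $r_{B}$ through $\phi$ we set $g:=r_{B}\circ\phi:A\to\mathbb{Z}^{+}$, which gives
\[
V\;\cong\;(V_{B})^{r_{B}}\;\cong\;(U_{A})^{g}.
\]

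Next I would compute the relevant integral. By Kac's formula applied to the ergodic $V$ and the subset $B$, we have $\int_{B}r_{B}\,d\mu_{B}=1/\mu(B)$, and since $\phi$ is measure-preserving between $(A,\mu_{A})$ and $(B,\mu_{B})$,
\[
\int_{A}g\,d\mu_{A}\;=\;\int_{B}r_{B}\,d\mu_{B}\;=\;\frac{1}{\mu(B)}.
\]
Therefore
\[
\mu(A)\int_{A}g\,d\mu_{A}\;=\;\frac{\mu(A)}{\mu(B)},
\]
which lies strictly between $0$ and $1$ by the hypothesis $0<\mu(A)<\mu(B)\le 1$.

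Now I would apply part (b) of Lemma \ref{lem:ORW13} with $S=U$, $f=g$, and the given $A$. Since $\mu(A)\int_{A}g\,d\mu_{A}<1$, the lemma supplies a set $C\subset X$ with
\[
\mu(C)\;=\;\mu(A)\int_{A}g\,d\mu_{A}\;=\;\frac{\mu(A)}{\mu(B)}\;\in\;(0,1)
\]
and $(U_{A})^{g}\cong U_{C}$. Chaining the isomorphisms gives $V\cong(U_{A})^{g}\cong U_{C}$, which is precisely the conclusion.

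There is no essential obstacle here once Lemma \ref{lem:ORW13} is in hand; the only care needed is the bookkeeping for the integral of the induced return time (Kac's formula transported via $\phi$) and checking that the resulting measure lies strictly in $(0,1)$, which is exactly where the strict inequality $\mu(A)<\mu(B)$ is used to keep $\mu(C)<1$ and the positivity $\mu(A)>0$ is used for $\mu(C)>0$.
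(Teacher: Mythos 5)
Your proof is correct, and it takes a slightly different route than the paper's. The paper builds $U$ as a tower over $U_A$ via $U\cong(U_A)^{r_A}$, transports $r_A$ through the isomorphism to get $\tilde r$ on $B$, observes $\int_B \tilde r\,d\mu_B = \int_A r_A\,d\mu_A > 1/\mu(B)$, and then invokes part (c) of Lemma \ref{lem:ORW13} to obtain $U\cong V^h$ with $\int h\,d\mu>1$; the set $C$ is then gotten by a final inducing step with $\mu(C)=1/\int h\,d\mu$. You go in the dual direction: you realize $V\cong(V_B)^{r_B}$, transport $r_B$ across the isomorphism to get $g$ on $A$, and since $\mu(A)\int_A g\,d\mu_A=\mu(A)/\mu(B)<1$ you can invoke part (b) instead, which hands you $C$ directly with $U_C\cong(U_A)^g\cong V$. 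Both arguments lean on the same tower-transport idea plus Lemma \ref{lem:ORW13}; the payoff of your variant is that it lands on the desired conclusion $U_C\cong V$ in one application of the lemma with no extra inducing step, and the value $\mu(C)=\mu(A)/\mu(B)$ comes out explicitly. The paper's variant identifies $U$ with a special transformation $V^h$ over the full space $X$, which is a natural intermediate object but requires one more move to extract $C$. Either is fine; yours is arguably a touch cleaner.
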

	
	\begin{proof}
		Since $\mu(A)<\mu(B),$ $\int_{A}r_{A}\ d\mu_{A}>\int_{B}r_{B}\ d\mu_{B}.$
		Then $U\cong(U_{A})^{r_{A}}\cong(V_{B})^{\tilde{r}}$ where $\int_{B}\tilde{r}\ d\mu_{B}=\int_{A}r_{A}\ d\mu_{A}>\int_{B}r_{B}\ d\mu_{B}=1/\mu(B).$
		Therefore by Lemma \ref{lem:ORW13}(c), $(V_{B})^{\tilde{r}}\cong V^{h}$
		for some $h:X\to\mathbb{Z}^{+}$ with $\int h\ d\mu>1.$ Therefore
		$U\cong V^{h}$ and consequently there exists $C$ with $\mu(C)=1/(\int h\ d\mu)$
		such that $U_{C}\cong V.$
	\end{proof}
	For the following proposition we use the same proof as on p. 90 of
	\cite{ORW} for the analogous result with $T$ replaced by Feldman's
	example $J.$ 
	\begin{prop}
		\label{prop:only-even}Let $\mathcal{T}\in\mathcal{T}\kern-.5mm rees$ and $T=F(\mathcal{T}).$
		If $(T_{A})^{-1}\cong T_{B}$ , then $\mu(A)=\mu(B).$ That is, if
		$T$ and $T^{-1}$ are Kakutani equivalent, then they are evenly equivalent. 
	\end{prop}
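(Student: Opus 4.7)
The plan is to follow the ORW strategy on p.~90 of \cite{ORW}, treating Proposition \ref{prop:NonIsom} as a black-box rigidity statement. Suppose for contradiction that $(T_A)^{-1}\cong T_B$ with $\mu(A)\neq\mu(B)$. Since $(T_A)^{-1}=(T^{-1})_A$, the hypothesis takes the symmetric form $(T^{-1})_A\cong T_B$, which is invariant under simultaneously swapping $(A,T)\leftrightarrow(B,T^{-1})$; thus I may assume $\mu(A)<\mu(B)$. Lemma \ref{lem:Induce}, applied with $U=T^{-1}$ and $V=T$, then yields $C\subseteq X$ with $0<\mu(C)<1$ such that $(T^{-1})_C\cong T$, and taking inverses of this relation also gives $T_C\cong T^{-1}$.

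Next I would convert each induced isomorphism into a tower-over-the-full-space isomorphism using the return-time tower. From $T_C\cong T^{-1}$ via an isomorphism $\psi:(X,\mu,T^{-1})\to(C,\mu_C,T_C)$, setting $g=r_C\circ\psi:X\to\mathbb{Z}^+$ yields $(T^{-1})^g\cong(T_C)^{r_C}\cong T$, with $\int g\,\mathrm{d}\mu=\int_C r_C\,\mathrm{d}\mu_C=1/\mu(C)>1$ by Kac's formula. Symmetrically, from $(T^{-1})_C\cong T$ I obtain $h:X\to\mathbb{Z}^+$ with $T^h\cong T^{-1}$ and $\int h\,\mathrm{d}\mu=1/\mu(C)>1$.

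The final step is to chain the two relations into a single tower isomorphism $T^k\cong T$ with $\int k\,\mathrm{d}\mu>1$. Using $T^{-1}\cong T^h$ to replace the base of $(T^{-1})^g$, I would rewrite $T\cong(T^{-1})^g\cong(T^h)^{g'}$, where $g':X^h\to\mathbb{Z}^+$ is the transfer of $g$ to $X^h$ under the isomorphism, so that $\int g'\,\mathrm{d}\mu^h=\int g\,\mathrm{d}\mu$. The two-stage tower $(T^h)^{g'}$ can then be unfolded as a single tower over $T$: defining $k(x)=\sum_{i=1}^{h(x)}g'(x,i)$, one checks $(T^h)^{g'}\cong T^k$ with
\[
\int k\,\mathrm{d}\mu=\int h\,\mathrm{d}\mu\cdot\int g'\,\mathrm{d}\mu^h=\int h\,\mathrm{d}\mu\cdot\int g\,\mathrm{d}\mu>1.
\]
This contradicts Proposition \ref{prop:NonIsom}, completing the proof.

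The main obstacle has already been dispatched: it is the rigidity statement Proposition \ref{prop:NonIsom} itself, whose long proof occupies Subsection \ref{subsec:SpecialTrans}. Once that is in hand, the present proposition is a brief algebraic argument about inducing and Abramov-style unfolding of special flows; the only care needed is the bookkeeping of measure normalizations in the composition $(T^h)^{g'}\cong T^k$ and the verification that both $g$ and $h$ arise with integrals strictly greater than $1$, which follows directly from $\mu(C)<1$.
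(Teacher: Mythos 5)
Your argument is correct and reaches the same contradiction to Proposition \ref{prop:NonIsom}, but via a more computational route than the paper's. You both start identically: Lemma \ref{lem:Induce} (with $U=T^{-1}$, $V=T$) produces $C$ with $0<\mu(C)<1$ and $(T^{-1})_C\cong T$, hence $T_C\cong T^{-1}$. Where the paper finishes in one move — it induces both sides of $T_C\cong T^{-1}$ on the set $C$, so that transitivity of inducing gives $T_{\widetilde C}=(T_C)_{\widetilde C}\cong (T^{-1})_C\cong T$ for some $\widetilde C\subset C$ with $\mu(\widetilde C)=\mu(C)^2<1$, directly contradicting the ``induced'' form of Proposition \ref{prop:NonIsom} — you instead pass to the ``tower'' side of the dictionary: you upgrade each of the two induced isomorphisms to a special-transformation isomorphism $(T^{-1})^g\cong T$ and $T^h\cong T^{-1}$ with $\int g=\int h=1/\mu(C)$, chain them, and unfold the two-stage tower via $k(x)=\sum_{j=1}^{h(x)}g'(x,j)$ to get $T^k\cong T$ with $\int k\,\mathrm d\mu=\int h\,\mathrm d\mu\cdot\int g'\,\mathrm d\mu^h=1/\mu(C)^2>1$, contradicting the ``roof function'' form of the same proposition. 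These are two sides of the same coin (note that $1/\mu(C)^2$ matches the paper's $\mu(\widetilde C)=\mu(C)^2$); your version requires the Abramov-style unfolding $(T^h)^{g'}\cong T^k$ and the multiplicativity of the expected roof, which you correctly state but leave as ``one checks,'' whereas the paper's version uses only the transitivity $(T_C)_{\widetilde C}=T_{\widetilde C}$ and is therefore shorter. One minor point worth spelling out: your reduction to $\mu(A)<\mu(B)$ by symmetry tacitly uses that a contradiction for $T^{-1}$ transfers back to $T$; this works because $(S^f)^{-1}\cong(S^{-1})^{f}$, so $(T^{-1})^{k'}\cong T^{-1}$ implies $T^{k'}\cong T$ — or, more simply as in the paper, one can just note that $(T_B)^{-1}\cong T_A$ with $\mu(B)<\mu(A)$ feeds into Lemma \ref{lem:Induce} in exactly the same way.
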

	
	\begin{proof}
		Suppose $(T_{A})^{-1}\cong T_{B}$ where $\mu(A)\ne\mu(B).$ 
		\begin{casenv}
			\item Suppose $\mu(A)<\mu(B).$ Then by Lemma \ref{lem:Induce}, there exists
			$C$ with $0<\mu(C)<1$ such that $(T_{C})^{-1}=(T^{-1})_{C}\cong T.$
			This implies $T_{C}\cong T^{-1}$ and there exists $\widetilde{C}\subset C$
			with $\mu(\widetilde{C})/\mu(C)=\mu(C)$ such that 
			\[
			T_{\widetilde{C}}=(T_{C})_{\widetilde{C}}\cong(T^{-1})_{C}\cong T.
			\]
			Since $0<\mu(\tilde{C})<1,$ this contradicts Proposition \ref{prop:NonIsom}.
			\item Suppose $\mu(A)>\mu(B).$ Then $(T_{B})^{-1}\cong T_{A},$ which is
			the same as Case 1 with $A$ and $B$ switched. Again we have a contradiction
			to Proposition \ref{prop:NonIsom}. 
		\end{casenv}
	\end{proof}

	The first inequality in Lemma \ref{lem:ConsistentCode} below is Proposition 3.2 in \cite[p. 92]{ORW}. The second inequality will serve as an approximate ``consistency
	property'' for a sequence of finite codes between two evenly equivalent
	transformations. It follows from the proof of Lemma 1.3 in \cite[p. 3]{ORW}. 
	
	If we are given an integrable function $f:X \to\mathbb{Z}^{+}$, a partition $P$ of the space $X$ and $A\subset X$
	with $\mu(A)>0$, we use the notation $P_{A}^{f}$ to denote the partition
	$\{A^{f}\setminus A\}\cup\{c\cap A:c\in P\}$ of $A^{f}.$ Moreover,
	if we consider names in the $(S_{A}^{f},P_{A}^{f})$ and $(S_{A}^{g},P_{A}^{g})$
	processes, we regard $A^{f}\setminus A$ and $A^{g}\setminus A$ as
	having different symbols associated to them. In other words, we have
	a match between the $x$-$S_{A}^{f}$-$P_{A}^{f}$ name at time $i$
	and the $x'$-$S_{A}^{g}$-$P_{A}^{g}$ name at time $j$ if and only
	if $S^{i}x,S^{j}x'\in c\cap A$ for some $c\in P.$ 
	\begin{lem}\label{lem:ConsistentCode} 
		Let $S:(X,\mu)\to(X,\mu)$ and $T:(Y,\nu)\to(Y,\nu)$
		be ergodic measure-preserving automorphisms, $Q$ a finite generating
		partition for $T,$ and $P$ a finite partition of $X.$ Suppose that
		$S$ and $T$ are evenly equivalent. Let $(\varepsilon_{\ell})_{\ell=1}^{\infty}$
		be a decreasing sequence of positive numbers such that $0<\varepsilon_{\ell}<1$
		and $\lim_{\ell\to\infty}\varepsilon_{\ell}=0.$ Then there exist
		increasing sequences $(m_{\ell})_{\ell=1}^{\infty}$ and $(K_{\ell})_{\ell=1}^{\infty}$
		of positive integers, sets $F_{\ell}\subset Y$ with $\nu(F_{\ell})>1-\varepsilon_{\ell},$
		and codes $\phi_{\ell}$ of length $2K_{\ell}+1$ from $(T,Q)$-names
		to $(S,P)$-names such that for $y\in F_{\ell}$ we have the following
		properties: There exists $x=x_{\ell}(y)\in X$ such that for all $m\ge m_{\ell},$
		the $(S$-$P$-name of $x)_{0}^{m-1}$ occurs with positive frequency
		in the $(S,P)$ process,
		\[
		\overline{f}(\phi_{\ell}((T\text{-}Q\text{-name of }y)_{0}^{m-1}),(S\text{-}P\text{-name of }x)_{0}^{m-1})<\varepsilon_{\ell},\text{\ for\ }m\ge m_{\ell},
		\]
		and 
		\[
		\overline{f}(\phi_{\ell}((T\text{-}Q\text{-name of }y)_{0}^{m-1})\text{ },\phi_{\ell+1}((T\text{-}Q\text{-name of }y)_{0}^{m-1}))<\varepsilon_{\ell},\text{\ for\ \ensuremath{m\ge m_{\ell+1}}}.
		\]
		
	\end{lem}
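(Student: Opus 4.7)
The plan is to construct the codes $\phi_\ell$ iteratively, adapting the Rokhlin tower techniques that underlie Proposition~3.2 and Lemma~1.3 in \cite{ORW}. Since $S$ and $T$ are evenly equivalent, by Definition~\ref{def:EvenEquiv} there exist $A\subset X$ and $B\subset Y$ with $\mu(A)=\nu(B)$ and an isomorphism $\Phi:(A,\mu_A,S_A)\to(B,\nu_B,T_B)$. The key idea is that $\Phi$ converts the $(T,Q)$-name of $y\in B$ into the $(S,P)$-name of $\Phi^{-1}(y)$ interleaved with the ``gap'' blocks corresponding to excursions away from $B$; a finite code recovers this translation by detecting returns to $B$ from the $(T,Q)$-name.

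First I would produce the codes $\phi_\ell$ satisfying the approximation (first) inequality. For fixed $\ell$, take a Rokhlin tower of base $C_\ell\subset B$ and height $N_\ell$, with the return-time structure under $T_B$ determined (via Fact~\ref{fact:sameLength} applied to $T_B$ and the return-time function $r_B$) on a subset $C_\ell'\subset C_\ell$ of nearly full relative measure. Because $Q$ generates, we can take $K_\ell$ large so that membership in $B$ and the value of $r_B$ on $C_\ell'$ are $(\varepsilon_\ell/10)$-determined by the $(T,Q)$-name in a window of radius $K_\ell$. Define $\phi_\ell$ on such windows by (i) locating the most recent return to $B$, (ii) reading off the excursion lengths, and (iii) substituting a typical $(S_A, P\cap A)$-name—drawn from a distribution over names of sufficient length—then inserting the excursion-gap symbols. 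Taking $F_\ell$ to be the set of $y$ whose orbits spend at least $1-\varepsilon_\ell$ of their time in the well-behaved part of the tower gives, via the ergodic theorem, the desired first inequality for all $m\ge m_\ell$, where $m_\ell$ is chosen large enough to swamp end effects (Remark~\ref{rem:end-effects}).

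For the consistency (second) inequality I would carry out the construction inductively, forcing the $(\ell+1)$-stage data to refine the $\ell$-stage data. Concretely, when $\phi_\ell$ has been chosen, select $K_{\ell+1}\gg K_\ell$ and a Rokhlin tower whose base sits inside $C_\ell'$ and whose height is much larger than $N_\ell$; then the new tower sees the ``gap structure'' of the old tower exactly, so on a set of $y$ of measure at least $1-\varepsilon_\ell/2$ the codes $\phi_\ell$ and $\phi_{\ell+1}$ assign symbols drawn from $(S,P)$-names of the \emph{same} intermediate $S_A$-orbit segment, with identical excursion insertions. Different choices of representative $S_A$-name produce the only discrepancy, which one controls using Fact~\ref{fact:omit_symbols} and $\overline{f}$-typicality: most $(S_A,P\cap A)$-names of a given long length are pairwise $\overline{f}$-close, so the two codes differ only on a set of density at most $\varepsilon_\ell$ once end effects are absorbed into $m_{\ell+1}$.

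The main obstacle is this consistency step. To make it work one must simultaneously (a) keep the towers nested across levels, (b) ensure that the substituted $(S,P)$-name pieces at level $\ell+1$ are close in $\overline{f}$ to those used at level $\ell$ despite being drawn independently, and (c) handle the boundary/end effects arising because $K_{\ell+1}>K_\ell$ forces a different ambiguity region. Item (b) is the genuine content: it is handled by recalling that for any ergodic process, the set of $n$-names whose empirical $(P\cap A)$-distribution is close to the true distribution has measure tending to one, and any two such typical names of equal length are $\overline{f}$-close (a standard Shannon-McMillan-Breiman style argument, used throughout \cite[Chapters 1--3]{ORW}). Once (b) is secured, (a) and (c) are matters of choosing the parameters $K_\ell$, $m_\ell$, $N_\ell$ increasing rapidly enough; the fact that $\varepsilon_\ell$ is merely decreasing to zero suffices because each consistency bound at stage $\ell$ is established independently.
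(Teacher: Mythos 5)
Your overall skeleton — obtain the code for one level from a tower construction, then force consistency by refining — is in the same spirit as the paper's proof, but your key step (b) contains a false claim that would break the argument exactly in the regime where the lemma is needed.

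You assert that "for any ergodic process, ... any two such typical [$n$]-names of equal length are $\overline{f}$-close" and attribute this to a standard SMB-style argument. That claim is, in essence, the definition of loosely Bernoulli. Two $n$-names with nearly identical empirical block distributions can have $\overline{f}$-distance bounded away from zero; Feldman's construction, reproduced as the Feldman patterns in Section~\ref{sec:Feldman} of this paper, exhibits exactly this phenomenon, and the systems $\Psi(\mathcal{T})$ to which Lemma~\ref{lem:ConsistentCode} is subsequently applied are engineered so that typical names of the same length are $\overline{f}$-far apart (see Proposition~\ref{prop:conclusio} and the surrounding estimates). The lemma is stated for arbitrary evenly equivalent $S$ and $T$ and must hold in particular when neither is loosely Bernoulli, since it is invoked in the proof of Lemma~\ref{lem:codeGroup} under the mere hypothesis that $\mathbb{K}$ and $\mathbb{K}^{-1}$ are evenly equivalent. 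So the typicality argument you lean on for the consistency inequality is unavailable.

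The paper avoids this entirely. Rather than drawing independent "typical" $(S_A,P\cap A)$-names at adjacent stages and arguing they are $\overline{f}$-close, it constructs a \emph{single, explicit} isomorphism $\theta_{\ell+1}$ between $(S_{A_\ell})^{f_\ell}$ and $(S_{A_{\ell+1}})^{f_{\ell+1}}$ which is the identity on a base set $B$ and maps orbit segments to orbit segments preserving consecutive order. By the three-dimensional tower analysis (equations~(\ref{eq:S_A^f orbit})--(\ref{eq:S_A'^f'_orbit})), the portion of any orbit that lies in $A_\ell$ is identical on both sides, so the $P^{f_\ell}_{A_\ell}$-name of $x$ and the $P^{f_{\ell+1}}_{A_{\ell+1}}$-name of $\theta_{\ell+1}(x)$ agree at all those indices; since $\mu(A_\ell)$ is close to $1$, the $\overline{f}$-match has density close to $1$. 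The two codes $\phi_\ell$ and $\phi_{\ell+1}$ are then both approximating the names of the same point (transported by $\theta_{\ell+1}$), and consistency follows without any appeal to $\overline{f}$-typicality. To repair your proposal you would need to replace your step (b) with such an explicit construction that keeps the level-$\ell$ and level-$(\ell+1)$ names deterministically linked.
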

	
	\begin{proof}
		Since $S$ and $T$ are evenly equivalent, there are $\widetilde{A}_{1}\subseteq X$
		and $\widetilde{f}_{1}:\widetilde{A}_{1}\to\mathbb{Z}^{+}$ with $\mu(\widetilde{A}_{1})\cdot\int_{\widetilde{A}_{1}}\widetilde{f}_{1}\mathrm{d}\mu_{\widetilde{A}_{1}}=1$
		such that $\left(S_{\widetilde{A}_{1}}\right)^{\widetilde{f}_{1}}\cong T$.
		We choose a set $A_{1}$ with $X\supset A_{1}\supset\widetilde{A}_{1}$
		and $1 > \mu(A_{1})>1-\frac{\varepsilon_{1}}{10}$. Since $\left(\left(S_{A_{1}}\right)_{\widetilde{A}_{1}}\right)^{\widetilde{f}_{1}}\cong T$,
		$S_{A_{1}}$ and $T$ are equivalent. We note that 
		\[
		\mu_{A_{1}}(\widetilde{A}_{1})\cdot\int_{\widetilde{A}_{1}}\widetilde{f}_{1}d\mu_{\widetilde{A}_{1}}=\frac{\mu(\widetilde{A}_{1})}{\mu(A_{1})}\cdot\int_{\widetilde{A}_{1}}\widetilde{f}_{1}d\mu_{\widetilde{A}_{1}}=\frac{1}{\mu(A_{1})}>1.
		\]
		Then Lemma \ref{lem:ORW13} (c) yields the existence of $f_{1}:A_{1}\to\mathbb{Z}^{+}$
		with $\left(S_{A_{1}}\right)^{f_{1}}\cong T$ and $\int_{A_{1}}f_{1}d\mu_{A_{1}}=\frac{1}{\mu(A_{1})}$.
		Let $\theta_{1}:Y\to(A_{1})^{f_{1}}$ be an isomorphism between $T$
		and $\left(S_{A_{1}}\right)^{f_{1}}.$
		
		We let $(A_{\ell})_{\ell=1}^{\infty}$ be a sequence of sets, strictly increasing in measure, such
		that $A_{1}\subset A_{2}\subset A_{3}\subset\cdots\subset X$ with
		$\mu(A_{\ell})>1-\frac{\varepsilon_{\ell}}{10}.$ 
		We will construct, inductively, a sequence of functions $f_{\ell}:A_{\ell}\to \mathbb{Z}$,$\ \ell = 2,3,\dots$ such that $(S_{A_{\ell}})^{f_{\ell}}\cong T$ and $\int_{A_{\ell}}f_{\ell}\ d\mu_{A_{\ell}}=\frac{1}{\mu(A_{\ell})}$. The existence of such functions already follows by the same argument given above in the case of $\ell=1$, and this would be enough to obtain the first $\overline{f}$ inequality in our lemma, but for the second $\overline{f}$ inequality, we need to carefully construct $f_{\ell}$ and a particular isomorphism $\theta_{\ell +1}:(A_{\ell})^{f_{\ell}}\to (A_{\ell +1})^{f_{\ell +1}}$ from $(S_{A_{\ell}})^{f_{\ell}}$ to $(S_{A_{\ell +1}})^{f_{\ell +1}}$ for $\ell=2,3,\dots.$ (A priori, the codes $\phi_{\ell}$ and $\phi_{\ell +1}$ used in the first $\overline{f}$ inequality could be totally unrelated.) To construct $f_{\ell +1}$ and $\theta_{\ell+1}$ we will utilize the method of the proof of Lemma 1.3 in \cite{ORW}.
		
		Let $\ell \ge 1.$ Suppose we are given functions $f_j:A_j\to \mathbb{Z}^+$ with $\int_{A_j}f_j\ d\mu_{A_j}=1/\mu({A_j})$ for $j=1,\dots,\ell$, and $f_1$ is as above. If $\ell >1$ and $j=1,\dots,\ell -1$, also assume that we have constructed isomorphisms $\theta_{j+1}:(A_j)^{f_j}\to (A_{j+1})^{f_{j+1}}$ from $(S_{A_j})^{f_j}$ to   $(S_{A_{j+1}})^{f_{j+1}}$. We will construct a function $f_{\ell+1}:A_{\ell +1}\to\mathbb{Z}^+$ with $\int_{A_{\ell +1}}f_{\ell +1}\ d\mu_{A_{\ell+1}}=1/\mu(A_{\ell+1})$, an isomorphism $\theta_{\ell+1}:(A_{\ell})^{f_{\ell}}\to (A_{\ell+1 })^{f_{\ell+1}}$ from $(S_{A_{\ell}})^{f_{\ell}}$ to $(S_{A_{\ell+1}})^{f_{\ell+1}},$ a positive integer $u_{\ell}$, and a set $D_{\ell}\subset A_{\ell}$ with $\mu(D_{\ell})>1-\frac{\varepsilon_{\ell}}{5}$ such that for $x\in D_{\ell}$ and $m\ge u_{\ell}$, we have 
		\begin{equation}
			\overline{f}(((S_{A_{\ell}})^{f_{\ell}}\text{-}P_{A_{\ell}}^{f_{\ell}}\text{-name of\ }x)_{0}^{m-1},((S_{A_{\ell+1}})^{f_{\ell+1}}\text{-}P_{A_{\ell+1}}^{f_{\ell+1}}\text{-name of\ }\theta_{\ell+1}(x))_{0}^{m-1})<\frac{\varepsilon_{\ell}}{5}.\label{eq:Estimate_2}
		\end{equation}
		
		We can write $S_{A_{\ell+1}}=(S_{A_{\ell}})^{g},$ where $g:A_{\ell}\to\mathbb{Z}^{+}$
			is the return time to $A_{\ell}$ under $S_{A_{\ell+1}}.$ Then $g$ satisfies $\int_{A_{\ell}}g\ d\mu_{A_{\ell}}=(\int_{A_{\ell}}g\ d\mu)/(\mu(A_{\ell}))=\mu(A_{\ell+1})/\mu(A_{\ell}).$
			Since $\int_{A_{\ell}}f_{\ell}\ d\mu_{A_{\ell}}=1/\mu(A_{\ell})>\mu(A_{\ell+1})/\mu(A_{\ell}),$ we have $\int_{A_{\ell}}f_{\ell}\ d\mu_{A_{\ell}}>\int_{A_{\ell}}g\ d\mu_{A_{\ell}}$.
			By the ergodic theorem, for almost every $x\in A_{\ell}$ there exists $N(x)$
			such that 
			\[
			\sum_{i=0}^{n-1}f_{\ell}((S_{A_{\ell}})^{i}(x))>\sum_{i=0}^{n-1}g((S_{A_{\ell}})^{i}(x)),\text{\ for all }n\ge N(x).
			\]
			Thus, we can choose a positive measure set $B\subset A_{\ell}$ and $N\in\mathbb{Z}^{+}$
			such that $N(x)\le N$ for all $x\in B,$ and the return time $r_{B,S_{A_{\ell}}}$
			to $B$ under $S_{A_{\ell}}$ satisfies $r_{B,S_{A_{\ell}}}(x)\ge N$ for all $x\in B.$
			It follows that 
			\[
			\sum_{i=0}^{r_{B,S_{A_{\ell}}}(x)-1}f_{\ell}((S_{A_{\ell}})^{i}(x))>\sum_{i=0}^{r_{B,S_{A_{\ell}}}(x)-1}g((S_{A_{\ell}})^{i}(x)),\ \text{for }x\in B.
			\]
			Now $(S_{A_{\ell}})^{f_{\ell}}\cong(S_B^{r_{B,S_{A_{\ell}}}})^{f_{\ell}}\cong(S_B)^{h_1}$,
			where 
			$$h_1(x)=\sum_{i=0}^{r_{B,S_{A_{\ell}}}(x)-1} f_{\ell}((S_{A_{\ell}})^i(x))\ \text{for }x\in B.$$ Similarly,
			$(S_{A_{\ell}})^g\cong (S_B^{r_B,S_{A_{\ell}}})^g\cong (S_B)^{h_2},$ where
			$$h_2(x)=\sum_{i=0}^{r_{B,S_{A_{\ell}}}(x)-1} g((S_{A_{\ell}})^i(x)),\ \text{for }x\in B.$$ Since
			$h_2(x)<h_1(x)$ for $x\in B$, we can build a tower $((S_B)^{h_2})^{f_{\ell+1}}\cong (S_B)^{h_1}$
			by letting $f_{\ell+1}$ be equal to $h_1(x)-h_2(x)+1$ on the top level above $x$ in the
			$B^{h_2}$ tower, and letting $f_{\ell+1}$ be equal to $1$ elsewhere.
			
			By the induction hypothesis, $\int_{A_{\ell}} f_{\ell}\ d\mu_{A_{\ell}}=1/\mu(A_{\ell})$, or, equivalently,
			$\int_{A_{\ell}}  f_{\ell}\ d\mu = 1$. Thus $1=\mu((A_{\ell})^{f_{\ell}})=\mu(B^{h_1})=\mu((B^{h_2})^{f_{\ell+1}}).$
			On the other hand, $\mu(B^{h_2})=\mu((A_{\ell})^g)=\mu(A_{\ell+1}).$ Hence $1=\mu((B^{h_2})^{f_{\ell+1}})=
			\int_B {h_2}\ d\mu +\int_{B^{h_2}}(f_{\ell+1}-1)\ d\mu = \mu(A_{\ell+1}) + \int_{A_{\ell+1}} (f_{\ell+1}-1)\ d\mu =\int_{A_{\ell+1}} f_{\ell+1}\ d\mu .$
			Therefore $\int_{A_{\ell+1}} f_{\ell+1}\ d\mu =1$ and $\int_{A_{\ell+1}} f_{\ell+1}\ d\mu_{A_{\ell+1}}=1/\mu(A_{\ell+1})$.
			
			To construct the isomorphism $\theta_{\ell+1}:A_{\ell}^{f_{\ell}}\to A_{\ell+1}^{f_{\ell+1}}$ from 
			$(S_{A_{\ell}})^{f_{\ell}}$ to $(S_{A_{\ell+1}})^{f_{\ell+1}}$, we first identify, in the natural way, the tower $B^{r_B,S_{A_{\ell}}}$ with the set $A_{\ell}$, the tower $A_{\ell}^{f_{\ell}}$ with the tower $(B^{r_B,S_{A_{\ell}}})^{f_{\ell}}$, and the tower $A_{\ell+1}^{f_{\ell+1}}$ with the tower $((B^{r_B,S_{A_{\ell}}})^g)^{f_{\ell+1}}.$
			With these identifications, it suffices to define $\theta_{\ell+1}:(B^{r_B,S_{A_{\ell}}})^{f_{\ell}}\to ((B^{r_B,S_{A_{\ell}}})^g)^{f_{\ell+1}}$ so that it is an isomorphism from $((S_B)^{r_B,S_{A_{\ell}}})^{f_{\ell}}$ to $((S_B)^{r_B,S_{A_{\ell}}})^g)^{f_{\ell+1}}$. For $x\in B$, it takes $h_1(x)$ iterations for the first return to $B$ under both $(S_B^{r_B,S_{A_{\ell}}})^{f_{\ell}}$ and $((S_B^{r_B,S_{A_{\ell}}})^g)^{f_{\ell+1}}$, and the point of $B$ at which the first return occurs is the same for both of these transformations. Let $\mathcal{O}_1(x)$ and $\mathcal{O}_2(x)$ denote the orbit segments of length $h_1(x)$ starting at $x\in B$ under, respectively, $(S_B^{r_B,S_{A_{\ell}}})^{f_{\ell}}$ and $((S_B^{r_B,S_{A_{\ell}}})^g)^{f_{\ell+1}}$. Note that $\mathcal{O}_1(x)$ and $\mathcal{O}_2(x)$ 
			contain the same points in $A_{\ell}$, in the same order, but possibly at different times. We let $\theta_{\ell+1}$ be the identity on $B$ and let $\theta_{\ell+1}$ map the orbit segment $\mathcal{O}_1(x)$ to the orbit segment $\mathcal{O}_2(x)$ for each $x\in B$,
			with points of the orbits remaining in consecutive order.
			
			At the points in $A_{\ell},$ the $P_{A_{\ell}}^{f_{\ell}}$-name agrees with the $P_{A_{\ell+1}}^{f_{\ell+1}}$-name.
			Thus, the number of $\overline{f}$ matches between the $P_{A_{\ell}}^{f_{\ell}}$-names
			of the points in $\mathcal{O}_1(x)$ and the $P_{A_{\ell+1}}^{f_{\ell+1}}$-names
			of the points in $\mathcal{O}_2(x)$ is at least the number
			of terms in $A_{\ell}$. Therefore, to estimate
			the $\overline{f}$ distance between the $P_{A_{\ell}}^{f_{\ell}}$-name of $x$
			and the $P_{A_{\ell+1}}^{f_{\ell+1}}$-name of $\theta_2(x)$ from time $0$ to $m-1$
			we divide the orbits into segments that start with a point in $B$
			until right before the next return to $B.$ The number of $\overline{f}$
			matches that we obtain is at least the total number of times that points
			in the segments are in $A_{\ell}.$ If $a>0$ is sufficiently large,
		then the waiting time until the first entry to the set $B$ is less
		than $a$ with high probability, and the last entry to the
		set $B$ occurs after time $m-a$ with high probability. Thus there
		exist $u_{\ell}>>a,$ and a set $D_{\ell}\subset A_{\ell}$ with $\mu(D_{\ell})>1-\frac{\varepsilon_{\ell}}{5}$
		such that for $x\in D_{\ell}$ and $m\ge u_{\ell},$ we have
		(\ref{eq:Estimate_2}).
		
		The functions $f_{\ell}:A_{\ell}\to\mathbb{Z}^{+}$
		satisfy $\left(S_{A_{\ell}}\right)^{f_{\ell}}\cong T$ and $\int_{A_{\ell}}f_{\ell}\ d\mu_{A_{\ell}}=\frac{1}{\mu(A_{\ell})},$ for $\ell=1,2,\dots.$
		We also have $(S_{A_{\ell}})^{r_{\ell}}\cong S,$ where $r_{\ell}=r_{A_{\ell},S}$
		is the return time to $A_{\ell}$ under $S$ and $\int_{A_{\ell}}r_{\ell}\ d\mu_{A_{\ell}}=\frac{1}{\mu(A_{\ell})}.$ 
		For $x\in A_{\ell},$ the $P_{A_{\ell}}^{f_{\ell}}$-name of $x$
		at the $i$th time the orbit under $(S_{A_{\ell}})^{f_{\ell}}$ returns
		to $A_{\ell}$ agrees with the $P_{A_{\ell}}^{r_{\ell}}$-name of
		$x$ at the $i$th time the orbit under $(S_{A_{\ell}})^{r_{\ell}}$
		returns to $A_{\ell}.$ There exist $k_{\ell}$ and a set $C_{\ell}\subset A_{\ell}$
		with $\mu(C_{\ell})>\mu(A_{\ell})-\frac{\varepsilon_{\ell}}{10}>1-\frac{\varepsilon_{\ell}}{5}$
		such that for $x\in C_{\ell}$ and $m\ge k_{\ell},$ both the $(S_{A_{\ell}})^{f_{\ell}}$
		and the $(S_{A_{\ell}})^{r_{\ell}}$ orbits of $x$ are in $A_{\ell}$
		for more than $1-\frac{\varepsilon_{\ell}}{5}$ of the times $0,1,\dots,m-1.$
		Thus, for $x\in C_{\ell}$ and $m\ge k_{\ell},$ we have
		\begin{equation}
			\overline{f}(((S_{A_{\ell}})^{f_{\ell}}\text{-}P_{A_{\ell}}^{f_{\ell}}\text{-name of }x)_{0}^{m-1},(S\text{\text{-}}P\text{-name of }x)_{0}^{m-1})<\frac{\varepsilon_{\ell}}{5}.\label{eq:Estimate_1}
		\end{equation}
		We may assume that any such $S$-$P$-name of length $m$ occurs with positive probability in the $(S,P)$ process. 
		
		Since $Q$ is a generator for $T$ there exists $K_{\ell}$ such that
		\[
		(\theta_{\ell}\circ\cdots\circ\theta_{1})^{-1}(P_{A_{\ell}}^{f_{\ell}})\subset^{\varepsilon_{\ell}/10}\lor_{-K_{\ell}}^{K_{\ell}}T^{j}Q,
		\]
		and this defines a code $\phi_{\ell}$ from $(T,Q)$-names to $(\theta_{\ell}\circ\cdots\circ\theta_1)^{-1}(P_{A_{\ell}}^{f_{\ell}})$-names.
		If $m_{\ell}$ is sufficiently large, in particular, $m_{\ell}\ge\max(k_{\ell},u_{\ell}),$
		then there exists a set $E_{\ell}\subset(\theta_{\ell}\circ\cdots\circ\theta_{1})^{-1}(A_{\ell})$
		such that for $y\in E_{\ell}$ the coded name $\phi_{\ell}(y)$ at
		time $j$ agrees with the actual $(\theta_{\ell}\circ\cdots\circ\theta_{1})^{-1}(P_{A_{\ell}}^{f_{\ell}})$-name
		of $y,$ or equivalently, the $(P_{A_{\ell}}^{f_{\ell}})$-name of
		$(\theta_{\ell}\circ\cdots\circ\theta_{1})(y),$ at time $j$, with
		frequency greater than $1-\frac{\varepsilon_{\ell}}{5}$ among the
		times $j=0,1,\dots,m-1,$ for $m\ge m_{\ell.}$ That is, 
		\begin{equation}
			\overline{d}((\phi_{\ell}(y))_{0}^{m-1},((S_{A_{\ell}})^{f_{\ell}}\text{-}P_{A_{\ell}}^{f_{\ell}}\text{-name of\ }(\theta_{\ell}\circ\cdots\circ\theta_{1})(y))_{0}^{m-1})<\frac{\varepsilon_{\ell}}{5}.\label{eq:Estimate_3}
		\end{equation}

		Finally, let $F_{\ell}=E_{\ell}\cap E_{\ell+1}\cap(\theta_{\ell}\circ\cdots\circ\theta_{1})^{-1}(C_{\ell}\cap D_{\ell})\cap(\theta_{\ell+1}\circ\cdots\circ\theta_{1})^{-1}(D_{\ell+1})$, and let $x_{\ell}(y)=(\theta_{\ell}\circ\cdots\circ\theta_1)(y).$
		The first $\overline{f}$ inequality in the lemma follows from (\ref{eq:Estimate_1})
		and (\ref{eq:Estimate_3}), while the second $\overline{f}$ inequality
		follows from (\ref{eq:Estimate_3}) for
		$\ell$ and $\ell+1,$ and (\ref{eq:Estimate_2}) for $\ell$. 
	\end{proof}

	\begin{rem}
		\label{rem:code} We call such a sequence of codes as in Lemma \ref{lem:ConsistentCode} a sequence of $(\varepsilon_{\ell},K_{\ell})$-finite
		$\overline{f}$ codes from $(T,Q)$ to $(S,P)$.
	\end{rem}
	
	\begin{cor}\label{cor:CodeOnWords}
		Let $0<\gamma<1/2$ and assume the set-up of Lemma \ref{lem:ConsistentCode}, where $T=\Phi(\mathcal{T})$
		for some $\mathcal{T\in\mathcal{T}}rees$ and $Q$ is the partition
		of $Y=\Sigma^{\mathbb{Z}}$ according to the symbol of $\Sigma$ located
		at position $0.$ We also let $X=\Sigma^{\mathbb{Z}}$ and $P=Q,$
		but the measures $\mu$ and $\nu$ may be different. Assume that the
		$\varepsilon_{\ell}$ in Lemma \ref{lem:ConsistentCode} are chosen so that $\Sigma_{i=\ell+1}^{\infty}\varepsilon_{i}<\varepsilon_{\ell}.$
		Then there exists $N(\gamma)\in\mathbb{Z}^{+}$ such that for $N\ge N(\gamma)$
		and $k\in\mathbb{Z}^{+}$ and all sufficiently large $n$ (how large
		depends on $N$,$\text{\ensuremath{\gamma,} and }k)$ there is a collection
		$\mathcal{W}_{n}'\subset\mathcal{W}_{n}$ that includes at least $1-\gamma$
		of the words in $\mathcal{W}_{n}$ and the following condition holds:
		For $w\in\mathcal{W}_{n}'$ there exists $z=z(w)\in X$ such that
		the $(S\text{-}Q\text{-name of }z)_{0}^{h_{n}-1}$ occurs with positive
		frequency in the $(S,Q)$ process and we have 
		\[
		\overline{f}((\phi_{N}(w))_{0}^{h_{n}-1},(S\text{-}Q\text{-name of }z)_{0}^{h_{n}-1})<\gamma,
		\]
		and
		
		\[
		\overline{f}((\phi_{N}(w))_{0}^{h_{n}-1},(\phi_{N+k}(w))_{0}^{h_{n}-1})<\gamma.
		\]
	\end{cor}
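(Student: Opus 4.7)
The plan is to combine Lemma \ref{lem:ConsistentCode} with an ergodic theorem argument and the locality of stationary codes. I would first choose $N(\gamma)$ so that $4\sqrt{\varepsilon_{N(\gamma)}}<\gamma$, which ensures all error terms stay below $\gamma$ at the end. Since $\overline{f}$ satisfies the triangle inequality on equal-length strings and $(\varepsilon_{\ell})$ satisfies $\sum_{i=\ell+1}^{\infty}\varepsilon_{i}<\varepsilon_{\ell}$, iterating the second $\overline{f}$-inequality of Lemma \ref{lem:ConsistentCode} yields, for $y\in F^{\ast}:=\bigcap_{j=0}^{k}F_{N+j}$ and $m\geq m_{N+k}$,
\[
\overline{f}\bigl((\phi_{N}(y))_{0}^{m-1},(\phi_{N+k}(y))_{0}^{m-1}\bigr)<2\varepsilon_{N},
\]
and combined with the first inequality $\overline{f}((\phi_{N}(y))_{0}^{m-1},(S\text{-}Q\text{-name of }x_{N}(y))_{0}^{m-1})<\varepsilon_{N}$. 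The set $F^{\ast}$ has $\nu$-measure close to $1$.

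Next I fix a generic $y\in F^{\ast}$ and take $m$ a large multiple of $h_{n}$. By Fact \ref{fact:MeasureConstrSeq} and the ergodic theorem, the $(T,Q)$-name of $y$ on $[0,m-1]$ splits into $n$-blocks aligned with the odometer timing, with each $w\in\mathcal{W}_{n}$ appearing as an $n$-block with frequency arbitrarily close to $1/|\mathcal{W}_{n}|$. Applying Fact \ref{fact:substring_matching} to a best $\overline{f}$-match decomposes the $(S,Q)$-name of $x_{N}(y)$ into substrings corresponding to the $n$-blocks of $y$. Markov's inequality on the per-block $\overline{f}$-contributions then implies that on at least a $(1-\sqrt{\varepsilon_{N}})$-fraction of $n$-block positions the corresponding substring of the $x$-name has length within $(1\pm\sqrt{\varepsilon_{N}})h_{n}$ and is $\overline{f}$-distance at most $\sqrt{\varepsilon_{N}}$ from the coded block; an analogous decomposition for the $\phi_{N}$--$\phi_{N+k}$ comparison gives a further $(1-\sqrt{2\varepsilon_{N}})$-fraction of positions where the two codings are $\overline{f}$-close.

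Locality of the stationary codes finishes the argument: for any interior $n$-block position $j$, the restrictions of $\phi_{N}(y)$ and $\phi_{N+k}(y)$ to $[jh_{n}+K_{N+k},(j+1)h_{n}-1-K_{N+k}]$ depend only on $w_{j}\in\mathcal{W}_{n}$, and therefore agree with $\phi_{N}(w_{j})$ and $\phi_{N+k}(w_{j})$ there up to $2K_{N+k}$ boundary positions, which are absorbed via Fact \ref{fact:omit_symbols}. Let $\mathcal{W}_{n}'$ consist of those $w$ occurring at some position of $y$ that is good for both comparisons; since bad positions account for at most $3\sqrt{\varepsilon_{N}}$ of all $n$-block occurrences and each $w$ occurs with density $1/|\mathcal{W}_{n}|$, pigeonhole gives $|\mathcal{W}_{n}\setminus\mathcal{W}_{n}'|<\gamma|\mathcal{W}_{n}|$. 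For $w\in\mathcal{W}_{n}'$ I define $z(w)$ as the shift of $x_{N}(y)$ placing the corresponding piece of its $(S,Q)$-name at $[0,h_{n}-1]$; this substring occurs with positive frequency by the explicit clause of Lemma \ref{lem:ConsistentCode}, and the two $\overline{f}$-inequalities follow from the good-position bounds after absorbing the length discrepancy and boundary strips. The main obstacle is this last bookkeeping step --- ensuring that the per-block $\overline{f}$-errors, the boundary contributions of size $O(K_{N+k}/h_{n})$, and the length-mismatch corrections from Fact \ref{fact:omit_symbols} together stay below $\gamma$ --- but each contribution is at most $\sqrt{\varepsilon_{N}}$ or $o(1)$ as $n\to\infty$, so it suffices to choose $N(\gamma)$ large and then $n$ large.
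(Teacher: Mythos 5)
Your proposal is essentially correct, but it follows a genuinely different route from the paper. You work with a single generic trajectory: fix $y\in F^{\ast}$, take a long stretch of length a large multiple of $h_n$, decompose it into $n$-blocks, apply Fact \ref{fact:substring_matching} together with a Markov/pigeonhole argument to find good block positions, and then pass from codes on $y$ to codes on the word $w$ via the locality of the stationary code. The paper instead uses the Rokhlin tower structure directly: since $\nu(F) > 1-\gamma^2/2$, an averaging argument over (word, level) pairs shows that for at least a $1-\gamma$ fraction of words $w\in\mathcal{W}_n$, more than a $1-\gamma/2$ fraction of the levels of $w$'s column intersect $F$; for each such $w$, one simply selects a level $i_0\leq (\gamma/2)h_n$ whose column intersects $F$, picks $y$ there, applies Lemma \ref{lem:ConsistentCode} directly to $y$ (whose name is the tail $a_{i_0}\cdots a_{h_n}$ of $w$), and absorbs the offset $i_0$ via Fact \ref{fact:omit_symbols} at a cost of $\gamma/2$. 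The paper's argument is considerably shorter because it avoids the $\overline{f}$-match decomposition of a long string, the non-uniform weights $v_i$, and the locality bookkeeping you need (the $O(K_{N+k}/h_n)$ boundary corrections and length-mismatch adjustments). Your approach works, but your choice $4\sqrt{\varepsilon_{N(\gamma)}}<\gamma$ is too tight once you actually track all the constants — you accumulate roughly $7\sqrt{\varepsilon_N}$ from the per-block $\overline{f}$-errors and the two applications of Fact \ref{fact:omit_symbols} (length mismatch plus boundary strips), so you should take $N(\gamma)$ so that, say, $10\sqrt{\varepsilon_{N(\gamma)}}<\gamma$. Also note that the iteration in your opening step should require $y\in\bigcap_{j=0}^{k-1}F_{N+j}$ (the second $\overline{f}$-inequality of Lemma \ref{lem:ConsistentCode} at level $\ell$ needs $y\in F_\ell$), which is harmless since your $F^{\ast}$ is smaller.
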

	
	\begin{proof}
		Choose $N(\gamma)$ sufficiently large that $\varepsilon_{N(\gamma)-1}<\gamma^{2}/2.$
		Let $N\ge N(\gamma).$ Then choose $n$ sufficiently large so $m_{N+k}<h_{n}/2$.
		Let $F=F_{N}\cap\cdots\cap F_{N+k-1}$ where $F_{N},\dots,F_{N+k-1}$
		are as in Lemma \ref{lem:ConsistentCode}. Then $\nu(F)>1-\varepsilon_{N}-\cdots-\varepsilon_{N+k-1}>1-\varepsilon_{N-1}>1-\gamma^{2}/2.$
		Thus there is a collection $\mathcal{W}_{n}'\subset\mathcal{W}_{n}$
		that includes at least $1-\gamma$ of the words in $\mathcal{W}_{n}$
		such that for $w\in\mathcal{W}_{n}',$ for more than $1-\gamma/2$
		of the indices $i=1,2,\dots,h_{n},$ the $i$th level of column in
		the tower representation of the word $w$ intersects $F.$ For $w=a_{1}a_{2}\cdots a_{h_{n}}\in\mathcal{W}_{n}',$
		there exists $i_{0}=i_{0}(w)$ with $1\le i_{0}\le(\gamma/2)h_{n}$
		such that for some $y=y(w)\in F$ we have $a_{i_{0}}\cdots a_{h_{n}}=(T$-$Q$-name
		of $y)_{0}^{h_{n}-i_{0}-1}$. Let $x=x(y)$ be as in Lemma \ref{lem:ConsistentCode}. Let
		$z=z(y)\in X$ be such that the $(S$-$Q$-name of $z)_{0}^{h_{n}-1}$
		occurs with positive frequency in the $(S,Q)$ process and the $(S$-$Q$-name
		of $z)_{i_{0}}^{h_{n}-1}$ is the same as the $(S$-$Q$-name of $x)_{0}^{h_{n}-i_{0}-1}.$
		Since 
		\[
		\overline{f}(\phi_{N}(a_{i_{0}}\cdots a_{h_{n}}),(S\text{-}Q\text{-name of }x)_{0}^{h_{n}-i_{0}-1})<\varepsilon_{N}<\gamma^{2}
		\]
		and $i_{0}\le(\gamma/2)h_{n},$ it follows that 
		\[
		\overline{f}(\phi_{N}(w),(S\text{-}Q\text{-name of }z)_{0}^{h_{n}-1})<(\gamma/2)+\gamma^{2}<\gamma.
		\]
		By repeated application of the second $\overline{f}$ inequality
		in Lemma \ref{lem:ConsistentCode}, we obtain 
		\[
		\overline{f}(\phi_{N}(a_{i_{0}}\cdots a_{h_{n}}),\phi_{N+k}(a_{i_{0}}\cdots a_{h_{n}}))<\varepsilon_{N}+\cdots\varepsilon_{N+k-1}<\varepsilon_{N-1}<\gamma^{2}/2.
		\]
		Therefore $\overline{f}(\phi_{N}(w),\phi_{N+k}(w))<(\gamma/2)+\gamma^{2}<\gamma.$
	\end{proof}

	\subsection{\label{subsec:non-even}Proof of part (2) in Proposition \ref{prop:criterion} }
	We have seen in Proposition \ref{prop:only-even} that if $\Psi(\mathcal{T})$
	and $\Psi(\mathcal{T})^{-1}$ are Kakutani equivalent, then it can
	only be by an even equivalence. Thus, to prove part (2) in Proposition
	\ref{prop:criterion} it remains to show that under our assumption
	that the tree $\mathcal{T}\in\mathcal{T}\kern-.5mm rees$ does not have an infinite
	branch, $\mathbb{K}:=\Psi(\mathcal{T})$ is not evenly equivalent to
	$\mathbb{K}^{-1}=\Psi(\mathcal{T})^{-1}$.

	As in Section \ref{subsec:SpecialTrans} we enumerate the uniquely
	readable and strongly uniform construction sequence of $\mathbb{K}=\Psi(\mathcal{T})$
	as $\left\{ \mathcal{W}_{n}\right\} _{n\in\mathbb{N}}$, that is,  $n$-words
	are built by concatenating $(n-1)$-words. We also remind the reader of
	the phrase $s$-pattern type from Remark \ref{rem:s-pattern-type}
	as well as Remark \ref{rem:Substitution-Dagger} and that we can decompose an $n$-word $w\in \mathcal{W}_n$ into its $s$-Feldman patterns (if we are in Case 2 of the construction we ignore the strings coming from $\mathcal{W}^{\dagger\dagger}$): $w=P_{n-1,1}\dots P_{n-1,U_{s+1}}$. In the subsequent
	lemma we prove that even under finite coding substantial strings
	of Feldman patterns of different type cannot be matched well. Its proof is partly based on techniques developed in \cite[chapter 12]{ORW} (see also \cite[Proposition 4.12]{Ben}) but new difficulties arise by our iterative substitution of different patterns.
	\begin{lem}
		\label{lem:BadCoding0}Let $s\in\mathbb{N}$ and $\phi$ be a finite
		code from $\mathbb{K}$ to $rev(\mathbb{K})$. Then for sufficiently large $n\in\mathbb{N}$
		and any pair of $s$-Feldman patterns $P_{n-1,\ell}$ in $\mathbb{K}$ and $\overline{P}_{n-1,\overline{\ell}}$ in $rev(\mathbb{K})$ 
		of different pattern type we have 
		\[
		\overline{f}\left(\phi(E),\overline{E}\right)>\frac{\alpha_{s}}{10},
		\]
		for any substrings $E$, $\overline{E}$ of at least $\frac{|P_{n-1,\ell}|}{2^{2e(n-1)}}=\frac{|\overline{P}_{n-1,\overline{\ell}}|}{2^{2e(n-1)}}$
		consecutive symbols in $P_{n-1,\ell}$ and $\overline{P}_{n-1,\overline{\ell}}$, respectively.
	\end{lem}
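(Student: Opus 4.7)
The plan is to adapt the argument of Lemma~\ref{lem:distDiff}, with the finite code $\phi$ taking the role that was played there by the approximating code coming from an isomorphism. Denote the distinct $s$-pattern types of $P_{n-1,\ell}$ and $\overline{P}_{n-1,\overline{\ell}}$ by $r$ and $\overline{r}$, and set $N=2^{4e(n-1)-t_s}$, $T=T^{(n)}_{s+1}$, and $M=M^{(n)}_{s+1}$, so that both patterns are $(T,N,M)$-Feldman patterns whose building blocks are $s$-equivalence classes of $(n-1)$-words. The case $r>\overline{r}$ is handled by the symmetric variant already used in Lemma~\ref{lem:distDiff}, so I focus on $r<\overline{r}$. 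Take $n$ large enough that $2K+1\ll h_{n-1}$, so that boundary effects introduced by $\phi$ at the interfaces of consecutive $(n-1)$-words contribute at most a negligible amount to the $\overline{f}$-distance.

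I now invoke the Coding Lemma~\ref{lem:CodingLemma} with $A=\phi(E)$ and $B=\overline{E}$. On the $B$-side I subdivide $\overline{E}$ along the maximal-repetition structure of the $\overline{r}$-Feldman pattern in $rev(\mathbb{K})$: choose $L$ and $q$ exactly as in Lemma~\ref{lem:distDiff} so that each $\Upsilon_{im}=B^{(i)}_{m1}\cdots B^{(i)}_{mq}$ fills one substitution instance of $[A_m]_s^{TN^{2\overline{r}}}$ inside $\overline{E}$. Distinct $s$-equivalence classes stay at $\overline{f}$-distance at least $\alpha_s$ on substantial substrings even after reversal, by Proposition~\ref{prop:conclusio} combined with Fact~\ref{fact:AddSymbol} and Lemma~\ref{lem:symbol by block replacement} (exactly as in estimate~\eqref{eq:AlphaAppl}); hence the hypothesis of the Coding Lemma holds with $\alpha$ of order $\alpha_s$ and $R_1$ comparable to $p_n$. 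On the $A$-side I decompose $\phi(E)$ according to the $r$-Feldman pattern structure of $P_{n-1,\ell}$: each $\tilde{\Lambda}_{i,m}$ is the grouping of consecutive complete cycles of $P_{n-1,\ell}$ whose total length matches $|\Upsilon_{im}|$. By Lemma~\ref{lem:Occurrence-Substitutions} (and Remark~\ref{rem:Substitution-Dagger}) each repetition $(a_{x,y})^{p_n^{\prime}}$ of a substitution instance of $[A_x]_s$ occurs the same number of times in each $\tilde{\Lambda}_{i,m}$; since the code length is dwarfed by $p_n^{\prime}h_{n-1}$, at most a fraction $2/p_n^{\prime}$ of the coded images within such a repetition can disagree. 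This permits writing $\phi(E)$, up to a negligible $\overline{f}$-error, as $A=\Lambda_{11}\cdots\Lambda_{zN}$ in which each $\Lambda_{im}$ is a permutation of $A_{i1},\dots,A_{iq}$.

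Applying the Coding Lemma then yields
\[
\overline{f}(\phi(E),\overline{E})\ge\alpha_s\Big(\tfrac{1}{8}-\tfrac{2}{\sqrt[4]{N}}\Big)-\tfrac{1}{R_1}-\tfrac{4R_2}{z}-\tfrac{2}{p_n^{\prime}}-\tfrac{2K}{h_{n-1}},
\]
and the parameter choices coming from Section~\ref{sec:Construction} make every error term at most $\alpha_s/40$ once $n$ is sufficiently large, giving the required lower bound $\alpha_s/10$. The main obstacle is verifying that $\phi(E)$ actually carries the permutation structure demanded by the Coding Lemma: although $\phi$ may act differently on distinct substitution instances of the same $s$-class, the extraordinarily long consecutive runs $(a_{x,y})^{p_n^{\prime}}$ built into the construction force all but a vanishing fraction of occurrences within any $\tilde{\Lambda}_{i,m}$ to be coded identically, and Lemma~\ref{lem:Occurrence-Substitutions} guarantees these occurrences are distributed uniformly across the $\tilde{\Lambda}_{i,m}$; without this uniformity the coding could conceivably concentrate mismatches in a way that would defeat the Coding Lemma bound.
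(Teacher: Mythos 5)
Your proposal follows essentially the same route as the paper's proof: verify the hypothesis of the Coding Lemma via Proposition \ref{prop:conclusio}, decompose $\overline{E}$ into the maximal repetitions of the $\overline{r}$-pattern and $\phi(E)$ into groupings of complete cycles of the $r$-pattern (resp.\ into repetitions $[A_x]_s^{q}$ when $r>\overline{r}$), and use Lemma \ref{lem:Occurrence-Substitutions} together with the smallness of the code length relative to the runs $(a_{x,y})^{p_n^{\prime}}$ to realize the permutation structure up to a $2/p_n^{\prime}$ error. The only caveat is the final bookkeeping: what is needed is that the \emph{sum} of the error terms be below $\alpha_s/8-\alpha_s/10=\alpha_s/40$, not that each term individually be at most $\alpha_s/40$; this does hold, since $1/R_{n-1}<\alpha_s/110$ by the choice of $R_{n-1}$ and the remaining terms vanish as $n\to\infty$, exactly as in the paper's computation.
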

	
	\begin{proof}
		We recall the definition of $M(s)$ from Definition \ref{def:M-and-s} and choose $n>M(s)+1$ sufficiently large such that $h_{n-1}>2^{4e(n-1)}\cdot|\phi|$ and $R_{n-1}>\frac{110}{\alpha_s}$, where the number $R_{n-1}$ from (\ref{eq:Rassum2}) quantifies substantial substrings of $(n-1)$-words in Proposition~\ref{prop:conclusio}. In particular, end effects are negligible and we can assume $\phi(E)$ and $E$ to have the same length. 
		
		We denote the tuple of building blocks for $[P_{n-1,\ell}]_{s}$ by $\left([A_{1}]_{s},\dots,[A_{2^{4e(n-1)-t_{s}}}]_{s}\right)$
		and the tuple of building blocks for $[\overline{P}_{n-1,\overline{\ell}}]_{s}$ by $\left([\overline{A}_{1}]_{s},\dots,[\overline{A}_{2^{4e(n-1)-t_{s}}}]_{s}\right)$.
		Moreover, let $r$ and $\overline{r}$ denote the types of $s$-Feldman pattern of $P_{n-1,\ell}$ and $\overline{P}_{n-1,\overline{\ell}}$, respectively.
		In the notation of the Coding Lemma \ref{lem:CodingLemma} we let 
		\begin{align*}
			L & = h_{n-1}, \\
			N &=2^{4e(n-1)-t_s}, \\
			p &= N^{2\cdot (M_{s+1}+1-\overline{r})}, \\
			q &= T_{s+1} \cdot 2^{(4e(n-1)-t_{s})\cdot2\overline{r}} = T_{s+1} \cdot N^{2\overline{r}}.
		\end{align*}
		We emphasize that $T_{s+1}$ and thus $q$ are multiples of $\mathfrak{p}^{\prime}_n$, where $\mathfrak{p}_{n}^{\prime}=\mathfrak{p}_{n}^{2}$
		in Case~1 ($s(n)=s(n-1)$) or $\mathfrak{p}_{n}^{\prime}=\mathfrak{p}_{n}^{2}-\mathfrak{p}_{n}$ in
		Case 2 ($s(n)=s(n-1)+1$) of the construction. Furthermore, each $\Gamma_{i,j}$ in the Coding Lemma \ref{lem:CodingLemma} corresponds to a substitution instance of a maximal repetition $[\overline{A}_j]^q_s$ in $\overline{P}_{n-1,\overline{\ell}}$. Since we have $\overline{f}\left(W,W^{\prime}\right)>\alpha_{s}$ on any substrings $W$ and $W^{\prime}$ of at least $\frac{h_{n-1}}{R_{n-1}}$ consecutive symbols in any words $w,w^{\prime}\in\mathcal{W}_{n-1}$
		with $[w]_{s}\neq[w^{\prime}]_{s}$ by Proposition \ref{prop:conclusio}, the assumption of the Coding Lemma is fulfilled. Now we distinguish between the two possible cases. 
		
		In the first instance, let $r<\overline{r}$, that is, the number $T_{s+1}2^{(4e(n-1)-t_{s})\cdot2r}=T_{s+1}N^{2r}$
		of consecutive repetitions of the same $s$-block in $P_{n-1,\ell}$ is smaller than in $\overline{P}_{n-1,\overline{\ell}}$. We subdivide $P_{n-1,\ell}$ into substitution instances $\tilde{\Lambda}_{i,j}$ of $N^{2(\overline{r}-r)-1}$ consecutive complete cycles in $[P_{n-1,\ell}]_s$. Note that $|\tilde{\Lambda}_{i,j}|=|\Gamma_{i,j}|$. Lemma \ref{lem:Occurrence-Substitutions}
		and Remark \ref{rem:Substitution-Dagger} yield that for each substitution instance $a_{x,y}$ of each $[A_{x}]_{s}$, $x=1,\dots,2^{4e(n-1)-t_{s}}$,
		the repetition $\left(a_{x,y}\right)^{\mathfrak{p}_{n}^{\prime}}$ occurs the same number of times in each $\tilde{\Lambda}_{i,j}$. Since $h_{n-1}>2^{4e(n-1)}\cdot|\phi|$, we also see that the images under code $\phi$ of all occurrences of $\left(a_{x,y}\right)^{\mathfrak{p}_{n}^{\prime}}$ are $\frac{2}{\mathfrak{p}^{\prime}_n}$ close to each other in $\overline{d}$. Hence, by making a $\overline{d}$-approximation error of at most $\frac{2}{\mathfrak{p}^{\prime}_n}$ we can decompose $\phi(P_{n-1,\ell})$ into permutations $\Lambda_{i,j}$ as in the Coding Lemma.

		In case of $r> \overline{r}$ we decompose $P_{n-1,\ell}$ into substitution instances $\tilde{\Lambda}_{i,j}$ of repetitions of the form $[A_x]^q_s$, $x=1,\dots,2^{4e(n-1)-t_{s}}$. Lemma \ref{lem:Occurrence-Substitutions} and Remark \ref{rem:Substitution-Dagger} yield again that for each substitution instance $a_{x,y}$ of $[A_{x}]_{s}$ the repetition $\left(a_{x,y}\right)^{\mathfrak{p}_{n}^{\prime}}$ occurs the same number of times in each $\tilde{\Lambda}_{i,j}$. As above, we can decompose $\phi(P_{n-1,\ell})$ into permutations $\Lambda_{i,j}$ as in the Coding Lemma by making a $\overline{d}$-approximation error of at most $\frac{2}{\mathfrak{p}^{\prime}_n}$.
		
		In both cases, the Coding Lemma gives 
		\begin{align*}
			\overline{f}\left(\phi(E),\overline{E}\right)& >\alpha_{s} \cdot \left(\frac{1}{8}-\frac{2}{2^{e(n-1)-0.25t_{s}}}\right)-\frac{1}{R_{n-1}}-\frac{4}{N}-\frac{2}{\mathfrak{p}^{\prime}_n} \\
			& > \frac{\alpha_s-\frac{11}{R_{n-1}}}{9} > \frac{\alpha_s}{10}
		\end{align*}
		for any substrings $E$, $\overline{E}$ of at least $pqL=\frac{|P_{n-1,\ell}|}{N}$ consecutive symbols in $P_{n-1,\ell}$ and $\overline{P}_{n-1,\overline{\ell}}$, respectively. Here, we used conditions (\ref{eq:Pn}) and (\ref{eq:kn}) as well as our assumption $R_{n-1}>\frac{110}{\alpha_s}$. Since $\frac{|P_{n-1,\ell}|}{N}< \frac{|P_{n-1,\ell}|}{2^{2e(n-1)}}$, the conclusion holds for the strings from the statement of this lemma.
	\end{proof}
	
	\begin{lem}
		\label{lem:BadCoding}Let $s\in\mathbb{N}$ and $\phi$ be a finite
		code from $\mathbb{K}$ to $rev(\mathbb{K})$. Then for sufficiently large $n\in\mathbb{N}$
		and any pair $w\in\mathcal{W}_{n}$, $\overline{w}\in rev(\mathcal{W}_{n})$
		of different $s$-pattern type we have 
		\[
		\overline{f}\left(\phi(A),\overline{A}\right)>\frac{\alpha_{s}}{16}
		\]
		for any substrings $A$, $\overline{A}$ of at least $\frac{h_{n}}{2^{e(n-1)}}$
		consecutive symbols in $w$ and $\overline{w}$, respectively.
	\end{lem}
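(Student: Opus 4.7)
The plan is to extend Lemma \ref{lem:BadCoding0} from individual $s$-Feldman patterns to $n$-word substrings via a symbol-by-block replacement argument modeled on Lemma \ref{lem:symbol by block replacement}, in which each $s$-Feldman pattern plays the role of a single symbol. Provided the sequences of Feldman patterns appearing in $w$ and in $\overline{w}$ are disjoint (or nearly so), the block-level bound $\alpha_s/10$ from Lemma \ref{lem:BadCoding0} lifts to a word-level bound close to $\alpha_s/10$, which exceeds $\alpha_s/16$ with enough margin to absorb end effects and the $1/R$ error inherent in block replacement.

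First I decompose $A$ into complete $s$-Feldman patterns of $w$, discarding at most two partial patterns at the endpoints, and similarly for $\overline{A}$. Each $s$-Feldman pattern has length $h_n/U_{s+1}$ with $U_{s+1}\geq 2^{8e(n-1)}$ by the choice of $U_1$ in Section \ref{sec:Construction}, while $|A|,|\overline{A}|\geq h_n/2^{e(n-1)}$, so the proportion of symbols discarded is $O(2^{e(n-1)}/U_{s+1})$, which is negligible for $n$ large. End effects of the code $\phi$ are similarly negligible as long as $|\phi|\ll h_n/U_{s+1}$, which holds for $n$ sufficiently large.

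Second I verify the key combinatorial input: if $w$ and $\overline{w}$ have different $s$-pattern types, then the sets of $s$-Feldman patterns appearing in $w$ and in $\overline{w}$ are disjoint. By step (4) of the substitution step in Section \ref{sec:Substitution}, each Feldman pattern is used at most once across all substitution instances in $S$; and the $H$-action preserves each pattern's structural label, only permuting its tuple of building blocks. Since $H$ acts freely on tuples, distinct elements of $\Omega'=HS$ use disjoint pools of patterns, and hence different $\mathcal{Q}_s^n$-classes in $\mathcal{W}_n$ correspond to disjoint Feldman pattern pools. The analogous disjointness for $rev(\mathcal{W}_n)$ follows likewise, and specification (E3) rules out coincidences between forward patterns of $w\in\mathcal{W}_n$ and reversed patterns of $\overline{w}\in rev(\mathcal{W}_n)$. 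The same disjointness holds in Case 2 of the construction (Subsection \ref{subsec:Case-2}), where the $\ast$-interleaving of $\mathcal{W}^\dagger$ and $\mathcal{W}^{\dagger\dagger}$ components is inherited at the pattern level.

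Third I run the symbol-by-block replacement argument. Writing $A=P_{n-1,i_0}\cdots P_{n-1,i_0+r-1}$ and $\overline{A}=\overline{P}_{n-1,j_0}\cdots \overline{P}_{n-1,j_0+r'-1}$ after augmentation, I follow the inductive procedure in the proof of Lemma \ref{lem:symbol by block replacement}, adapted so that one side is coded by $\phi$. For any substantial substrings $E\subseteq P_{n-1,i}$ and $\overline{E}\subseteq\overline{P}_{n-1,j}$ of length at least $|P_{n-1,i}|/2^{2e(n-1)}$, Lemma \ref{lem:BadCoding0} combined with the disjointness from Step 2 yields $\overline{f}(\phi(E),\overline{E})>\alpha_s/10$. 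Applying the symbol-by-block replacement estimate with $\alpha=\alpha_s/10$ and $R=2^{2e(n-1)}$ yields $\overline{f}(\phi(A),\overline{A})>\alpha_s/10-O(1/2^{e(n-1)})>\alpha_s/16$ for $n$ sufficiently large. The main obstacle is the combinatorial disjointness claim in Step 2, in particular verifying that it survives Case 2 of the construction (where words in $\mathcal{W}_n$ are formed by the $\ast$-interleaving of $\mathcal{W}^\dagger$ and $\mathcal{W}^{\dagger\dagger}$) and handling the forward-versus-reverse interaction between $\mathcal{W}_n$ and $rev(\mathcal{W}_n)$.
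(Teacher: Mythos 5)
Your proposal captures the right ingredients (complete the partial Feldman patterns at the ends, apply Lemma~\ref{lem:BadCoding0} at the pattern scale, control the error), but the key combinatorial claim in your second step is too strong and does not hold. You assert that if $w$ and $\overline{w}$ have different $s$-pattern types, the pools of $s$-Feldman patterns appearing in them are disjoint. This fails precisely in the case the paper singles out: when $rev(\overline{w})$ and $w$ lie in the same $G_s^n$-orbit (related, say, by an even group element), they use the \emph{same} pool of Feldman patterns, merely traversed in opposite orders, so $w$ and $\overline{w}$ have different $s$-pattern types yet overlapping pools. Your justification — ``since $H$ acts freely on tuples, distinct elements of $\Omega'=HS$ use disjoint pools of patterns'' — is incorrect because $H$-conjugate elements of $S$ reuse the same pattern labels (only the building-block tuples and, for odd elements, the traversal order change). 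Your appeal to specification (E3) also does not deliver disjointness; (E3) is a non-overlap condition on concatenations of $m$-words (a unique-readability statement) and says nothing about coincidences of Feldman-pattern labels between $w$ and $rev(\overline{w})$. The paper's proof handles the overlapping case by first assuming $\overline{f}(\phi(A),\overline{A})\le 1/32$ (the other case being trivial), fixing a best match $\pi$, decomposing $A_{aug}$ and $\overline{A}_{aug}$ into maximal $\pi$-corresponding pieces $E_i\subseteq P_{n-1,j(i)}$, $\overline{E}_i\subseteq \overline{P}_{n-1,\overline{j}(i)}$, and observing that because $\pi$ is order-preserving and the two pattern sequences are traversed in opposite orders, \emph{at most one} pair $(E_i,\overline{E}_i)$ can have coinciding pattern type; removing that single pair costs at most $2/2^{6e(n-1)}$ in $\overline{f}$.

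There is a secondary soft spot in your third step: Lemma~\ref{lem:symbol by block replacement} compares two block-concatenations directly, whereas here one side passes through the code $\phi$, so it does not apply verbatim. You acknowledge this (``adapted so that one side is coded by $\phi$'') but do not carry out the adaptation. The paper sidesteps this entirely: once the pieces $(E_i,\overline{E}_i)$ are extracted, it applies Lemma~\ref{lem:BadCoding0} to each pair (after discarding the at-most-one bad pair and the short-substring contributions, bounded by $1/2^{2e(n-1)-4}$) and finishes with Fact~\ref{fact:substring_matching}, rather than rerunning the symbol-by-block machinery. To repair your argument you would need both to supply the ``at most one coincidence'' observation in place of disjointness and to replace the appeal to Lemma~\ref{lem:symbol by block replacement} with a direct estimate along the lines of Fact~\ref{fact:substring_matching}.
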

	
	\begin{proof}
		As before, we choose $n>M(s)+1$ sufficiently large such that $h_{n-1}>2^{4e(n-1)}\cdot|\phi|$ and
		end effects are negligible, that is, we can assume $\phi(A)$ and $A$ to
		have the same length. 
		
		If $\overline{f}\left(\phi(A),\overline{A}\right)>\frac{\alpha_{s}}{4}$,
		we do not have to show anything. Otherwise, we have $\overline{f}\left(\phi(A),\overline{A}\right)\leq\frac{1}{32}$
		(recall that $\alpha_{s}<\frac{1}{8}$) and let $\pi$ be a best possible
		$\overline{f}$ matching between $\phi(A)$ and $\overline{A}$. Since
		$\pi$ is a map on the indices, we can also view it as a map between
		$A$ and $\overline{A}$. Since $n>M(s)+1$ and $A$ as well as $\overline{A}$
		has length at least $\frac{h_{n}}{2^{e(n-1)}}$, Remark \ref{rem:s-pattern-type}
		yields that there are at least $2^{6e(n-1)}$ complete $s$-Feldman
		patterns (out of building blocks in $\mathcal{W}_{n-1}/\mathcal{Q}_{s}^{n-1}$)
		in $A$ and $\overline{A}$. Moreover, all of them have the same length
		(if we are in Case 2 of the construction we ignore the strings coming
		from $\mathcal{W}^{\dagger\dagger}$, which might increase the $\overline{f}$
		distance by at most $\frac{2}{\mathfrak{p}_{n}}$ according to (\ref{eq:LengthProportion})
		and Fact \ref{fact:omit_symbols}). We complete partial $s$-patterns
		at the beginning and end of $A$ and $\overline{A}$, which might increase
		the $\overline{f}$ distance by at most $\frac{2}{2^{6e(n-1)}}$ according
		to Fact \ref{fact:omit_symbols} as well. The augmented strings obtained
		in this way are denoted by $A_{aug}$ and $\overline{A}_{aug}$, respectively,
		and we decompose them into the $s$-Feldman patterns:
		\begin{align*}
			A_{aug}=P_{n-1,1}\dots P_{n-1,r} &  &  & \overline{A}_{aug}=\overline{P}_{n-1,1}\dots\overline{P}_{n-1,\overline{r}}.
		\end{align*}
		Since $w$ and $\overline{w}$ are of different $s$-pattern type,
		the occurring Feldman patterns are either disjoint or traversed in
		opposite directions. 
		
		In the next step, we write $A_{aug}$ and $\overline{A}_{aug}$ as
		\begin{align*}
			A_{aug}=E_{1}\dots E_{v} &  &  & \overline{A}_{aug}=\overline{E}_{1}\dots\overline{E}_{v},
		\end{align*}
		where $E_{i}$ and $\overline{E}_{i}$ are maximal substrings such
		that $E_{i}\subseteq P_{n-1,j(i)}$ and $\overline{E}_{i}\subseteq\overline{P}_{n-1,\overline{j}(i)}$
		correspond to each other under $\pi$ (see Figure \ref{fig:fig2} for an illustration of this decomposition). In particular, we have $v\geq2^{6e(n-1)}$.
		
		\begin{figure}
			\centering
			\includegraphics[width=\textwidth]{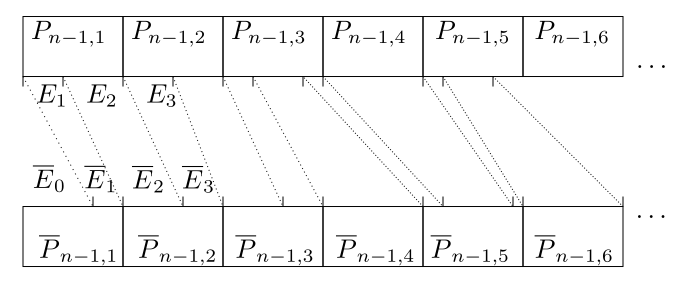}
			\caption{Illustration of the decompositions into $E_{i}\subseteq P_{n-1,j(i)}$ and $\overline{E}_{i}\subseteq\overline{P}_{n-1,\overline{j}(i)}$
				corresponding to each other under $\pi$.}
			\label{fig:fig2}
		\end{figure}
		
		We note that for at most one pair of $E_{i}$ and $\overline{E}_{i}$
		the type of Feldman pattern coincides. We delete this possible pair.
		This might increase the $\overline{f}$ distance by at most $\frac{2}{2^{6e(n-1)}}$
		according to Fact~\ref{fact:omit_symbols} because we have $v\geq2^{6e(n-1)}$.
		Thus, in the following we can assume that all pairs $E_{i}$ and $\overline{E}_{i}$
		lie in Feldman patterns of different type.
		
		If $\overline{f}\left(\phi\left(E_{i}\right),\overline{E}_{i}\right)>\frac{\alpha_{s}}{4}$,
		we stop the investigation of these strings.  In the rest of the proof we investigate the case $\overline{f}\left(\phi\left(E_{i}\right),\overline{E}_{i}\right)\leq \frac{\alpha_{s}}{4}$. Then we have 
		\begin{equation}
			1-\frac{1}{2^{4}}<\frac{1-\frac{1}{2^{5}}}{1+\frac{1}{2^{5}}}\leq\frac{|E_{i}|}{|\overline{E}_{i}|}\leq\frac{1+\frac{1}{2^{5}}}{1-\frac{1}{2^{5}}}<1+\frac{1}{2^{3}}\label{eq:Glength}
		\end{equation}
		by Fact \ref{fact:string_length}. In each $P_{n-1,l}$ there are
		at most two segments $E_{i}$ with $|E_{i}|<\frac{|P_{n-1,l}|}{2^{2e(n-1)}}$
		because a string $\overline{E}\subset \overline{A}_{aug}$ with $\overline{f}(\phi(P_{n-1,l}),\overline{E})\leq \frac{\alpha_s}{4}$  has to lie within
		at most three consecutive patterns in $\overline{A}_{aug}$. For their
		corresponding segments we also get $|\overline{E}_{i}|<\frac{|P_{n-1,l}|}{2^{2e(n-1)-1}}$
		by the estimate in (\ref{eq:Glength}). By the same reasons, there
		are at most two segments $\overline{E}_{i}$ with $|\overline{E}_{i}|<\frac{|P_{n-1,l}|}{2^{2e(n-1)}}$
		in each $\overline{P}_{n-1,l}$ and we can also bound the lengths
		of their corresponding strings. Altogether, the density of symbols
		in those situations is at most $\frac{1}{2^{2e(n-1)-4}}$ and we ignore
		them in the following consideration.
		
		Hence, we consider the case that both $|E_{i}|$ and $|\overline{E}_{i}|$
		are at least $\frac{|P_{n-1,l}|}{2^{2e(n-1)}}$. This is large enough
		such that we can apply Lemma \ref{lem:BadCoding0} and conclude the statement.
	\end{proof}
	As a consequence, we deduce that a well-approximating finite code
	can be identified with an element of the group action, where we recall the notation $\eta_g$ from Remark \ref{rem:etag}. We also recall that $G^n_s \subseteq G_s$ and for every $g \in G_s$ there is $n$ sufficiently large such that $g \in G^n_s$.
	\begin{lem}
		\label{lem:groupelement}Let $s\in\mathbb{N}$, $0<\delta<\frac{1}{4}$,
		$0<\varepsilon<\frac{\alpha_{s}}{200}\delta$, and $\phi$ be a finite
		code from $\mathbb{K}$ to $rev(\mathbb{K})$. Then for $n$ sufficiently
		large and $w\in\mathcal{W}_{n}$ we have that for any string $\overline{A}$
		in $rev(\mathbb{K})$ with $\overline{f}\left(\phi(w),\overline{A}\right)<\varepsilon$
		there must be exactly one $\overline{w}\in rev(\mathcal{W}_{n})$
		with $|\overline{A}\cap\overline{w}|\geq(1-\delta)|\overline{w}|$
		and $\overline{w}$ must be of the form $\left[\overline{w}\right]_{s}=\eta_{g}\left[w\right]_{s}$
		for a unique $g\in G_{s}$, which is necessarily of odd parity.
	\end{lem}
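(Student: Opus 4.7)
The plan is to deduce both the alignment claim and the group-element representation from the $\overline{f}$-separation lemmas (Lemmas \ref{lem:BadCoding} and \ref{lem:BadCoding0}) that we have already established, together with a length estimate. First, uniqueness of $\overline{w}$ is automatic. Indeed, since $\overline{f}(\phi(w),\overline{A})<\varepsilon$, Fact \ref{fact:string_length} gives $|\overline{A}|\in((1-\varepsilon')h_n,(1+\varepsilon')h_n)$ for some small $\varepsilon'=O(\varepsilon)$ (using that the end effects of the finite code are negligible when $n$ is sufficiently large). If two distinct words $\overline{w},\overline{w}'\in rev(\mathcal{W}_n)$ both satisfied $|\overline{A}\cap\overline{w}|\geq(1-\delta)h_n$, they would force $|\overline{A}|\geq 2(1-\delta)h_n>(3/2)h_n$, contradicting the length estimate since $\delta<1/4$.

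For existence, I would embed $\overline{A}$ in a typical trajectory of $rev(\mathbb{K})$ whose canonical parsing is $\ldots\overline{w}_0\overline{w}_1\ldots$ with each $\overline{w}_i\in rev(\mathcal{W}_n)$, and decompose $\overline{A}=\overline{B}_1\cdots\overline{B}_k$ with $\overline{B}_i\subseteq\overline{w}_i$ (necessarily $k\leq 3$). Fix a best possible $\overline{f}$-match between $\phi(w)$ and $\overline{A}$ and, using it, decompose $\phi(w)=\phi_1\cdots\phi_k$ into the corresponding substrings; each $\phi_i$ is, up to negligible $|\phi|$-sized boundary errors, the $\phi$-image of a substring $w_i'$ of $w$. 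Fact \ref{fact:substring_matching} then gives
\[
\varepsilon>\overline{f}(\phi(w),\overline{A})=\sum_{i=1}^{k} v_i\,\overline{f}(\phi_i,\overline{B}_i),\qquad v_i=\frac{|\phi_i|+|\overline{B}_i|}{|\phi(w)|+|\overline{A}|}.
\]
For each $i$ with $|\overline{B}_i|\geq h_n/2^{e(n-1)}$, I would distinguish two cases. If $[\overline{w}_i]_s\neq\eta_g[w]_s$ for every $g\in G_s$, then $w$ and $\overline{w}_i$ have different $s$-pattern types and Lemma \ref{lem:BadCoding} gives $\overline{f}(\phi_i,\overline{B}_i)>\alpha_s/16$ directly. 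If instead $[\overline{w}_i]_s=\eta_g[w]_s$ for some $g$, but $\overline{B}_i$ fails to coincide with $\overline{w}_i$ up to a $\delta$-fraction, then $w_i'$ and $\overline{B}_i$ occupy different position ranges within their respective words, and since distinct position indices in the $U_{s+1}$-long concatenation correspond to distinct Feldman patterns by construction, Lemma \ref{lem:BadCoding0} still forces $\overline{f}(\phi_i,\overline{B}_i)>\alpha_s/10$ on the resulting Feldman-pattern-sized substrings.

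Summing these contributions with weights $v_i$ and using $\varepsilon<\alpha_s\delta/200$, the total $v$-weight coming from misaligned or pattern-mismatched pieces is less than $\delta/10$; the short pieces with $|\overline{B}_i|<h_n/2^{e(n-1)}$ contribute a further $O(1/2^{e(n-1)})$, which is negligible for large $n$. Consequently there is exactly one index $i$ for which $\overline{B}_i$ covers at least $(1-\delta)|\overline{w}_i|$ of $\overline{w}_i$ and for which $[\overline{w}_i]_s=\eta_g[w]_s$ for some $g\in G_s$. Uniqueness of $g$ follows from the freeness of the $G_s$ action on $\mathcal{W}_n/\mathcal{Q}_s^n$ (specification (A7)), and $g$ must have odd parity because, by Lemma \ref{lem:siso} and Definition \ref{def:etag}, the skew-diagonal action sends $\mathcal{W}_n/\mathcal{Q}_s^n$ into $rev(\mathcal{W}_n)/\mathcal{Q}_s^n$ precisely when it reverses the order of the symbol sequence, which happens exactly for odd elements. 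The main obstacle is the subcase where $\overline{A}$ straddles two consecutive $n$-words that \emph{both} share the pattern type of $w$ (e.g., when the same $\mathcal{Q}_s^n$-class appears twice consecutively in $rev(\mathbb{K})$); here the global obstruction from Lemma \ref{lem:BadCoding} is unavailable, and the argument must genuinely descend to Lemma \ref{lem:BadCoding0}, carefully tracking the position indices of the individual Feldman patterns inside $w$ versus inside $\overline{w}_i$ to produce the required $\overline{f}$-discrepancy.
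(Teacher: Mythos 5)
Your proposal is correct and follows essentially the same route as the paper's proof: uniqueness from the length estimate of Fact \ref{fact:string_length}, existence by decomposing $\overline{A}$ across consecutive $n$-words and ruling out a split via Lemmas \ref{lem:BadCoding} and \ref{lem:BadCoding0} together with Fact \ref{fact:substring_matching}, with the decisive observation (also the paper's) that the $U_{s+1}$ Feldman patterns inside a word are pairwise distinct, so a positional offset between an initial segment of $w$ and a terminal segment of $\overline{w}_1$ of the same orbit already forces the $\overline{f}$-separation of Lemma \ref{lem:BadCoding0}. The only cosmetic difference is bookkeeping: you bound the total $v$-weight of all bad pieces by $\delta/10$, whereas the paper derives $\overline{f}(\phi(w),\overline{A})\ge 5\varepsilon$ from a single bad piece of weight at least $\tfrac{80}{\alpha_s}\varepsilon$; both yield the same contradiction.
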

	
	\begin{proof}
		For a start, we choose $n$ large enough such that end effects are
		negligible with respect to the length $h_{n}$ of $n$-words (that is,
		$h_{n}$ has to be much larger than the length of the code). By Fact
		\ref{fact:string_length} we know from $\overline{f}\left(\phi(w),\overline{A}\right)<\varepsilon$
		that 
		\[
		1-3\varepsilon\leq\frac{1-\varepsilon}{1+\varepsilon}\leq\frac{|\overline{A}|}{h_n}\leq\frac{1+\varepsilon}{1-\varepsilon}\leq1+3\varepsilon.
		\]
		Hence, there can be at most one $\overline{w}\in rev(\mathcal{W}_{n})$
		with $|\overline{A}\cap\overline{w}|\geq(1-\delta)|\overline{w}|$.
		Suppose that there is no such $\overline{w}\in rev(\mathcal{W}_{n})$.
		Since we also know $|\overline{A}|\geq(1-3\varepsilon)|\overline{w}|$, there have to be two words $\overline{w}_{1},\overline{w}_{2}\in rev(\mathcal{W}_{n})$
			such that $\overline{A}$ is a substring of $\overline{w}_{1}\overline{w}_{2}$ and 
			\begin{equation}\label{eq:LengthBarAj}
				|\overline{A}\cap\overline{w}_{j}|\geq\left(\delta-3\varepsilon\right)\cdot|\overline{w}_{j}|>\frac{180}{\alpha_{s}}\varepsilon h_{n}\text{ for }j=1,2.
			\end{equation}
		
		We denote $\overline{A}_{j}\coloneqq\overline{A}\cap\overline{w}_{j}$. Let $\pi$ be a best possible match in $\overline{f}$ between $\phi(w)$
		and $\overline{A}$. Then we denote the parts in $\phi(w)$ corresponding
		to $\overline{A}_{j}$ under $\pi$ by $A_{j}$.

			We note that $\overline{f}\left(\phi(w),\overline{A}\right)<\varepsilon$ requires $\overline{f}\left(\phi(A_j),\overline{A}_j\right)<\frac{\alpha_s}{60}$ for both $j\in \{1,2\}$ because otherwise
			\begin{align*}
				\overline{f}\left(\phi(w),\overline{A}\right) 
				= & \frac{|A_1|+|\overline{A}_1|}{|w|+|\overline{A}|}\cdot \overline{f}\left(\phi(A_1),\overline{A}_1\right) + \frac{|A_2|+|\overline{A}_2|}{|w|+|\overline{A}|}\cdot \overline{f}\left(\phi(A_2),\overline{A}_2\right) \\
				\geq & \frac{\frac{180}{\alpha_{s}}\varepsilon h_{n}}{h_n + (1+3\varepsilon)h_n}\cdot \frac{\alpha_s}{60} > \varepsilon 
			\end{align*}
			by Fact~\ref{fact:substring_matching} and our length estimate on $\overline{A}_j$ in~\eqref{eq:LengthBarAj}. Then Fact~\ref{fact:string_length} and inequality~\eqref{eq:LengthBarAj} yield
			\begin{equation}\label{eq:LengthAj}
				|A_j|\geq \frac{1-\frac{\alpha_s}{60}}{1+\frac{\alpha_s}{60}}\cdot |\overline{A}_j| > (1-3\frac{\alpha_s}{60})\cdot \frac{180}{\alpha_{s}}\varepsilon h_{n} > \frac{160}{\alpha_{s}}\varepsilon h_{n}
			\end{equation}
			for both $j\in \{1,2\}$.
			
			We also note that the beginning of $\phi(w)$ is matched with the end of $\overline{w}_{1}$,
			while the end of $\phi(w)$ is matched with the beginning of $\overline{w}_{2}$.
			There are two possible cases: 
			\begin{itemize}
				\item At least one of $\overline{w}_{1},\overline{w}_{2}$
				has a different $s$-pattern type from that of $w$.
				\item Both $\overline{w}_{1}$ and $\overline{w}_{2}$
				have the same $s$-pattern type as $w$. Here, we point out that the
				$s$-Feldman pattern structure of the beginning and end of $w$ are different
				from each other by construction (recall from step (4) in the general substitution
				mechanism in Section \ref{sec:Substitution} that substitution instances
				were constructed as a concatenation of different Feldman patterns).
				In order to have $\overline{f}\left(\phi(w),\overline{A}\right)<\varepsilon$, we need to have $\overline{f}\left(\phi(A_j),\overline{A}_j\right)<\varepsilon$ for at least one $j\in \{1,2\}$. Since the
				$s$-Feldman pattern structure of the beginning and end of $w$ are different
				from each other, our estimate $|A_{j'}|>\frac{160}{\alpha_{s}}\varepsilon h_{n}$ for $j'\neq j$ from \eqref{eq:LengthAj} and $\overline{f}\left(\phi(A_j),\overline{A}_j\right)<\varepsilon$ together imply that $s$-Feldman patterns in $A_j$ and $\overline{A}_j$ matched under $\pi$ cannot be of the same type.
			\end{itemize}
			In both cases, our estimate $|A_{j}|\geq\frac{160}{\alpha_{s}}\varepsilon h_{n}$ from~\eqref{eq:LengthAj} guarantees for sufficiently large $n$ that the strings are long enough 
			so that we can apply Lemmas \ref{lem:BadCoding0} and \ref{lem:BadCoding}. This yields, with the aid of Fact \ref{fact:substring_matching},
		that in both cases
		\[
		\overline{f}\left(\phi(w),\overline{A}\right)\geq\frac{80}{\alpha_{s}}\varepsilon\cdot\frac{\alpha_{s}}{16}=5\varepsilon,
		\]
		which contradicts the assumption $\overline{f}\left(\phi(w),\overline{A}\right)<\varepsilon$. 
		Hence, there must be exactly one $\overline{w}\in rev(\mathcal{W}_{n})$
		with $|\overline{A}\cap\overline{w}|\geq(1-\delta)|\overline{w}|$. 
		
		Lemma \ref{lem:BadCoding} also implies that $w$ and $\overline{w}$
		need to have the same $s$-pattern structure. By construction this
		only happens if their equivalence classes $\left[w\right]_{s}$ and
		$\left[\overline{w}\right]_{s}$ lie on the same orbit of the group
		action by $G_{s}^{n}$ (recall from steps (4) and (6) in the general substitution mechanism
		that substitution instances not lying on the same $G_{s}^{n}$ orbit
		were constructed as a different concatenation of different Feldman
		patterns). Moreover, we have seen in Lemma~\ref{lem:iso} that an element $g\in G_s^n$ of odd parity induces $\eta_g:\mathcal{W}_n/\mathcal{Q}^n_s\to rev(\mathcal{W}_n)/\mathcal{Q}^n_s$ that preserves the order of $s$-Feldman patterns, while an element $g\in G_s^n$ of even parity would induce a map $\mathcal{W}_n/\mathcal{Q}^n_s\to rev(\mathcal{W}_n)/\mathcal{Q}^n_s$ that reverses the order of $s$-Feldman patterns. By the freeness of the group action and Proposition \ref{prop:conclusio}, we conclude that there is a unique $g\in G_{s}$ with $\left[\overline{w}\right]_{s}=\eta_{g}\left[w\right]_{s}$. As explained before, this element $g$ has to be of odd parity.
	\end{proof}
	Assume $\mathbb{K}$ and $\mathbb{K}^{-1}$ are evenly equivalent. Let $(\varepsilon_{\ell})_{\ell\in \N}$ be a sequence of positive reals satisfying $\sum^{\infty}_{i=\ell+1}\varepsilon_i<\varepsilon_{\ell}$ for every $\ell \in \N$.
	Then there exists a sequence of $\left(\varepsilon_{\ell},K_{\ell}\right)$-finite
	$\overline{f}$ codes $\phi_{\ell}$ from $\mathbb{K}$ to $\mathbb{K}^{-1}$ satisfying the properties in Lemma \ref{lem:ConsistentCode}.
	We fix such a sequence of $\left(\varepsilon_{\ell},K_{\ell}\right)$-finite
	$\overline{f}$ codes $\phi_{\ell}$ between $\mathbb{K}$ and $\mathbb{K}^{-1}$. By applying Corollary \ref{cor:CodeOnWords} with $\gamma_s = \frac{\alpha^4_s}{5\cdot 10^9}$, for each $s \in \mathbb{Z}^+$ there is $N(s)\in \mathbb{Z}^+$ such that for every $N \geq N(s)$ and $k\in \mathbb{Z}^+$ there is $n(s,N,k) \in \mathbb{Z}^+$ sufficiently large so that for all $n\geq n(s,N,k)$ Lemma \ref{lem:groupelement} with $\delta_s = \frac{\alpha^3_s}{2 \cdot 10^7}$ and $\phi_{N}$ holds. Moreover, there exists a collection $\mathcal{W}^{\prime}_n$ of $n$-words (that includes at least $1-\gamma_s$ of the $n$-words) satisfying the following properties:
	\begin{itemize}
		\item[(C1)] For $w \in \mathcal{W}^{\prime}_n$ there exists $z = z(w) \in rev(\mathbb{K})$ such that $z\upharpoonright[0,h_n-1]$ occurs with positive frequency in $rev(\mathbb{K})$ and 
		\[
		\overline{f}\left(\phi_{N}(w),z\upharpoonright[0,h_n-1]\right)<\frac{\alpha^4_s}{5\cdot 10^9}.
		\]
		\item[(C2)] For $w \in \mathcal{W}^{\prime}_n$ we have
		\[
		\overline{f}\left(\phi_{N}(w),\phi_{N+k}(w)\right)<\frac{\alpha^4_s}{5\cdot 10^9}.
		\]
	\end{itemize}

	\begin{lem}
		\label{lem:codeGroup}Suppose that $\mathbb{K}$ and $\mathbb{K}^{-1}$ are evenly equivalent and let $s\in\mathbb{N}$. There is a unique $g_{s}\in G_{s}$
		such that for every $N\geq N(s)$ and sufficiently large $n\in\mathbb{N}$
		we have for every $w\in\mathcal{W}_{n}^{\prime}$ that there is $\overline{w}\in rev(\mathcal{W}_{n})$
		with $\left[\overline{w}\right]_{s}=\eta_{g_{s}}\left[w\right]_{s}$
		and 
		\[
		\overline{f}\left(\phi_{N}(w),\overline{w}\right)<\frac{\alpha_{s}}{4},
		\]
		where $\left(\phi_{\ell}\right)_{\ell \in\mathbb{N}}$ is a sequence of $\left(\varepsilon_{\ell},K_{\ell}\right)$-finite
		$\overline{f}$ codes as described above and $\mathcal{W}_{n}^{\prime}$ is the associated collection of $n$-words satisfying properties (C1) and (C2).
		
		Moreover, $g_{s}\in G_{s}$ is of odd parity and the sequence $(g_{s})_{s\in\mathbb{N}}$
		satisfies $g_{s}=\rho_{s+1,s}(g_{s+1})$ for all $s\in\mathbb{N}$.
	\end{lem}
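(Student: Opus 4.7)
The plan is to extract, for each $w\in\mathcal{W}_n'$, a candidate element $g(w,N,n)\in G_s$ via Lemma \ref{lem:groupelement}, and then show this candidate is constant in $w$, $n$, and $N$, thereby yielding $g_s$. First, for every $w\in\mathcal{W}_n'$, property (C1) combined with Lemma \ref{lem:groupelement} applied with $\delta=\delta_s$ and $\varepsilon=\gamma_s$ (the inequality $\gamma_s<\alpha_s\delta_s/200$ holds by our choice of constants) produces a unique $\overline{w}\in rev(\mathcal{W}_n)$ and a unique odd-parity $g(w,N,n)\in G_s$ with $[\overline{w}]_s=\eta_{g(w,N,n)}[w]_s$ and $\overline{f}(\phi_N(w),\overline{w})<\alpha_s/4$.

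Second, I show $g(w,N,n)$ is constant in $w$ and $n$ by descending Lemma \ref{lem:groupelement} applied at the $(n+1)$-level. For $W=w_1\cdots w_{k_n}\in\mathcal{W}_{n+1}'$, Lemma \ref{lem:groupelement} yields $g(W,N,n+1)\in G_s$ and $\overline{W}\in rev(\mathcal{W}_{n+1})$ whose $i$-th $n$-block (read left to right) equals $\eta_{g(W,N,n+1)}[w_i]_s$, using the analysis of the skew diagonal action under odd parity carried out in Lemma \ref{lem:siso}. Since $|\phi_N|\ll h_n$ for sufficiently large $n$, $\phi_N(W)$ restricted to position $[ih_n,(i+1)h_n)$ agrees outside at most $2K_N$ endpoint symbols with $\phi_N(w_i)$, and a best-match analysis of $\overline{f}(\phi_N(W),\overline{W})<\alpha_s/4$ descends to $\overline{f}$-closeness of $\phi_N(w_i)$ and the $i$-th $n$-block of $\overline{W}$ for a proportion $1-o(1)$ of indices $i$. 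Local uniqueness in Lemma \ref{lem:groupelement} together with freeness of the $G_s$-action then forces $g(w_i,N,n)=g(W,N,n+1)$ for most such $i$. Strong uniformity (specification (E2)) guarantees that any two $W,W'\in\mathcal{W}_{n+1}'$ share some $n$-word appearing at a good position in both, so $g(W,N,n+1)$ is independent of $W$; after possibly shrinking $\mathcal{W}_n'$ (still keeping density $\geq 1-\gamma_s$ upon choosing the initial $\gamma_s$ smaller), we obtain a value $g(N)\in G_s$ with $g(w,N,n)=g(N)$ for every $w\in\mathcal{W}_n'$ and every sufficiently large $n$.

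Third, $g(N)$ is independent of $N$ via (C2): for $w\in\mathcal{W}_n'$ and $k\in\mathbb{Z}^+$, the triangle inequality for $\overline{f}$ on equal-length strings gives
\[
\overline{f}(\overline{w}(w,N,n),\overline{w}(w,N+k,n))<\alpha_s/4+\gamma_s+\alpha_s/4<\alpha_s,
\]
which by Proposition \ref{prop:conclusio} forces $[\overline{w}(w,N,n)]_s=[\overline{w}(w,N+k,n)]_s$ and hence, by freeness of the action, $g(N)=g(N+k)$. Call the common value $g_s$; its odd parity is automatic from Lemma \ref{lem:groupelement}. For the compatibility $g_s=\rho_{s+1,s}(g_{s+1})$, running the same procedure at level $s+1$ produces $g_{s+1}\in G_{s+1}$ with $[\overline{w}]_{s+1}=\eta_{g_{s+1}}[w]_{s+1}$. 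The intertwining $\pi_{s+1,s}\circ\eta_{g_{s+1}}=\eta_{\rho_{s+1,s}(g_{s+1})}\circ\pi_{s+1,s}$ from Lemma \ref{lem:siso} projects this relation to $[\overline{w}]_s=\eta_{\rho_{s+1,s}(g_{s+1})}[w]_s$, and uniqueness of $g_s$ at level $s$ then forces $g_s=\rho_{s+1,s}(g_{s+1})$.

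I expect the main obstacle to lie in the second step, where the descent of $\overline{f}$-closeness from the $(n+1)$-word scale to the $n$-word scale has to be handled carefully: the best-possible match between $\phi_N(W)$ and $\overline{W}$ need not preserve the $n$-block boundaries, and one has to argue that alignment shifts together with boundary mismatches affect only a small-density set of $n$-block positions, so that local uniqueness in Lemma \ref{lem:groupelement} can be invoked to match the $g$-values.
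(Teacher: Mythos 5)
Your overall architecture (extract a candidate group element from Lemma \ref{lem:groupelement} via (C1), show it is constant in $w$ and $n$ by passing to $(n+1)$-words, constant in $N$ via (C2) and the triangle inequality, and coherent under $\rho_{s+1,s}$ via the intertwining from Lemma \ref{lem:siso}) is the paper's, and your third and fourth steps are essentially the paper's arguments. The gap is in your second step, which is the step that carries the lemma, and you have correctly identified its location but not supplied a mechanism that works. First, the positional-alignment claim is false: an order-preserving best $\overline{f}$-match between $\phi_N(W)$ and $\overline{W}$ with error $\epsilon$ can displace the correspondence at any position by up to roughly $2\epsilon h_{n+1}=2\epsilon k_n h_n$ symbols, i.e.\ by on the order of $\epsilon k_n$ many $n$-blocks; since $\epsilon$ is a fixed constant and $k_n\to\infty$, the $i$-th $n$-block of $\phi_N(W)$ need not land anywhere near the $i$-th $n$-block of $\overline{W}$. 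Second, even granting alignment, the quantities do not close: from $\overline{f}(\phi_N(W),\overline{W})<\alpha_s/4$, Fact \ref{fact:substring_matching} only gives that the length-weighted \emph{average} of the block-level $\overline{f}$-distances is below $\alpha_s/4$, whereas re-invoking Lemma \ref{lem:groupelement} at the $n$-block level requires a pointwise bound below $\frac{\alpha_s}{200}\delta<\frac{\alpha_s}{800}$ on most blocks; Markov's inequality starting from an average of $\alpha_s/4$ yields nothing.

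The paper resolves both problems at once. It retains the much stronger bound $\overline{f}(\phi_N(W),\overline{W})<\gamma_s+\delta_s<10^{-7}\alpha_s^3$ that (C1) plus Lemma \ref{lem:groupelement} actually deliver, and it replaces positional alignment by the Feldman-pattern structure: decompose $W$ into its $s$-Feldman patterns $P_1\dots P_r$ and take the corresponding decomposition $\overline{P}_1\dots\overline{P}_r$ of $\overline{W}$ under the match; by Fact \ref{fact:substring_matching} at least $9/10$ of the patterns satisfy $\overline{f}(\phi_N(P_i),\overline{P}_i)\le 10^{-6}\alpha_s^3$; Lemma \ref{lem:BadCoding0} then forces a $\bigl(1-\frac{\alpha_s^2}{5\cdot 10^4}\bigr)$-fraction $\overline{P}_i'$ of $\overline{P}_i$ to have the same $s$-pattern type as $P_i$, hence $[\overline{P}_i']_s\subseteq\eta_{g_s}[P_i]_s$; and finally the disjointness of the $G_s$-orbits of the classes in the building tuple of $P_i$ pins the locally extracted element $h_l$ to $g_s$ for at least $9/10$ of the $n$-blocks of $P_i$, \emph{regardless} of which portion of $\overline{W}$ the match assigns to $w_l$. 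Strong uniformity then rules out a second word $W'$ producing a different element, exactly as you propose. Without this (or an equivalent) mechanism, your descent from the $(n+1)$-scale to the $n$-scale does not go through.
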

	
	\begin{proof}
		Let $N\geq N(s)$. For $n\geq n(s,N,1)$ we see from property (C1) that for any $w\in \mathcal{W}^{\prime}_n$ there exists $z\in rev(\mathbb{K})$ with 
		\[
		\overline{f}\left(\phi_{N}(w),z\upharpoonright[0,h_n-1]\right)<\gamma_s=\frac{\alpha^4_s}{5\cdot 10^9} < \frac{\alpha_s}{200}\cdot \delta_s.
		\]
		We denote $\overline{A}\coloneqq z\upharpoonright[0,h_n-1]$ in $rev(\mathbb{K})$.
		By Lemma \ref{lem:groupelement} there is exactly one $\overline{w}\in rev(\mathcal{W}_{n})$
		with $|\overline{A}\cap\overline{w}|\geq(1-\delta_{s})|\overline{w}|$
		and $\overline{w}$ must be of the form $\left[\overline{w}\right]_{s}=\eta_{g}\left[w\right]_{s}$
		for a unique $g\in G_{s}$, which is of odd parity. By Fact \ref{fact:omit_symbols}
		we conclude 
		\begin{equation} \label{eq:CodeWord1}
			\overline{f}\left(\phi_{N}(w),\overline{w}\right)<\gamma_s+\delta_{s}<10^{-7}\alpha_{s}^{3}. 
		\end{equation}
		
		To see that one group element of $G_{s}$ is supposed to work for
		$\phi_{N}$ and all words in $\mathcal{W}^{\prime}_{n+1}$ we repeat the argument
		for a word $w\in\mathcal{W}^{\prime}_{n+1}$, which is a concatenation of $n$-words
		by construction. Hence, we obtain a unique $g_{s}\in G_{s}$ and $\overline{w}\in rev(\mathcal{W}_{n+1})$
		with $\left[\overline{w}\right]_{s}=\eta_{g_{s}}\left[w\right]_{s}$
		such that $\overline{f}\left(\phi_{N}(w),\overline{w}\right)<10^{-7}\alpha_{s}^{3}$.
		We decompose $w$ into its $s$-Feldman patterns: $w=P_{1}\dots P_{r}$.
		Moreover, let $\overline{w}=\overline{P}_{1}\dots\overline{P}_{r}$
		be the corresponding decomposition under a best possible $\overline{f}$-match between $\phi_N(w)$ and $\overline{w}$.
		
		According to Fact \ref{fact:substring_matching} the proportion of
		such substrings with $\overline{f}\left(\phi_N(P_{i}),\overline{P}_{i}\right)>10^{-6}\alpha_{s}^{3}$
		could be at most $\frac{1}{10}$. On the other substrings with $\overline{f}\left(\phi_N(P_{i}),\overline{P}_{i}\right)\leq10^{-6}\alpha_{s}^{3}$
		we need to have $\left(1-\frac{2\alpha_{s}^{3}}{10^6}\right)|P_{i}|\leq|\overline{P}_{i}|\leq\left(1+\frac{3\alpha_{s}^{3}}{10^6}\right)|P_{i}|$
		by Fact \ref{fact:string_length}. Then we obtain from Lemma \ref{lem:BadCoding0}
		that there is a substring $\overline{P}_{i}^{\prime}\subseteq\overline{P}_{i}$
		with $|\overline{P}_{i}^{\prime}|\geq\left(1-\frac{\alpha^2_{s}}{5 \cdot 10^4}\right)|\overline{P}_{i}|$
		such that $\overline{P}_{i}^{\prime}$ and $P_{i}$ have the same
		$s$-pattern structure, that is, $[\overline{P}_{i}^{\prime}]_{s}$
		must lie in $\eta_{g_{s}}[P_{i}]_{s}$. We deduce from $\overline{f}\left(\phi_N(P_{i}),\overline{P}_{i}^{\prime}\right)<\frac{\alpha^2_{s}}{4\cdot 10^4}$
		that a proportion of at least $\frac{9}{10}$ of the $n$-blocks $w_{l}$
		occurring as substitution instances in $P_{i}$ have to satisfy that
		there is a string $\overline{A}_{l}$ in $\overline{P}_i^{\prime}$ such that $\overline{f}\left(\phi_{N}(w_{l}),\overline{A}_{l}\right)<\frac{\alpha^2_{s}}{2000}=\frac{\alpha_{s}}{200} \cdot \frac{\alpha_{s}}{10}$. As in the first paragraph of the proof, Lemma \ref{lem:groupelement} implies that there is exactly one $\overline{w}_l\in rev(\mathcal{W}_{n})$
		with $|\overline{A}_l\cap\overline{w}_l|\geq(1-\frac{\alpha_s}{10})|\overline{w}_l|$
		and $\overline{w}_l$ must be of the form $\left[\overline{w}_l\right]_{s}=\eta_{h_l}\left[w_l\right]_{s}$
		for a unique $h_l\in G_{s}$, which is of odd parity. As in the proof of (\ref{eq:CodeWord1}), we also get $\overline{f}\left(\phi_{N}(w_{l}),\overline{w}_{l}\right)<\frac{\alpha_{s}}{4}$. Since the classes	
		within the building tuple of $P_{i}$ have disjoint $G_{s}$ orbits by our selection of the building tuples in the setup of the general substitution mechanism from Section~\ref{sec:Substitution},
		we obtain that a proportion of at least $\frac{9}{10}$ of the $n$-blocks $w_{l}$
		occurring as substitution instances in $P_{i}$ have to satisfy that
		there is $\overline{w}_{l}\in rev(\mathcal{W}_{n})$ with $\left[\overline{w}_{l}\right]_{s}=\eta_{g_{s}}\left[w_{l}\right]_{s}$
		and $\overline{f}\left(\phi_{N}(w_{l}),\overline{w}_{l}\right)<\frac{\alpha_{s}}{4}$.
		Altogether, we conclude that a proportion of at least $\frac{8}{10}$
		of all $n$-blocks $w_{l}$ have to satisfy that there is $\overline{w}_{l}\in rev(\mathcal{W}_{n})$
		with $\left[\overline{w}_{l}\right]_{s}=\eta_{g_{s}}\left[w_{l}\right]_{s}$
		and $\overline{f}\left(\phi_{N}(w_{l}),\overline{w}_{l}\right)<\frac{\alpha_{s}}{4}$.
		
		Suppose now that there is a word $w^{\prime}\in\mathcal{W}^{\prime}_{n+1}$
		such that the argument above gives a different element $h\in G_{s}$
		and $\overline{w}^{\prime}\in rev(\mathcal{W}_{n+1})$ with $\left[\overline{w}^{\prime}\right]_{s}=\eta_{h}\left[w^{\prime}\right]_{s}$
		and $\overline{f}\left(\phi_{N}(w^{\prime}),\overline{w}^{\prime}\right)<\frac{\alpha_{s}}{4}$. Then a proportion of at least $\frac{8}{10}$ of all $n$-blocks $w_{l}$
		would have to satisfy that there is $\overline{w}_{l}\in rev(\mathcal{W}_{n})$
		with $\left[\overline{w}_{l}\right]_{s}=\eta_{h}\left[w_{l}\right]_{s}$
		and $\overline{f}\left(\phi_{N}(w_{l}),\overline{w}_{l}\right)<\frac{\alpha_{s}}{4}$.
		This contradicts $\eta_{g_{s}}\left[w_{l}\right]_{s}\neq\eta_{h}\left[w_{l}\right]_{s}$
		and Proposition \ref{prop:conclusio}. Hence, there is a unique group
		element $g_{s}\in G_{s}$ working for all $(n+1)$-words in $\mathcal{W}^{\prime}_{n+1}$.
		
		In the next step, we check that the same group element $g_{s}\in G_{s}$
		works for all $\phi_{N}$, $N\geq N(s)$. Let $k\in\mathbb{Z}^{+}$.
		For $n \geq n(s,N,k)$ property (C2) guarantees $\overline{f}\left(\phi_{N}(w),\phi_{N+k}(w)\right)<\alpha^4_s$
		for every $w\in\mathcal{W}^{\prime}_{n}$. On the one hand, let $g_{s}$ be
		the element in $G_{s}$ such that for every $w\in\mathcal{W}^{\prime}_{n}$
		there is $\overline{w}\in rev(\mathcal{W}_{n})$ with $\left[\overline{w}\right]_{s}=\eta_{g_{s}}\left[w\right]_{s}$
		and $\overline{f}\left(\phi_{N}(w),\overline{w}\right)<\frac{\alpha_{s}}{4}$.
		On the other hand, let $h_{s}\in G_{s}$ such that for every $w\in\mathcal{W}^{\prime}_{n}$
		there is $w^{\prime}\in rev(\mathcal{W}_{n})$ with $\left[w^{\prime}\right]_{s}=\eta_{h_{s}}\left[w\right]_{s}$
		and $\overline{f}\left(\phi_{N+k}(w),w^{\prime}\right)<\frac{\alpha_{s}}{4}$.
		Assume $g_{s}\neq h_{s}$. Then we have $\left[w^{\prime}\right]_{s}=\eta_{h_{s}}[w]_{s}\neq\eta_{g_{s}}\left[w\right]_{s}=\left[\overline{w}\right]_{s}$
		but 
		\[
		\overline{f}\left(\overline{w},w^{\prime}\right)\leq\overline{f}\left(\overline{w},\phi_{N}(w)\right)+\overline{f}\left(\phi_{N}(w),\phi_{N+k}(w)\right)+\overline{f}\left(\phi_{N+k}(w),w^{\prime}\right)<\frac{2}{3}\alpha_{s},
		\]
		which contradicts Proposition \ref{prop:conclusio}. Thus, we conclude that $g_{s}\in G_{s}$
		works for all $\phi_{N}$, for $N\geq N(s)$.
		
		Suppose that for some $s\in\mathbb{N}$ we have that $g_{s}^{\prime}\coloneqq\rho_{s+1,s}(g_{s+1})\neq g_{s}$.
		Let $N\geq N(s+1)$. Then for $n$ sufficiently large there is $w\in\mathcal{W}^{\prime}_{n}$
		with $\eta_{g_{s}^{\prime}}\left[w\right]_{s}\neq\eta_{g_{s}}\left[w\right]_{s}=\left[\overline{w}_{1}\right]_{s}$,
		where $\overline{w}_{1}\in rev\left(\mathcal{W}_{n}\right)$ is the
		element corresponding to $w$ under the code $\phi_{N}$ with $\overline{f}\left(\phi_{N}(w),\overline{w}_{1}\right)<\frac{\alpha_{s}}{4}$.
		Similarly, let $\overline{w}_{2}\in rev\left(\mathcal{W}_{n}\right)$
		be the element corresponding to $w$ under the code $\phi_{N}$ with
		$\overline{f}\left(\phi_{N}(w),\overline{w}_{2}\right)<\frac{\alpha_{s+1}}{4}$.
		Since the $G_{s+1}^{n}$ action is subordinate to the $G_{s}^{n}$
		action on $\mathcal{W}_{n}/\mathcal{Q}_{s}^{n}$ by specification
		(A7), we see that $\left[\overline{w}_{2}\right]_{s+1}=\eta_{g_{s+1}}\left[w\right]_{s+1}$
		lies in $\eta_{g_{s}^{\prime}}\left[w\right]_{s}$. Hence, $\left[\overline{w}_{2}\right]_{s}\neq\left[\overline{w}_{1}\right]_{s}$,
		which implies $\overline{f}\left(\overline{w}_{1},\overline{w}_{2}\right)>\alpha_{s}$
		by Proposition \ref{prop:conclusio}. On the other hand, we have 
		\[
		\overline{f}\left(\overline{w}_{1},\overline{w}_{2}\right)\leq\overline{f}\left(\overline{w}_{1},\phi_{N}(w)\right)+\overline{f}\left(\phi_{N}(w),\overline{w}_{2}\right)<\frac{\alpha_{s}}{4}+\frac{\alpha_{s+1}}{4}<\frac{\alpha_{s}}{2}.
		\]
		This contradiction shows that $(g_{s})_{s\in\mathbb{N}}$ must satisfy
		$g_{s}=\rho_{s+1,s}(g_{s+1})$ for all $s$.
	\end{proof}

	\begin{proof}[Proof of part (2) in Proposition \ref{prop:criterion}]
		If $\mathbb{K}=\Psi(\mathcal{T})$ and $\mathbb{K}^{-1}$ are Kakutani equivalent, then they have to be evenly equivalent by Proposition \ref{prop:only-even}. Then the conclusion of Lemma \ref{lem:codeGroup} implies that there is a sequence $(g_s)_{s\in \N}$ of group elements $g_{s}\in G_{s}$ of odd parity satisfying $g_{s}=\rho_{s+1,s}(g_{s+1})$ for all $s\in\mathbb{N}$. This results in a nonidentity element of odd parity in $G_{\infty}(\mathcal{T})$. Then Fact~\ref{fact:ill} yields that the
		tree $\mathcal{T}$ has an infinite branch.
	\end{proof}

	\section{\label{sec:Transfer}Transfer to the smooth and real-analytic categories}
	
	In this final section we transfer our anti-classification result for
	Kakutani equivalence to the smooth and real-analytic categories. For
	this purpose, we review the definition of circular systems. These
	symbolic systems can be realized by the Approximation by Conjugation
	method as $C^{\infty}$ diffeomorphisms on compact surfaces admitting
	a nontrivial circle action \cite{FW1} and as real-analytic diffeomorphisms
	on $\mathbb{T}^{2}$ (\cite{Ba17}, \cite{BK2}). We also review the definition of
	the functor $\mathcal{F}$ between odometer-based and circular systems
	introduced in \cite{FW2}. To simplify notation we enumerate construction
	sequences of our odometer-based systems as $\left\{ \mathcal{W}_{n}\right\} _{n\in\mathbb{N}}$,
	that is, $(n+1)$-words are built by concatenating $n$-words. This functor
	$\mathcal{F}$ preserves so-called synchronous and anti-synchronous
	isomorphisms \cite{FW2} but it does not necessarily preserve Kakutani
	equivalence as shown by examples in \cite{GeKu}. In the setting of
	our constructions from Section \ref{sec:Construction} we can prove
	using the techniques from Section \ref{sec:Non-Equiv} that if $\mathcal{T}\in\mathcal{T}\kern-.5mm rees$
	does not have an infinite branch, then $\mathcal{F}\left(\Psi(\mathcal{T})\right)$
	and $\mathcal{F}\left(\Psi(\mathcal{T})^{-1}\right)$ are not Kakutani
	equivalent to each other. This will allow us to conclude the proofs
	of Theorems \ref{thm:smooth} and \ref{thm:analytic} in Subsection
	\ref{subsec:ProofsDiffeos}.
	
	\subsection{Diffeomorphism spaces} \label{subsec:DiffeomorphismSpaces}
	
	Let $M$ be a smooth compact manifold of finite dimension equipped
	with a standard measure $\mu$ determined by a smooth volume element.
	We denote the space of $\mu$-preserving $C^{\infty}$ diffeomorphisms
	on $M$ by $\text{Diff}^{\infty}(M,\mu)$, which inherits a Polish
	topology. 
	
	Following \cite[section 2.3]{BK2} we give a more detailed description
	of the space of Lebesgue measure-preserving real-analytic diffeomorphisms
	on $\mathbb{T}^{2}\coloneqq\mathbb{R}^{2}/\mathbb{Z}^{2}$ that are
	homotopic to the identity. Here, $\lambda$ denotes the standard Lebesgue
	measure on $\mathbb{T}^{2}$.
	
	Any real-analytic diffeomorphism on $\mathbb{T}^{2}$ homotopic to
	the identity admits a lift to a map from $\mathbb{R}^{2}$ to $\mathbb{R}^{2}$
	which has the form 
	\[
	F(x_{1},x_{2})=(x_{1}+f_{1}(x_{1},x_{2}),x_{2}+f_{2}(x_{1},x_{2})),
	\]
	where $f_{i}:\mathbb{R}^{2}\to\mathbb{R}$ are $\mathbb{Z}^{2}$-periodic
	real-analytic functions. Any real-analytic $\mathbb{Z}^{2}$-periodic
	function on $\mathbb{R}^{2}$ can be extended as a holomorphic function
	defined on some open complex neighborhood of $\mathbb{R}^{2}$ in
	$\mathbb{C}^{2}$, where we identify $\mathbb{R}^{2}$ inside $\mathbb{C}^{2}$
	via the natural embedding $(x_{1},x_{2})\mapsto(x_{1}+0 \cdot \mathrm{i},x_{2}+0\cdot \mathrm{i})$.
	For a fixed $\rho>0$ we define the neighborhood 
	\[
	\Omega_{\rho}:=\{(z_{1},z_{2})\in\mathbb{C}^{2}:|\text{Im}(z_{1})|<\rho\text{ and }|\text{Im}(z_{2})|<\rho\},
	\]
	and for a function $f$ defined on this set we let 
	\[
	\|f\|_{\rho}:=\sup_{(z_{1},z_{2})\in\Omega_{\rho}}|f((z_{1},z_{2}))|.
	\]
	We define $C_{\rho}^{\omega}(\mathbb{T}^{2})$ to be the space of
	all $\mathbb{Z}^{2}$-periodic real-analytic functions $f$ on $\mathbb{R}^{2}$
	that extend to a holomorphic function on $\Omega_{\rho}$ and satisfy $\|f\|_{\rho}<\infty$.
	
	Hereby, we define $\text{Diff}_{\rho}^{\,\omega}(\mathbb{T}^{2},\lambda)$
	to be the set of all Lebesgue measure-preserving real-analytic diffeomorphisms
	of $\mathbb{T}^{2}$ homotopic to the identity, whose lift $F$ to
	$\mathbb{R}^{2}$ satisfies $f_{i}\in C_{\rho}^{\omega}(\mathbb{T}^{2})$,
	and we also require that the lift $\tilde{F}(x)=(x_{1}+\tilde{f}_{1}(x),x_{2}+\tilde{f}_{2}(x))$
	of its inverse to $\mathbb{R}^{2}$ satisfies $\tilde{f}_{i}\in C_{\rho}^{\omega}(\mathbb{T}^{2})$.
	Then the metric in $\text{Diff}_{\rho}^{\,\omega}(\mathbb{T}^{2},\lambda)$
	is defined by 
	\begin{align*}
		d_{\rho}(f,g)=\max\{\tilde{d}_{\rho}(f,g),\tilde{d}_{\rho}(f^{-1},g^{-1})\},\text{ where }\tilde{d}_{\rho}(f,g)=\max_{i=1,2}\{\inf_{n\in\mathbb{Z}}\|f_{i}-g_{i}+n\|_{\rho}\}.
	\end{align*}
	We note that if $\{f_{n}\}_{n=1}^{\infty}\subset\text{Diff}_{\rho}^{\,\omega}(\mathbb{T}^{2},\lambda)$
	is a Cauchy sequence in the $d_{\rho}$ metric, then $f_{n}$ converges
	to some $f\in\text{Diff}_{\rho}^{\,\omega}(\mathbb{T}^{2},\lambda)$.
	Thus, this space is Polish.
	
	\subsection{\label{subsec:Circular-systems}Circular systems}
	
	A \emph{circular coefficient sequence} is a sequence of pairs of positive
	integers $\left(k_{n},l_{n}\right)_{n\in\mathbb{N}}$ such that $k_{n}\geq2$
	and $\sum_{n\in\mathbb{N}}\frac{1}{l_{n}}<\infty.$ From these numbers
	we inductively define numbers 
	\[
	q_{n+1}=k_{n}l_{n}q_{n}^{2}
	\]
	and 
	\[
	p_{n+1}=p_{n}k_{n}l_{n}q_{n}+1,
	\]
	where we set $p_{0}=0$ and $q_{0}=1$. Obviously, $p_{n+1}$ and
	$q_{n+1}$ are relatively prime. Moreover, let $\Sigma$ be a non-empty
	finite alphabet (in our case $\Sigma=\left\{ 1,\dots,2^{12}\right\} $)
	and $b,e$ be two additional symbols. Then given a circular coefficient
	sequence $\left(k_{n},l_{n}\right)_{n\in\mathbb{N}}$ we build collections
	of words $\mathcal{W}_{n}^{c}$ in the alphabet $\Sigma\cup\{b,e\}$
	by induction as follows:
	\begin{itemize}
		\item Set $\mathcal{W}_{0}^{c}=\Sigma$.
		\item Having built $\mathcal{W}_{n}^{c}$, we choose a set $P_{n+1}\subseteq\left(\mathcal{W}_{n}^{c}\right)^{k_{n}}$
		of so-called \emph{prewords} and build $\mathcal{W}_{n+1}^{c}$ by
		taking all words of the form 
		\[
		\mathcal{C}_{n}\left(w_{0},w_{1},\dots,w_{k_{n}-1}\right)=\prod_{i=0}^{q_{n}-1}\prod_{j=0}^{k_{n}-1}\left(b^{q_{n}-j_{i}}w_{j}^{l_{n}-1}e^{j_{i}}\right)
		\]
		with $w_{0}\dots w_{k_{n}-1}\in P_{n+1}$. If $n=0$ we take $j_{0}=0$,
		and for $n>0$ we let $j_{i}\in\{0,\dots,q_{n}-1\}$ be such that
		\[
		j_{i}\equiv\left(p_{n}\right)^{-1}i\;\mod q_{n}.
		\]
		We note that each word in $\mathcal{W}_{n+1}^{c}$ has length $k_{n}l_{n}q_{n}^{2}=q_{n+1}$.
	\end{itemize}
	\begin{defn}
		\label{def:circularConstSeq} A construction sequence $\left(\mathcal{W}_{n}^{c}\right)_{n\in\mathbb{N}}$
		will be called \emph{circular} if it is built in this manner using
		the $\mathcal{C}_n$-operators and a circular coefficient sequence, and
		each $P_{n+1}$ is uniquely readable in the alphabet with the words
		from $\mathcal{W}_{n}^{c}$ as letters. (This last property is called
		the \emph{strong readability assumption}.) 
	\end{defn}
	
	\begin{rem*}
		By \cite[Lemma 45]{FW2} each $\mathcal{W}_{n}^{c}$ in a circular
		construction sequence is uniquely readable even if the prewords are
		not uniquely readable. However, the definition of a circular construction
		sequence requires this stronger readability assumption. 
	\end{rem*}
	\begin{defn}
		\label{def:CircularShift}A symbolic shift $\mathbb{K}$ built from
		a circular construction sequence is called a \emph{circular system}.
		For emphasis we will often denote it by $\mathbb{K}^{c}$. 
	\end{defn}
	
	As described in \cite[section 4.3]{FW3} one can give an explicit
	construction sequence $\left\{ rev(\mathcal{W}_{n}^{c})\right\} _{n\in\mathbb{N}}$
	of $(\mathbb{K}^{c})^{-1}$ as 
	\[
	rev(\mathcal{W}_{n+1}^{c})=\left\{ \mathcal{C}_{n}^{r}\left(rev(w_{0}),rev(w_{1}),\dots,rev(w_{k_{n}-1})\right):w_{0}w_{1}\dots w_{k_{n}-1}\in P_{n+1}\right\} 
	\]
	using the operator 
	\begin{equation}
		\mathcal{C}_{n}^{r}\left(w_{0},w_{1},\dots,w_{k_{n}-1}\right)=\prod_{i=0}^{q_{n}-1}\prod_{j=0}^{k_{n}-1}\left(e^{q_{n}-j_{i+1}}w_{k_{n}-j-1}^{l_{n}-1}b^{j_{i+1}}\right).\label{eq:RevCircOper}
	\end{equation}
	
	For a circular system $\mathbb{K}^c$ with circular coefficients $(k_{n},l_{n})_{n\in\mathbb{N}}$
	we also construct a second circular system: Let $\Sigma_{0}=\{\ast\}$, where $\ast$ is an arbitrary symbol,
	and we define a construction sequence 
	\[
	\widetilde{\mathcal{W}}_{0}^{c}:=\Sigma_{0},\qquad\text{and if }\qquad\widetilde{\mathcal{W}}_{n}^{c}:=\{w_{n}\}\quad\text{then}\quad\widetilde{\mathcal{W}}_{n+1}^{c}=\{\mathcal{C}_{n}(w_{n},\ldots,w_{n})\}.
	\]
	We denote the resulting circular system by $\mathcal{K}$. As shown
	in \cite[section 3.4]{FW2} $\mathcal{K}$ is a factor of $\mathbb{K}^c$
	measure theoretically isomorphic to a rotation of the circle. We call
	it the \emph{canonical circle factor}. In \cite[section 4.3]{FW2}
	a specific isomorphism $\natural:\mathcal{K}\to rev(\mathcal{K})$
	is introduced. It is called the \emph{natural map} and will serve
	as a benchmark for understanding maps from $\mathbb{K}^{c}$ to $rev(\mathbb{K}^{c})$
	(see, for example, Definition~\ref{def:synchronous} below).
	
	By \cite[Theorem 60]{FW1} circular systems can be realized as smooth
	diffeomorphisms via the untwisted AbC method provided that the sequence
	$(l_{n})_{n\in\mathbb{N}}$ grows sufficiently fast.
	\begin{lem}
		\label{lem:SmoothRealization}Let $M$ be $\mathbb{D}^{2}$, $\mathbb{A}$,
		or $\mathbb{T}^{2}$ equipped with Lebesgue measure $\lambda$, and
		let $\left\{ \mathcal{W}_{n}^{c}\right\} _{n\in\mathbb{N}}$ be a
		strongly uniform circular construction sequence with circular coefficients
		$\left(k_{n},l_{n}\right)_{n\in\mathbb{N}}$. We suppose that $|\mathcal{W}_{n}^{c}|\to\infty$
		as $n\to\infty$ and that $|\mathcal{W}_{n}^{c}|$ divides $|\mathcal{W}_{n+1}^{c}|$.
		If the sequence $(l_{n})_{n\in\mathbb{N}}$ grows sufficiently fast,
		then there is an area-preserving $C^{\infty}$ diffeomorphism $T$
		on $M$ such that the system $(M,\lambda,T)$ is isomorphic to $(\mathbb{K}^c,\nu,sh)$.
	\end{lem}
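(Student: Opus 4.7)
My plan is to follow the untwisted Anosov--Katok (AbC) construction as executed in \cite{FW1}, adapting it to the specific circular construction sequence $(\mathcal{W}_n^c)_{n\in\mathbb{N}}$ at hand. The basic scheme is to produce the diffeomorphism $T$ as a limit of periodic measure-preserving diffeomorphisms $T_n = H_n \circ R_{\alpha_{n+1}} \circ H_n^{-1}$, where $R_\alpha$ denotes the rotation $(x,y)\mapsto(x+\alpha,y)$ on $M$ (for $M=\mathbb{A}$ or $\mathbb{T}^2$; on $\mathbb{D}^2$ one uses rotation by angle $2\pi\alpha$), $\alpha_{n+1}=p_{n+1}/q_{n+1}$ are the rationals determined by the circular coefficient sequence via $q_{n+1}=k_nl_nq_n^2$ and $p_{n+1}=p_nk_nl_nq_n+1$, and $H_n = h_1\circ\cdots\circ h_n$ is a composition of measure-preserving conjugacies commuting with $R_{p_n/q_n}$.

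First I would build the combinatorial tower at stage $n$: partition $M$ into a Rokhlin tower of height $q_n$ whose levels are mapped by $R_{p_n/q_n}$ cyclically, then subdivide each level into $|\mathcal{W}_n^c|$ equal pieces labeled by the $n$-words of the construction sequence. Next I would design $h_{n+1}$ so that $T_{n+1}=H_{n+1}R_{\alpha_{n+2}}H_{n+1}^{-1}$ refines the $n$-th tower into a $q_{n+1}$-tower whose $(n+1)$-levels correspond exactly to the letters in the circular operator $\mathcal{C}_n(w_0,\ldots,w_{k_n-1})$ for $(w_0,\ldots,w_{k_n-1})\in P_{n+1}$. The arithmetic pattern $j_i\equiv p_n^{-1}i\bmod q_n$ in the $\mathcal{C}_n$ operator is precisely what matches the rotation number $p_{n+1}/q_{n+1}$ under the twist caused by $h_{n+1}$, and the $b,e$ spacers correspond to the geometric margin left by $h_{n+1}$ when organizing the $l_n$-fold repetitions of each $w_j$. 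The strong readability assumption guarantees that the symbolic coding is well-defined on a set of full measure.

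The main obstacle is $C^\infty$-convergence: the $h_{n+1}$ are built as the compositions of shears, cut-and-stack conjugacies, and smoothed permutations whose derivatives grow rapidly with $n$, so that $\|H_{n+1}\|_{C^n}$ may be huge. The standard remedy, and the reason for the hypothesis that $(l_n)$ grows sufficiently fast, is to choose $l_n$ large enough (depending on $\|H_n\|_{C^n}$ and previously chosen quantities) to ensure that
\[
d_{C^n}(T_{n+1},T_n) \le \frac{1}{2^n},
\]
by exploiting the fact that $R_{\alpha_{n+1}}$ and $R_{\alpha_{n+2}}$ differ by a rotation of order $O(1/q_{n+1})=O(1/(k_nl_nq_n^2))$, which is absorbed after conjugation by $H_n$ as long as $l_n\gg \|H_n\|_{C^n}^{n+1}$. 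This gives Cauchy convergence in $\mathrm{Diff}^\infty(M,\lambda)$, hence a limit $T$. The required growth of $l_n$ can be specified inductively once stage $n$ is complete, which matches the hypothesis of the lemma.

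Finally I would verify that $(M,\lambda,T)$ is measure-theoretically isomorphic to $(\mathbb{K}^c,\nu,sh)$. The partition $\mathcal{P}_n$ of $M$ into the subdivided levels of the $q_n$-tower, labeled by $\Sigma\cup\{b,e\}$ in the way dictated by $\mathcal{C}_n$, is $T_m$-invariant for all $m\ge n$ and hence generates a factor of $T$. The strong readability condition, together with the uniformity of the construction sequence and the fact that $|\mathcal{W}_n^c|\to\infty$ with $|\mathcal{W}_n^c|$ dividing $|\mathcal{W}_{n+1}^c|$, implies that $\bigvee_n \mathcal{P}_n$ separates points almost surely, so the coding map is a measure-theoretic isomorphism to the circular symbolic system with its unique invariant measure $\nu$ (obtained as in Fact \ref{fact:MeasureConstrSeq} for the uniform circular construction sequence). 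This completes the plan.
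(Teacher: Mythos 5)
The paper does not actually prove this lemma: it is quoted from \cite[Theorem 60]{FW1}, as noted in the sentence immediately preceding the statement, and no argument is given in the present text. Your proposal reconstructs, in outline, the untwisted Anosov--Katok construction that Foreman and Weiss use to establish that theorem, and most of what you describe is correct in spirit: the rationals $\alpha_{n+1}=p_{n+1}/q_{n+1}$ from the circular recursion, the approximating diffeomorphisms $T_n = H_n\circ R_{\alpha_{n+1}}\circ H_n^{-1}$ with $H_n$ commuting with $R_{p_n/q_n}$, the identification of the $q_{n+1}$-tower structure with the circular operator $\mathcal{C}_n$ (including the arithmetic role of $j_i\equiv p_n^{-1}i \bmod q_n$ and the $b,e$ spacers as the uncontrolled margins of the tower), and the choice of $l_n$ growing fast enough relative to $\|H_n\|_{C^n}$ to get $C^\infty$-Cauchy convergence.

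The one place your sketch is noticeably thinner than the argument it is replacing is the final paragraph. Asserting that $\bigvee_n \mathcal{P}_n$ ``separates points almost surely'' and that the resulting coding is therefore an isomorphism to $(\mathbb{K}^c,\nu,sh)$ is the part of \cite{FW1} that requires the most care: one must verify that the partitions produced by the AbC construction are in fact a generating sequence, and that the push-forward of $\lambda$ under the coding concentrates on the set $S$ from Fact \ref{fact:MeasureConstrSeq}, so that by uniqueness it equals $\nu$. The hypotheses $|\mathcal{W}_n^c|\to\infty$ and $|\mathcal{W}_n^c| \mid |\mathcal{W}_{n+1}^c|$ enter here in making the refining subdivisions of the tower levels by $n$-words well-defined and unbounded in cardinality; that is what drives generation, not strong readability, which is instead what ensures unique parsing of the names. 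These steps are nontrivial in \cite{FW1} and deserve more than a single sentence, but since the paper you are comparing against cites the result rather than proving it, your proposal stands as a reasonable summary of the external argument rather than as a gap in the present paper.
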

	
	The following real-analytic counterpart is proven in \cite[Theorem G]{BK2}.
	\begin{lem}
		\label{lem:Analytic Realization}Let $\rho>0$, and let $\lambda$ be the
		Lebesgue measure on $\mathbb{T}^{2}$. Moreover, let $\left\{ \mathcal{W}_{n}^{c}\right\} _{n\in\mathbb{N}}$
		be a strongly uniform circular construction sequence with circular
		coefficients $\left(k_{n},l_{n}\right)_{n\in\mathbb{N}}$. We suppose
		that $|\mathcal{W}_{n}^{c}|\to\infty$ as $n\to\infty$ and that $|\mathcal{W}_{n}^{c}|$
		divides $|\mathcal{W}_{n+1}^{c}|$. If the sequence $(l_{n})_{n\in\mathbb{N}}$
		grows sufficiently fast, then there is $T\in\text{Diff}_{\rho}^{\,\omega}(\mathbb{T}^{2},\lambda)$
		such that the system $(\mathbb{T}^2,\lambda,T)$ is isomorphic to $(\mathbb{K}^c,\nu,sh)$.
	\end{lem}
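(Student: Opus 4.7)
The plan is to invoke the real-analytic Approximation by Conjugation (AbC) method of Banerjee--Kunde \cite{Ba17,BK2}, building $T$ as an explicit $d_\rho$-limit of real-analytic periodic diffeomorphisms. Concretely, using the rationals $\alpha_n = p_n/q_n$ supplied by the circular coefficient sequence, I would set
\[
T_n = H_n \circ R_{\alpha_{n+1}} \circ H_n^{-1},
\]
where $R_\alpha(x_1,x_2)=(x_1+\alpha,x_2)$ is rotation in the first coordinate, and $H_n = h_1 \circ h_2 \circ \cdots \circ h_n$ is a composition of area-preserving real-analytic diffeomorphisms each commuting with $R_{1/q_n}$. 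Since $\alpha_{n+1} \equiv \alpha_n \pmod{1/q_n}$, this conjugation is well-defined and consistent with the previous stage.

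At stage $n$, the map $h_n$ is designed so that the $(T_n,\mathcal{P}_n)$-process for a natural real-analytic partition $\mathcal{P}_n$ of $\mathbb{T}^2$ realizes the combinatorial cutting-and-stacking structure of the circular operator $\mathcal{C}_n$: the base $[0,1/q_n)\times[0,1)$ is cut into $k_n l_n q_n$ horizontal strips, permuted by $h_n$ so that the orbit of $R_{\alpha_{n+1}}$ visits them in the order dictated by the prewords $P_{n+1}$, and so that the $b$'s and $e$'s in the definition of $\mathcal{C}_n$ correspond to the spacer levels created at the beginning and end of each of the $q_n$ sub-towers. The strong readability of $P_{n+1}$ guarantees that the partitions $\bigvee_{i=-N}^{N} T_n^i \mathcal{P}_n$ refine to a generating sequence, giving the isomorphism with $(\mathbb{K}^c,\nu,sh)$.

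To produce such an $h_n$ in $\text{Diff}^{\,\omega}_{\rho'}(\mathbb{T}^2,\lambda)$ for some $\rho' > \rho$, I would use the explicit real-analytic step functions constructed in \cite{Ba17,BK2} (built from carefully truncated Fourier series that approximate the desired piecewise permutation at scale $1/q_n$). These give $h_n$ with an effective estimate of the form $\|h_n\|_{\rho_n} \le C(q_n,l_n)$, and correspondingly $\|H_n\|_\rho$ can be controlled in terms of the previous data. One then verifies
\[
d_\rho(T_n,T_{n-1}) \le \|H_{n-1}\|_\rho^{\,*} \cdot |\alpha_{n+1}-\alpha_n| \le \frac{C_n}{l_n},
\]
where $\|\cdot\|^{\,*}_\rho$ incorporates the norm of the derivative estimated via the Cauchy integral formula on $\Omega_\rho$. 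Choosing $l_n$ sufficiently fast-growing compared to $C_n$ (which depends only on the previously constructed data) makes the series $\sum d_\rho(T_n,T_{n-1})$ summable, so $T_n \to T$ in $d_\rho$, and since $\text{Diff}^{\,\omega}_\rho(\mathbb{T}^2,\lambda)$ is complete in this metric, $T$ lies in this space.

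The main obstacle is the real-analytic norm control: unlike in the smooth category, where bump-function partitions of unity allow localized modifications, holomorphic extensions are genuinely global, so approximating the highly non-smooth permutation $h_n$ by a holomorphic map on $\Omega_{\rho_n}$ with controlled norm is delicate and forces the uniformity and divisibility hypotheses $|\mathcal{W}_n^c| \mid |\mathcal{W}_{n+1}^c|$ and $|\mathcal{W}_n^c| \to \infty$. The fast-growth requirement on $(l_n)$ is precisely what is needed to absorb the blow-up of $\|H_n\|_\rho$ caused by iterating these approximations. Once convergence is secured, identification of $(\mathbb{T}^2,\lambda,T)$ with $(\mathbb{K}^c,\nu,sh)$ follows from the fact that the symbolic coding of the $T_n$-orbits through the geometric towers stabilizes word by word to the symbolic construction sequence, which is the content of the isomorphism theorem in \cite[Theorem G]{BK2}.
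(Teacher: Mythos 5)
The paper does not prove this lemma at all: it is quoted directly from Banerjee--Kunde, and the text immediately preceding it reads ``The following real-analytic counterpart is proven in \cite[Theorem G]{BK2}'' (exactly as the smooth analogue, Lemma \ref{lem:SmoothRealization}, is quoted from \cite[Theorem 60]{FW1}). Your proposal is therefore not an alternative route but a reconstruction of the skeleton of the cited argument: the untwisted AbC scheme $T_n=H_n\circ R_{\alpha_{n+1}}\circ H_n^{-1}$ with $h_n$ commuting with $R_{1/q_n}$, convergence in $d_\rho$ forced by fast growth of $l_n$, and identification of the symbolic coding with the circular construction sequence. That skeleton is right, but everything that makes the real-analytic case nontrivial --- the explicit construction of area-preserving real-analytic $h_n$ approximating the required block-slide permutations with quantitative control of $\|h_n\|_{\rho_n}$ and $\|h_n^{-1}\|_{\rho_n}$ on a shrinking sequence of complex strips, and the verification that the limit still generates the prescribed circular construction sequence --- is deferred in your write-up to \cite{Ba17,BK2}. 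So in substance your proof, like the paper's, is a citation; as a standalone argument it has a large gap exactly where the difficulty lies, as you yourself acknowledge.

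Two smaller points. First, the assertion $\alpha_{n+1}\equiv\alpha_n\pmod{1/q_n}$ is false: $\alpha_{n+1}-\alpha_n=1/q_{n+1}=1/(k_nl_nq_n^2)$, which is not a multiple of $1/q_n$. The consistency of the scheme comes instead from the commutation $h_n\circ R_{1/q_n}=R_{1/q_n}\circ h_n$, which lets one rewrite $T_{n-1}=H_n\circ R_{\alpha_n}\circ H_n^{-1}$ and then bound $d_\rho(T_n,T_{n-1})$ by the smallness of $|\alpha_{n+1}-\alpha_n|$ against the norm of $H_n$ --- the mechanism you correctly invoke elsewhere. Second, the hypotheses that $|\mathcal{W}_n^c|$ divides $|\mathcal{W}_{n+1}^c|$ and $|\mathcal{W}_n^c|\to\infty$ are not specific to the holomorphic approximation problem; they are combinatorial requirements of the AbC tower construction and appear verbatim in the smooth realization lemma as well.
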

	
	We end this subsection by introducing the following subscales for
	a word $w\in\mathcal{W}_{n+1}^{c}$ as in \cite[Subsection 3.3]{FW2}: 
	\begin{itemize}
		\item Subscale $0$ is the scale of the individual powers of $w_{j}\in\mathcal{W}_{n}^{c}$
		of the form $w_{j}^{l-1}$ and each such occurrence of a $w_{j}^{l-1}$
		is called a \emph{$0$-subsection}. 
		\item Subscale $1$ is the scale of each term in the product $\prod_{j=0}^{k_{n}-1}\left(b^{q_{n}-j_{i}}w_{j}^{l_{n}-1}e^{j_{i}}\right)$
		that has the form $\left(b^{q_{n}-j_{i}}w_{j}^{l_{n}-1}e^{j_{i}}\right)$
		and these terms are called \emph{$1$-subsections}. 
		\item Subscale $2$ is the scale of each term of $\prod_{i=0}^{q_{n}-1}\prod_{j=0}^{k_{n}-1}\left(b^{q_{n}-j_{i}}w_{j}^{l_{n}-1}e^{j_{i}}\right)$
		that has the form $\prod_{j=0}^{k_{n}-1}\left(b^{q_{n}-j_{i}}w_{j}^{l_{n}-1}e^{j_{i}}\right)$
		and these terms are called \emph{$2$-subsections}. 
	\end{itemize}
	
	\subsection{\label{subsec:Functor}Categories $\mathcal{OB}$ and $\mathcal{CB}$
		and the functor $\mathcal{F}:\mathcal{OB}\to\mathcal{CB}$}
	
	For a fixed circular coefficient sequence $\left(k_{n},l_{n}\right)_{n\in\mathbb{N}}$
	we consider two categories $\mathcal{OB}$ and $\mathcal{CB}$ whose
	objects are odometer-based and circular systems, respectively. The
	morphisms in these categories are (synchronous and anti-synchronous)
	graph joinings defined below. For this purpose, we recall that a \emph{joining} between two measure preserving systems $(X,\mathcal{B},\mu,T)$
		and $(Y ,\mathcal{C} ,\nu,S)$ is a measure $\rho$ on $X \times Y$ defined on the product $\sigma$-algebra such that $\rho$ is $T \times S$-invariant, $\rho(B \times Y)=\mu(B)$ for each set $B \in \mathcal{B}$, and $\rho(X \times C)=\nu(C)$ for each set $C \in \mathcal{C}$. A joining $\rho$ is a \emph{graph joining} between $X$ and $Y$ if and only if for all $C \in \mathcal{C}$ there is a $B \in \mathcal{B}$ such that $\rho\left((B \times Y) \triangle (X \times C)\right)=0$. We can represent
		the graph joining corresponding to a measure preserving map $\phi:X \to Y$ by $\int (\delta_x \times \delta_{\phi(x)})\; \mathrm{d}\mu(x)$. 
	\begin{defn}
		\label{def:synchronous}If $\mathbb{K}$ is an odometer-based system,
		we denote its odometer factor by $\mathbb{K}^{\pi}$ and let $\pi:\mathbb{K}\to\mathbb{K}^{\pi}$
		be the canonical factor map. Similarly, if $\mathbb{K}^{c}$ is a
		circular system, we let $(\mathbb{K}^{c})^{\pi}$ be the circle factor
		$\mathcal{K}$ and let $\pi:\mathbb{K}^{c}\to\mathcal{K}$ be the
		canonical factor map.
		\begin{enumerate}
			\item Let $\mathbb{K}$ and $\mathbb{L}$ be odometer-based systems with
			the same coefficient sequence and let $\rho$ be a joining between
			$\mathbb{K}$ and $\mathbb{L}^{\pm1}$. Then $\rho$ is called \emph{synchronous}
			if $\rho$ joins $\mathbb{K}$ and $\mathbb{L}$ and the projection
			of $\rho$ to a joining on $\mathbb{K}^{\pi}\times\mathbb{L}^{\pi}$
			is the graph joining determined by the identity map. The joining $\rho$
			is called \emph{anti-synchronous} if $\rho$ joins $\mathbb{K}$ and
			$\mathbb{L}^{-1}$ and the projection of $\rho$ to a joining on $\mathbb{K}^{\pi}\times(\mathbb{L}^{-1})^{\pi}$
			is the graph joining determined by the map $x\mapsto-x$.
			\item Let $\mathbb{K}^{c}$ and $\mathbb{L}^{c}$ be circular systems with
			the same coefficient sequence and let $\rho$ be a joining between
			$\mathbb{K}^{c}$ and $(\mathbb{L}^{c})^{\pm1}$. Then $\rho$ is
			called \emph{synchronous} if $\rho$ joins $\mathbb{K}^{c}$ and $\mathbb{L}^{c}$
			and the projection of $\rho$ to a joining on $\mathcal{K}\times\mathcal{L}$
			is the graph joining determined by the identity map. The joining $\rho$
			is called \emph{anti-synchronous} if $\rho$ joins $\mathbb{K}^{c}$
			and $(\mathbb{L}^{c})^{-1}$ and the projection of $\rho$ to a joining
			on $\mathcal{K}\times\mathcal{L}^{-1}$ is the graph joining determined
			by the map $rev(\cdot)\circ\natural$.
		\end{enumerate}
	\end{defn}
	
	In \cite{FW2} Foreman and Weiss define a functor taking odometer-based
	systems to circular system that preserves the factor and conjugacy
	structure. To review the definition of the functor we fix a circular
	coefficient sequence $\left(k_{n},l_{n}\right)_{n\in\mathbb{N}}$.
	Let $\Sigma$ be a finite alphabet and $\left(\mathcal{W}_{n}\right)_{n\in\mathbb{N}}$
	be a construction sequence for an odometer-based system with coefficients
	$\left(k_{n}\right)_{n\in\mathbb{N}}$. Then we define a circular
	construction sequence $\left(\mathcal{W}_{n}^{c}\right)_{n\in\mathbb{N}}$
	and bijections $c_{n}:\mathcal{W}_{n}\to\mathcal{W}_{n}^{c}$ by induction:
	\begin{itemize}
		\item Let $\mathcal{W}_{0}^{c}=\Sigma$ and $c_{0}$ be the identity map. 
		\item Suppose that $\mathcal{W}_n$, $\mathcal{W}_{n}^{c}$, and $c_{n}$
		have already been defined. Then we define 
		\[
		\mathcal{W}_{n+1}^{c}=\left\{ \mathcal{C}_{n}\left(c_{n}\left(w_{0}\right),c_{n}\left(w_{1}\right),\dots,c_{n}\left(w_{k_{n}-1}\right)\right)\::\:w_{0}w_{1}\dots w_{k_{n}-1}\in\mathcal{W}_{n+1}\right\} 
		\]
		and the map $c_{n+1}$ by setting 
		\[
		c_{n+1}\left(w_{0}w_{1}\dots w_{k_{n}-1}\right)=\mathcal{C}_{n}\left(c_{n}\left(w_{0}\right),c_{n}\left(w_{1}\right),\dots,c_{n}\left(w_{k_{n}-1}\right)\right).
		\]
		In particular, the prewords are 
		\[
		P_{n+1}=\left\{ c_{n}\left(w_{0}\right)c_{n}\left(w_{1}\right)\dots c_{n}\left(w_{k_{n}-1}\right)\::\:w_{0}w_{1}\dots w_{k_{n}-1}\in\mathcal{W}_{n+1}\right\} .
		\]
		
	\end{itemize}
	\begin{defn}
		Suppose that $\mathbb{K}$ is built from a construction sequence $\left(\mathcal{W}_{n}\right)_{n\in\mathbb{N}}$
		and $\mathbb{K}^{c}$ has the circular construction sequence $\left(\mathcal{W}_{n}^{c}\right)_{n\in\mathbb{N}}$
		as constructed above. Then we define a map $\mathcal{F}$ from the
		set of odometer-based systems (viewed as subshifts) to circular systems
		(viewed as subshifts) by 
		\[
		\mathcal{F}\left(\mathbb{K}\right)=\mathbb{K}^{c}.
		\]
	\end{defn}
	
	\begin{rem}
		The map $\mathcal{F}$ is a bijection between odometer-based symbolic
		systems with coefficients $\left(k_{n}\right)_{n\in\mathbb{N}}$ and
		circular symbolic systems with coefficients $\left(k_{n},l_{n}\right)_{n\in\mathbb{N}}$
		that preserves uniformity. Since the construction sequences for our
		odometer-based systems are uniquely readable, the corresponding
		circular construction sequences automatically satisfy the strong
		readability assumption. 
	\end{rem}
	
	In \cite{FW2} it is shown that $\mathcal{F}$ gives an isomorphism
	between the categories $\mathcal{OB}$ and $\mathcal{CB}$. We state
	the following fact which is part of the main result in \cite{FW2}.
	\begin{lem}
		\label{lem:functor}For a fixed circular coefficient sequence $(k_{n},l_{n})_{n\in\mathbb{N}}$
		the categories $\mathcal{OB}$ and $\mathcal{CB}$ are isomorphic
		by the functor $\mathcal{F}$ that takes synchronous isomorphisms
		to synchronous isomorphisms and anti-synchronous isomorphisms to anti-synchronous
		isomorphisms.
	\end{lem}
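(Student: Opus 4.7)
The statement is a summary of the main result of \cite{FW2}, so my plan is to follow that argument and verify that the hypotheses on our odometer-based systems match what is needed. The verification proceeds in three steps.

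First I would verify that $\mathcal{F}$ is a bijection on objects between $\mathcal{OB}$ and $\mathcal{CB}$ (for a fixed circular coefficient sequence). The inductive definition of $\mathcal{W}_n^c$ together with the bijections $c_n : \mathcal{W}_n \to \mathcal{W}_n^c$ gives the inverse at the level of construction sequences, and the strong readability of the prewords $P_{n+1}$ required by Definition \ref{def:circularConstSeq} is inherited from the unique readability of $\mathcal{W}_n$, which is part of our standing specifications in Section \ref{sec:Setup}.

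Second, I would describe the action of $\mathcal{F}$ on morphisms. A graph joining $\rho$ on $\mathbb{K}\times\mathbb{L}^{\pm 1}$ is determined by a measurable shift-equivariant (or reversal-equivariant) map $\phi:\mathbb{K}\to\mathbb{L}^{\pm 1}$, and $\mathcal{F}(\rho)$ is defined by replacing, at each level $n$, the block $w\in\mathcal{W}_n$ occurring in the parsing of a typical point of $\mathbb{K}$ by its image $c_n(\phi(w))\in\mathcal{W}_n^c$, while using the canonical circle factor $\mathcal{K}$ to fix the relative timing at scale $q_{n+1}$. Functoriality follows because the correspondence $c_n$ is natural with respect to concatenation, so it commutes with the circular operators $\mathcal{C}_n$.

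Third, and most delicately, I would verify the preservation of the synchronous versus anti-synchronous dichotomy. The key fact is that the canonical factor map $\pi:\mathcal{F}(\mathbb{K})\to\mathcal{K}$ covers a specific identification of the odometer factor $\mathbb{K}^\pi$ of $\mathbb{K}$ with the circle factor $\mathcal{K}$ of $\mathcal{F}(\mathbb{K})$, compatible with $\mathcal{F}$ on morphisms. A synchronous joining projects to the identity on odometer bases, which transfers to the identity on circle factors. An anti-synchronous joining projects to the map $x\mapsto -x$ on the odometer, and the content of \cite[section 4.3]{FW2} is precisely that this map corresponds, under $\mathcal{F}$, to the composition $\mathrm{rev}(\cdot)\circ\natural$ on the circle factor, where $\natural$ is the natural map built from the reverse circular operator $\mathcal{C}_n^r$ of \eqref{eq:RevCircOper}. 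Granting this identification, the match-up of Definition \ref{def:synchronous}(1) and Definition \ref{def:synchronous}(2) under $\mathcal{F}$ is immediate.

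The main obstacle is the combinatorics of reversal. The circular operator $\mathcal{C}_n$ is not symmetric: each $1$-subsection $b^{q_n-j_i}w_j^{l_n-1}e^{j_i}$ carries asymmetric exponents with $j_i\equiv p_n^{-1}i \ \mod\ q_n$, and reversing a circular word does not simply exchange $b$ and $e$ but requires the shifted exponents $j_{i+1}$ appearing in $\mathcal{C}_n^r$. Verifying that $\natural$ is the unique isomorphism $\mathcal{K}\to\mathrm{rev}(\mathcal{K})$ whose lift realizes the odometer reversal $x\mapsto -x$ through the $\mathcal{F}$-identification is the delicate step; once it is in place, the preservation of anti-synchronous morphisms follows by extending across the levels of the construction sequence and invoking the functoriality already established.
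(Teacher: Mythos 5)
Your proposal takes essentially the same approach as the paper: the paper does not prove Lemma \ref{lem:functor} at all but states it as part of the main result of \cite{FW2}, and your outline is a reasonable sketch of the structure of the Foreman--Weiss argument, so in substance both rely on the same external source. In particular, your identification of the delicate step---verifying that the odometer involution $x \mapsto -x$ lifts, through $\mathcal{F}$, to $\mathrm{rev}(\cdot)\circ\natural$ on the canonical circle factor, with $\natural$ built from the reverse circular operator $\mathcal{C}_n^r$ of \eqref{eq:RevCircOper}---correctly locates where the work in \cite{FW2} actually lies, and your reading of Definition~\ref{def:synchronous} and its compatibility under $\mathcal{F}$ is consistent with how the present paper uses the lemma.
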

	
	\begin{rem}
		Using the operator $\mathcal{C}_{n}^{r}$ from equation (\ref{eq:RevCircOper})
		we can give a construction sequence $\left\{ rev(\mathcal{W}_{n}^{c})\right\} _{n\in\mathbb{N}}$
		for $\mathcal{F}\left(\mathbb{K}\right)^{-1}=\left(\mathbb{K}^{c}\right)^{-1}\cong rev(\mathbb{K}^{c})$
		via 
		\begin{align*}
			& rev(\mathcal{W}_{n+1}^{c})=\\
			& \left\{ \mathcal{C}_{n}^{r}\left(rev(c_{n}(w_{0})),rev(c_{n}(w_{1})),\dots,rev(c_{n}(w_{k_{n}-1}))\right):w_{0}w_{1}\dots w_{k_{n}-1}\in\mathcal{W}_{n+1}\right\} .
		\end{align*}
	\end{rem}
	
	We recall that the length of a $n$-block $w$ in the odometer-based
	system is $h_{n}=\prod_{i=0}^{n-1}k_{i}$, if $n>0$, and $h_{0}=1$,
	while the length of a $n$-block in the circular system is $q_{n}$,
	that is, $\lvert c_{n}\left(w\right)\rvert=q_{n}$. Moreover, we will
	use the following map from substrings of the underlying odometer-based
	system to the circular system: 
	\[
	\mathcal{C}_{n,i}\left(w_{s}w_{s+1}\dots w_{t}\right)=\prod_{j=s}^{t}\left(b^{q_{n}-j_{i}}\left(c_{n}\left(w_{j}\right)\right)^{l_{n}-1}e^{j_{i}}\right)
	\]
	for any $0\leq i\leq q_{n}-1$ and $0\leq s\leq t\leq k_{n}-1$. We also have the map  
	\begin{equation*}
		\mathcal{C}^r_{n,i}\left(w_{s}w_{s+1}\dots w_{t}\right)=\prod_{j=0}^{t-s}\left(e^{q_{n}-j_{i+1}}\left(rev\left(c_{n}\left(w_{t-j}\right)\right)\right)^{l_{n}-1}b^{j_{i+1}}\right)
	\end{equation*}
	from substrings of $\mathbb{K}$ into $rev(\mathbb{K}^c)$.
	\begin{rem}
		\label{rem:Propagate}As in \cite[section 4.9]{FW3} we also propagate
		our equivalence relations and group actions to the circular system.
		For all $n\in\mathbb{N}$, $w_{1},w_{2}\in\mathcal{W}_{n}$ we let
		$\left(c_{n}(w_{1}),c_{n}(w_{2})\right)$ be in the relation $(\mathcal{Q}_{s}^{n})^{c}$
		if and only if $(w_{1},w_{2})\in\mathcal{Q}_{s}^{n}$. Moreover, given
		$g\in G_{s}^{n}$, we let $g[c_{n}(w_{1})]_{s}=[c_{n}(w_{2})]_{s}$
		if and only if $g[w_{1}]_{s}=[w_{2}]_{s}$.
	\end{rem}

	\subsection{$\overline{f}$-estimates in the circular system}
	
	We recall that we enumerate construction sequences of our odometer-based
	systems as $\left\{ \mathcal{W}_{n}\right\} _{n\in\mathbb{N}}$ in
	this section, that is, $(n+1)$-words are built by concatenating $n$-words.
	Before we analyze the circular system we foreshadow that we will choose
	the circular coefficients $(l_{n})_{n\in\mathbb{N}}$ as the last
	sequence of parameters in Subsection \ref{subsec:ProofsDiffeos} to
	grow fast enough to allow the realization as diffeomorphisms. We stress
	that none of the other parameters depends on $(l_{n})_{n\in\mathbb{N}}$.
	In particular, we can choose $(l_{n})_{n\in\mathbb{N}}$ to satisfy
	\begin{equation}
		l_{n}\geq\max\left(4R_{n+2}^{2},9l_{n-1}^{2}\right).\label{eq:lCondCircular}
	\end{equation}
	
	For the $\overline{f}$-estimates in the circular system we make use
	of the $\overline{f}$-estimates in the underlying odometer-based
	system. We will also use a sequence $\left(R_{n}^{c}\right)_{n=1}^{\infty}$,
	where $R_{1}^{c}=R_{1}$ (with $R_{1}$ from the sequence $(R_{n})_{n\in\mathbb{N}}$
	in the construction of the odometer-based system in Section \ref{sec:Construction})
	and $R_{n}^{c}=\lfloor\sqrt{l_{n-2}}\cdot k_{n-2}\cdot q_{n-2}^{2}\rfloor$
	for $n\geq2$. We note that for $n\geq2$, 
	\begin{equation}
		\frac{q_{n}}{R_{n}^{c}}=\frac{k_{n-1}\cdot l_{n-1}\cdot\left(k_{n-2}\cdot l_{n-2}\cdot q_{n-2}^{2}\right)\cdot q_{n-1}}{\lfloor\sqrt{l_{n-2}}\cdot k_{n-2}\cdot q_{n-2}^{2}\rfloor}\geq\sqrt{l_{n-2}}\cdot k_{n-1}\cdot l_{n-1}\cdot q_{n-1}.\label{eq:Rcircular}
	\end{equation}
	Hence, for $n\geq2$ a substring of at least $q_{n}/R_{n}^{c}$ consecutive
	symbols in a circular $n$-block contains at least $\sqrt{l_{n-2}}-1$
	complete $2$-subsections, which have length $k_{n-1}l_{n-1}q_{n-1}$
	(recall the notion of a $2$-subsection from the end of Subsection
	\ref{subsec:Circular-systems}). When conducting estimates in Lemma
	\ref{lem:CircularBlock} this will allow us to ignore incomplete $2$-subsections
	at the ends of the substring.
	
	Now we show $\overline{f}$-estimates in the circular system by induction.
	We start with the base case $n=1$.
	\begin{lem}
		\label{lem:CircularBase}Let $w,\overline{w}\in\mathcal{W}_{1}$ and
		$\mathcal{A}$, $\overline{\mathcal{A}}$ be any substrings of at
		least $q_{1}/R_{1}^{c}$ consecutive symbols in $c_{1}(w)$ and $c_{1}(\overline{w})$,
		respectively. 
		\begin{enumerate}
			\item If $[w]_{1}\neq[\overline{w}]_{1}$, then we have 
			\begin{equation}
				\overline{f}\left(\mathcal{A},\overline{\mathcal{A}}\right)>\frac{1}{2}-\frac{3}{R_{1}}-\frac{2}{l_{0}}.\label{eq:CircularBase1}
			\end{equation}
			\item If $w\neq\overline{w}$, then we have 
			\begin{equation}
				\overline{f}\left(\mathcal{A},\overline{\mathcal{A}}\right)>\frac{1}{\mathfrak{p}_{1}}\cdot\left(\frac{1}{2}-\frac{1}{R_{1}}\right)-\frac{3}{R_{1}}-\frac{2}{l_{0}}.\label{eq:CircularBase2}
			\end{equation}
		\end{enumerate}
	\end{lem}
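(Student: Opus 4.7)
The plan is to reduce each circular $\overline{f}$-estimate to an $\overline{f}$-estimate on substrings of $w,\overline{w}\in\mathcal{W}_1$ in the underlying odometer-based system, and then to invoke the base case estimates of Remark \ref{rem:PrepTransfer2}. The reduction exploits the explicit structure
\[
c_1(w) = b\,w_0^{l_0-1}\,b\,w_1^{l_0-1}\,\cdots\,b\,w_{k_0-1}^{l_0-1},
\]
in which the marker $b\notin\Sigma$ occurs at density $1/l_0$ and each $\Sigma$-symbol $w_j$ is repeated $l_0-1$ consecutive times.

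First I would delete all $b$-markers from $\mathcal{A}$ and $\overline{\mathcal{A}}$ to obtain strings $\mathcal{A}^*,\overline{\mathcal{A}}^*$ in the alphabet $\Sigma$. Since no deleted $b$ can be paired with a non-deleted $\Sigma$-symbol, the sharper form of Fact \ref{fact:omit_symbols} applies and gives $\overline{f}(\mathcal{A},\overline{\mathcal{A}}) \geq \overline{f}(\mathcal{A}^*,\overline{\mathcal{A}}^*) - 2/l_0$ up to negligible boundary corrections. Next I would trim partial $w_j^{l_0-1}$-blocks at the endpoints to obtain $\mathcal{A}^{**} = w_{j_1}^{l_0-1}\cdots w_{j_2}^{l_0-1}$, the complete blow-up of the substring $v = w_{j_1}\cdots w_{j_2}$ of $w$ by factor $l_0-1$, and similarly $\overline{\mathcal{A}}^{**}$ arising from a substring $\overline{v}$ of $\overline{w}$. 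Each trim involves at most $2(l_0-1)$ symbols, so by Fact \ref{fact:omit_symbols} together with condition (\ref{eq:lCondCircular}) the cost is absorbed into an $O(1/R_1)$ error.

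The critical ingredient is the identity
\[
\overline{f}\bigl(v^{\uparrow (l_0-1)},\, \overline{v}^{\uparrow (l_0-1)}\bigr) = \overline{f}(v,\overline{v}),
\]
where $v^{\uparrow L}$ denotes the string $v$ with each symbol repeated $L$ times. This should be proved by a direct argument: an order-preserving match in the blow-ups, composed with the block-to-block projection $i\mapsto \lceil i/L\rceil$, induces an order-preserving match of $v,\overline{v}$ in which each block pair is counted at most $L$ times in the blow-up, while conversely any match of $v,\overline{v}$ lifts to an equally efficient match of the blow-ups. With this identity, the problem reduces to estimating $\overline{f}(v,\overline{v})$ on substrings of length at least $k_0/R_1 = h_1/R_1$. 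For part (1), $[w]_1\neq[\overline{w}]_1$ together with equation (\ref{eq:BaseClass}) at $m=0,n=1$ yields $\overline{f}(v,\overline{v}) > 1/2 - 2/R_1 - 2/p_1$; for part (2), $w\neq\overline{w}$ together with equation (\ref{eq:BaseWords}) yields $\overline{f}(v,\overline{v}) > (1/p_1)(1/2 - 1/R_1) - 2/R_1$.

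The final step collects the error terms ($2/l_0$ from $b$-deletion, $O(1/R_1)$ from the boundary trim, and the odometer errors) and uses conditions (\ref{eq:Pn}), (\ref{eq:Rn}), and (\ref{eq:lCondCircular}) to repackage them into the claimed forms $1/2 - 3/R_1 - 2/l_0$ and $(1/p_1)(1/2 - 1/R_1) - 3/R_1 - 2/l_0$. The main obstacle is the exact $\overline{f}$-preservation identity for symbol blow-up: although it is intuitively clear, one must carefully track the pairings of a putative best match of the blow-ups and argue that it cannot outperform the natural block-induced correspondence. The rest of the argument is bookkeeping of constants through the two applications of Fact \ref{fact:omit_symbols}.
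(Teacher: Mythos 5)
Your proof follows essentially the same route as the paper's: remove the boundary effects and the spacer symbols $b$, reduce to the strings $w_{j_1}^{l_0-1}\cdots w_{j_2}^{l_0-1}$, and invoke the base-case bounds from Remark \ref{rem:BaseCase}. The paper augments $\mathcal{A},\overline{\mathcal{A}}$ by fewer than $2l_0$ symbols to complete partial $bw_j^{l_0-1}$-strings and then drops the $b$'s, whereas you delete first and then trim; this difference is purely cosmetic and the error accounting via Fact \ref{fact:omit_symbols} works out either way to $O(1/R_1) + 2/l_0$.

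The one place your write-up is genuinely more explicit than the paper is the ``blow-up identity'' $\overline{f}(v^{\uparrow L},\overline{v}^{\uparrow L})=\overline{f}(v,\overline{v})$. The paper's proof says only ``Since $0$-words are symbols, we obtain from Remark \ref{rem:BaseCase}\ldots'' and never names the step that passes from $\overline{f}(\mathcal{A}',\overline{\mathcal{A}}')$ — which is an $\overline{f}$-distance between blown-up strings — to an $\overline{f}$-distance between substrings of $w,\overline{w}$ themselves, the setting in which (\ref{eq:BaseWords}) and (\ref{eq:BaseClass}) are actually stated. Isolating this as a lemma is a useful clarification. However, your sketch of its proof is incomplete in the hard direction. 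The lift $v\mapsto v^{\uparrow L}$ of a match is immediate and gives $\overline{f}(v^{\uparrow L},\overline{v}^{\uparrow L})\le\overline{f}(v,\overline{v})$. For the reverse inequality you must show that a best match $\mathcal{M}$ of the blow-ups satisfies $|\mathcal{M}|\le L\cdot|\mathcal{N}|$ for some order-preserving matching $\mathcal{N}$ of $v,\overline{v}$, but the block-to-block projection $p\mapsto\lceil p/L\rceil$ produces a non-decreasing multi-relation on block indices, not an injective matching — the same block pair or the same row can be hit repeatedly. One clean way to finish is to observe that each block of $v^{\uparrow L}$ and each block of $\overline{v}^{\uparrow L}$ contributes at most $L$ matched positions, so the projected multi-relation has maximum degree $L$ on each side; K\H{o}nig's edge-colouring theorem then decomposes it into at most $L$ matchings, each of which is automatically order-preserving because the projected pairs form a chain in the product order. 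Since you flagged this gap yourself (``one must carefully track the pairings\ldots''), this is a matter of completing the argument rather than repairing it; everything else in your proposal is sound and the constants match the paper's.
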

	
	\begin{proof}
		Since $q_{0}=1$, we have 
		\[
		\frac{q_{1}}{R_{1}^{c}}=\frac{k_{0}\cdot l_{0}}{R_{1}}.
		\]
		By adding fewer than $2l_{0}$ symbols to each of $\mathcal{A}$ and
		$\overline{\mathcal{A}}$ we can complete partial strings $bw_{j}^{l_{0}-1}$,
		where $w_{j}\in\mathcal{W}_{0}$, at the beginning and end. The spacer
		$b$ occupies a proportion of $\frac{1}{l_{0}}$ in the augmented
		strings of $\mathcal{A}$ and $\overline{\mathcal{A}}$, respectively.
		We ignore the spacers $b$ and $e$ in the following consideration and denote the
		modified strings by $\mathcal{A}'$ and $\overline{\mathcal{A}}'$.
		By Fact \ref{fact:omit_symbols} we have 
		\[
		\overline{f}\left(\mathcal{A},\overline{\mathcal{A}}\right)>\overline{f}\left(\mathcal{A}',\overline{\mathcal{A}}'\right)-\frac{2R_{1}}{k_{0}}-\frac{2}{l_{0}}>\overline{f}\left(\mathcal{A}',\overline{\mathcal{A}}'\right)-\frac{1}{R_{1}}-\frac{2}{l_{0}}.
		\]
		Since $0$-words are symbols, we obtain from Remark \ref{rem:BaseCase}
		that 
		\[
		\overline{f}\left(\mathcal{A}',\overline{\mathcal{A}}'\right)>\begin{cases}
			\frac{1}{2}-\frac{2}{R_{1}}-\frac{2}{\mathfrak{p}_1}, & \text{if }[w]_{1}\neq[\overline{w}]_{1},\\
			\frac{1}{\mathfrak{p}_{1}}\cdot\left(\frac{1}{2}-\frac{1}{R_{1}}\right)-\frac{2}{R_{1}}, & \text{if }w\neq\overline{w}.
		\end{cases}
		\]
	\end{proof}
	This motivates the definitions 
	\begin{align*}
		\alpha_{1,1}^{c}=\min\left(\frac{1}{2}-\frac{3}{R_{1}}-\frac{2}{\mathfrak{p}_1}-\frac{2}{l_{0}},\frac{1}{9}\right), &  &  & \beta_{1}^{c}=\min\left(\frac{1}{\mathfrak{p}_{1}}\cdot\left(\frac{1}{2}-\frac{1}{R_{1}}\right)-\frac{3}{R_{1}}-\frac{2}{l_{0}},\alpha_{1,1}^{c}\right).
	\end{align*}
	We note that by $R_{1}\geq40\mathfrak{p}_{1}$ from condition (\ref{eq:Rn}),
	we have $\beta_{1}^{c}>\frac{1}{4\mathfrak{p}_{1}}\geq\frac{10}{R_{1}}$. Hence,
	$R_{1}>\frac{10}{\beta_{1}^{c}}$.
	
	\textbf{Induction assumption:} we assume for $n\geq1$ that there are $\frac{1}{8}>\alpha^{c}_{1,n} > \dots > \alpha^{c}_{s(n),n}>\beta_{n}^{c}$ such that in addition to (\ref{eq:Rassum2}) we have
	\begin{equation}
		R_{n}\geq\frac{7}{\beta_{n}^{c}},\label{eq:RAssumCirc2}
	\end{equation}
	and the following estimates on $\overline{f}$ distances hold:
	\begin{itemize}
		\item For every $s \in \{1,\dots, s(n)\}$ we assume that if $w,\overline{w}\in\mathcal{W}_{n}$ with
		$[w]_{s}\neq[\overline{w}]_{s}$, then
		\begin{equation}
			\overline{f}\left(\mathcal{A},\overline{\mathcal{A}}\right)>\alpha_{s,n}^{c}\label{eq:CircularAssump1}
		\end{equation}
		on any substrings $\mathcal{A}$, $\overline{\mathcal{A}}$ of at
		least $q_{n}/R_{n}^{c}$ consecutive symbols in $c_{n}(w)$ and $c_{n}(\overline{w})$,
		respectively.
		\item For $w,\overline{w}\in\mathcal{W}_{n}$ with $w\neq\overline{w}$, we have
		\begin{equation}
			\overline{f}\left(\mathcal{A},\overline{\mathcal{A}}\right)>\beta_{n}^{c}\label{eq:CircularAssump2}
		\end{equation}
		on any substrings $\mathcal{A}$, $\overline{\mathcal{A}}$ of at
		least $q_{n}/R_{n}^{c}$ consecutive symbols in $c_{n}(w)$ and $c_{n}(\overline{w})$,
		respectively.
	\end{itemize}
	We also recall the assumption $\mathfrak{p}_{n+1}>\max\left(4R_{n},2^{n+1},\frac{1}{\epsilon_{n+1}}\right)$
	from (\ref{eq:Pn}). 
	
	At stage $n+1$ of the construction we choose $R_{n+1}$ such that besides (\ref{eq:Rn}) it also satisfies
	\begin{equation}
		R_{n+1}\geq\frac{40\mathfrak{p}_{n+1}}{\beta_{n}^{c}}.\label{eq:RupdateCircular1}
	\end{equation}
	Our assumptions (\ref{eq:CircularAssump1}) and (\ref{eq:CircularAssump2})
	immediately imply $\overline{f}$-estimates for $1$-subsections.
	\begin{lem}
		\label{lem:CircularRepetition}Let $w,\overline{w}\in\mathcal{W}_{n}$,
		$0\leq i_{1},i_{2}<q_{n}$, and $\mathcal{B}$, $\overline{\mathcal{B}}$
		be any substrings of at least $\sqrt{l_{n}}q_{n}$ consecutive symbols
		in $\mathcal{C}_{n,i_{1}}(w)$ and $\mathcal{C}_{n,i_{2}}(\overline{w})$,
		respectively.
		\begin{enumerate}
			\item If $[w]_{s}\neq[\overline{w}]_{s}$ for any $s\leq s(n)$, then we
			have 
			\begin{equation}
				\overline{f}\left(\mathcal{B},\overline{\mathcal{B}}\right)>\alpha_{s,n}^{c}-\frac{1}{R_{n}^{c}}-\frac{2}{\sqrt{l_{n}}}.\label{eq:CircularRep1}
			\end{equation}
			\item If $w\neq\overline{w}$, then we have 
			\begin{equation}
				\overline{f}\left(\mathcal{B},\overline{\mathcal{B}}\right)>\beta_{n}^{c}-\frac{1}{R_{n}^{c}}-\frac{2}{\sqrt{l_{n}}}.\label{eq:CircularRep2}
			\end{equation}
		\end{enumerate}
	\end{lem}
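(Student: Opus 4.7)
The plan is to reduce the $\overline{f}$-estimate on substrings of the $1$-subsections to the inductive $\overline{f}$-estimates \eqref{eq:CircularAssump1} and \eqref{eq:CircularAssump2} on the constituent $n$-blocks $c_n(w)$ and $c_n(\overline{w})$, by invoking the symbol-by-block replacement machinery of Lemma \ref{lem:symbol by block replacement}. The argument for \eqref{eq:CircularRep1} and \eqref{eq:CircularRep2} is the same, with $\alpha_{s,n}^c$ replaced by $\beta_n^c$, so I will describe it for case (1).

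First I will strip the spacer symbols $b$ and $e$ from $\mathcal{B}$ and $\overline{\mathcal{B}}$. Since $b,e\notin\Sigma$, no spacer in $\mathcal{B}$ can be matched with a non-spacer in $\overline{\mathcal{B}}$ in any best possible match, so the refinement \eqref{eq:omit_symbols2} of Fact \ref{fact:omit_symbols} applies. Each $1$-subsection $\mathcal{C}_{n,i}(w)$ contains exactly $q_n$ spacer symbols, so the proportion of spacers removed on each side is at most $q_n/(\sqrt{l_n}q_n)=1/\sqrt{l_n}$, decreasing $\overline{f}$ by at most $1/\sqrt{l_n}$. The resulting strings $\mathcal{B}'$ and $\overline{\mathcal{B}}'$ are substrings of $(c_n(w))^{l_n-1}$ and $(c_n(\overline{w}))^{l_n-1}$, respectively, each of length at least $(\sqrt{l_n}-1)q_n$. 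Next, I adjoin at most $q_n$ symbols at each end of $\mathcal{B}'$ (respectively $\overline{\mathcal{B}}'$) so that the augmented strings become full concatenations $(c_n(w))^{r}$ and $(c_n(\overline{w}))^{r'}$ of complete $n$-blocks with $r,r'\geq 1$; by the addition version of Fact \ref{fact:omit_symbols}, this changes $\overline{f}$ by an $O(1/\sqrt{l_n})$ amount, which together with the spacer loss is absorbed into the $2/\sqrt{l_n}$ term in the statement.

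Finally I apply Lemma \ref{lem:symbol by block replacement} with alphabet $\{w,\overline{w}\}$, common block length $L=q_n$, parameters $R=R_n^c$ and $\alpha=\alpha_{s,n}^c$, and index strings $a_1a_2\cdots a_r=ww\cdots w$ and $b_1b_2\cdots b_{r'}=\overline{w}\overline{w}\cdots\overline{w}$. The hypothesis that $\overline{f}(C,D)\geq\alpha_{s,n}^c$ for all substrings $C\subseteq c_n(w)$, $D\subseteq c_n(\overline{w})$ of length at least $q_n/R_n^c$ is precisely the inductive assumption \eqref{eq:CircularAssump1}, and the requirement $\alpha\in(0,1/7)$ holds since $\alpha_{s,n}^c<1/8$. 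Because all indices on the $w$-side differ from those on the $\overline{w}$-side, we have $\overline{f}(a_1\cdots a_r,b_1\cdots b_{r'})=1$, and Lemma \ref{lem:symbol by block replacement} yields
\[
\overline{f}\bigl((c_n(w))^{r},(c_n(\overline{w}))^{r'}\bigr)>\alpha_{s,n}^c-\frac{1}{R_n^c}.
\]
Combining with the two-step cleanup gives \eqref{eq:CircularRep1}; the proof of \eqref{eq:CircularRep2} is identical, using \eqref{eq:CircularAssump2} and the hypothesis $w\neq\overline{w}$. There is no substantive obstacle beyond the careful bookkeeping of the $O(1/\sqrt{l_n})$ losses; the key subtlety is that the refinement \eqref{eq:omit_symbols2} must be used for spacer removal (to avoid doubling the cost) because the spacer alphabet $\{b,e\}$ is disjoint from $\Sigma$.
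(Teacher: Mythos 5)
Your proof follows the same route as the paper's: delete the spacers via Fact \ref{fact:omit_symbols} and then invoke Lemma \ref{lem:symbol by block replacement} with block length $L=q_n$, $R=R_n^c$, trivially mismatched index strings (so the index-level $\overline{f}$ equals $1$), and the inductive hypotheses \eqref{eq:CircularAssump1}, \eqref{eq:CircularAssump2} supplying the block-level separation. The substance is correct, and your observation that the refined estimate \eqref{eq:omit_symbols2} applies to spacer removal (a deleted $b$ or $e$ can only be matched to another spacer, which is also deleted) is valid. The one place where you are more careful than the paper --- explicitly completing the partial $n$-blocks at the ends of $\mathcal{B}'$ and $\overline{\mathcal{B}}'$ --- is also where your error accounting does not close. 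Completing partial blocks adds up to $2q_n$ symbols to each string, hence up to $4q_n$ symbols against a combined length of at least $2(\sqrt{l_n}-1)q_n$, so the addition version of \eqref{eq:omit_symbols2} costs about $2/(\sqrt{l_n}-1)$; together with the $1/\sqrt{l_n}$ spent on spacer removal the total loss is roughly $3/\sqrt{l_n}$, which overshoots the stated $2/\sqrt{l_n}$. The paper instead charges the full $2/\sqrt{l_n}$ to spacer removal (using the crude $2\gamma$ bound of Fact \ref{fact:omit_symbols}) and applies Lemma \ref{lem:symbol by block replacement} directly to the despacered strings, leaving the end blocks implicit. Since $l_n$ is chosen enormous (see \eqref{eq:lCondCircular}), a bound with $3/\sqrt{l_n}$ would serve every downstream use equally well, so this is a cosmetic discrepancy with the stated constant rather than a genuine gap; but as written your argument establishes the lemma with $3/\sqrt{l_n}$ in place of $2/\sqrt{l_n}$.
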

	
	\begin{proof}
		The newly introduced spacers $b$ and $e$ occupy a proportion of
		at most $\frac{q_{n}}{\sqrt{l_{n}}q_{n}}=\frac{1}{\sqrt{l_{n}}}$
		in each $\mathcal{B}$ and $\overline{\mathcal{B}}$. We ignore them
		in the following consideration, which might increase the $\overline{f}$-distance
		by at most $\frac{2}{\sqrt{l_{n}}}$ according to Fact \ref{fact:omit_symbols}.
		Then we apply the symbol by block replacement Lemma~\ref{lem:symbol by block replacement} with $c_n(w)$ and $c_n(\overline{w})$ being represented by different symbols (that is, $\overline{f}=1$)
		and our assumptions (\ref{eq:CircularAssump1}) and (\ref{eq:CircularAssump2}),
		respectively. 
	\end{proof}
	Hereby, we can prove $\overline{f}$-estimates for substantial substrings
	of $2$-subsections.
	\begin{lem}
		\label{lem:CircularStep}Let $w,\overline{w}\in\mathcal{W}_{n+1}$,
		$0\leq i_{1},i_{2}<q_{n}$, and $\mathcal{B}$, $\overline{\mathcal{B}}$
		be any substrings of at least $\frac{k_{n}l_{n}q_{n}}{R_{n+1}}$ consecutive
		symbols in $\mathcal{C}_{n,i_{1}}(w)$ and $\mathcal{C}_{n,i_{2}}(\overline{w})$,
		respectively.
		\begin{enumerate}
			\item If $s(n+1)=s(n)$, then we have 
			\begin{equation}
				\overline{f}\left(\mathcal{B},\overline{\mathcal{B}}\right)>\begin{cases}
					\alpha_{s,n}^{c}-\frac{1}{R_{n}}-\frac{2}{R_{n+1}}-\frac{1}{R_{n}^{c}}-\frac{3}{\sqrt{l_{n}}}, & \text{if }[w]_{s}\neq[\overline{w}]_{s},\\
					\beta_{n}^{c}-\frac{1}{R_{n}}-\frac{2}{R_{n+1}}-\frac{1}{R_{n}^{c}}-\frac{3}{\sqrt{l_{n}}}, & \text{if }w\neq\overline{w}.
				\end{cases}\label{eq:CircularCase1}
			\end{equation}
			\item If $s(n+1)=s(n)+1$, then we have
			\begin{equation}
				\overline{f}\left(\mathcal{B},\overline{\mathcal{B}}\right)>\begin{cases}
					\alpha_{s,n}^{c}-\frac{1}{R_{n}}-\frac{3}{R_{n+1}}-\frac{1}{R_{n}^{c}}-\frac{3}{\sqrt{l_{n}}}, & \text{if }[w]_{s}\neq[\overline{w}]_{s},\\
					\beta_{n}^{c}-\frac{1}{R_{n}}-\frac{3}{R_{n+1}}-\frac{1}{R_{n}^{c}}-\frac{3}{\sqrt{l_{n}}}, & \text{if }[w]_{s(n)+1}\neq[\overline{w}]_{s(n)+1},\\
					\frac{1}{2\mathfrak{p}_{n+1}}\cdot\left(\beta_{n}^{c}-\frac{1}{R_{n}^{c}}-\frac{2}{\sqrt{l_{n}}}\right)-\frac{1}{R_{n+1}}-\frac{1}{\sqrt{l_{n}}}, & \text{if }w\neq\overline{w}.
				\end{cases}\label{eq:CircularCase2}
			\end{equation}
		\end{enumerate}
	\end{lem}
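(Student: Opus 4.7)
The plan is to view each $2$-subsection $\mathcal{C}_{n,i}(w)$ as a concatenation of $k_n$ building blocks of length $L = l_n q_n$ (namely the $1$-subsections $b^{q_n-j_i} c_n(w_j)^{l_n-1} e^{j_i}$, indexed by the $n$-block sequence $w_0 w_1 \cdots w_{k_n-1}$ of $w$) and to apply Lemma~\ref{lem:symbol by block replacement} (symbol-by-block replacement). The inner $\overline{f}$-bound on substantial substrings of individual $1$-subsections is supplied by Lemma~\ref{lem:CircularRepetition}, and the outer $\overline{f}$-bound on the sequence of $n$-block indices, viewed as symbols in $\Sigma_{n,s}$ or $\Sigma_n$, is supplied by Remark~\ref{rem:PrepTransfer1} in Case~1 and Remark~\ref{rem:PrepTransfer2} in Case~2.

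First I would handle end effects. Each of $\mathcal{B}, \overline{\mathcal{B}}$ contains at least $\lfloor k_n/R_{n+1}\rfloor - 1$ complete $1$-subsections, and the incomplete ones at the two endpoints have total length less than $4 l_n q_n$, a proportion of at most $\frac{4 R_{n+1}}{k_n}$ of the overall length. The growth conditions on $k_n$ (which contains a factor of $T_1 \ge p_{n+1}^2$) and the choice $R_{n+1} \ge 40 p_{n+1}/\beta_n^c$ guarantee that this proportion is at most $\frac{1}{R_{n+1}}$, so by Fact~\ref{fact:omit_symbols} the truncation of $\mathcal{B}, \overline{\mathcal{B}}$ to concatenations $\mathcal{B}', \overline{\mathcal{B}}'$ of complete $1$-subsections increases $\overline{f}$ by at most $\frac{1}{R_{n+1}}$.

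Next I would invoke Lemma~\ref{lem:symbol by block replacement} with $L = l_n q_n$ and $R = \sqrt{l_n}$. Lemma~\ref{lem:CircularRepetition} provides the inner $\overline{f}$-lower bound $\alpha = \alpha_{s,n}^c - \frac{1}{R_n^c} - \frac{2}{\sqrt{l_n}}$ (respectively $\beta_n^c - \frac{1}{R_n^c} - \frac{2}{\sqrt{l_n}}$) on substrings of $1$-subsections of length at least $L/R = \sqrt{l_n} q_n$ whenever the underlying $n$-blocks lie in different $\mathcal{Q}_s^n$-classes (respectively, are different $n$-words); since $\alpha_{s,n}^c < \frac{1}{8}$, this is well inside the range $(0, 1/7)$ required by Lemma~\ref{lem:symbol by block replacement}. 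The outer factor, namely the $\overline{f}$-distance on the sequence of $n$-block symbols of $w$ and $\overline{w}$, is at least $1 - \frac{4}{2^{e(n)}} - \frac{1}{R_{n+1}}$ by Remark~\ref{rem:PrepTransfer1} in Case~1 (both subcases) and by Remark~\ref{rem:PrepTransfer2} in Case~2 in the subcases $[w]_s \ne [\overline{w}]_s$ for $s \le s(n)$ and $[w]_{s(n)+1} \ne [\overline{w}]_{s(n)+1}$; in the remaining subcase of Case~2, when $w \ne \overline{w}$, equation~(\ref{eq:BaseWords}) gives the weaker outer bound $\frac{1}{2p_{n+1}}$.

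Combining these through Lemma~\ref{lem:symbol by block replacement}, using the elementary inequality $\alpha \cdot (1 - \delta) \ge \alpha - \delta$ for $\alpha \le 1$, and adding back the end-effect loss $\frac{1}{R_{n+1}}$, yields the stated inequalities once one notes that $\frac{4}{2^{e(n)}} < \frac{1}{R_n}$ by~(\ref{eq:kn}). The main work is the bookkeeping of error terms: the contributions $\frac{1}{R_n^c}$, $\frac{2}{\sqrt{l_n}}$ from Lemma~\ref{lem:CircularRepetition}, the extra $\frac{1}{\sqrt{l_n}}$ from applying Lemma~\ref{lem:symbol by block replacement} with $R = \sqrt{l_n}$, the end-effect $\frac{1}{R_{n+1}}$, and the outer-bound losses $\frac{4}{2^{e(n)}} + \frac{1}{R_{n+1}}$ (or the multiplicative factor $\frac{1}{2p_{n+1}}$ in the lowest subcase of Case~2) must all be absorbed into the prescribed slack of the statement, which the growth conditions~(\ref{eq:Rassum2}), (\ref{eq:Rn}), (\ref{eq:kn}), (\ref{eq:RupdateCircular1}), and~(\ref{eq:lCondCircular}) are designed to accommodate.
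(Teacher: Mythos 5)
Your proposal is correct and follows essentially the same route as the paper: truncate to complete $1$-subsections (losing at most $\frac{1}{R_{n+1}}$ by Fact~\ref{fact:omit_symbols}, using that $k_n = U_1 T_1 \ge 2R_{n+1}^2 T_1$), then apply Lemma~\ref{lem:symbol by block replacement} with $L = l_n q_n$, $R = \sqrt{l_n}$, the inner bound from Lemma~\ref{lem:CircularRepetition}, and the outer bound from Remark~\ref{rem:PrepTransfer1} or~\ref{rem:PrepTransfer2}. One small imprecision worth flagging: for Case~2 you assert the outer factor is at least $1 - \frac{4}{2^{e(n)}} - \frac{1}{R_{n+1}}$, but Remark~\ref{rem:PrepTransfer2} (read at stage $n+1$) actually gives $1 - \frac{4}{2^{e(n)}} - \frac{2}{R_{n+1}} - \frac{2}{p_{n+1}}$, which is better packaged as $1 - \frac{1}{R_n} - \frac{2}{R_{n+1}}$; substituting that corrected form into your bookkeeping produces exactly the $-\frac{3}{R_{n+1}}$ appearing in the Case~2 bounds (rather than the slightly-too-strong $-\frac{2}{R_{n+1}}$ your version would yield), so the overall argument is sound once that constant is fixed.
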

	
	\begin{proof}
		By adding fewer than $2l_{n}q_{n}$ symbols to each of $\mathcal{B}$
		and $\overline{\mathcal{B}}$ we can complete any partial $\mathcal{C}_{n,i_{1}}(w_{j})$
		at the beginning and end of $\mathcal{B}$ and any partial $\mathcal{C}_{n,i_{2}}(\overline{w}_{j})$
		at the beginning and end of $\overline{\mathcal{B}}$, where $w_{j},\overline{w}_{j}\in\mathcal{W}_{n}$.
		Let $\mathcal{B}_{aug}$ and $\overline{\mathcal{B}}_{aug}$ be the
		augmented $\mathcal{B}$ and $\overline{\mathcal{B}}$ strings obtained
		in this way. By Fact \ref{fact:omit_symbols} we have 
		\begin{equation}
			\overline{f}\left(\mathcal{B},\overline{\mathcal{B}}\right)\geq\overline{f}\left(\mathcal{B}_{aug},\overline{\mathcal{B}}_{aug}\right)-\frac{2R_{n+1}}{k_{n}}>\overline{f}\left(\mathcal{B}_{aug},\overline{\mathcal{B}}_{aug}\right)-\frac{1}{R_{n+1}}.\label{eq:CircularAug}
		\end{equation}
		We consider the corresponding strings $B_{aug}$ and $\overline{B}_{aug}$
		in the words $w,\overline{w}\in\mathcal{W}_{n+1}$ expressed in the
		alphabets $\Sigma_{n,s}=\left(\mathcal{W}_{n}/\mathcal{Q}_{s}^{n}\right)^{\ast}$
		or $\Sigma_{n}=\left(\mathcal{W}_{n}\right)^{\ast}$. They have length
		at least $\frac{k_{n}}{R_{n+1}}$. Hence, we can apply Remarks \ref{rem:PrepTransfer1}
		or \ref{rem:PrepTransfer2} depending on whether we are in case 1
		($s(n+1)=s(n)$) or case 2 ($s(n+1)=s(n)+1$) of the construction. 
		
		We examine case 2. If $[w]_{s}\neq[\overline{w}]_{s}$ for $s\leq s(n)$,
		then we have $\overline{f}_{\Sigma_{n,s}}\left(B_{aug},\overline{B}_{aug}\right)\geq1-\frac{1}{R_{n}}-\frac{2}{R_{n+1}}$.
		We apply Lemma \ref{lem:symbol by block replacement} with $\overline{f}\geq1-\frac{1}{R_{n}}-\frac{2}{R_{n+1}}$
		and the estimate from (\ref{eq:CircularRep1}) to obtain 
		\begin{align*}
			\overline{f}\left(\mathcal{B}_{aug},\overline{\mathcal{B}}_{aug}\right) & \geq\left(1-\frac{1}{R_{n}}-\frac{2}{R_{n+1}}\right)\cdot\left(\alpha_{s,n}^{c}-\frac{1}{R_{n}^{c}}-\frac{2}{\sqrt{l_{n}}}\right)-\frac{1}{\sqrt{l_{n}}}\\
			& \geq\alpha_{s,n}^{c}-\frac{1}{R_{n}}-\frac{2}{R_{n+1}}-\frac{1}{R_{n}^{c}}-\frac{3}{\sqrt{l_{n}}}.
		\end{align*}
		Together with (\ref{eq:CircularAug}) we deduce the first inequality
		in (\ref{eq:CircularCase2}). In an analogous manner, we obtain the
		second inequality in (\ref{eq:CircularCase2}) from (\ref{eq:CircularRep2})
		and $\overline{f}_{\Sigma_{n}}\left(B_{aug},\overline{B}_{aug}\right)\geq1-\frac{1}{R_{n}}-\frac{2}{R_{n+1}}$
		following from (\ref{eq:BaseClass}). To see the third inequality,
		we note that $\overline{f}_{\Sigma_{n}}\left(B_{aug},\overline{B}_{aug}\right)\geq\frac{1}{2\mathfrak{p}_{n+1}}$
		by (\ref{eq:BaseWords}) in Remark \ref{rem:PrepTransfer2}. Thus,
		we apply Lemma \ref{lem:symbol by block replacement} and the estimate
		from (\ref{eq:CircularRep2}) to obtain
		\[
		\overline{f}\left(\mathcal{B}_{aug},\overline{\mathcal{B}}_{aug}\right)\geq\frac{1}{2\mathfrak{p}_{n+1}}\cdot\left(\beta_{n}^{c}-\frac{1}{R_{n}^{c}}-\frac{2}{\sqrt{l_{n}}}\right)-\frac{1}{\sqrt{l_{n}}},
		\]
		which yields the claim with the aid of (\ref{eq:CircularAug}).
		
		By the same method we can also examine case 1 and conclude (\ref{eq:CircularCase1}).
	\end{proof}
	As an immediate consequence from the previous lemma we obtain $\overline{f}$-estimates
	for $(n+1)$-words in the circular system. 
	\begin{lem}
		\label{lem:CircularBlock}Let $w,\overline{w}\in\mathcal{W}_{n+1}$
		and $\mathcal{B}$, $\overline{\mathcal{B}}$ be any substrings of
		at least $q_{n+1}/R_{n+1}^{c}$ consecutive symbols in $c_{n+1}(w)$
		and $c_{n+1}(\overline{w})$, respectively. 
		\begin{enumerate}
			\item If $s(n+1)=s(n)$, then we have
			\begin{equation}
				\overline{f}\left(\mathcal{B},\overline{\mathcal{B}}\right)>\begin{cases}
					\alpha_{s,n}^{c}-\frac{2}{R_{n}}-\frac{3}{R_{n+1}}, & \text{if }[w]_{s}\neq[\overline{w}]_{s},\\
					\beta_{n}^{c}-\frac{2}{R_{n}}-\frac{3}{R_{n+1}}, & \text{if }w\neq\overline{w}.
				\end{cases}\label{eq:CircularBlock1}
			\end{equation}
			\item If $s(n+1)=s(n)+1$, then we have
			\begin{equation}
				\overline{f}\left(\mathcal{B},\overline{\mathcal{B}}\right)>\begin{cases}
					\alpha_{s,n}^{c}-\frac{2}{R_{n}}-\frac{4}{R_{n+1}}, & \text{if }[w]_{s}\neq[\overline{w}]_{s},\\
					\beta_{n}^{c}-\frac{2}{R_{n}}-\frac{4}{R_{n+1}}, & \text{if }[w]_{s(n)+1}\neq[\overline{w}]_{s(n)+1},\\
					\frac{1}{2\mathfrak{p}_{n+1}}\cdot\left(\beta_{n}^{c}-\frac{1}{R_{n}^{c}}-\frac{2}{\sqrt{l_{n}}}\right)-\frac{3}{R_{n+1}}, & \text{if }w\neq\overline{w}.
				\end{cases}\label{eq:CircularBlock2}
			\end{equation}
		\end{enumerate}
	\end{lem}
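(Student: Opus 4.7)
The plan is a direct application of Lemma \ref{lem:symbol by block replacement} at the scale of $2$-subsections, with the internal $\overline{f}$-estimates supplied by the already-proved Lemma \ref{lem:CircularStep}.

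First, I observe via (\ref{eq:Rcircular}) that each of $\mathcal{B}$ and $\overline{\mathcal{B}}$ has length at least $\sqrt{l_{n-1}}\cdot k_n l_n q_n$, so each contains at least $\sqrt{l_{n-1}}-1$ complete $2$-subsections (each of length $L := k_n l_n q_n$). I would complete any partial $2$-subsections at the boundaries, adding fewer than $2L$ symbols to each string; by Fact \ref{fact:omit_symbols}, this degrades the $\overline{f}$-distance by at most $\frac{2}{\sqrt{l_{n-1}}}$, which thanks to (\ref{eq:lCondCircular}) is bounded by $\frac{1}{R_{n+1}}$. Let $\mathcal{B}_{aug}$, $\overline{\mathcal{B}}_{aug}$ denote the augmented strings.

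Next, I view $\mathcal{B}_{aug}$ and $\overline{\mathcal{B}}_{aug}$ as concatenations of complete $2$-subsections $\mathcal{C}_{n,i}(w)$ and $\mathcal{C}_{n,i'}(\overline{w})$ of common length $L$, and label every block from $c_{n+1}(w)$ with the single subscript ``$w$'' (ignoring the varying index $i$) and every block from $c_{n+1}(\overline{w})$ with ``$\overline{w}$''. With $R = R_{n+1}$, Lemma \ref{lem:CircularStep} verifies the within-block hypothesis of Lemma \ref{lem:symbol by block replacement}: substantial substrings of length $\geq L/R_{n+1}$ drawn from any two blocks of differing subscript are $\overline{f}$-apart by the quantity $\alpha$ displayed in the appropriate subcase of (\ref{eq:CircularCase1}) or (\ref{eq:CircularCase2}), depending on the relationship ($[w]_s\neq[\overline{w}]_s$, $[w]_{s(n)+1}\neq[\overline{w}]_{s(n)+1}$, or $w\neq\overline{w}$) that we are assuming. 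Since in each subcase every block label on one side differs from every label on the other, the two subscript strings have $\overline{f}$-distance exactly $1$, and Lemma \ref{lem:symbol by block replacement} yields
\[
\overline{f}(\mathcal{B}_{aug},\overline{\mathcal{B}}_{aug})>\alpha\cdot 1-\frac{1}{R_{n+1}}.
\]
Combining with the boundary estimate gives $\overline{f}(\mathcal{B},\overline{\mathcal{B}})>\alpha-\frac{2}{R_{n+1}}$.

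What remains in each of the five subcases of (\ref{eq:CircularBlock1})--(\ref{eq:CircularBlock2}) is a routine arithmetic check that the residual terms $\frac{1}{R_n^c}+\frac{3}{\sqrt{l_n}}$ carried over from Lemma \ref{lem:CircularStep} fit into the extra slack (an additional $\frac{1}{R_n}$ or $\frac{1}{R_{n+1}}$) between the bound supplied by that lemma and the bound claimed here. This is immediate: $R_n^c=\lfloor\sqrt{l_{n-2}}\,k_{n-2}q_{n-2}^2\rfloor$ is enormous, and (\ref{eq:lCondCircular}) forces $\sqrt{l_n}\geq 2R_{n+2}$, so both residual terms are dominated by a small multiple of $\frac{1}{R_{n+1}}$. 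I do not anticipate any conceptual obstacle; the only subtlety is that the estimates in Lemma \ref{lem:CircularStep} are uniform in the indices $i_1,i_2$, which is what legitimizes collapsing all $2$-subsections of a single $c_{n+1}(w)$ to one symbol when we invoke Lemma \ref{lem:symbol by block replacement}.
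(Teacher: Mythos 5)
Your proposal is correct and follows essentially the same route as the paper's proof: complete the partial $2$-subsections at the boundaries (losing at most $2/\sqrt{l_{n-1}}$ by Fact \ref{fact:omit_symbols}), apply Lemma \ref{lem:symbol by block replacement} with the symbol-level distance equal to $1$ and the within-block hypothesis supplied by Lemma \ref{lem:CircularStep}, and then absorb the residual terms $\frac{1}{R_n^c}$, $\frac{2}{\sqrt{l_{n-1}}}$, $\frac{3}{\sqrt{l_n}}$ into the available slack using (\ref{eq:lCondCircular}). The only difference is cosmetic bookkeeping of which error term is charged to $\frac{1}{R_n}$ versus $\frac{1}{R_{n+1}}$, and your closing remark about the uniformity of Lemma \ref{lem:CircularStep} in $i_1,i_2$ correctly identifies the point that justifies collapsing all $2$-subsections of $c_{n+1}(w)$ to a single block label.
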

	
	\begin{proof}
		In case of $n>0$ we note that $\mathcal{B}$ and $\overline{\mathcal{B}}$
		have at least the length of $\sqrt{l_{n-1}}$ complete $2$-subsections
		by equation (\ref{eq:Rcircular}). By adding fewer than $2l_{n}k_{n}q_{n}$
		symbols to each of $\mathcal{B}$ and $\overline{\mathcal{B}}$, we
		can complete any partial $2$-subsections at the beginning and end
		of $\mathcal{B}$ and $\overline{\mathcal{B}}$. This change will increase
		the $\overline{f}$ distance between $\mathcal{B}$ and $\overline{\mathcal{B}}$
		by less than $2/\sqrt{l_{n-1}}$ due to Fact \ref{fact:omit_symbols}.
		Then we apply Lemma \ref{lem:symbol by block replacement} with $\overline{f}=1$
		and the corresponding estimates from Lemma \ref{lem:CircularStep}.
		In case of $s(n+1)=s(n)$ this yields
		\[
		\overline{f}\left(\mathcal{B},\overline{\mathcal{B}}\right)>\begin{cases}
			\alpha_{s,n}^{c}-\frac{1}{R_{n}}-\frac{3}{R_{n+1}}-\frac{1}{R_{n}^{c}}-\frac{2}{\sqrt{l_{n-1}}}-\frac{3}{\sqrt{l_{n}}}, & \text{if }[w]_{s}\neq[\overline{w}]_{s},\\
			\beta_{n}^{c}-\frac{1}{R_{n}}-\frac{3}{R_{n+1}}-\frac{1}{R_{n}^{c}}-\frac{2}{\sqrt{l_{n-1}}}-\frac{3}{\sqrt{l_{n}}}, & \text{if }w\neq\overline{w}.
		\end{cases}
		\]
		We use the conditions on the sequence $(l_{n})_{n\in\mathbb{N}}$
		from (\ref{eq:lCondCircular}) to estimate 
		\[
		\frac{1}{R_{n}^{c}}+\frac{2}{\sqrt{l_{n-1}}}+\frac{3}{\sqrt{l_{n}}}\leq\frac{1}{\sqrt{l_{n-2}}}+\frac{3}{\sqrt{l_{n-1}}}\leq\frac{2}{\sqrt{l_{n-2}}}\leq\frac{1}{R_{n}},
		\]
		which implies (\ref{eq:CircularBlock1}). Similarly, we obtain (\ref{eq:CircularBlock2}).
	\end{proof}
	This accomplishes the induction step. In particular, we note with
	the aid of (\ref{eq:RupdateCircular1}) that 
	\[
	\beta_{n+1}^{c}>\frac{\beta_{n}^{c}}{4\mathfrak{p}_{n+1}}-\frac{3}{R_{n+1}}\geq\frac{7\beta_{n}^{c}}{40\mathfrak{p}_{n+1}}\geq\frac{7}{R_{n+1}},
	\]
	that is, $R_{n+1}>\frac{7}{\beta_{n+1}^{c}}$ and the induction assumption
	(\ref{eq:RAssumCirc2}) for the next step is satisfied. Since we have
	\[
	\alpha_{s,M(s)}^{c}=\beta_{M(s)-1}^{c}-\frac{2}{R_{M(s)-1}}-\frac{4}{R_{M(s)}}\geq\beta_{M(s)-1}^{c}-\frac{2}{7}\beta_{M(s)-1}^{c}-\frac{\beta_{M(s)-1}^{c}}{10\mathfrak{p}_{M(s)}}>\frac{\beta_{M(s)-1}^{c}}{2}
	\]
	by (\ref{eq:RAssumCirc2}) and (\ref{eq:RupdateCircular1}), we also obtain
	\[
	\alpha_{s,n}^{c}\geq\alpha_{s,M(s)}^{c}-\frac{2}{R_{M(s)}}-\sum_{j>M(s)}\frac{6}{R_{j}}\geq\alpha_{s,M(s)}^{c}-\frac{\beta_{M(s)-1}^{c}}{20\mathfrak{p}_{M(s)}}-\frac{6\beta_{M(s)-1}^{c}}{40}>\frac{\alpha_{s,M(s)}^{c}}{2}
	\]
	for any $n>M(s)$ by conditions (\ref{eq:RAssumCirc2}) and (\ref{eq:RupdateCircular1}). Hereby, we see
	that for every $s\in\mathbb{N}$ the decreasing sequence $\left(\alpha_{s,n}^{c}\right)_{n\geq M(s)}$
	is bounded from below by $\frac{\alpha_{s,M(s)}^{c}}{2}$, and we denote
	its limit by $\alpha_{s}^{c}$. Altogether, this proves the following proposition which is the counterpart of Proposition~\ref{prop:conclusio} for circular systems.
	\begin{prop}\label{prop:fCircular}
		For every $n\geq M(s)$, we have
		\begin{equation}
			\overline{f}\left(\mathcal{A},\overline{\mathcal{A}}\right)>\alpha_{s}^{c}\label{eq:ConclusioCircular}
		\end{equation}
		on any substrings $\mathcal{A}$, $\overline{\mathcal{A}}$ of at
		least $q_{n}/R_{n}^{c}$ consecutive symbols in $c_{n}(w)$ and $c_{n}(\overline{w})$,
		respectively, for $w,\overline{w}\in\mathcal{W}_{n}$ with $[w]_{s}\neq[\overline{w}]_{s}$.
	\end{prop}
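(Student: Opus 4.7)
The plan is to obtain Proposition \ref{prop:fCircular} as a clean consolidation of the inductive estimates already established in Lemmas \ref{lem:CircularBase} through \ref{lem:CircularBlock}, together with the definition of the limiting constants $\alpha_s^c$ described in the paragraph immediately preceding the statement. The strategy mirrors exactly the way Proposition \ref{prop:conclusio} was deduced from the analogous $\overline{f}$-estimates on the odometer-based side.

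First, for every fixed $s \in \mathbb{Z}^+$ and every $n \geq M(s)$ with $[w]_s \neq [\overline w]_s$, I would apply the appropriate part of Lemma \ref{lem:CircularBlock} (case (1) if $s(n+1)=s(n)$ and case (2) if $s(n+1)=s(n)+1$) to the circular $n$-blocks $c_n(w)$ and $c_n(\overline w)$. When $n = M(s)$, the hypothesis $[w]_s \neq [\overline w]_s$ corresponds, in case 2, to the newly created equivalence relation, so the relevant inequality is the one involving $\beta_{M(s)-1}^c$; this is exactly the definition of $\alpha_{s,M(s)}^c$ recorded earlier. For $n > M(s)$ the update is always of the form $\alpha_{s,n}^c = \alpha_{s,n-1}^c - \tfrac{2}{R_{n-1}} - \tfrac{c}{R_n}$ with $c\in\{3,4\}$, which is precisely the recursion implicit in Lemma \ref{lem:CircularBlock}.

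Second, I would verify that the decreasing sequence $(\alpha_{s,n}^c)_{n\geq M(s)}$ is bounded below by a positive number, so that its limit $\alpha_s^c$ is positive and the estimate $\overline{f}(\mathcal{A},\overline{\mathcal{A}}) > \alpha_{s,n}^c \geq \alpha_s^c$ is meaningful. This follows from the growth conditions (\ref{eq:Rn}), (\ref{eq:RAssumCirc2}) and (\ref{eq:RupdateCircular1}) on $R_n$: using $R_n \geq 40 p_n / \beta_{n-1}^c$ we obtain $\alpha_{s,M(s)}^c > \beta_{M(s)-1}^c/2$, and then telescoping
\[
\alpha_{s,n}^c \geq \alpha_{s,M(s)}^c - \frac{2}{R_{M(s)}} - \sum_{j>M(s)} \frac{6}{R_j}
\]
together with $\sum_j 1/R_j \leq \beta_{M(s)-1}^c/20$ gives $\alpha_{s,n}^c > \alpha_{s,M(s)}^c/2$. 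Thus $\alpha_s^c \coloneqq \lim_{n\to\infty} \alpha_{s,n}^c \geq \alpha_{s,M(s)}^c/2 > 0$.

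Since the entire inductive machinery has already been set up and verified in the preceding lemmas, I do not anticipate any genuine obstacle here: the statement is the final bookkeeping step that records, in a uniform form valid for all $n\geq M(s)$, the $\overline f$-separation of $s$-inequivalent circular $n$-blocks. The only point requiring care is the choice of which case of Lemma \ref{lem:CircularBlock} to invoke at each stage (and matching it to the recursion that defines $\alpha_{s,n}^c$), but this is entirely parallel to the odometer-based argument culminating in Proposition \ref{prop:conclusio}.
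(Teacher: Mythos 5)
Your proposal is correct and follows essentially the same approach as the paper: the proposition is indeed the bookkeeping consolidation of Lemmas \ref{lem:CircularBase}--\ref{lem:CircularBlock} and the definition of $\alpha_s^c$ as the limit of the decreasing sequence $(\alpha_{s,n}^c)_{n\geq M(s)}$, exactly parallel to the argument culminating in Proposition~\ref{prop:conclusio}. One small numerical slip: the paper's bound is $\sum_{j>M(s)}1/R_j \le \beta_{M(s)-1}^c/40$ (not $/20$), which is what is actually needed to conclude $\alpha_{s,n}^c > \alpha_{s,M(s)}^c/2$ after multiplying by $6$ and using $\beta_{M(s)-1}^c<2\alpha_{s,M(s)}^c$.
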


	\subsection{\label{subsec:NonEquivSmooth}Proof of Non-Kakutani Equivalence}
	
	We follow the strategy in Section \ref{sec:Non-Equiv} to show that
	for a tree $\mathcal{T}\in\mathcal{T}\kern-.5mm rees$ without an infinite branch
	$T_{c}=\mathcal{F}(\Psi(\mathcal{T}))$ and $T_{c}^{-1}=\mathcal{F}(\Psi(\mathcal{T}))^{-1}$
	are not Kakutani equivalent.
	\begin{lem}
		Let $\mathcal{T}\in\mathcal{T}\kern-.5mm rees$, $T_{c}=\mathcal{F}(\Psi(\mathcal{T}))$,
		and $f:X\to\mathbb{Z}^{+}$ be integrable. If $T_{c}^{f}\cong T_{c}$,
		then $\int f\mathrm{d}\mu=1$.
	\end{lem}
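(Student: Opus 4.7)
The plan is to adapt the proof of Proposition \ref{prop:NonIsom} to the circular setting, with the circular $\overline{f}$-estimates from Proposition \ref{prop:fCircular} taking the place of those in Proposition \ref{prop:conclusio}. Assume for contradiction that $T_c^f \cong T_c$ by an isomorphism $\Phi$ with $\int f\, \mathrm{d}\mu > 1$. Let $P_c$ denote the natural generating partition of $X_c$ according to the symbol in $\Sigma \cup \{b,e\}$ at position $0$, and $P_c^f$ the corresponding partition of $X_c^f$. For arbitrarily small $\tau > 0$, approximate $\Phi^{-1}(P_c^f)$ by a finite code $P_c(\tau) \subset \bigvee_{i=-K(\tau)}^{K(\tau)} T_c^i P_c$, and compare the $(T_c, P_c(\tau))$-name and the $(T_c, \Phi^{-1}(P_c^f))$-name of a $\mu$-typical point $x$.

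Via the functor $\mathcal{F}$ and the bijection $c_n : \mathcal{W}_n \to \mathcal{W}_n^c$, each $c_n(w)$ inherits a decomposition into circular $1$-Feldman patterns corresponding to the $1$-Feldman patterns of $w$; each substitution instance $[A_i]_1^{T_1}$ in the odometer setting is encoded under $c_{n-1}$ as a circular string built with the $\mathcal{C}_{n-1,i}$ operator. Decompose both names into these circular $1$-Feldman patterns $F_{n,l}$ and $\tilde F_{n,\tilde l}$ and consider the intersections $I_i = F_{n,l} \cap \tilde F_{n,\tilde l}$ as in Section \ref{subsec:SpecialTrans}. The circular analog of Lemma \ref{lem:densitySame} then follows verbatim from Fact \ref{fact:sameLength}, since typical $(T_c,P_c)$-$n$-blocks are expanded by a factor close to $\int f\, \mathrm{d}\mu$ under $T_c^f$; this yields that the density of indices for which $F_{n,l}$ and $\tilde F_{n,\tilde l}$ share the same $1$-pattern type is at most $1/\int f\, \mathrm{d}\mu + O(\delta)$.

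The main step is the circular analog of Lemma \ref{lem:distDiff}: on each $I_i$ with $|I_i| \geq \delta f_n^c$ and distinct $1$-pattern types, I would apply the Coding Lemma with $L$ a suitable multiple of $q_{n-1}$ and $N = 2^{4e(n-1)-t_1}$. The key uniformity from Lemma \ref{lem:Occurrence-Substitutions} transfers through $c_n$: each maximal repetition $(a_{x,y})^{p_{n+1}^\prime}$ is replaced by a circular string of the form $\mathcal{C}_{n-1,i}((a_{x,y})^{p_{n+1}^\prime})$, and these occur the same number of times in each substitution block. Since $h_n^c \gg |P_c(\tau)|$, the code acts essentially constantly on such repetitions up to end effects, so the substrings $\Lambda_{i,j}$ of the Coding Lemma can be extracted from $P_c(\tau)(F_{n,l})$ up to $\overline{d}$-error $O(1/p_{n+1}^\prime)$. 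The required $\overline{f}$-separation between building blocks of distinct $\mathcal{Q}_1$-class is supplied by Proposition \ref{prop:fCircular}. Combining the circular analogs of Lemmas \ref{lem:distDiff} and \ref{lem:densitySame} gives a lower bound on the $\overline{d}$-distance between the $(T_c,P_c(\tau))$ and $(T_c,\Phi^{-1}(P_c^f))$ names that is bounded away from $0$ as $\tau \to 0$, contradicting $d(P_c(\tau),\Phi^{-1}(P_c^f)) < \tau$.

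The main obstacle is careful bookkeeping of the circular spacers $b$ and $e$: they inflate the building blocks at every subscale and they might match each other across Feldman patterns under a best possible $\overline{f}$-pairing, potentially inflating the number of $\mathcal{M}$-matches that the Coding Lemma tries to bound from above. However, condition \eqref{eq:lCondCircular} guarantees that the total density of spacers at any finite collection of subscales is $O(\sum 1/l_n)$, which is negligible relative to the separations $\alpha_1^c$ and $\beta^c_n$ obtained in Proposition \ref{prop:fCircular}. Any spacer-to-spacer matches contribute at most this negligible density via Fact \ref{fact:omit_symbols}, so the Coding Lemma estimate carries over with a modified constant, forcing $\int f\,\mathrm{d}\mu = 1$.
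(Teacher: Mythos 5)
Your proposal correctly adapts Proposition \ref{prop:NonIsom} by decomposing the circular names into circular images $\mathcal{C}_{n,j}(F_{n,i})$ of odometer-level $1$-Feldman patterns and invoking the circular $\overline{f}$-estimates from Proposition \ref{prop:fCircular} in place of Proposition \ref{prop:conclusio}; this is exactly the approach the paper takes (stated there only as a one-sentence sketch). The index offset between your $q_{n-1}$, $2^{4e(n-1)-t_1}$ and the paper's $q_n$, $2^{4e(n)-t_1}$ is a harmless notational shift, and your handling of the $b,e$ spacers via Fact \ref{fact:omit_symbols} together with the growth condition \eqref{eq:lCondCircular} is the right way to see that the Coding Lemma estimate survives in the circular system.
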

	
	\begin{proof}
		The proof follows along the lines of Proposition \ref{prop:NonIsom}.
		This time, one decomposes the name $a(T_{c},P_{c},x)$ in the circular
		system into strings of the form $\mathcal{C}_{n,j}\left(F_{n,i}\right)$,
		where $0\leq j<q_{n}$ and $F_{n,i}$ is a Feldman pattern of $1$-equivalence
		classes of $n$-words in the odometer-based system (as considered
		in the proof of Proposition \ref{prop:NonIsom}). 
	\end{proof}
	Hereby, we can show as in Proposition \ref{prop:only-even} that
	if $\mathbb{K}^{c}=\mathcal{F}(\Psi(\mathcal{T}))$ and $(\mathbb{K}^{c})^{-1}=\mathcal{F}(\Psi(\mathcal{T}))^{-1}$
	are Kakutani equivalent, then it can only be by an even equivalence.
	We want to show that $\mathbb{K}^{c}$ and $(\mathbb{K}^{c})^{-1}$ are not evenly
	equivalent if $\mathcal{T}\in\mathcal{T}\kern-.5mm rees$ does not have an infinite
	branch. For this purpose, we prove the following analogues of Lemmas \ref{lem:BadCoding0} and \ref{lem:BadCoding}.
	\begin{lem}
		\label{lem:BadCodingCircular0}Let $s\in\mathbb{N}$ and $\phi$ be a finite
		code from $\mathbb{K}^{c}$ to $rev\left(\mathbb{K}^{c}\right)$. Then for sufficiently large $n\in\mathbb{N}$, $0\leq i_{1},i_{2}<q_{n}$,
		and any pair of $s$-Feldman patterns $P_{n,\ell}$ and $\overline{P}_{n,\overline{\ell}}$ in $\mathbb{K}$  
		of different pattern type we have 
		\[
		\overline{f}\left(\phi(\mathcal{G}),\overline{\mathcal{G}}\right)>\frac{\alpha^c_{s}}{10}
		\]
		for any substrings $\mathcal{G}$, $\overline{\mathcal{G}}$ of at least $\frac{|P_{n,\ell}|\cdot l_n q_n}{2^{2e(n)}h_n}=\frac{|\overline{P}_{n,\overline{\ell}}|\cdot l_n q_n}{2^{2e(n)}h_n}$
		consecutive symbols in $\mathcal{C}_{n,i_{1}}\left(P_{n,\ell}\right)$ and $\mathcal{C}_{n,i_{2}}^{r}\left(\overline{P}_{n,\overline{\ell}}\right)$, respectively.
	\end{lem}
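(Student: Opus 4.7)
The approach will mirror the proof of Lemma \ref{lem:BadCoding0}, adapted to the circular setting, with Proposition \ref{prop:fCircular} playing the role that Proposition \ref{prop:conclusio} played there and the Coding Lemma \ref{lem:CodingLemma} used in the same way. First I would choose $n > M(s)+1$ sufficiently large that $R_n^c > 110/\alpha_s^c$, that end effects relative to the code length are negligible (in particular $l_n q_n > 2^{4e(n)}\,|\phi|$), and that the spacer-induced errors of order $1/l_n$ and $1/l_{n-1}$ appearing in Lemma \ref{lem:CircularBlock}-style estimates can be absorbed into the final bound. Denote the tuples of building blocks for $[P_{n,\ell}]_s$ and $[\overline P_{n,\overline\ell}]_s$ by $([A_1]_s,\dots,[A_N]_s)$ and $([\overline A_1]_s,\dots,[\overline A_N]_s)$, with $N=2^{4e(n)-t_s}$, and let $r,\overline r$ be the respective $s$-pattern types.

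Under $\mathcal{C}_{n,i_1}$ each substitution instance $a_{k,y}$ of $[A_k]_s$ is replaced by the $1$-subsection $b^{q_n-j_{i_1}} c_n(a_{k,y})^{l_n-1} e^{j_{i_1}}$ of length exactly $l_n q_n$, and analogously for the reverse operator $\mathcal{C}^r_{n,i_2}$. I would then set, in the notation of the Coding Lemma, $L=l_n q_n$, keep $N$ as above, $z = N^{2(M_{s+1}+1-\overline r)}$, $q = T_{s+1}\,N^{2\overline r}$, and take the $\Upsilon_{i,m}$ to correspond to the circularized substitution instances of the maximal repetitions $[\overline A_m]_s^{T_{s+1}N^{2\overline r}}$ in $\mathcal{C}^r_{n,i_2}(\overline P_{n,\overline\ell})$. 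The hypothesis of the Coding Lemma is verified via Proposition \ref{prop:fCircular}: after deleting the $\le 2/l_n$ proportion of spacers (Fact \ref{fact:omit_symbols}), any two substantial substrings of length at least $L/p_{n+1}$ coming from circular $n$-blocks in distinct $\mathcal{Q}_s^n$-classes are $\alpha_s^c$-apart in $\overline f$.

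In both cases $r<\overline r$ and $r>\overline r$ I would decompose $\mathcal{C}_{n,i_1}(P_{n,\ell})$ into substitution instances $\tilde\Lambda_{i,m}$ exactly as in Lemma \ref{lem:BadCoding0}. The key periodicity observation transfers: by Lemma \ref{lem:Occurrence-Substitutions} and Remark \ref{rem:Substitution-Dagger}, each substitution instance $a_{k,y}$ of $[A_k]_s$ appears as a $p_n'$-fold consecutive repetition $(a_{k,y})^{p_n'}$ the same number of times in each $\tilde\Lambda_{i,m}$; under $\mathcal{C}_{n,i_1}$ this becomes $p_n'$ consecutive identical $1$-subsections. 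Because $|\phi| < l_n q_n$, the coded images of these $p_n'$ identical circular blocks agree in $\overline d$ within $2/p_n'$, which allows one to decompose $\phi(\mathcal G)$ into permutations $\Lambda_{i,m}$ up to an $\overline f$-approximation error of $2/p_n'$. Applying the Coding Lemma then yields
\[
\overline f\bigl(\phi(\mathcal G),\overline{\mathcal G}\bigr) > \alpha_s^c\Bigl(\tfrac{1}{8}-\tfrac{2}{2^{e(n)-t_s/4}}\Bigr) - \tfrac{1}{R_n^c} - \tfrac{4}{N} - \tfrac{2}{p_n'} - \tfrac{2}{l_n} > \tfrac{\alpha_s^c}{10}
\]
for $n$ large enough, invoking conditions \eqref{eq:Pn}, \eqref{eq:kn}, and \eqref{eq:lCondCircular}.

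The main technical nuisance will be the spacers $b^{q_n-j_{i_1}}, e^{j_{i_1}}$ and their reverse-side counterparts $b^{j_{i_2+1}}, e^{q_n-j_{i_2+1}}$: they break the strict symbol-level periodicity exploited in the odometer case, and the different values of $j_{i_1}$ versus $j_{i_2}$ mean the two strings' spacer patterns are not aligned. However, since spacers occupy a proportion at most $2/l_n$ of each $1$-subsection and the code length is far smaller than $l_n q_n$, Fact \ref{fact:omit_symbols} absorbs both the spacer density and the boundary mismatches into a small additive error that the final estimate $\alpha_s^c/10$ comfortably accommodates; this is the only point where the circular argument differs substantively from its odometer-based prototype.
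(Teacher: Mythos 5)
Your proposal is correct and takes essentially the same route as the paper, whose own proof is literally the one-line remark ``The proof follows along the lines of Lemma~\ref{lem:BadCoding0}''; you have correctly carried out the adaptation, replacing $L=h_{n-1}$ by $L=l_nq_n$, Proposition~\ref{prop:conclusio} by Proposition~\ref{prop:fCircular} (together with Lemma~\ref{lem:CircularRepetition} for substrings of $1$-subsections), and noting that the spacers contribute only an $O(1/l_n)$ error that is absorbed. The only quibbles are small bookkeeping slips: the $\overline d$-error from the periodic $p$-fold repetitions should be stated with $p_{n+1}'$ (not $p_n'$) since the $s$-Feldman patterns $P_{n,\ell}$ sit inside $(n+1)$-words, and the $-1/R_n^c$ term in your final display does not quite match your earlier choice of $R_1=p_{n+1}$ in the Coding Lemma hypothesis, but both versions give bounds strong enough for the $\alpha_s^c/10$ conclusion.
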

	\begin{proof}
		The proof follows along the lines of Lemma \ref{lem:BadCoding0}.
	\end{proof}
	
	\begin{lem}
		\label{lem:BadCodingCircular}Let $s\in\mathbb{N}$ and $\phi$ be
		a finite code from $\mathbb{K}^{c}$ to $rev\left(\mathbb{K}^{c}\right)$.
		Moreover, let $n\in\mathbb{N}$ be sufficiently large, $0\leq i_{1},i_{2}<q_{n}$,
		and $w,\overline{w}\in\mathcal{W}_{n+1}$ be any pair such that $w$
		and $rev(\overline{w})$ are of different $s$-pattern type. Then
		we have 
		\[
		\overline{f}\left(\phi\left(\mathcal{A}\right),\overline{\mathcal{A}}\right)>\frac{\alpha_{s}^{c}}{16}
		\]
		on any substrings $\mathcal{A}$, $\overline{\mathcal{A}}$ of at
		least $\frac{k_{n}l_{n}q_{n}}{2^{e(n)}}$ consecutive symbols in $\mathcal{C}_{n,i_{1}}(w)$
		and $\mathcal{C}_{n,i_{2}}^{r}\left(\overline{w}\right)$, respectively.
	\end{lem}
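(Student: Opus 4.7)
The strategy mirrors the proof of Lemma \ref{lem:BadCoding}, substituting Lemma \ref{lem:BadCodingCircular0} for Lemma \ref{lem:BadCoding0} at the atomic level and adapting the Feldman-pattern bookkeeping to the circular setting. We may assume $\overline{f}(\phi(\mathcal{A}),\overline{\mathcal{A}}) \leq \frac{1}{32}$, since the conclusion is otherwise immediate from $\alpha_s^c<\frac{1}{8}$. Under this assumption Fact \ref{fact:string_length} forces $|\mathcal{A}|$ and $|\overline{\mathcal{A}}|$ to be comparable, and we fix a best possible $\overline{f}$-matching $\pi$ between $\phi(\mathcal{A})$ and $\overline{\mathcal{A}}$, viewed as a map on index sets.

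First I would write $w = P_{n,1}\dots P_{n,r}$ and $\overline{w} = \overline{P}_{n,1}\dots\overline{P}_{n,\overline{r}}$ as concatenations of their $s$-Feldman patterns at scale $n$ (ignoring the $\mathcal{W}^{\dagger\dagger}$-contribution in Case 2 of the construction, which costs at most $2/p_{n+1}$ by (\ref{eq:LengthProportion}) and Fact \ref{fact:omit_symbols}). Applying $\mathcal{C}_{n,i_1}$ and $\mathcal{C}^{r}_{n,i_2}$ term-by-term transports these decompositions into $\mathcal{C}_{n,i_1}(w)$ and $\mathcal{C}^{r}_{n,i_2}(\overline{w})$. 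For $n$ large enough, combining the hypothesis $|\mathcal{A}|,|\overline{\mathcal{A}}| \geq k_n l_n q_n/2^{e(n)}$ with $r,\overline{r} \geq U_{s+1} \geq 2^{8e(m)}$ (cf.\ Remark \ref{rem:s-pattern-type}) guarantees that after augmenting $\mathcal{A}$ and $\overline{\mathcal{A}}$ by completing partial circular patterns at the boundaries (costing at most $\frac{2}{2^{6e(n)}}$ in $\overline{f}$ by Fact \ref{fact:omit_symbols}), the augmented strings $\mathcal{A}_{aug}$ and $\overline{\mathcal{A}}_{aug}$ contain at least $2^{6e(n)}$ complete circular $s$-Feldman patterns each.

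The matching $\pi$ then induces decompositions $\mathcal{A}_{aug} = \mathcal{G}_1\dots\mathcal{G}_v$ and $\overline{\mathcal{A}}_{aug} = \overline{\mathcal{G}}_1\dots\overline{\mathcal{G}}_v$ into maximal corresponding substrings with $\mathcal{G}_i \subseteq \mathcal{C}_{n,i_1}(P_{n,j(i)})$ and $\overline{\mathcal{G}}_i \subseteq \mathcal{C}^{r}_{n,i_2}(\overline{P}_{n,\overline{j}(i)})$, and $v \geq 2^{6e(n)}$. Since $w$ and $rev(\overline{w})$ have different $s$-pattern types, at most one index $i$ can have $(j(i),\overline{j}(i))$ of the same pattern type; deleting this pair costs a further $\frac{2}{2^{6e(n)}}$. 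On each surviving pair we either already have $\overline{f}(\phi(\mathcal{G}_i),\overline{\mathcal{G}}_i) > \alpha_s^c/4$ (and we retain it), or Fact \ref{fact:string_length} gives length comparability. Following the density argument in the proof of Lemma \ref{lem:BadCoding}, pairs too short to satisfy the length hypothesis of Lemma \ref{lem:BadCodingCircular0} (i.e.\ with $|\mathcal{G}_i|$ or $|\overline{\mathcal{G}}_i|$ below $|P_{n,j(i)}|\cdot l_n q_n/2^{2e(n)}$) together contribute density at most $1/2^{2e(n)-4}$ and may be discarded; on the remainder Lemma \ref{lem:BadCodingCircular0} supplies $\overline{f}(\phi(\mathcal{G}_i),\overline{\mathcal{G}}_i) > \alpha_s^c/10$. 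Assembling via Fact \ref{fact:substring_matching} and absorbing the accumulated boundary errors then yields the stated bound $\alpha_s^c/16$.

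The main obstacle will be the bookkeeping for the reverse operator $\mathcal{C}^{r}_{n,i_2}$ in tandem with $\mathcal{C}_{n,i_1}$: one must verify that the decomposition of $\mathcal{C}^{r}_{n,i_2}(\overline{w})$ into circular $s$-Feldman patterns aligns compatibly with $\pi$, and that the spacer strings $b^{q_n-j_{i_k}}$, $e^{j_{i_k}}$ at the boundaries between $1$-subsections, which form a proportion at most $2/l_n$ inside each $1$-subsection, stay negligible thanks to (\ref{eq:lCondCircular}). Since both $\mathcal{C}_{n,i_1}$ and $\mathcal{C}^{r}_{n,i_2}$ act block-wise on concatenations of $n$-blocks, the Feldman-pattern decomposition transports cleanly, but the estimates must be tracked carefully so that boundary errors are not double-counted between the augmentation step above and the corresponding augmentation already built into the proof of Lemma \ref{lem:BadCodingCircular0}.
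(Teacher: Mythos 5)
Your proposal follows essentially the same route as the paper: the paper's own proof (of both Lemma \ref{lem:BadCodingCircular0} and Lemma \ref{lem:BadCodingCircular}) consists precisely of decomposing $\mathcal{A}$, $\overline{\mathcal{A}}$ into the circular images $\mathcal{C}_{n,i_1}(P_{n,j})$, $\mathcal{C}^r_{n,i_2}(\overline{P}_{n,j})$ of the $s$-Feldman patterns and then declaring that the argument of Lemma \ref{lem:BadCoding} goes through verbatim, which is exactly what you spell out in detail. The only blemish is a small index slip ($U_{s+1}\ge 2^{8e(m)}$ should be $U_{s+1}\ge 2^{8e(n)}$, since the parameters are those chosen at stage $n+1$), but this does not affect the argument.
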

	
	\begin{proof}
		Analogously to the proof of Lemma \ref{lem:BadCoding}, we divide $\mathcal{A}$
		and $\overline{\mathcal{A}}$ into circular images $\mathcal{P}_{n,j}=\mathcal{C}_{n,i_{1}}\left(P_{n,j}\right)$
		and $\overline{\mathcal{P}}_{n,j}=\mathcal{C}_{n,i_{2}}^{r}\left(\overline{P}_{n,j}\right)$,
		where $P_{n,j}$ and $\overline{P}_{n,j}$ are $s$-Feldman patterns
		in $w$ and $\overline{w}$, respectively. Then the proof follows
		along the lines of Lemma \ref{lem:BadCoding}. 
	\end{proof}
	We can use Lemmas \ref{lem:BadCodingCircular0} and \ref{lem:BadCodingCircular}
	to identify a well-approximating finite code with an element of the
	group action in a similar way that we used Lemmas \ref{lem:BadCoding0} and \ref{lem:BadCoding} in the proof of
	Lemma \ref{lem:groupelement}.
	\begin{lem}
		\label{lem:groupelementCircular}Let $s\in\mathbb{N}$, $0<\delta<\frac{1}{4}$,
		$0<\varepsilon<\frac{\alpha_{s}^{c}}{200}\delta$, and $\phi$ be
		a finite code from $\mathbb{K}^{c}$ to $rev(\mathbb{K}^{c})$. Then
		for $n$ sufficiently large, any $0\leq i_{1}<q_{n}$, any $w\in\mathcal{W}_{n+1}$,
		and any string $\overline{\mathcal{A}}$ in $rev(\mathbb{K}^{c})$
		with 
		\[
		\overline{f}\left(\phi\left(\mathcal{C}_{n,i_{1}}(w)\right),\overline{\mathcal{A}}\right)<\varepsilon,
		\]
		there must be exactly one string of the form $\mathcal{C}_{n,i_{2}}^{r}(\overline{w})$
		with $|\overline{\mathcal{A}}\cap\mathcal{C}_{n,i_{2}}^{r}(\overline{w})|\geq(1-\delta)k_{n}l_{n}q_{n}$
		for some $0\leq i_{2}<q_{n}$, $\overline{w}\in\mathcal{W}_{n+1}$. Moreover, $\overline{w}$ must be of the form $\left[\overline{w}\right]_{s}=g\left[w\right]_{s}$
		for a unique $g\in G_{s}$, which is necessarily of odd parity.
	\end{lem}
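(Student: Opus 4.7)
The plan is to mirror the proof of Lemma \ref{lem:groupelement} almost verbatim, replacing the odometer-based $\overline{f}$-estimates from Lemmas \ref{lem:BadCoding0} and \ref{lem:BadCoding} with their circular counterparts from Lemmas \ref{lem:BadCodingCircular0} and \ref{lem:BadCodingCircular}, and using Proposition \ref{prop:fCircular} (together with the propagated equivalence relations and group actions from Remark \ref{rem:Propagate}) in place of Proposition \ref{prop:conclusio}. First I would choose $n$ large enough that end effects of the code $\phi$ are negligible relative to $|\mathcal{C}_{n,i_1}(w)|=k_nl_nq_n$, so that $\phi(\mathcal{C}_{n,i_1}(w))$ and $\overline{\mathcal{A}}$ can be treated as having approximately equal lengths. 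Fact \ref{fact:string_length} applied to $\overline{f}(\phi(\mathcal{C}_{n,i_1}(w)),\overline{\mathcal{A}})<\varepsilon$ then gives $(1-3\varepsilon)k_nl_nq_n\le|\overline{\mathcal{A}}|\le(1+3\varepsilon)k_nl_nq_n$, which already forces uniqueness of any $\mathcal{C}_{n,i_2}^{r}(\overline{w})$ achieving overlap at least $(1-\delta)k_nl_nq_n$ with $\overline{\mathcal{A}}$.

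Next I would argue existence by contradiction: if no such $\mathcal{C}_{n,i_2}^{r}(\overline{w})$ exists, then $\overline{\mathcal{A}}$ straddles two consecutive $1$-subsections $\mathcal{C}_{n,i_2}^{r}(\overline{w}_1)$ and $\mathcal{C}_{n,i_2'}^{r}(\overline{w}_2)$ in $rev(\mathbb{K}^c)$ (which may lie in the same or in adjacent $(n+1)$-blocks of $rev(\mathbb{K}^c)$), with $|\overline{\mathcal{A}}\cap\mathcal{C}_{n,i_2}^{r}(\overline{w}_j)|\geq(\delta-3\varepsilon)k_nl_nq_n>(180/\alpha_s^c)\varepsilon\,k_nl_nq_n$ for $j=1,2$. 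Splitting into two subcases as in Lemma \ref{lem:groupelement}: either at least one of $rev(\overline{w}_1),rev(\overline{w}_2)$ has an $s$-pattern type different from that of $w$, or both agree with the $s$-pattern type of $w$. In the second subcase, the beginning and end of $w$ carry \emph{different} Feldman patterns by construction (substitution instances are concatenations of distinct Feldman patterns in Section~\ref{sec:Substitution}), so matching the beginning of $\phi(\mathcal{C}_{n,i_1}(w))$ against the tail of $\mathcal{C}_{n,i_2}^{r}(\overline{w}_1)$, and the end against the head of $\mathcal{C}_{n,i_2'}^{r}(\overline{w}_2)$, necessarily pairs pieces of differing $s$-pattern type somewhere. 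In both subcases, the pieces are long enough to apply Lemma \ref{lem:BadCodingCircular} (and, where the discrepancy lives inside a single Feldman pattern, Lemma \ref{lem:BadCodingCircular0}), producing lower bounds of $\alpha_s^c/16$ on the $\overline{f}$-distance of those pieces. Combining these via Fact \ref{fact:substring_matching} yields $\overline{f}(\phi(\mathcal{C}_{n,i_1}(w)),\overline{\mathcal{A}})\ge (80/\alpha_s^c)\varepsilon\cdot\alpha_s^c/16=5\varepsilon$, contradicting the hypothesis $\varepsilon<\alpha_s^c\delta/200$.

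For the final assertion, I would apply Lemma \ref{lem:BadCodingCircular} once more to the unique $\overline{w}$ found above: it forces $w$ and $rev(\overline{w})$ to have the same $s$-pattern structure, and by the construction in Section \ref{sec:Construction} this happens only if $[rev(\overline{w})]_s$ lies in the $G_s$-orbit of $[w]_s$ (substitution instances not on the same orbit use disjoint concatenations of Feldman patterns). Freeness of the $G_s^{n+1}$-action on $\mathcal{W}_{n+1}/\mathcal{Q}_s^{n+1}$ combined with Proposition \ref{prop:fCircular} (applied to the circular image of two distinct $s$-equivalence classes) then gives a unique $g\in G_s$ with $[rev(\overline{w})]_s=\eta_g[w]_s$.

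The main obstacle I anticipate is the boundary subcase in which $\overline{\mathcal{A}}$ straddles two $\mathcal{C}_{n,i}^{r}(\overline{w})$-strings whose reverses carry the same $s$-pattern type as $w$; unlike in Lemma \ref{lem:groupelement}, here one must track both the index $i_2$ and the word $\overline{w}$, and must confirm that the asymmetry between the beginning and end Feldman patterns of $w$ transfers correctly through the $\mathcal{C}_{n,i}^{r}$ operator and its reversal convention so that Lemma \ref{lem:BadCodingCircular} indeed applies to the straddled pieces. Checking that the required minimum substring length $k_nl_nq_n/2^{e(n)}$ in Lemma \ref{lem:BadCodingCircular} is comfortably met by the fragments $(180/\alpha_s^c)\varepsilon\,k_nl_nq_n$ (which it is, for $n$ sufficiently large so that $2^{e(n)}>360/(\alpha_s^c\varepsilon)$, using $2^{e(n)}\to\infty$) is routine but must be verified.
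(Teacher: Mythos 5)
Your proposal is correct and follows essentially the same route as the paper: uniqueness from the length bound via Fact \ref{fact:string_length}, a contradiction argument for existence in which the straddled pieces of length at least $(180/\alpha_s^c)\varepsilon\,k_nl_nq_n$ are fed into Lemmas \ref{lem:BadCodingCircular0} and \ref{lem:BadCodingCircular} and combined via Fact \ref{fact:substring_matching} to reach $5\varepsilon$, and then the pattern-type rigidity plus freeness of the group action for the final assertion. The paper's proof is exactly this adaptation of Lemma \ref{lem:groupelement}, including the same numerical constants.
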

	
	\begin{proof}
		As in the proof of Lemma \ref{lem:groupelement} we neglect end effects
		and conclude that there can be at most one string of the form $\mathcal{C}_{n,i_{2}}^{r}(\overline{w})$
		with $|\overline{\mathcal{A}}\cap\mathcal{C}_{n,i_{2}}^{r}(\overline{w})|\geq(1-\delta)k_{n}l_{n}q_{n}$.
		Suppose that there is no such $\mathcal{C}_{n,i_{2}}^{r}(\overline{w})$,
		but two strings $\mathcal{C}_{n,i_{2,1}}^{r}(\overline{w}_{1})$,
		$\mathcal{C}_{n,i_{2,2}}^{r}(\overline{w}_{2})$ with $\overline{w}_{1},\overline{w}_{2}\in\mathcal{W}_{n+1}$
		and $0\leq i_{2,1},i_{2,2}<q_{n}$ such that $\overline{\mathcal{A}}$
		is a substring of $\mathcal{C}_{n,i_{2,1}}^{r}(\overline{w}_{1})\mathcal{C}_{n,i_{2,2}}^{r}(\overline{w}_{2})$
		and 
		\[
		|\overline{\mathcal{A}}\cap\mathcal{C}_{n,i_{2,j}}^{r}(\overline{w}_{j})|\geq\left(\delta-3\varepsilon\right)\cdot k_{n}l_{n}q_{n}>\frac{180}{\alpha_{s}^{c}}\varepsilon k_{n}l_{n}q_{n}\;\text{ for }j=1,2.
		\]
		We follow the proof of Lemma \ref{lem:groupelement} using Lemmas \ref{lem:BadCodingCircular0} and \ref{lem:BadCodingCircular} to conclude that there must be exactly one $\mathcal{C}_{n,i_{2}}^{r}(\overline{w})$
		with $|\overline{\mathcal{A}}\cap\mathcal{C}_{n,i_{2}}^{r}(\overline{w})|\geq(1-\delta)k_{n}l_{n}q_{n}$. 
		
		Lemma \ref{lem:BadCodingCircular} also implies that $w$ and $rev(\overline{w})$
		need to have the same $s$-pattern structure. As before this only
		happens if $\left[\overline{w}\right]_{s}=g\left[w\right]_{s}$
		for some $g\in G_{s}$ of odd parity.
	\end{proof}
	
	Assume $\mathbb{K}^{c}$ and $(\mathbb{K}^{c})^{-1}$ are evenly
	equivalent. Let $(\varepsilon_{\ell})_{\ell \in \N}$ be a sequence of positive reals satisfying $\sum^{\infty}_{i=\ell +1} \varepsilon_i <\varepsilon_{\ell}$. Then we take a sequence of $(\varepsilon_{\ell},K_{\ell})$-finite $\overline{f}$ codes $\phi^c_{\ell}$ from $\mathbb{K}^c$ to $(\mathbb{K}^{c})^{-1}$ as produced by Lemma \ref{lem:ConsistentCode}. In the following we use $\gamma^c_{s}=\frac{(\alpha_{s}^{c})^{4}}{5 \cdot 10^9}$
	for each $s\in\mathbb{N}$. We apply Corollary \ref{cor:CodeOnWords} with $\gamma^c_s$ on our sequence of codes and obtain $N(s) \in \mathbb{Z}^+$ such that for every $N \geq N(s)$ and $k\in \mathbb{Z}^+$ there is $n(s,N,k)\in \mathbb{Z}^+$ sufficiently large such that for all $n \geq n(s,N,k)$ Lemma \ref{lem:groupelementCircular} with $\delta^c_s=\frac{(\alpha^c_s)^3}{2\cdot 10^7}$ and $\phi_N$ holds and there exists a collection of words $\mathcal{W}^{\prime}_n \subset \mathcal{W}_n$ (that includes at least $1-\sqrt{\gamma^c_s}$ of the $n$-words) such that for every $w\in \mathcal{W}^{\prime}_n$ and $0\leq i<q_{n-1}$ the circular strings $\mathcal{C}_{n-1,i}(w)$ satisfy analogues of properties (C1) and (C2). 
	\begin{lem}
		\label{lem:codeGroupCircular}Suppose $\mathbb{K}^{c}$ and $(\mathbb{K}^{c})^{-1}$ are evenly
		equivalent and let $s\in\mathbb{N}$. There is a unique
		$g_{s}\in G_{s}$ such that for every $N\geq N(s)$ and sufficiently
		large $n\in\mathbb{N}$ we have for every $w\in\mathcal{W}_{n}^{\prime}$
		that there is $\overline{w}\in \mathcal{W}_{n}$ with
		$\left[\overline{w}\right]_{s}=g_{s}\left[w\right]_{s}$
		such that 
		\[
		\overline{f}\left(\phi_{N}^{c}(c_n(w)),rev(c_n(\overline{w}))\right)<\frac{\alpha_{s}^{c}}{4},
		\]
		where $(\phi^c_{\ell})_{\ell}$ is a sequence of codes as described above. Moreover, $g_{s}$ is of odd parity and the sequence $(g_{s})_{s\in\mathbb{N}}$
		satisfies $g_{s}=\rho_{s+1,s}(g_{s+1})$ for all $s\in\mathbb{N}$.
	\end{lem}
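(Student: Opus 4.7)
The proof will parallel that of Lemma \ref{lem:codeGroup}, with the circular infrastructure (Lemmas \ref{lem:BadCodingCircular0}, \ref{lem:BadCodingCircular}, \ref{lem:groupelementCircular} and Proposition \ref{prop:fCircular}) replacing its odometer counterparts. Using the even equivalence hypothesis, the circular version of Corollary \ref{cor:CodeOnWords} applied to $(\phi^c_\ell)_\ell$ with $\gamma^c_s = (\alpha^c_s)^4/(5\cdot 10^9)$ furnishes, for each $N\geq N(s)$ and $k\in\mathbb{Z}^+$, a threshold $n(s,N,k)$ and a subcollection $\mathcal{W}^{\prime}_n\subset\mathcal{W}^c_n$ of size at least $(1-\gamma^c_s)|\mathcal{W}^c_n|$ satisfying the circular analogues of properties (C1) and (C2). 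My aim is to produce a single $g_s\in G_s$ (necessarily of odd parity, and satisfying $g_s = \rho_{s+1,s}(g_{s+1})$) such that for all $N \geq N(s)$, all sufficiently large $n$, and every $w^c \in \mathcal{W}^{\prime}_n$, there is $\overline{w}^c \in rev(\mathcal{W}^c_n)$ with $[\overline{w}^c]_s = \eta_{g_s}[w^c]_s$ and $\overline{f}(\phi^c_N(w^c),\overline{w}^c) < \alpha^c_s/4$.

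The central technical step is to apply Lemma \ref{lem:groupelementCircular} piecewise. For $w^c = c_n(w) \in \mathcal{W}^{\prime}_n$ with $w \in \mathcal{W}_n$, I will use the canonical decomposition
\[
c_n(w) = \mathcal{C}_{n-1,0}(w)\,\mathcal{C}_{n-1,1}(w)\cdots\mathcal{C}_{n-1,q_{n-1}-1}(w).
\]
From property (C1) I obtain $\overline{\mathcal{A}} = z\upharpoonright[0,q_n-1]$ with $\overline{f}(\phi^c_N(w^c),\overline{\mathcal{A}}) < \gamma^c_s$. Fixing a best possible match and invoking Fact \ref{fact:substring_matching}, I split $\overline{\mathcal{A}} = \overline{\mathcal{A}}_0\cdots\overline{\mathcal{A}}_{q_{n-1}-1}$ into substrings corresponding to the $\mathcal{C}_{n-1,i}(w)$; an averaging argument then shows that at least a proportion $1-\sqrt{\gamma^c_s}$ of the indices $i$ satisfy $\overline{f}(\phi^c_N(\mathcal{C}_{n-1,i}(w)),\overline{\mathcal{A}}_i) < \sqrt{\gamma^c_s} < (\alpha^c_s/200)\delta^c_s$ with $\delta^c_s = (\alpha^c_s)^3/(2\cdot 10^7)$. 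For each such good index, Lemma \ref{lem:groupelementCircular} (applied at level $n-1$) supplies a unique $g^{(i)} \in G_s$ and a unique reversed circular image $\mathcal{C}^r_{n-1,i_2(i)}(\overline{w}^{(i)})$ with $[rev(\overline{w}^{(i)})]_s = \eta_{g^{(i)}}[w]_s$ and agreement with $\overline{\mathcal{A}}_i$ on a proportion at least $1-\delta^c_s$ of its length. If two good indices yielded distinct $g^{(i_1)}\neq g^{(i_2)}$, the triangle inequality through $\phi^c_N$ would force corresponding reversed circular pieces to be $\overline{f}$-close, contradicting Proposition \ref{prop:fCircular}; the same argument forces the $\overline{w}^{(i)}$ to share a common $s$-class (corresponding to a single $\overline{w} \in \mathcal{W}_n$) and the positions $i_2(i)$ to assemble consistently via the natural map $\natural$. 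Setting $\overline{w}^c \coloneqq rev(c_n(\overline{w})) \in rev(\mathcal{W}^c_n)$ then yields the conclusion at $\phi^c_N$ for the single word $w^c$.

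The remaining assertions transcribe their odometer counterparts. The uniformity in $w^c \in \mathcal{W}^{\prime}_n$ is obtained by passing to a word in $\mathcal{W}^{\prime}_{n+1}$ and decomposing by its $s$-Feldman patterns, substituting Lemma \ref{lem:BadCodingCircular0} for Lemma \ref{lem:BadCoding0}; the classes within any building tuple have disjoint $G_s$-orbits, so a single $g_s$ must serve all $n$-subwords. Independence from $N \geq N(s)$ follows from property (C2) and Proposition \ref{prop:fCircular} via the same triangle-inequality argument as in Lemma \ref{lem:codeGroup}. The coherence $g_s = \rho_{s+1,s}(g_{s+1})$ comes from specification (A7) propagated to the circular setting via Remark \ref{rem:Propagate}, combined with Proposition \ref{prop:fCircular}. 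Odd parity is automatic because $\eta_{g_s}$ is defined only for $g_s$ of odd parity (Definition \ref{def:etag}). The main obstacle is the second paragraph: $c_n(w)$ is not itself of the form $\mathcal{C}_{n-1,i_1}(\cdot)$ to which Lemma \ref{lem:groupelementCircular} directly applies, so one must carefully split into the $q_{n-1}$ pieces, track end-effects at the seams, and verify that the uniquely-chosen group elements and target words across pieces fit together into a single reversed circular $n$-word. The loss of efficiency from $\gamma^c_s$ to $\sqrt{\gamma^c_s}$ in the averaging step is the reason the constants must be chosen as above.
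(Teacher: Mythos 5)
Your overall architecture matches the paper's: decompose the circular word into its $2$-subsections $\mathcal{C}_{n,i}(w)$, apply Lemma \ref{lem:groupelementCircular} to each, assemble the resulting reversed pieces into $\mathcal{C}_n^r(\overline{w})$, and then run the uniformity, $N$-independence, and coherence arguments exactly as in Lemma \ref{lem:codeGroup}. The one place where you genuinely diverge is how the approximation from (C1) reaches the individual $2$-subsections, and that step contains a quantitative error that breaks the proof as written. You apply (C1) to the whole word and use Fact \ref{fact:substring_matching} to conclude that most pieces satisfy $\overline{f}<\sqrt{\gamma_s^c}$, and you then claim $\sqrt{\gamma_s^c}<\frac{\alpha_s^c}{200}\delta_s^c$ so that Lemma \ref{lem:groupelementCircular} applies. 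But with $\gamma_s^c=(\alpha_s^c)^4/(5\cdot 10^9)$ and $\delta_s^c=(\alpha_s^c)^3/(2\cdot 10^7)$ one has $\sqrt{\gamma_s^c}=(\alpha_s^c)^2/\sqrt{5\cdot 10^9}\approx 1.4\cdot 10^{-5}(\alpha_s^c)^2$, while $\frac{\alpha_s^c}{200}\delta_s^c=(\alpha_s^c)^4/(4\cdot 10^9)$; since $\alpha_s^c<\frac18$, the required inequality would force $\alpha_s^c>200$, so it fails by many orders of magnitude. Consequently the hypothesis of Lemma \ref{lem:groupelementCircular} is not met on your ``good'' pieces, and in fact the Markov bound at the correct threshold $(\alpha_s^c/200)\delta_s^c$ would only show that fewer than $\gamma_s^c/\varepsilon=4/5$ of the pieces are bad, which gives nothing. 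The paper avoids the square-root loss entirely by invoking the (C1)-type approximation directly for each $2$-subsection $\mathcal{C}_{n,i}(w)$ (the ergodic argument behind Corollary \ref{cor:CodeOnWords} supplies a starting point in the good set $F$ for each such string), so the full $\gamma_s^c$-closeness is available on every piece and the bound $\gamma_s^c+\delta_s^c+2/l_n<10^{-7}(\alpha_s^c)^3$ survives into the uniformity argument. Your route is salvageable only if you re-choose $\gamma_s^c$ far smaller (roughly the square of the paper's value), and you would then also need to re-verify that the assembled bound is still below $10^{-7}(\alpha_s^c)^3$, which the subsequent Feldman-pattern decomposition requires.

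A secondary point: your consistency argument across the indices $i$ (``the triangle inequality through $\phi_N^c$'') is not literally available, since $\phi_N^c(\mathcal{C}_{n,i_1}(w))$ and $\phi_N^c(\mathcal{C}_{n,i_2}(w))$ are different strings. What makes it work is that two $2$-subsections of the same $w$ differ only in their $b,e$-spacers, a proportion at most $2/l_n$ of their length, so their code images are $\overline{f}$-close and Proposition \ref{prop:fCircular} then forces the targets $\overline{w}^{(i)}$ to lie in a single class $\eta_g[w]_s$. You correctly identify this assembly as the main obstacle but do not supply the missing observation.
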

	
	\begin{proof}
		Let $w\in\mathcal{W}^{\prime}_{n+1} \subset \mathcal{W}_{n+1}$ and we recall that $c_n(w)=\prod_{i=0}^{q_{n}-1}\mathcal{C}_{n,i}(w)$. As in the proof of Lemma \ref{lem:codeGroup}
		we use property (C1) to choose
		$n$ sufficiently large such that there exists $z \in rev(\mathbb{K}^c)$ with
		\[
		\overline{f}\left(\phi_{N}^{c}\left(\mathcal{C}_{n,i}(w)\right),z\upharpoonright[0,k_nl_nq_n-1]\right)<\gamma^c_s.
		\]
		We denote $\overline{\mathcal{A}}_{i}\coloneqq z\upharpoonright[0,k_nl_nq_n-1]$
		in $rev(\mathbb{K}^{c})$. By Lemma \ref{lem:groupelementCircular}
		there is a string of the form $\mathcal{C}_{n,j_{i}}^{r}(\overline{w})$
		with $|\overline{\mathcal{A}}_{i}\cap\mathcal{C}_{n,j_{i}}^{r}(\overline{w})|\geq(1-\delta^c_{s})k_{n}l_{n}q_{n}$
		for some $0\leq j_{i}<q_{n}$, $\overline{w}\in\mathcal{W}_{n+1}$,
		and $\overline{w}$ must be of the form $\left[\overline{w}\right]_{s}=g\left[w\right]_{s}$
		for a unique $g\in G_{s}$. By Fact \ref{fact:omit_symbols} we conclude
		$\overline{f}\left(\phi_{N}^{c}\left(\mathcal{C}_{n,i}(w)\right),\mathcal{C}_{n,j_{i}}^{r}(\overline{w})\right)<\gamma^c_s+\delta^c_{s}$.
		Since this holds true for every $i\in\left\{ 0,\dots,q_{n}-1\right\} $
		and the newly introduced spacers occupy a proportion of at most $\frac{1}{l_{n}}$,
		we obtain for $n$ sufficiently large that $\overline{f}\left(\phi_{N}^{c}(c_n(w)),\mathcal{C}_{n}^{r}(\overline{w})\right)<\gamma^c_s+\delta^c_{s}+\frac{2}{l_{n}}<10^{-7}(\alpha_{s}^{c})^{3}$.
		To see that the same $g\in G_{s}$
		is supposed to work for all $w\in\mathcal{W}_{n+1}$ we repeat the
		argument as in Lemma \ref{lem:codeGroup} for a word in $\mathcal{W}_{n+2}^c$
		which is a concatenation of $(n+1)$-words by construction. By the
		same arguments as in Lemma \ref{lem:codeGroup} one can also show
		that $g_{s}=\rho_{s+1,s}(g_{s+1})$ for all $s\in\mathbb{N}$.
	\end{proof}
	As in Subsection \ref{subsec:non-even} we conclude that $\mathbb{K}^{c}=\mathcal{F} \circ \Psi(\mathcal{T})$
	and $(\mathbb{K}^{c})^{-1}$ cannot be Kakutani equivalent if $\mathcal{T}$ does not have an infinite branch.

	\subsection{\label{subsec:ProofsDiffeos}Proofs of Theorems \ref{thm:smooth}
		and \ref{thm:analytic}}
	
	Now we can complete the transfer to the setting of diffeomorphisms.
	\begin{proof}[Proof of Theorem \ref{thm:smooth}]
		In the proof of Theorem \ref{thm:Main} we constructed a continuous
		reduction $\Psi:\mathcal{T}\kern-.5mm rees\to\mathcal{E}$ that took values in
		the strongly uniform odometer-based transformations. Under the functor
		$\mathcal{F}$ from Subsection \ref{subsec:Functor} these are transferred
		to strongly uniform circular systems. During the course of the construction
		several parameters with numerical conditions about growth and decay
		rates appear. In particular, at stage $n$ the parameter $l_{n}$
		is chosen last. Hence, we can choose it sufficiently large to guarantee
		condition (\ref{eq:lCondCircular}), as well as 
		the convergence to a $C^{\infty}$ diffeomorphism that is measure-theoretically
		isomorphic to the circular system asserted in Lemma \ref{lem:SmoothRealization}. We denote this realization map from Lemma \ref{lem:SmoothRealization}
		with our choice of the sequence $(l_{n})_{n\in\mathbb{N}}$ by $R$.
		Hereby, we define the map $F^{s}:\mathcal{T}\kern-.5mm rees\to\text{Diff}^{\infty}(M,\lambda)$
		by $F^{s}=R\circ\mathcal{F}\circ\Psi$. We want to show that $F^{s}$
		is a continuous reduction. 
		\begin{lem*}
			$F^{s}:\mathcal{T}\kern-.5mm rees\to\text{Diff}^{\infty}(M,\lambda)$ is continuous.
		\end{lem*}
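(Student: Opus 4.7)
The plan is to invoke Fact \ref{fact:contTree} and check the local-determination criterion. Fix $\mathcal{T}\in\mathcal{T}\kern-.5mm rees$ and an open neighborhood $O$ of $F^{s}(\mathcal{T})$. Since the $C^{\infty}$ topology is generated by the $C^{k}$-metrics, it suffices to produce, for each $\varepsilon>0$ and each $k\in\mathbb{N}$, some $M\in\mathbb{N}$ such that whenever $\mathcal{T}\cap\{\sigma_{n}:n\leq M\}=\mathcal{T}'\cap\{\sigma_{n}:n\leq M\}$ one has $\|F^{s}(\mathcal{T})-F^{s}(\mathcal{T}')\|_{C^{k}}<\varepsilon$.

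First I would unwind the three factors. By the defining property \eqref{eq:ContinuityF} of $\Psi$, the collection $\mathcal{W}_{n}(\mathcal{T})$ depends only on $\mathcal{T}\cap\{\sigma_{m}:m\leq n\}$. The functor $\mathcal{F}$ is defined level-by-level via the operators $\mathcal{C}_{n}$, so the circular construction sequence $\mathcal{W}_{n}^{c}(\mathcal{T})=\mathcal{F}(\Psi(\mathcal{T}))_{n}$ inherits the same locality. Finally, the realization map $R$ of Lemma \ref{lem:SmoothRealization} is an Anosov-Katok limit $T=\lim_{n\to\infty}T_{n}$, where the $n$-th approximant $T_{n}=H_{n}^{-1}R_{\alpha_{n+1}}H_{n}$ is completely determined by the circular words in $\mathcal{W}_{n}^{c}(\mathcal{T})$ together with the parameters $(k_{j},l_{j})_{j\leq n}$ specified through stage $n$. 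Composing these three locality properties gives the key fact: there is a function $n\mapsto m(n)$ such that $T_{n}$ depends only on $\mathcal{T}\cap\{\sigma_{j}:j\leq m(n)\}$.

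Given this, I would choose $N$ so large that $\|T-T_{N}\|_{C^{k}}<\varepsilon/2$, using the standard AbC convergence estimates (the parameters $l_{j}$ are chosen last at each stage, large enough to enforce $\|T_{j+1}-T_{j}\|_{C^{j}}<2^{-j}$). Set $M=m(N)$. For any $\mathcal{T}'$ agreeing with $\mathcal{T}$ on $\{\sigma_{j}:j\leq M\}$, the approximants coincide: $T_{N}'=T_{N}$, because the first $N$ stages of the AbC construction use identical data. Thus by the triangle inequality $\|T-T'\|_{C^{k}}\leq\|T-T_{N}\|_{C^{k}}+\|T'-T_{N}\|_{C^{k}}<\varepsilon$, provided the tail estimate $\|T'-T_{N}\|_{C^{k}}<\varepsilon/2$ also holds for $\mathcal{T}'$.

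The only point requiring care is the last tail estimate, which is the same issue as in the original AbC convergence proof: one must verify that the threshold for choosing $l_{j}$ at stage $j>N$ depends only on the data fixed through stage $j$, not on subsequent structure. This is indeed the case since the rule ``choose $l_{j}$ larger than an explicit function of $(l_{i},k_{i},q_{i})_{i<j}$ and of $\mathcal{W}_{j}^{c}$'' is executed one stage at a time, and the estimate $\|T_{j+1}'-T_{j}'\|_{C^{k}}<2^{-j}$ it enforces is uniform over all trees with the prescribed initial segment. Consequently the same tail bound $\sum_{j\geq N}2^{-j}<\varepsilon/2$ applies to every $\mathcal{T}'$ in the basic neighborhood, completing the verification of the criterion in Fact \ref{fact:contTree}.
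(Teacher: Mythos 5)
Your proof is correct and follows essentially the same decomposition as the paper: verify the criterion of Fact \ref{fact:contTree} by composing the locality of $\Psi$ (from \eqref{eq:ContinuityF}), the level-by-level locality of $\mathcal{F}$, and a finite-determination property of the realization map $R$. The one genuine difference is in the third piece: the paper invokes \cite[Proposition 61]{FW1} as a black box — that proposition says precisely that agreement of circular construction sequences up to level $M$ forces the realized diffeomorphisms to lie in a prescribed neighborhood — whereas you re-derive that statement from the AbC mechanics, namely that $T_N$ is determined by the first $N$ stages and that the stage-wise rule for choosing $l_j$ enforces a uniform tail bound $\|T_{j+1}-T_j\|_{C^j}<2^{-j}$ independently of how the tree continues. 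Your version makes the source of the continuity more transparent but is necessarily a sketch of facts that Proposition 61 packages rigorously; both are valid. Two small points worth tightening: the function $n\mapsto m(n)$ depends on $\mathcal{T}$ (it records the positions $m_0<m_1<\cdots$ of the nodes of $\mathcal{T}$ in the fixed enumeration of $\mathbb{N}^{<\mathbb{N}}$, which shift when the tree changes), so it should be introduced after fixing $\mathcal{T}$ rather than as a single universal function; and the uniformity of the tail estimate over the basic open set deserves the explicit observation — which you do make — that the threshold for $l_j$ depends only on data available at stage $j$, so that the enforced bound is the same for every tree agreeing with $\mathcal{T}$ through stage $M$.
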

		\begin{proof}
			Let $T=F^{s}(\mathcal{T})$ for $\mathcal{T}\in\mathcal{T}\kern-.5mm rees$ and
			$U$ be an open neighborhood of $T$ in $\text{Diff}^{\infty}(M,\lambda)$.
			Let $T^{c}=\mathcal{F}\circ\Psi(\mathcal{T})$ be the circular system
			such that $R(T^{c})=T$. By \cite[Proposition 61]{FW1} there is $M\in\mathbb{N}$
			sufficiently large such that for all $S\in\text{Diff}^{\infty}(M,\lambda)$
			in the range of $R$ we have the following property: If $\left(\mathcal{W}_{n}^{c}(T^{c})\right)_{n\leq M}=\left(\mathcal{W}_{n}^{c}(S^{c})\right)_{n\leq M}$,
			then $S\in U$. Here, $S^{c}$ denotes the circular system such that
			$S=R(S^{c})$. Moreover, $\left(\mathcal{W}_{n}(T^{c})\right)_{n\in\mathbb{N}}$
			and $\left(\mathcal{W}_{n}(S^{c})\right)_{n\in\mathbb{N}}$ denote
			the construction sequences of $T^{c}$ and $S^{c}$, respectively.
			We recall from Subsection \ref{subsec:Functor} that $\left(\mathcal{W}_{n}(T^{c})\right)_{n\leq M}$
			is determined by the first $M+1$ members in the construction sequence
			of the odometer based system $F(\mathcal{T})$, that is, it is determined
			by $\left(\mathcal{W}_{n}(\mathcal{T})\right)_{n\leq M}$. By (\ref{eq:ContinuityF})
			there is a basic open set $V\subseteq\mathcal{T}\kern-.5mm rees$ containing
			$\mathcal{T}$ such that for all $\mathcal{S}\in V$ the first $M+1$
			members of the construction sequences of $\Psi(\mathcal{T})$ and
			$\Psi(\mathcal{S})$ are the same, that is, $\left(\mathcal{W}_{n}(\mathcal{T})\right)_{n\leq M}=\left(\mathcal{W}_{n}(\mathcal{S})\right)_{n\leq M}$.
			Then it follows that $F^{s}(\mathcal{S})\in U$ for all $\mathcal{S}\in V$
			which yields the continuity of $F^{s}$ by Fact \ref{fact:contTree}.
		\end{proof}
		\begin{lem*}
			$F^{s}:\mathcal{T}\kern-.5mm rees\to\text{Diff}^{\infty}(M,\lambda)$ reduces
			the collection of ill-founded trees to the collection of ergodic $C^{\infty}$
			diffeomorphisms $T$ such that $T$ and $T^{-1}$ are Kakutani equivalent.
		\end{lem*}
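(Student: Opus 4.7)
The plan is to verify the biconditional separately in each direction, both times exploiting that the realization map $R$ is a measure-theoretic isomorphism from the circular system to the diffeomorphism it realizes, so that isomorphism, non-isomorphism, Kakutani equivalence, and non-Kakutani equivalence all transfer across $R$.

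For the forward direction, suppose $\mathcal{T}$ has an infinite branch. By part (1) of Proposition \ref{prop:criterion}, $\Psi(\mathcal{T})$ and $\Psi(\mathcal{T})^{-1}$ are isomorphic in $\mathcal{E}$; in fact the isomorphism built in Section \ref{sec:Isom} is anti-synchronous in the sense of Definition \ref{def:synchronous}, since it is assembled from the maps $\eta_{\rho_s(g)}$ which, on the odometer factor, act by $x\mapsto -x$. Lemma \ref{lem:functor} then transports this anti-synchronous isomorphism through $\mathcal{F}$ to an anti-synchronous isomorphism between $\mathcal{F}(\Psi(\mathcal{T}))$ and $\mathcal{F}(\Psi(\mathcal{T}))^{-1}$, and applying $R$ yields an isomorphism in $\mathrm{Diff}^{\infty}(M,\lambda)$ between $F^{s}(\mathcal{T})$ and $F^{s}(\mathcal{T})^{-1}$. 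Since isomorphism implies Kakutani equivalence, this direction is complete.

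For the converse, assume $\mathcal{T}\in\mathcal{T}\kern-.5mm rees$ has no infinite branch and suppose, for contradiction, that $F^{s}(\mathcal{T})$ and $F^{s}(\mathcal{T})^{-1}$ were Kakutani equivalent. Since $R$ provides a measure-theoretic isomorphism between each circular system and its smooth realization, and Kakutani equivalence is an invariant of the measure-preserving structure alone, this would produce a Kakutani equivalence between $\mathbb{K}^{c}=\mathcal{F}(\Psi(\mathcal{T}))$ and $(\mathbb{K}^{c})^{-1}$. I would then invoke the entire machinery developed in Subsection \ref{subsec:NonEquivSmooth}: the special-flow obstruction at the circular level (so that any such Kakutani equivalence must be even), followed by the group-element extraction from finite $\overline{f}$-codes given by Lemma \ref{lem:codeGroupCircular}, whose coherent sequence $(g_s)_{s\in\mathbb{N}}$ with $g_s=\rho_{s+1,s}(g_{s+1})$ and $g_s$ of odd parity produces an infinite branch of $\mathcal{T}$ via Fact \ref{fact:oddElement}, contradicting well-foundedness.

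The continuity and reducibility of $F^{s}$, combined with Fact \ref{fact:ill}, then finishes the proof that the collection of pairs $(S,T)$ of Kakutani-equivalent ergodic diffeomorphisms is complete analytic and hence not Borel, which gives Theorem \ref{thm:smooth}. The main obstacle lies entirely in the second direction: namely, that the $\overline{f}$-estimates of Proposition \ref{prop:fCircular} (which crucially required choosing $l_n$ large relative to $R^c_n$, a choice that is compatible with the one needed for smooth realization via Lemma \ref{lem:SmoothRealization}) are strong enough for the circular versions of Lemmas \ref{lem:BadCodingCircular0}--\ref{lem:groupelementCircular} to go through. The rest is routine transfer through $R$ and $\mathcal{F}$, for which all the needed preservation properties have already been established.
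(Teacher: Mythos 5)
Your proposal is correct and follows essentially the same route as the paper: reduce to the circular level via the measure-theoretic isomorphism $R$, obtain the forward direction by transporting the anti-synchronous isomorphism through $\mathcal{F}$ (Lemma \ref{lem:functor}), and obtain the converse by invoking the non-Kakutani-equivalence of $\mathbb{K}^c$ and $(\mathbb{K}^c)^{-1}$ established in Subsection \ref{subsec:NonEquivSmooth}. The extra detail you supply about the coherent odd-parity sequence $(g_s)$ and Fact \ref{fact:oddElement} is just the unpacking of that subsection's content, and your remark on choosing $l_n$ compatibly with both Proposition \ref{prop:fCircular} and Lemma \ref{lem:SmoothRealization} correctly identifies the one non-routine compatibility to check.
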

		\begin{proof}
			Since the realization map $R$ maps the circular system to a measure-theoretically
			isomorphic diffeomorphism, it preserves Kakutani equivalence. Hence,
			to see that $F^{s}$ is a reduction, it suffices to check that $\mathcal{F}\circ\Psi$
			is a reduction. Here, we let $\mathbb{K}=\Psi(\mathcal{T})$ and $\mathbb{K}^{c}=\mathcal{F}\circ\Psi(\mathcal{T})$
			for $\mathcal{T}\in\mathcal{T}\kern-.5mm rees$.
			
			Suppose $\mathcal{T}\in\mathcal{T}\kern-.5mm rees$ has an infinite branch. By
			the proof of part (1) of Proposition \ref{prop:criterion} in Section
			\ref{sec:Isom} there is an anti-synchronous isomorphism $\phi:\mathbb{K}\to\mathbb{K}^{-1}$.
			Then we can apply Lemma \ref{lem:functor} on the functor $\mathcal{F}$
			and obtain the existence of an anti-synchronous isomorphism $\phi^{c}:\mathbb{K}^{c}\to(\mathbb{K}^{c})^{-1}$.
			The converse direction follows from Subsection \ref{subsec:NonEquivSmooth},
			where we have shown that if $\mathcal{T}\in\mathcal{T}\kern-.5mm rees$ does
			not have an infinite branch, then $\mathbb{K}^{c}$ and $(\mathbb{K}^{c})^{-1}$
			are not Kakutani equivalent.
		\end{proof}
		Both statements together yield that $F^{s}$ is a continuous reduction
		and we conclude the statement as in the proof of Theorem \ref{thm:CompleteAnalytic}.
	\end{proof}
	In an analogous manner we can also prove Theorem \ref{thm:analytic}.
	\begin{proof}[Proof of Theorem \ref{thm:analytic}]
		This time we let $R$ be the realization map from circular systems
		with fast growing coefficients $(l_{n})_{n\in\mathbb{N}}$ to $\text{Diff}_{\rho}^{\,\omega}(\mathbb{T}^2,\lambda)$
		obtained by Lemma \ref{lem:Analytic Realization}. Then the proof
		follows along the lines of the previous one, using \cite[Proposition 7.10]{BK2} 
		instead of \cite[Proposition 61]{FW1}.
	\end{proof}
	
	\section{An Application: Sigma-Finite Case}\label{sec:Sigma}
		
		We now show non-classifiability of ergodic automorphisms up to isomorphism
		in the case of the sigma-finite measure space $(\mathbb{R},\mathcal{B},m),$
		where $\mathcal{B}$ denotes the Borel subsets of $\mathbb{R}$ and
		$m$ is Lebesgue measure. We will see that this follows easily from
		non-classifiability of ergodic automorphisms \emph{up to Kakutani
			equivalence} in the finite measure case. This is essentially the general
		sigma-finite case, because all standard Borel spaces with a Borel
		non-atomic sigma-finite infinite measure are isomorphic to $(\mathbb{R},\mathcal{B},m)$ \cite{Ku34}.
		
		Let $\tilde{\mathcal{X}}$ be the collection of measure-preserving
		automorphisms of $(\mathbb{R},\mathcal{B},m).$ An element $\tilde{T}$
		of $\tilde{\mathcal{X}}$ is said to be \emph{ergodic} if every $\tilde{T}$-invariant
		set has measure zero or its complement has measure zero. Let $\tilde{\mathcal{E}}\subset\tilde{X}$
		denote the ergodic automorphisms in $\tilde{\mathcal{X}}.$ We endow
		$\tilde{\mathcal{X}}$ with the weak topology, that is, $\tilde{T}_{n}\to\tilde{T}$
		if and only if $m(\tilde{T}_{n}A\Delta\tilde{T}A)\to0$ for every
		$A\in\mathcal{B}$ of finite measure. Note that $\tilde{\mathcal{X}}$
		is a Polish space \cite[Proposition 2.1]{Maitre}. Moreover, as in the
		finite measure case, $\tilde{\mathcal{E}}$ is a dense $G_{\delta}$
		subset of $\tilde{\mathcal{X}}$ \cite{Chokski}. Since $\tilde{\mathcal{E}}$
		is a Borel set, it makes sense to ask whether the isomorphism equivalence
		relation, considered as a subset of $\tilde{\mathcal{E}}\times\tilde{\mathcal{E}}$,
		is Borel. As in the finite measure case, we have the following negative
		answer.

		\begin{thm}
			There exists a continuous map 
			\[
			\tilde{\Psi}:\mathcal{T}rees\to\tilde{\mathcal{E}}
			\]
			such that for every $\mathcal{T}\in\mathcal{T}rees,$ $\tilde{\Psi}(\mathcal{T})$
			is isomorphic to $(\tilde{\Psi}(\mathcal{T}))^{-1}$ if and only if
			$\mathcal{T}$ has an infinite branch. Consequently, 
			\[
			\{(\tilde{S},\tilde{T})\in\tilde{\mathcal{E}}\times\tilde{\mathcal{E}}:\tilde{S}\text{\ is\ isomorphic\ to\ \ensuremath{\tilde{T}\}} }
			\]
			is a complete analytic set. In particular, it is not Borel. 
		\end{thm}

		\begin{proof}
			The map $\tilde{\Psi}$ is obtained from the map $\Psi$
			in Theorem~\ref{thm:Main} as follows. Let $\mathcal{E}$ be the ergodic automorphisms
			of a standard non-atomic probability space $(X,\mu).$ For $T\in\mathcal{E}$
			and a measurable function $f:X\to\mathbb{Z}^{+}$ define $T^{f}$
			on $(X^{f},\mu^{f})$ as in Section~\ref{subsec:SpecialTrans}, but assume that $\int f\ d\mu=\infty.$
			Then there is a natural isomorphism between $(X^{f},\mu^{f})$ and
			$(\mathbb{R},m).$ By applying this isomorphism, automorphisms of
			$(X^{f},\mu^{f})$ may be considered as automorphisms of $(\mathbb{R},m),$
			and $T\in\mathcal{E}$ implies that $T^{f}\in\tilde{\mathcal{E}}$. For each $T=\Psi(\mathcal{T}),$ we will construct a function $f=f_{\mathcal{T}},$
			and we will define $\tilde{\Psi}(\mathcal{T})$ to be $T^{f}.$ 
			
			The definition of $f_{\mathcal{T}}$ will be given in terms of the
			$n$-blocks used in the construction of $\Psi(\mathcal{T}).$ Start
			by choosing a sequence $(a_{n})_{n=1}^{\infty}$ of positive integers
			such that $\sum_{n=1}^{\infty}a_{n}=\infty.$ Let $I_{n}=I_{n}(\mathcal{T})$
			be the set of points $x\in X$ such that position $0$ in the bi-infinite
			string of symbols constituting $x$ is the first or last position within
			an $n$-block. Let $b_{n}=b_{n}(\mathcal{T})$ be the first positive
			integer such that $b_{n}\mu(I_{n})>a_{n}.$ Define $g_{n}=g_{n}(\mathcal{T}):X\to\mathbb{N}$
			by $g_{n}(x)=b_{n}$ if $x\in I_{n},$ and $g_{n}(x)=0$ otherwise.
			Let $f_{\mathcal{T}}=1+\sum_{n=1}^{\infty}g_{n}.$ Since the probability
			of position $0$ being the first position or the last position within
			an $n$-block for infinitely many $n$ is $0,$ $f_{\mathcal{T}}$
			is finite-valued almost everywhere. Moreover, $\int f_{\mathcal{T}}\ d\mu\ge\sum_{n=1}^{\infty}b_{n}\mu(I_{n})\ge\sum_{n=1}^{\infty}a_{n}=\infty.$
			Thus, for $f=f_{\mathcal{T}},$ $(\Psi(\mathcal{T}))^{f}$ can be
			identified with an element of $\tilde{\mathcal{E}}$ in a natural
			way. We define $\tilde{\Psi}(\mathcal{T})$ to be this element of
			$\tilde{\mathcal{E}}.$ The identification can be done in such a way
			that $\tilde{\Psi}$ is continuous. It remains to be shown that $\tilde{\Psi}(\mathcal{T})$
			is isomorphic to $(\tilde{\Psi}(\mathcal{T}))^{-1}$ if and only if
			$\mathcal{T}$ has an infinite branch. 
			
			\emph{{\bf Case 1:} $\mathcal{T}$ has an infinite branch.} Let $T=\Psi(\mathcal{T}).$ In the proof of Theorem~\ref{thm:Main}, a measure-preserving
			automorphism $\phi$ of $X$ is obtained such that $\phi\circ T^{-1}=T\circ\phi.$
			The automorphism $\phi$ takes $n$-blocks to $n$-blocks, possibly
			reversing them. By the symmetry in the definition of $g_{n}=g_{n}(\mathcal{T})$,
			for $f=f_{\mathcal{T}}$ we have $f=f\circ\phi.$ It remains to be
			shown that $T^{f}$ is isomorphic to $(T^{f})^{-1}.$ We define a
			measure-preserving automorphism $\tilde{\phi}$ of $X^{f}$ by $\tilde{\phi}(x,j)=(\phi(x),f(x)-j+1),$
			for $x\in X,$ $j=1,2,\dots,f(x).$ We claim that $\tilde{\phi}\circ(T^{f})^{-1}=T^{f}\circ\tilde{\phi.}$
			We have 
			\begin{align*}
				\tilde{\phi}((T^{f})^{-1}(x,1))&=\tilde{\phi}(T^{-1}x,f(T^{-1}x))=(\phi(T^{-1}x),1)=(T(\phi(x)),1) \\
				&=T^{f}(\phi(x),f(\phi(x)))=T^{f}(\phi(x),f(x))=T^{f}(\tilde{\phi}(x,1));
			\end{align*}
			and for $j=2,\dots,f(x),$ we have 
			\begin{align*}
				\tilde{\phi}((T^{f})^{-1}(x,j))&=\tilde{\phi}(x,j-1)=(\phi(x),f(x)-j+2) \\
				&=T^{f}(\phi(x),f(x)-j+1)=T^{f}(\tilde{\phi}(x,j)).
			\end{align*}
			Therefore $\tilde{\phi}\circ(T^{f})^{-1}=T^{f}\circ\tilde{\phi}.$
			This implies that $T^{f}$ is isomorphic to $(T^{f})^{-1},$ that
			is, $\tilde{\Psi}(\mathcal{T})$ is isomorphic to $(\tilde{\Psi}(\mathcal{T}))^{-1}.$
			
			\emph{{\bf Case 2:} $\mathcal{T}$ does not have an infinite branch.} Let
			$T=\tilde{\Psi}(\mathcal{T}).$ By Theorem~\ref{thm:Main}, $T$ and $T^{-1}$
			are not Kakutani equivalent. Note that $(T^{f})^{-1}$ is isomorphic
			to $(T^{-1})^{g},$ where $g=f\circ T^{-1}.$ More specifically, $\tilde{\varphi}\circ(T^{f})^{-1}=(T^{-1})^{g}\circ\tilde{\varphi},$
			where $\tilde{\varphi}:X^f\to X^g$ is defined by $\tilde{\varphi}(x,1)=(x,1),$
			and $\tilde{\varphi}(x,j)=(Tx,f(x)-j+2)$ if $1<j\le f(x).$ (This
			follows by a calculation similar to the one in Case 1.) Therefore
			Case 2 is reduced to showing that $T^{f}$ is not isomorphic to $(T^{-1})^{g}.$
			Suppose $T^{f}$ is isomorphic to $(T^{-1})^{g}.$ The transformation
			$T$ on $X$ is the first return map of $T^{f}$ to $X.$ Thus there
			exists a subset $Y$ of $\mathbb{R}$ of measure one such that the
			first return map of $(T^{-1})^{g}$ to $Y$ is isomorphic to $T.$
			If $S$ is the first return map of $(T^{-1})^{g}$ to $X\cup Y,$
			then the first return maps of $S$ to $X$ and of $S$ to $Y$ are
			Kakutani equivalent. But the former is isomorphic to $T^{-1},$ while
			the latter is isomorphic to $T.$ This is a contradiction, since $T$
			and $T^{-1}$ are not Kakutani equivalent. 
	\end{proof} 
	
	\subsection*{Acknowledgements}
	
	We thank M. Foreman and B. Weiss for helpful discussions.
	We are especially grateful to Foreman for his encouragement and his detailed answers to 
	our questions regarding his work with D. Rudolph and B. Weiss. We thank the referees for their careful reading of the paper and their very helpful and detailed comments.

\end{document}